%% file: main.tex
\documentclass[11pt]{amsart}

\usepackage{amsmath,amssymb,amsthm} 
\usepackage{mathrsfs}               
\usepackage{a4wide}                 

\usepackage{xcolor}                 
\usepackage[colorlinks=true,linkcolor=blue,citecolor=blue,urlcolor=blue]{hyperref}   
\usepackage{tikz}
\usepackage{tikz-cd}
\usetikzlibrary{arrows.meta,positioning,calc,fit}

\usepackage{enumitem}               

\makeatletter
\newcommand{\nameditem}[2]{%
  \item[#1]%
  \refstepcounter{enumi}%
  \protected@edef\@currentlabel{#1}%
  \label{#2}%
}
\makeatother

\usepackage{longtable}
\usepackage{booktabs}
\usepackage{array}
\usepackage{ragged2e}

\newcolumntype{L}[1]{>{\RaggedRight\arraybackslash}p{#1}}

\newcommand{\runinhead}[1]{%
  \par\medskip\noindent\textbf{#1.}\enspace\ignorespaces
}

\theoremstyle{plain}
\newtheorem{theorem}{Theorem}[section]
\newtheorem{lemma}[theorem]{Lemma}
\newtheorem{proposition}[theorem]{Proposition}
\newtheorem{corollary}[theorem]{Corollary}

\theoremstyle{definition}
\newtheorem{definition}[theorem]{Definition}
\newtheorem{remark}[theorem]{Remark}
\newtheorem{question}[theorem]{Question}
\newtheorem{example}[theorem]{Example}

\numberwithin{equation}{section}

\newcommand{\norm}[1]{\lVert #1 \rVert}
\newcommand{\abs}[1]{\left|#1\right|}

\newcommand{\Bop}{\mathcal{B}}
\newcommand{\N}{\mathbb{N}}
\newcommand{\C}{\mathbb{C}}

\newcommand{\Yspace}{\mathscr Y}

\newcommand{\Id}{\mathrm{Id}}
\newcommand{\cf}{\mathrm{cf}}
\newcommand{\supp}{\operatorname{supp}}
\newcommand{\spn}{\operatorname{span}}

\newcommand{\CH}{\mathrm{CH}}

\renewcommand{\le}{\leqslant}
\renewcommand{\ge}{\geqslant}
\renewcommand{\leq}{\leqslant}
\renewcommand{\geq}{\geqslant}

\makeatletter
\def\@tocline#1#2#3#4#5#6#7{\relax
  \ifnum #1>\c@tocdepth
  \else
    \par \addpenalty\@secpenalty\addvspace{#2}%
    \begingroup
      \hyphenpenalty\@M
      \@ifempty{#4}{%
        \@tempdima\csname r@tocindent\number#1\endcsname\relax
      }{%
        \@tempdima#4\relax
      }%
      \parindent\z@
      \leftskip#3\relax
      \advance\leftskip\@tempdima\relax
      \rightskip\@pnumwidth plus4em
      \parfillskip-\@pnumwidth
      #5\leavevmode\hskip-\@tempdima
        \ifcase #1
        \or\or \hskip 1em \or \hskip 2em \else \hskip 3em \fi
        #6\nobreak\relax
      \dotfill\hbox to\@pnumwidth{\@tocpagenum{#7}}\par
      \nobreak
    \endgroup
  \fi
}
\makeatother

\subjclass[2020]{Primary 46B03; Secondary 46B20, 46E15, 47L10}

\keywords{primary Banach space, primary factorisation property, uniform primary factorisation property, symmetric basis, direct sum, complemented subspace, maximal ideal}

\begin{document}

\title{Primariness and the Primary Factorisation Property}

\author[A. Acuaviva]{Antonio Acuaviva}
\address[A. Acuaviva]{School of Mathematical Sciences,
Fylde College,
Lancaster University,
LA1 4YF,
United Kingdom}
\email{ahacua@gmail.com}

\author[T. Kania]{Tomasz Kania}
\address[T. Kania]{Mathematical Institute,
Czech Academy of Sciences,
\v{Z}itn\'a 25,
115 67 Praha 1,
Czech Republic
\and
Institute of Mathematics and Computer Science,
Jagiellonian University,
\L{}ojasiewicza 6,
30-348 Krak\'ow,
Poland}
\email{kania@math.cas.cz, tomasz.marcin.kania@gmail.com}
\thanks{IM CAS (RVO 67985840).}
\date{\today}

\begin{abstract}
    We study the relation between primariness of Banach spaces and the stronger operator-theoretic notions of the primary factorisation property (PFP) and the uniform primary factorisation property (UPFP). We revisit several classical primariness arguments and isolate the additional information needed to factor the identity through arbitrary operators. In the separable setting, this recovers quantitative factorisation versions of the Casazza--Kottman--Lin method for spaces with symmetric bases and treats the exceptional cases of $\ell_1$ and $\ell_\infty$. We then develop support-reduction and free-selection tools for uncountable direct sums, allowing one to transfer primariness and the PFP/UPFP from countable building blocks to non-separable $\ell_p$-, $c_0$- and more general symmetric sums. As applications, we obtain, among others, the primariness of $C[0,1]^*$ under the negation of the Continuum Hypothesis and UPFP results for uncountable sums of ordinal $C(\alpha)$-spaces. Finally, using the finite-block representation of $\mathcal B(\ell_p)$, we prove a uniform primary factorisation theorem for the Banach space $\mathcal B(\ell_p)$, $1<p<\infty$, and end with open problems concerning the gap between primariness and factorisation.
\end{abstract}

\maketitle

\tableofcontents

\input{0_Introduction}

\input{1_Preliminaries}
\input{2_Separable_symmetric_basis}

\input{3_Uncountable_direct}
\input{4_PFP_other_spaces}
\input{5_Primary_filters}
\input{w_Open_Questions}

\bigskip
\noindent\textbf{Acknowledgements.} This paper forms part of the first-named author's PhD research at Lancaster University, conducted under the supervision of Professor N. J. Laustsen. He acknowledges with thanks the funding from the EPSRC (grant number EP/W524438/1) that has supported his studies. The second-named author gratefully acknowledges support received from NCN Sonata-Bis~13 (2023/50/E/ST1/00067).\\

For the purpose of open access, the authors have applied a Creative Commons Attribution (CC BY) licence to any Author Accepted Manuscript version arising. \\

\noindent\textbf{Data availability.} No data was used for the research described in the article.

\newpage

\appendix
\input{Appendix_Quantitative_Pelczynski}
\input{Appendix_Jp_Dual_Primary}

\input{z_Bibliography}
\end{document}

%% file: 0_Introduction.tex
\section{Introduction}\label{sec: introduction}

A Banach space $E$ is \emph{primary} if for every bounded projection $P\colon E\to E$ one has
\[
PE\simeq E \qquad\text{or}\qquad (\Id_E-P)E\simeq E.
\]
Equivalently, whenever $E$ is written as a topological direct sum $E=Y\oplus Z$, one of the two summands must already be isomorphic to $E$.
In this sense, primary spaces are building blocks for Banach-space decompositions; the systematic study of primary spaces was to an extend triggered by Lindenstrauss' note \cite{Lindenstrauss1971},
They belong to the same circle of ideas as prime spaces, indecomposable spaces and spaces with few operators, and they form one of the central structural classes in Banach-space theory.

Classical proofs of primariness usually proceed in two steps.
First, one establishes a dichotomy for projections: given a projection $P$ on $E$, either $PE$ or $(\Id_E-P)E$
contains a complemented copy of $E$.
Second, one invokes Pe{\l}czy\'nski's decomposition method to upgrade this complemented-copy dichotomy to an isomorphism statement.
This strategy underlies many of the classical primariness results for sequence spaces, function spaces and iterated constructions.

In this paper we focus on a stronger operator-theoretic property.
We say that $E$ has the \emph{primary factorisation property} (PFP) if for every bounded operator
$T\colon E\to E$, the identity operator factors either through $T$ or through $\Id_E-T$.
If this can be done with a uniform bound on the factorisation constants, then $E$ has the
\emph{uniform primary factorisation property} (UPFP).
Thus the PFP replaces the projection dichotomy by a dichotomy for all bounded operators.
Whenever $E$ is stable under an $\ell_p$- or $c_0$-sum, the PFP implies primariness via Pe{\l}czy\'nski's decomposition method;
however, the factorisation viewpoint is strictly stronger and is often the more natural one for the operator algebra $\Bop(E)$.
In particular, it is closely related to the problem of identifying the unique maximal ideal of $\Bop(E)$.

Our first aim is to revisit a number of classical primariness arguments and extract from them the stronger conclusion of PFP or UPFP.
The guiding principle is that many classical proofs already contain an operator dichotomy strong enough to produce factorisations,
but this is often left implicit.
A second aim is to make this mechanism systematic in the non-separable setting, where uncountable symmetry and free-selection arguments allow one to sharpen the classical perturbative constructions.

The paper has three main themes.
In Section~\ref{sec: symmetric-like-basis} we analyse Banach spaces with symmetric bases and refine the classical arguments of Casazza, Kottman and Lin.
This yields quantitative factorisation results in the separable setting and, in particular, uniform versions of several classical primariness theorems.
We also discuss variants involving $\ell_\infty$ and iterated constructions.

In Section~\ref{sec: uncountable-direct-sums} we prove transfer principles for uncountable direct sums.
Roughly speaking, once the relevant countable sum of a ``small'' Banach space $X$ is known to be primary or to have the PFP/UPFP,
one can often pass to uncountable $\ell_p$-, $c_0$- or more general symmetric sums of copies of $X$.
The key input is a support-reduction mechanism showing that operators into such large sums are controlled on a countable set of coordinates,
combined with free-selection arguments that force exact vanishing of the off-diagonal interaction.
These results lead to new examples of primary spaces and spaces with the UPFP, including uncountable sums of ordinal $C(\alpha)$-spaces.

In the final substantive section, we discuss further classes of primary spaces and the extent to which their known proofs yield the PFP or UPFP.
Some cases fit naturally into the general framework developed earlier, while others require additional ideas.
We also include a catalogue of examples and a final section of open problems, with particular emphasis on the gap between primariness and the factorisation properties.

A recurring point of the paper is that primariness and factorisation should be carefully distinguished.
For instance, the classical theorem of Casazza--Kottman--Lin gives, in great generality, a range dichotomy for complemented copies,
but a range dichotomy does not by itself yield a factorisation of the identity.
One of the purposes of the present work is precisely to isolate those situations in which the stronger factorisation conclusion can in fact, be obtained.

%% file: 1_Preliminaries.tex
\section{Preliminaries}\label{sec:prelim}

\subsection{Basic notation and primariness}

Throughout the paper, all Banach spaces are over the scalar field $\mathbb K\in\{\mathbb R,\mathbb C\}$, and all operators are assumed to be bounded and linear. For Banach spaces $E$ and $F$ we write $\Bop(E,F)$ for the Banach space of all bounded operators from $E$ to $F$, and $\Bop(E)=\Bop(E,E)$. If $Y\subseteq E$ is a closed subspace, we write $Y\stackrel{c}{\hookrightarrow}E$ to indicate that $Y$ is complemented in $E$. We denote by $\operatorname{ran}T$ and $\ker T$ the range and kernel of an operator $T$.

\begin{definition}
Let $E,F$ be Banach spaces and $C\ge 1$. We write $E\simeq_C F$ if there exists an isomorphism
$T\colon E\to F$ with $\norm{T}\,\norm{T^{-1}}\le C$. We set
\[
d_{BM}(E,F)=\inf\{\norm{T}\,\| T^{-1} \|:\ T\colon E\to F\text{ is an isomorphism}\}.
\]
Thus $E\simeq_C F$ iff $d_{BM}(E,F)\le C$.
\end{definition}

\begin{definition}
Let $C \colon [1,\infty) \to [1,\infty)$ be an increasing function. A Banach space $E$ is \emph{$C$-primary} if for every bounded projection $P\colon E\to E$ one has
\[
PE\simeq_{C(\norm{P})} E
\qquad\text{or}\qquad
(\Id_E-P)E\simeq_{C(\norm{P})} E.
\]
We call $E$ \emph{uniformly primary} if it is $C$-primary for some such function $C$.
\end{definition}

To show that a Banach space $E$ is primary, it suffices to prove that for every projection $P \colon E \to E$, either $PE \simeq E$ or $(\Id_E - P)E \simeq E$. In practice, this is typically achieved in two steps. First, one proves a dichotomy for projections: for every projection $P \colon E \to E$, either $PE$ or $(\Id_E - P)E$ contains a complemented subspace isomorphic to $E$. Second, one applies Pe{\l}czy\'nski's decomposition method to deduce that $E$ is isomorphic to either $PE$ or $(\Id_E - P)E$; we recall this method now.

\begin{theorem}[Pe{\l}czy\'nski decomposition method \cite{Pelczynski1960}] \label{th: pelcdecmethod}
    Let $X$ and $Y$ be Banach spaces such that $X$ is isomorphic to a complemented subspace of $Y$ and $Y$ is isomorphic to a complemented subspace of $X$. Suppose further that
    \begin{enumerate}[label = (\alph*), ref = (\alph*)]
        \item \label{dec:methoda} $X \simeq X \oplus X$ and $Y \simeq Y \oplus Y$, or
        \item \label{dec:methodb} $X \simeq c_0(X)$ or $X \simeq \ell_p(X)$ for some $1 \leq p \leq \infty$.
    \end{enumerate}
    Then $X \simeq Y$.
\end{theorem}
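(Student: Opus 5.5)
We will write $X\simeq Y\oplus A$ and $Y\simeq X\oplus B$ for suitable Banach spaces $A$ and $B$; these exist because each space is isomorphic to a complemented subspace of the other. In each case the plan is a short chain of isomorphisms, the Pe{\l}czy\'nski ``swindle'', using only associativity and commutativity of finite direct sums.

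\emph{Case \ref{dec:methoda}.} Assume $X\simeq X\oplus X$ and $Y\simeq Y\oplus Y$. Then
\[
X\simeq Y\oplus A\simeq (Y\oplus Y)\oplus A\simeq Y\oplus (Y\oplus A)\simeq Y\oplus X.
\]
By the symmetric computation, $Y\simeq X\oplus B\simeq (X\oplus X)\oplus B\simeq X\oplus Y$. Hence $X\simeq Y\oplus X\simeq X\oplus Y\simeq Y$.

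\emph{Case \ref{dec:methodb}.} Write $Z(\cdot)$ for $c_0(\cdot)$ or $\ell_p(\cdot)$, and assume $X\simeq Z(X)$. The first step is to note that $Z(X)\simeq X\oplus Z(X)$, by splitting off the first coordinate. It follows that
\[
X\oplus Y\simeq X\oplus X\oplus B\simeq X\oplus B\simeq Y,
\]
where the middle step uses $X\oplus X\simeq Z(X)\oplus Z(X)\simeq Z(X)\simeq X$. (We take the $\ell_p$- or $c_0$-direct sum of the two sums; reindexing $\N\sqcup\N\cong\N$ gives $Z(X)\oplus Z(X)\simeq Z(X)$.)

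The second step, which is the heart of the argument, is to show that $X\oplus Y\simeq X$. Here we use that $Z(\cdot)$ respects decompositions coordinatewise: from $X\simeq Y\oplus A$ we get $Z(X)\simeq Z(Y)\oplus Z(A)$. The isomorphism is the diagonal one, so it is bounded with a uniform constant, and it does extend to the $\ell_p$- or $c_0$-sums. Therefore
\[
Y\oplus X\simeq Y\oplus Z(X)\simeq Y\oplus Z(Y)\oplus Z(A)\simeq Z(Y)\oplus Z(A)\simeq Z(X)\simeq X.
\]
Combining the two steps gives $X\simeq X\oplus Y\simeq Y$.

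\emph{Main obstacle.} The main point to check is this second step. One must verify that the infinite sums behave well: an isomorphism $T\colon X\to Y\oplus A$ induces $\bigoplus T\colon Z(X)\to Z(Y\oplus A)\simeq Z(Y)\oplus Z(A)$ with the same norm bounds. This is immediate from the definition of the $\ell_p$- and $c_0$-norms, including the case $p=\infty$. One must also check that $Y\oplus Z(Y)\simeq Z(Y)$, which again follows by shifting coordinates. Everything else is algebra of finite sums. Note that in case \ref{dec:methodb} no hypothesis on $Y$ is needed.
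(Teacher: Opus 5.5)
Your proof is correct. The paper proves this statement only in the quantitative form of Theorem~\ref{thm:quant-pelc}. Your case (a) is exactly the chain used there, but your case (b) is organised differently.

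The paper first shows $X\simeq X\oplus X$. It then uses $X\simeq\ell_p(X)$ to prove that $Y$ is also square, via
$Y\simeq X\oplus H\simeq \ell_p(X)\oplus H\simeq \ell_p(Y)\oplus\ell_p(F)\oplus H\simeq Y\oplus Y$,
and finally reruns the case (a) computation. You instead prove the two absorptions $X\oplus Y\simeq Y$ and $X\oplus Y\simeq X$ directly. This is the classical form of the swindle, and it never needs $Y\simeq Y\oplus Y$.

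The paper's route has the conceptual advantage of reducing case (b) literally to case (a), so one final computation serves both cases and one learns along the way that $Y$ is square. Your route is shorter, and it gives a better constant if you track constants as in the appendix, with $X\simeq_a Y\oplus F$, $Y\simeq_b X\oplus H$ and $X\simeq_{C_5}\ell_p(X)$. Your two steps give $X\oplus Y\simeq_{b^2C_5^2}Y$ and $Y\oplus X\simeq_{a^2C_5^2}X$. Hence $d_{BM}(X,Y)\le a^2b^2C_5^4$, rather than the appendix's $a^4b^4C_5^4$. This is a modest improvement in the $c_0$/$\ell_p$-stable case, which is directly relevant to the constant question raised in Section~\ref{sec:open-problems}.
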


The argument in Theorem~\ref{th: pelcdecmethod} is qualitative. It is natural to ask whether one can obtain a quantitative version, controlling the Banach--Mazur distance between $X$ and $Y$ in terms of the corresponding complementability constants. We return to this question in Section~\ref{sec:open-problems}.

In general, it is difficult to control the structure of $PE$ (and that of $(\Id_E - P)E$) and hence to guarantee condition \ref{dec:methoda}. However, in many cases, condition \ref{dec:methodb} holds automatically for the space $E$, and thus primariness reduces to establishing the dichotomy for projections. In passing, we observe that by \cite[Proposition 1]{CKL1977}, one may replace the $\ell_p$- or $c_0$-sum in \ref{dec:methodb} by a direct sum with respect to any Banach space with a symmetric basis.

Finally, we note that the two steps above are, in general, independent: one may establish a dichotomy for projections while still being unable to deduce primariness, as is the case for certain Banach spaces with a symmetric basis; see \cite{CasazzaLin1974}.

\subsection{Primary factorisation property}

As discussed above, proofs of primariness typically proceed by establishing a dichotomy for projections. In many cases, one can obtain a stronger property, namely a dichotomy that holds for \emph{all bounded operators} on the space.

Recall that if $S, T \colon E \to E$ are bounded operators, we say that $S$ \emph{factors through} $T$ if there exist bounded operators $U, V \colon E \to E$ such that $S = U T V$.

\begin{definition}
Let $E$ be a Banach space.

\begin{enumerate}[label = (\alph*), ref = (\alph*)]
    \item \label{def: PFP} We say that $E$ has the \emph{primary factorisation property} (PFP) if for every bounded operator $T \colon E \to E$, the identity $\Id_E$ factors either through $T$ or through $\Id_E - T$.
    \item Let $C \geq 1$. We say that $E$ has the \emph{$C$-primary factorisation property} ($C$-PFP) if for every bounded operator $T \colon E \to E$ there exist bounded operators $U, V \colon E \to E$ such that either
    \begin{equation*}
        UTV = \Id_E \quad \text{ or } \quad U(\Id_E - T)V = \Id_E
    \end{equation*}
    and $\norm{U}\,\norm{V} \le C.$
    \item Let $C \geq 1$. We say that $E$ has the \emph{$C^{+}$-primary factorisation property} ($C^{+}$-PFP) if it has the $(C+\varepsilon)$-PFP for every $\varepsilon > 0$.
    \item We say that $E$ has the \emph{uniform primary factorisation property} (UPFP) if there exists $C \geq 1$ such that $E$ has the $C$-PFP.
\end{enumerate}
\end{definition}

Naturally, although the constant may change, the UPFP is invariant under isomorphisms, as the following proposition shows.

\begin{proposition}\label{prop:iso-invariance}
    Let $E, F$ be Banach spaces and let $S \colon E \to F$ be an isomorphism.
    If $F$ has the $C$-PFP, then $E$ has the $C\,\norm{S}^2 \norm{S^{-1}}^2$-PFP.
\end{proposition}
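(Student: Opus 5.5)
The plan is to transport the operator to $F$, apply the $C$-PFP there, and pull the resulting factorisation back to $E$ via conjugation by $S$.

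Fix $T\in\Bop(E)$ and set $\widetilde T = STS^{-1}\in\Bop(F)$. The key identity is
\[
\Id_F-\widetilde T = S(\Id_E - T)S^{-1},
\]
so the two alternatives for $\widetilde T$ correspond exactly to the two alternatives for $T$.

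By the $C$-PFP of $F$ there are $U,V\in\Bop(F)$ with $\norm{U}\,\norm{V}\le C$ such that either $U\widetilde T V=\Id_F$ or $U(\Id_F-\widetilde T)V=\Id_F$. Suppose the first alternative holds. Then
\[
USTS^{-1}V=\Id_F .
\]
Multiplying on the left by $S^{-1}$ and on the right by $S$ gives
\[
(S^{-1}US)\,T\,(S^{-1}VS)=S^{-1}S=\Id_E .
\]
We therefore define $U'=S^{-1}US$ and $V'=S^{-1}VS$, both in $\Bop(E)$. They satisfy $U'TV'=\Id_E$, and
\[
\norm{U'}\,\norm{V'}\le \norm{S^{-1}}^2\norm{S}^2\norm{U}\,\norm{V}\le C\norm{S}^2\norm{S^{-1}}^2 .
\]
The second alternative is handled identically, using the displayed identity for $\Id_F-\widetilde T$.

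There is no genuine obstacle here; the only point needing care is to conjugate the outer factors $U,V$ rather than merely composing them with $S$ or $S^{-1}$. This is what keeps the resulting operators as endomorphisms of $E$, and it is also why the constant involves the squared factor $\norm{S}^2\norm{S^{-1}}^2$.
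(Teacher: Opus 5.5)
Your proposal is correct and is essentially the paper's own proof: conjugate $T$ to $\widetilde T = STS^{-1}$, apply the $C$-PFP on $F$, and pull back via $U' = S^{-1}US$, $V' = S^{-1}VS$, obtaining the bound $C\norm{S}^2\norm{S^{-1}}^2$. The identity $\Id_F-\widetilde T = S(\Id_E-T)S^{-1}$, which you state explicitly, is the step the paper leaves implicit for the second alternative.
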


\begin{proof}
    Let $T \in \Bop(E)$ and set $\widetilde T = S T S^{-1} \in \Bop(F)$.
    By the $C$-PFP in $F$, there exist operators $\widetilde U, \widetilde V \in \Bop(F)$ such that either
    \begin{equation*}
    \widetilde U\,\widetilde T\,\widetilde V = \Id_F
    \quad \text{or} \quad
    \widetilde U(\Id_F - \widetilde T)\widetilde V = \Id_F,
    \end{equation*}
    and $\norm{\widetilde U}\norm{\widetilde V} \le C$.
    
    Set $U = S^{-1}\widetilde U S$ and $V = S^{-1}\widetilde V S$.
    Then $UTV = \Id_E$ (or $U(\Id_E - T)V = \Id_E$), and
    \begin{equation*}
    \norm{U}\norm{V}
    \le (\norm{S}\norm{S^{-1}})^2\,\norm{\widetilde U}\norm{\widetilde V}
    \le C\,\norm{S}^2 \norm{S^{-1}}^2.\qedhere
    \end{equation*}
\end{proof}

Note that the PFP is strictly stronger than the requirement that projection ranges contain complemented copies of the space. Nevertheless, it is often the more natural hypothesis in applications. Closely related factorisation dichotomies have appeared implicitly in the literature for some time. For example, in \cite{Drewnowski1989}, spaces with PFP are treated under the name of `nice' spaces for the purposes of that paper. The terminology of the primary factorisation property is now used explicitly in recent work on factorisation and maximal ideals; see, for instance, \cite{KaniaLechner2022, LechnerSpeckhofer2025}.

A closely related predecessor is due to Casazza--Kottman--Lin \cite{CKL1976}: if $X$ has a symmetric basis and $E$ contains a complemented copy of $X$, then for every bounded operator $T\colon E\to E$, either $T(E)$ or $(\Id_E-T)(E)$ contains a complemented subspace isomorphic to $X$.  This is most naturally stated as a range dichotomy, not as a factorisation theorem: the presence of a complemented copy of $X$ inside $\operatorname{ran}T$ does not, in general, produce a bounded right inverse for $T$.  Nevertheless, the diagonal part of their argument contains the additional information needed for factorisation.

Although the PFP is, a priori, stronger than primariness, it appears to be at least as prevalent. The same is true for the UPFP, which, through a careful analysis of classical proofs of primariness, can often be obtained. Indeed, while primariness alone does not impose any uniform operator-theoretic dichotomy, many such proofs implicitly establish the PFP, and in several cases even its uniform version.

As observed in the factorisation literature, Pe{\l}czy\'nski's decomposition method (Theorem~\ref{th: pelcdecmethod}) shows that the PFP implies primariness whenever the space is $\ell_p$-stable. However, verifying this structural assumption is not always straightforward, and a space can have the PFP without primariness being known. For instance, while $\ell_\infty/c_0$ is known to have the UPFP (see Proposition~\ref{prop: l_infty-quot-c0}), its primariness is only known under the additional assumption of $\CH$.

Finally, the PFP is closely related to the problem of the existence of a unique maximal ideal in $\Bop(E)$. Indeed, Dosev and Johnson \cite{DosevJohnson2010} introduced the set
\begin{equation*}
    \mathcal{M}_E = \{T \in \Bop(E) : \Id_E \text{ does not factor through } T \} \subseteq \Bop(E),
\end{equation*}
as a natural candidate for such an ideal. For a deeper discussion of this problem, and in particular for the treatment of the cases $L_p$, $1 \leq p < \infty$, we refer the reader to \cite{DosevJohnsonSchechtman2013}.

We have the following observation.

\begin{proposition}\label{prop:maximal-ideal-pfp}
Let $E$ be a Banach space and set
\[
    \mathcal M_E=\{T\in\Bop(E):\ \Id_E\text{ does not factor through }T\}.
\]
Then $E$ has the PFP if and only if $\mathcal M_E$ is a maximal ideal of $\Bop(E)$. In this case $\mathcal M_E$ is the unique maximal ideal of $\Bop(E)$.
\end{proposition}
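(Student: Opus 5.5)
Throughout, ideals are two-sided ideals of $\Bop(E)$, and we assume $E\neq\{0\}$, so that ideals are proper precisely when they do not contain $\Id_E$. The plan rests on one elementary observation. If $\mathcal J$ is any proper ideal of $\Bop(E)$, then $\mathcal J\subseteq\mathcal M_E$. Indeed, if $T\in\mathcal J$ and $\Id_E=UTV$, then $\Id_E\in\mathcal J$, which is impossible. Hence $\mathcal M_E$ contains every proper ideal. Consequently, as soon as $\mathcal M_E$ is itself an ideal, it is automatically the largest proper ideal. It is then maximal, and it is the unique maximal ideal. So everything reduces to showing that the PFP is equivalent to $\mathcal M_E$ being an ideal, or equivalently a maximal ideal.

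For the implication ``$\mathcal M_E$ maximal ideal $\Rightarrow$ PFP'', take $T\in\Bop(E)$ and suppose $\Id_E$ factors through neither $T$ nor $\Id_E-T$. Then both $T$ and $\Id_E-T$ lie in $\mathcal M_E$. Since $\mathcal M_E$ is closed under addition, $\Id_E=T+(\Id_E-T)\in\mathcal M_E$. But $\Id_E$ trivially factors through itself, a contradiction. Hence $E$ has the PFP.

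For the converse, assume the PFP. Nonemptiness, proper containment ($0\in\mathcal M_E$, $\Id_E\notin\mathcal M_E$) and the two-sided ideal property are routine. For the last, let $T\in\mathcal M_E$ and $A,B\in\Bop(E)$. If $\Id_E=U(ATB)V$, then $\Id_E=(UA)T(BV)$, so $\Id_E$ factors through $T$, a contradiction; hence $ATB\in\mathcal M_E$. Scalar multiples are handled similarly, since for $\lambda\neq0$ we have $\Id_E=U(\lambda T)V$ if and only if $\Id_E=(\lambda U)TV$.

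The only step with content is closure under addition, and this is the main obstacle. Let $S,T\in\mathcal M_E$ and suppose $S+T\notin\mathcal M_E$, say $\Id_E=U(S+T)V$. Put $R=USV$, so that $\Id_E-R=UTV$. By the PFP, $\Id_E$ factors through $R$ or through $\Id_E-R$. In the first case $\Id_E=ARB=(AU)S(VB)$, so $\Id_E$ factors through $S$, a contradiction. In the second case the same computation with $T$ in place of $S$ shows $\Id_E$ factors through $T$, again a contradiction. So $S+T\in\mathcal M_E$, and $\mathcal M_E$ is a proper ideal. By the opening observation it is then the unique maximal ideal.
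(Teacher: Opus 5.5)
Your proof is correct and follows the paper's argument essentially step by step: additivity by applying the PFP to $USV$ (using $\Id_E-USV=UTV$), the observation that every proper ideal lies in $\mathcal M_E$, and the converse via $\Id_E=T+(\Id_E-T)$ are exactly the paper's steps. The paper also checks that $\mathcal M_E$ is norm-closed, which your arrangement shows is unnecessary for maximality, and your explicit exclusion of $E=\{0\}$ is a correct minor refinement.
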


\begin{proof}
    Assume first that $E$ has the PFP. Plainly $\Id_E\notin\mathcal M_E$, and $\mathcal M_E$ is stable under left and right multiplication by arbitrary operators: if $\Id_E$ factors through $ATB$, then it factors through $T$.
    
    We next check additivity. Suppose that $S,T\in\mathcal M_E$ and that $S+T\notin\mathcal M_E$. Then there are $A,B\in\Bop(E)$ such that
    \[
        A(S+T)B=\Id_E.
    \]
    Apply the PFP to the operator $ASB$. Since
    \[
        \Id_E-ASB=ATB,
    \]
    the identity factors either through $ASB$ or through $ATB$, and therefore through $S$ or through $T$, a contradiction. Thus $\mathcal M_E$ is a proper two-sided ideal. It is closed: if $T_n\to T$ and $\Id_E=ATB$, then $AT_nB\to\Id_E$, so for all large $n$ the operator $AT_nB$ is invertible and $\Id_E$ factors through $T_n$.
    
    Finally, every proper ideal $\mathcal J$ of $\Bop(E)$ is contained in $\mathcal M_E$; indeed, if $T\in\mathcal J$ and $\Id_E$ factors through $T$, then $\Id_E\in\mathcal J$. Hence $\mathcal M_E$ is the largest proper ideal, and therefore the unique maximal ideal.
    
    Conversely, suppose that $\mathcal M_E$ is a maximal ideal. If the PFP failed, then there would be $T\in\Bop(E)$ such that both $T$ and $\Id_E-T$ belong to $\mathcal M_E$. Since $\mathcal M_E$ is an ideal, it is a linear subspace, and hence
    \[
        \Id_E=T+(\Id_E-T)\in\mathcal M_E,
    \]
    contradicting properness. Thus $E$ has the PFP.
\end{proof}

\begin{remark}
    The preceding proposition identifies the PFP with the assertion that the specific set $\mathcal M_E$ is a maximal ideal. It should not be read as saying that the mere existence of a unique maximal ideal in $\Bop(E)$ automatically implies the PFP: in examples where a unique maximal ideal is known by other means, one must still verify that it coincides with $\mathcal M_E$.
\end{remark}

\subsection{A catalogue of primary spaces}

The following table is intended as a guide to representative primary Banach spaces and classes rather than an exhaustive historical catalogue. We have not attempted full completeness, and some examples, refinements, or later quantitative improvements may be missing. In the entries below, $X$ denotes a Banach space with a symmetric basis, while $X_n$ denotes the relevant finite-dimensional building blocks in a direct-sum construction.

For readability, we group together closely related classes of examples, even when the original results were obtained at different times. We have also included a few entries that are especially relevant to the present paper, namely those concerning mixed sums, operator spaces and factorisation-based methods.

Recent work on strategically reproducible bases, stopping-time spaces and Haar-system Hardy spaces shows that the factorisation viewpoint extends well beyond the classical sym\-met\-ric-basis setting; see, for example, \cite{KaniaLechner2022,LechnerSpeckhofer2025}.

\begin{longtable}{@{}L{0.30\textwidth}L{0.08\textwidth}L{0.29\textwidth}L{0.23\textwidth}@{}}
\toprule
\textbf{Space / class} & \textbf{Year} & \textbf{Authors} & \textbf{PFP/UPFP} \\
\midrule
\endfirsthead

\toprule
\textbf{Space / class} & \textbf{Year} & \textbf{Authors} & \textbf{PFP/UPFP} \\
\midrule
\endhead

$c_0$ and $\ell_p$, $1 \leq p < \infty$ & 1960 &
A. Pe\l czy\'nski \cite{Pelczynski1960} & UPFP \\

$\ell_\infty$ & 1967 &
J. Lindenstrauss \cite{Lin1967} & UPFP \\

$C[0,1]$ & 1971 &
J. Lindenstrauss and A.Pe\l czy\'nski \cite{LindenstraussPelczynski1971} & PFP \\

Some spaces with symmetric basis, including Pe{\l}czy\'nski’s universal space & 1974 &
P.~Casazza and B.-L.~Lin \cite{CasazzaLin1974} & UPFP \\

$L_p[0,1]$ for $1<p<\infty$ & 1975 \& 1977 &
P. Enflo via B. Maurey \cite{Maurey1975} \& D. E. Alspach, P. Enflo, E. Odell \cite{AEO1977} & PFP \\

$C(\alpha)$, for most ordinals $\alpha$ & 1977 &
D. E. Alspach, Y. Benyamini \cite{AlspachBenyamini1977} & UPFP \\

James quasi-reflexive space $J$ & 1977 &
P. G. Casazza \cite{Casazza1977} & ?/? \\

$\ell_p(X)$, $c_0(X)$, $\left(\bigoplus_{n \in \N} X_n \right)_p$,
$\left(\bigoplus_{n \in \N} X_n \right)_{c_0}$ and
$\ell_r(\ell_\infty)$, \mbox{\ensuremath{1 < p < \infty}}, $1 \le r < \infty$ & 1976 &
P. G. Casazza, C. A. Kottman, B.-L. Lin \cite{CKL1976, CKL1977} & UPFP for some \\

$C(\alpha)$, $\alpha$ countable ordinal & 1978 \& 1982 &
P. Billard \cite{Billard1978} \& J. Wolfe \cite{Wolfe1982} & UPFP \\

$L_1[0,1]$ & 1975 \& 1979 &
P. Enflo via B. Maurey \cite{Maurey1975} \& P. Enflo, T. W. Starbird \cite{ES1979} & PFP \\

$\ell_p(\ell_q)$, $c_0(\ell_p)$ \& $\ell_p(c_0)$ for $1 \leq p, q <\infty$  & 1978 &
C. Samuel \cite{Samuel1978} & UPFP \\

Poulsen simplex (equivalently, the Gurari\u{\i} space) & 1980 &
W. Lusky \cite{Lusky1980} & PFP \\

Schatten classes $\mathcal{C}_p$, $1 < p <\infty$ & 1980 &
J. Arazy \cite{Arazy1980} & ?/? \\

$E$-valued Schatten classes $\mathcal{C}_E$ for spaces $E$ with a symmetric basis & 1980 &
J. Arazy \cite{Arazy1980Remark} & ?/? \\

$\ell_p(L_1)$, $1 \leq p < \infty$ & 1980 &
M. Capon \cite{Capon1980} & ?/? \\

$\ell_p(L_r)$ \& $c_0(L_r)$; $1 \leq p < \infty$, $1 < r < \infty$ & 1980 &
M. Capon \cite{Capon1980b} & ?/? \\

$L_p(\ell_r)$, $1 < p,r < \infty$ & 1980 &
M. Capon \cite{Capon1980b} & ?/? \\

$L_p(L_r)$, $1 < p,r < \infty$ & 1982 &
M. Capon \cite{Capon1982} & ?/? \\

$\ell_r(X)$ and $\left(\bigoplus_{n \in \N} X_n \right)_r$, $r = 1$ or $r = \infty$ & 1982 &
M. Capon \cite{Capon1982PLMS} & UPFP for some \\

Exotic-simplex space & 1982 &
W. Lusky \cite{Lusky1982} & ?/? \\

$H^\infty$ & 1983 &
J. Bourgain \cite{Bourgain1983} & ?/? \\

$L_p(X)$, $1 \leq p < \infty$ & 1983 &
M. Capon \cite{Capon1983} & ?/? \\

James tree space $JT$ & 1983 &
A. D. Andrew \cite{Andrew1983} & ?/? \\

Vector-valued function spaces & 1987 &
N. Popa \cite{Popa1987} & ?/? \\

$H^1$ and $\mathrm{BMO}$ & 1988 &
P. F. X. M\"uller \cite{Muller1988} & ?/? \\

$C((\beta \N) \setminus \N, X)$ for some $X$; in particular $C((\beta \N) \setminus \N \times K)$ for $K$ compact metric space & 1989 &
L. Drewnowski \cite{Drewnowski1989} & UPFP \\

$\mathcal M_2=(\bigoplus_n\Bop(\ell_2^n))_{\ell_\infty}$ and $\Bop(\ell_2)$ & 1990 &
G. Blower \cite{Blower1990} & UPFP  \\

$\ell_\infty/c_0$ (assuming $\CH$) & 1991 &
L. Drewnowski, J. W. Roberts \cite{DrewnowskiRoberts1991} & UPFP \\

Reflexive spaces with symmetric basis & 1992 &
Y.-H.~Lin \cite{Lin1992} & UPFP \\
Schatten classes $\mathcal{C}_p$ for $1\leqslant p < \infty$, $\mathcal{K}(\ell_2)$, nest algebras in $c_1$ & 1993 &
A.~Arias \cite{Arias1993} & ?/? \\
General tensor products of $\ell_p$-spaces & 1996 &
A. Arias and J. D. Farmer \cite{AriasFarmer} & ?/? \\

$\Bop(\ell_p,\ell_r)$, $1 < p < r < \infty$ & 1997 &
H. M. Wark \cite{Wark1997} & ?/? \\

$D(0,1)$ & 2005 &
A. Michalak \cite{Michalak2005} & ?/? \\

$\ell_\infty(L_p)$, $1 < p < \infty$ & 2007 &
H. M. Wark \cite{Wark2007} & ?/? \\

A class of primary Banach spaces & 2007 &
H. M. Wark \cite{Wark2007b} & ?/? \\

Strictly quasi-prime Banach spaces $X_p$, $1<p<\infty$, and analogues
associated with $\ell_1$ and $c_0$ & 2008 &
S.~A.~Argyros and T.~Raikoftsalis \cite{ArgyrosRaikoftsalis2008} & ?/? \\

$\ell_\infty(H^1(\mathbb{T}))$ and $\ell_\infty(L_1)$ & 2012 &
P.~F.~X.~M\"uller \cite{Muller2012} & PFP \\

$C_0(L_0)$ & 2015 &
T. Kania, N. J. Laustsen \cite{KaniaLaustsen2015} & ?/? \\

Stopping time space $S^1$ & 2015 &
D. Apatsidis \cite{Apatsidis2015} & ?/? \\

$\mathcal{F}(\mathbb{R}^d)$ & 2016 &
M. C\'uth, M. Doucha, P. Wojtaszczyk \cite{CuthDouchaWojtaszczyk2016} & ?/? \\
$\ell_\infty^c(\kappa)$ & 2016 & W.~B.~Johnson, T.~Kania, G.~Schechtman \cite{JKS2016} & UPFP \\
$SL^\infty$ & 2018 &
R. Lechner \cite{Lechner2018} & ?/? \\

Mixed-norm Hardy spaces $H^p(H^q)$, $1\le p,q<\infty$, with the
bi-parameter Haar system & 2018 &
N.~J.~Laustsen, R.~Lechner, and P.~F.~X.~M\"uller
\cite{LaustsenLechnerMuller2018} & large-diagonal FP \\

Spaces with strategically reproducible bases; in particular Haar systems in
$L^1$, multi-parameter Lebesgue spaces, mixed-norm Hardy spaces, and
unconditional sums & 2020 &
R.~Lechner, P.~Motakis, P.~F.~X.~M\"uller, and Th.~Schlumprecht
\cite{LMMSS2020} & large-diagonal FP \\

$\left(\sum_{n\in\N} H_n^p(H_n^q)\right)_r$ and
$\left(\sum_{n\in\N} H_n^p(H_n^q)^*\right)_r$,
$1\le p,q<\infty$, $1\le r\le\infty$ & 2018 &
R.~Lechner \cite{Lechner2018Mixed} & UPFP \\

$X^*$ and $\ell_p(X^*)$, $1\le p\le\infty$, for dual spaces with a
subsymmetric weak$^*$ Schauder basis satisfying Lechner's condition & 2019 &
R.~Lechner \cite{Lechner2019Subsymmetric} & PFP \\

Direct sums (Orlicz/Marcinkiewicz) & 2020 &
J. L. Ansorena \cite{Ansorena2020} & ?/? \\

$\ell^\infty(X_k)$-sums satisfying the hypotheses of \cite{LMMSS2021} & 2021 &
R.~Lechner, P.~Motakis, P.~F.~X.~M\"uller, and Th.~Schlumprecht
\cite{LMMSS2021} & large-diagonal FP \\

Stopping-time spaces (maximal ideal viewpoint) & 2022 &
T. Kania, R. Lechner \cite{KaniaLechner2022} & ?/? \\

$L_1(L_p)$, $1 \leq p < \infty$ & 2023 &
R.~Lechner, P.~Motakis, P.~F.~X.~M\"uller, and Th.~Schlumprecht\cite{LMMSS2022} & ?/? \\

Haar system spaces with unconditional Haar basis, including suitable
rearrangement-invariant spaces & 2023 &
Kh.~V.~Navoyan \cite{Navoyan2023} & large-diagonal FP \\

Bi-parameter Haar system Hardy spaces, restricted to Haar multipliers & 2024 &
R.~Lechner, P.~Motakis, P.~F.~X.~M\"uller, and Th.~Schlumprecht
\cite{LMMSS2024} & Haar-multiplier PFP \\

Rosenthal spaces $X_{p,w}$ and Bourgain--Rosenthal--Schechtman spaces
$R_\omega^p$, $1<p<\infty$ & 2025 &
K.~Konstantos and P.~Motakis \cite{KonstantosMotakis2025Orthogonal} & PFP \\

Independent sums $Y_\omega$ of Haar system Hardy spaces $Y$ & 2025 &
K.~Konstantos and T.~Speckhofer \cite{KonstantosSpeckhofer2025} & PFP \\

Haar Hardy spaces & 2025 &
R. Lechner, T. Speckhofer \cite{LechnerSpeckhofer2025} & ?/? \\

$c_0(\ell_\infty)$
& present paper
& present authors
& UPFP\\

$C[0,1]^*$ assuming $\mathfrak{c} > \aleph_1$ & present paper &
present authors & PFP \\

Uncountable sums $\ell_p(\Gamma,C(\alpha))$, $1\le p<\infty$ & present paper &
present authors & UPFP\\

$\mathcal M_p=(\bigoplus_n\Bop(\ell_p^n))_{\ell_\infty}$ and $\Bop(\ell_p)$, $1<p<\infty$ & present paper &
G.~Blower for $p=2$; present authors & UPFP \\
$c_{0, \mathcal{F}}$ for certain filters $\mathcal{F}$; this includes $\ell_\infty^c(\kappa)$ & present paper & present authors & UPFP \\
\bottomrule
\end{longtable}
The table is deliberately representative rather than exhaustive.  We have kept a broadly chronological order, while grouping together closely related classes when this improves readability.  The last entries illustrate the point of view of the present paper: several classical primariness arguments can be sharpened to factorisation dichotomies, while the uncountable transfer principles below produce new non-separable examples once the relevant support-reduction hypotheses are available.

The present paper adds two kinds of entries to the classical list. First, several classical primariness arguments are upgraded to PFP or UPFP statements. Second, the uncountable transfer results below produce non-separable analogues such as $\ell_p(\Gamma,C(\alpha))$ and more general $E(\Gamma,X)$-sums under support-reduction hypotheses.

%% file: 2_Separable_symmetric_basis.tex
\section{Symmetric-like bases and the UPFP}\label{sec: symmetric-like-basis}

As we mentioned, the standard methods for proving primariness of a space rely on two main steps: proving a factorisation of the identity for (projection) operators and then applying the Pe{\l}czy\'nski decomposition method. In this section, we will discuss the first problem in the particular case of Banach spaces with a (sub-)symmetric basis. 

We begin by fixing some notation. Let $X$ be a Banach space with a normalised basis $(e_n)_{n\in\N}$ and biorthogonal functionals $(e_n^*)_{n\in\N}$. We say that the basis is \emph{symmetric} if it is equivalent to each of its permutations and sign-changes; after replacing the norm by an equivalent one, we may and shall assume that the basis is equipped with a symmetric norm. In particular, the basis is $1$-unconditional.

The aim of this section is twofold. First, we revisit the classical separable arguments of Casazza, Kottman and Lin and recast them in a form that yields factorisations of the identity through arbitrary operators, rather than only complemented copies inside operator ranges. Second, we record a number of natural extensions of these ideas, including constructions involving $\ell_1$ and $\ell_\infty$.

As will become apparent later, the case when the basis $(e_n)_{n \in \mathbb{N}}$ is equivalent to the unit basis of $\ell_1$ behaves rather differently from all other cases. Therefore, from now on and unless explicitly stated otherwise, we assume that the basis $(e_n)_{n \in \mathbb{N}}$ is not equivalent to the unit vector basis of $\ell_1$. 

\subsection{Banach spaces \texorpdfstring{$X$}{X} with  (sub-)symmetric basis} \label{subsec: symmetric-basis-separable}

We begin with what is arguably the simpler class of spaces known to possess the PFP factorisation property: Banach spaces with a symmetric basis. The key ingredients for the proof go back to the work of Casazza and Lin \cite{CasazzaLin1974}. 

Roughly speaking, the argument shows that for a given operator $T \colon  X \to X$, either $T$ or $\Id_X-T$ acts (almost) as a diagonal operator on some subsequence $(e_{n_k})_{k \in \mathbb{N}}$ of the basis $(e_n)_{n \in \mathbb{N}}$, with the diagonal entries uniformly bounded away from zero. The proof relies on a gliding-hump argument together with two key observations: first, under the action of $T$ each basis element $e_n$ can, up to a small error, affect only finitely many other elements; and second, each $e_n$ can, again up to a small error, be affected by only finitely many others. We will formalise this shortly. Throughout this section, we fix a Banach space $X$ with a symmetric norm and symmetric basis $(e_n)_{n \in \N}$.

We will need the following result of Casazza, Kottman, and Lin \cite[Lemma 2]{CKL1976}. For completeness, we provide a proof.

\begin{lemma}\label{lmm: sym-basis-affected}
    Let $(e_n)_{n \in \N}$ be a normalised unconditional basis of a Banach space $X$. Then no subsequence of $(e_n)_{n \in \N}$ spans a subspace isomorphic to $\ell_1$ if and only if $\lim_{m \to \infty} e_n^* (Te_m) = 0$  for any operator $T \colon  X \to X$ and any $n \in \N$.
\end{lemma}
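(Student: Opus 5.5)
Both implications go through the functionals $e_n^*\circ T$ restricted to subsequences of the basis. The forward direction reduces to a statement about a single bounded functional. The converse builds an explicit operator out of an $\ell_1$-subsequence.

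\emph{($\Rightarrow$)} Assume no subsequence of $(e_n)$ spans a copy of $\ell_1$. Fix an operator $T$ and an index $n$, and put $f=T^*e_n^*\in X^*$, so that $e_n^*(Te_m)=f(e_m)$. Suppose, towards a contradiction, that $f(e_m)\not\to 0$. Then there are $\delta>0$ and a subsequence $(m_k)$ with $|f(e_{m_k})|\ge\delta$.

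For finitely supported scalars $(a_k)$, choose unimodular $\varepsilon_k$ with $\varepsilon_k a_k f(e_{m_k})=|a_k||f(e_{m_k})|$. Let $K$ be the unconditional constant. Then
\[
\delta\sum_k|a_k|\le f\Bigl(\sum_k\varepsilon_k a_k e_{m_k}\Bigr)\le \norm{f}\,\Bigl\|\sum_k\varepsilon_k a_k e_{m_k}\Bigr\|\le K\norm{f}\,\Bigl\|\sum_k a_k e_{m_k}\Bigr\|.
\]
The reverse inequality $\|\sum_k a_k e_{m_k}\|\le\sum_k|a_k|$ is just normalisation. Hence $(e_{m_k})$ is equivalent to the unit vector basis of $\ell_1$, which is the desired contradiction. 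This direction is routine.

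\emph{($\Leftarrow$)} Suppose some subsequence $(e_{m_k})$ is $C$-equivalent to the $\ell_1$ basis. The plan is to build an operator $T$ with $e_1^*(Te_{m_k})=1$ for all $k$, which violates the limit condition. Let $P$ be the basis projection onto $[e_{m_k}]$; it is bounded by unconditionality. Let $\varphi$ be the functional on $[e_{m_k}]\simeq\ell_1$ that sends each $e_{m_k}$ to $1$, i.e.\ the element $(1,1,\dots)\in\ell_\infty$. It is bounded with $\norm{\varphi}\le C$. Now define
\[
Tx=\varphi(Px)\,e_1 .
\]
This is a bounded rank-one operator, and $e_1^*(Te_{m_k})=1$ for every $k$. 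Therefore $e_1^*(Te_m)\not\to0$, as required.

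The only point needing care, and the step I expect to be the main obstacle, is the bookkeeping in ($\Rightarrow$). One must invoke unconditionality to move the signs $\varepsilon_k$ onto the coefficients; this is exactly where the unconditional hypothesis on the basis enters. One must also pass to a subsequence on which $|f(e_m)|$ stays bounded below. Neither step presents a genuine difficulty.
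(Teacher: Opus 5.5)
Your ($\Rightarrow$) direction is correct and is essentially the paper's argument. Choosing a unimodular $\varepsilon_k$ for each coefficient is a tidy way to avoid the paper's passage to a sector in the complex case.

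The gap is in ($\Leftarrow$). The negation of the left-hand side is only that some subsequence $(e_{m_k})$ \emph{spans a subspace isomorphic to} $\ell_1$. You instead start from the stronger assumption that $(e_{m_k})$ is $C$-\emph{equivalent to the unit vector basis} of $\ell_1$. These are not the same a priori: a basis of a space isomorphic to $\ell_1$ need not be equivalent to the unit vector basis (conditional bases of $\ell_1$ exist). The fact that a normalised \emph{unconditional} basis of such a space must be equivalent to it is the Lindenstrauss--Pe{\l}czy\'nski uniqueness theorem, which you neither cite nor prove. Without equivalence, your coordinate-summing functional $\varphi$ need not be bounded on $[e_{m_k}]$, so the construction of $T$ breaks down.

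The missing idea, which is exactly what the paper uses, is the Schur property of $\ell_1$.
\begin{itemize}
\item Let $S\colon[e_{m_k}]\to\ell_1$ be an isomorphism. Then $\|Se_{m_k}\|\ge 1/\|S^{-1}\|$, so $(Se_{m_k})$ is not norm null.
\item By Schur, $(Se_{m_k})$ is therefore not weakly null. Pulling back a witnessing functional through $S$ gives $g\in[e_{m_k}]^*$, $\delta>0$ and a further subsequence with $|g(e_{m_{k_j}})|\ge\delta$.
\item Extend $g$ to $f\in X^*$, by Hahn--Banach or as $g\circ P$, and put $Tx=f(x)e_1$. Then $e_1^*(Te_{m_{k_j}})=f(e_{m_{k_j}})\not\to0$.
\end{itemize}
The paper phrases the same step contrapositively. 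Applying the limit condition to the rank-one operators $x\mapsto f(x)e_{n_0}$ says precisely that $(e_m)$ is weakly null. A normalised weakly null sequence cannot span a copy of $\ell_1$, again by Schur.

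With this step inserted your proof is complete, and the explicit functional $\varphi$ and the projection $P$ become unnecessary. This point, not the sign bookkeeping in ($\Rightarrow$) that you flagged, is the one genuinely delicate step of the lemma.
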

\begin{proof}
Let $K$ be the unconditional constant of $(e_n)_{n\in\N}$.
    \medskip
\noindent\emph{($\Leftarrow$)} Assume that
\[
    \lim_{m\to\infty} e_n^*(T e_m)=0
    \qquad\text{for every }T\in\mathcal B(X)\text{ and every }n\in\N.
\]
Fix $f\in X^*$ and $n_0\in\N$, and consider the rank--one operator
\[
    T_f \colon X\to X, \qquad T_f(x)= f(x)\,e_{n_0}.
\]
Then, for every $m\in\N$,
\[
    e_{n_0}^*(T_f e_m)= e_{n_0}^*\bigl(f(e_m)e_{n_0}\bigr)= f(e_m).
\]
Hence $f(e_m)\to 0$ for every $f\in X^*$, i.e.\ $(e_m)_{m \in \N}$ is weakly null. If a subsequence $(e_{m_k})_{k \in \N}$ spanned a subspace isomorphic to $\ell_1$, then the corresponding isomorphism $S\colon [e_{m_k}: k \in \N] \to \ell_1$ would send the weakly
null sequence $(e_{m_k})$ to a weakly null sequence in $\ell_1$, hence (by the
Schur property of $\ell_1$) to a norm--null sequence. Since $S$ is an
isomorphism and $(e_{m_k})_{k \in \N}$ is normalised, this is impossible. Therefore, no
subsequence of $(e_n)_{n \in \N}$ spans a copy of $\ell_1$.
\medskip
\noindent\emph{($\Rightarrow$)} Assume now that no subsequence of $(e_n)_{n \in \N}$ spans
a subspace isomorphic to $\ell_1$. Suppose, towards a contradiction, that there
exist $T\in\mathcal B(X)$ and $n_0\in\N$ such that
\[
    \limsup_{m\to\infty}\bigl|e_{n_0}^*(T e_m)\bigr|>0.
\]
Then there are $\varepsilon>0$ and a strictly increasing sequence $(m_k)_{k \in \N}$ such
that for all $k\in\N$ we have
\(
    \bigl|e_{n_0}^*(T e_{m_k})\bigr|\ge \varepsilon.
\)
Set $f=T^*e_{n_0}^*\in X^*$. Passing to a subsequence and, in the complex case, to a sector of aperture strictly smaller than $\pi/2$, we may find a unimodular scalar $\lambda$ such that
\[
    \operatorname{Re}\bigl(\lambda f(e_{m_k})\bigr)\ge \varepsilon/2
    \qquad(k\in\N).
\]
In the real case this is just the usual passage to a subsequence of constant sign. Replacing $f$ by $\lambda f$, we therefore have
\[
    \operatorname{Re} f(e_{m_k})\ge \varepsilon/2
    \qquad(k\in\N).
\]
Let $(a_k)_{k\in\N}$ be finitely supported. By unconditionality,
\[
    \Bigl\|\sum_{k\in\N} a_k e_{m_k}\Bigr\|
    \ge \frac1K \Bigl\|\sum_{k\in\N} |a_k|e_{m_k}\Bigr\|.
\]
Moreover,
\[
\begin{aligned}
    \Bigl\|\sum_{k\in\N} |a_k|e_{m_k}\Bigr\|
    &\ge \frac1{\|f\|}\,
       \operatorname{Re} f\Bigl(\sum_{k\in\N}|a_k|e_{m_k}\Bigr)  \\
    &= \frac1{\|f\|}\sum_{k\in\N}|a_k|\operatorname{Re} f(e_{m_k})
    \ge \frac{\varepsilon}{2\|f\|}\sum_{k\in\N}|a_k|.
\end{aligned}
\]
Combining these estimates gives
\[
    \Bigl\|\sum_{k\in\N} a_k e_{m_k}\Bigr\|
    \ge \frac{\varepsilon}{2K\|f\|}\sum_{k\in\N}|a_k|.
\]
On the other hand, since the basis is normalised,
\[
    \Bigl\|\sum_{k \in \N} a_k e_{m_k}\Bigr\|
    \le \sum_{k \in \N} |a_k|.
\]
Hence $(e_{m_k})_{k \in \N}$ is equivalent to the unit vector basis of $\ell_1$, so its
closed linear span is isomorphic to $\ell_1$, contradicting the hypothesis.
Therefore $\lim_{m\to\infty} e_n^*(T e_m)=0$ for all $T\in \Bop (X)$ and all
$n\in\N$.
\end{proof}

We also require the following elementary observation.

\begin{lemma}\label{lmm: small-norm}
Let $X$ be a Banach space with a normalised $1$-unconditional basis $(e_n)_{n\in\N}$,
and let $T\colon X\to X$ be bounded.
Suppose there exists $\varepsilon>0$ such that
\[
|e_m^*Te_n|<\frac{\varepsilon}{2^{n+m}}
\qquad(m,n\in\N).
\]
Then $\norm{T}\le \varepsilon$.
\end{lemma}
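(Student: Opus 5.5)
The plan is to write $T$ as a norm-convergent sum of rank-one "matrix entry" operators and estimate each term separately.

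\emph{Step 1: coordinate expansion.} For $x=\sum_n x_n e_n$, with $x_n=e_n^*(x)$, I first want the formal identity
\[
Tx=\sum_{m}\sum_{n} e_m^*(Te_n)\,x_n\,e_m .
\]
Because the basis need not be shrinking (it may even contain a copy of $\ell_1$), this should not be justified by the coordinate expansion of $Tx$ alone. Instead I will define
\[
S=\sum_{m,n\in\N} e_m^*(Te_n)\,e_n^*\otimes e_m ,
\]
where $(e_n^*\otimes e_m)(x)=e_n^*(x)\,e_m$, and show that this series converges absolutely in operator norm. Then I will check that $S=T$.

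\emph{Step 2: norm estimate.} The basis is normalised and $1$-unconditional, so the coordinate functionals satisfy $\norm{e_n^*}\le 1$. Hence each term has norm
\[
\norm{e_n^*\otimes e_m}\le \norm{e_n^*}\,\norm{e_m}\le 1 .
\]
Using the hypothesis on the matrix entries,
\[
\norm{S}\le \sum_{m,n}\abs{e_m^*(Te_n)}<\sum_{m,n}\frac{\varepsilon}{2^{n+m}}=\varepsilon .
\]
In particular the series defining $S$ converges absolutely in operator norm.

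\emph{Step 3: identification $S=T$.} Evaluating the series at a fixed $e_k$, only the terms with $n=k$ survive, so
\[
Se_k=\sum_m e_m^*(Te_k)\,e_m=Te_k .
\]
The last equality is just the basis expansion of the vector $Te_k$. Thus $S$ and $T$ agree on every $e_k$, and both are bounded operators. Since the span of the $e_k$ is dense, $S=T$, and therefore $\norm{T}\le\varepsilon$.

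\emph{Main obstacle.} The only delicate point is the one addressed in Step 1. One must not expand $Tx$ coordinatewise for a general $x$ and interchange the double sum, because that interchange would need justification. Going through the absolutely convergent series $S$ avoids this entirely. Everything else is routine, and unconditionality is used only through the bound $\norm{e_n^*}\le 1$.
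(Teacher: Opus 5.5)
Your proof is correct and is essentially the paper's argument. The paper bounds $\norm{Te_n}\le\sum_m|e_m^*Te_n|<\varepsilon 2^{-n}$ and then $\norm{Tx}\le\sum_n|e_n^*(x)|\,\norm{Te_n}$ using $\norm{e_n^*}\le 1$, which is your estimate $\norm{T}\le\sum_{m,n}|e_m^*Te_n|$ done in two stages.

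The only difference is packaging. The paper gets $Tx=\sum_n e_n^*(x)\,Te_n$ directly from continuity of $T$ and the norm-convergent expansion of $x$, so no shrinking is needed. Your detour through the operator-norm series $S$ and the density identification is therefore valid but unnecessary.
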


\begin{proof}
For each $n\in\N$,
\[
\norm{Te_n}
= \Bigl\|\sum_{m\in\N}(e_m^*Te_n)e_m\Bigr\|
\le \sum_{m\in\N}|e_m^*Te_n|
< \sum_{m\in\N}\frac{\varepsilon}{2^{n+m}}
=\frac{\varepsilon}{2^n}.
\]
Now let $x=\sum_{n\in\N} a_n e_n\in B_X$. Since the basis is normalised and $1$-unconditional,
we have $\norm{e_n^*}=1$ for every $n$, and therefore
$|a_n|=|e_n^*(x)|\le 1$.
Hence
\[
\norm{Tx}
=\Bigl\|\sum_{n\in\N} a_n Te_n\Bigr\|
\le \sum_{n\in\N}|a_n|\,\norm{Te_n}
< \sum_{n\in\N}\frac{\varepsilon}{2^n}
=\varepsilon.
\]
This proves that $\norm{T}\le \varepsilon$.
\end{proof}

\begin{theorem}\label{th: Cassaza-Lin}
    Let $X$ be a separable Banach space with a symmetric basis, not equivalent to the $\ell_1$-basis, and a symmetric norm. Then $X$ has the $2^+$-PFP.
\end{theorem}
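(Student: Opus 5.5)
Fix $T\in\Bop(X)$ and $\varepsilon>0$. The plan is to find a subsequence $(e_{n_k})_{k\in\N}$ on which either $T$ or $\Id_X-T$ is, up to a perturbation of norm $<\varepsilon$, a diagonal operator $e_{n_k}\mapsto d_k e_{n_k}$ with $|d_k|\ge 1/2$. Then $\Id_X$ factors through it with constant close to $2$.

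\textbf{Step 1 (choosing the operator).} For each $n$ we have $e_n^*Te_n+e_n^*(\Id_X-T)e_n=1$. Hence for infinitely many $n$ one of the two diagonal entries has modulus at least $1/2$. Replacing $T$ by $\Id_X-T$ if necessary, we fix an infinite set $N_0\subseteq\N$ with $|e_n^*Te_n|\ge1/2$ for all $n\in N_0$.

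\textbf{Step 2 (gliding hump).} We extract $n_1<n_2<\dots$ in $N_0$ such that
\[
|e_{n_j}^*Te_{n_k}|<\delta 2^{-j-k}\qquad (j\ne k),
\]
with $\delta>0$ small. Two smallness facts make this possible:
\begin{itemize}
\item $e_{n}^*Te_m\to0$ as $m\to\infty$ for fixed $n$, by Lemma~\ref{lmm: sym-basis-affected}. Its hypothesis holds because a symmetric basis not equivalent to the $\ell_1$-basis has no subsequence spanning $\ell_1$: all subsequences are equivalent to the whole basis.
\item $e_m^*Te_n\to0$ as $m\to\infty$ for fixed $n$, because $Te_n\in X$ has a convergent basis expansion.
\end{itemize}
At stage $k$ we choose $n_k$ large enough that both the entries $e_{n_j}^*Te_{n_k}$ and $e_{n_k}^*Te_{n_j}$, for $j<k$, are below the required thresholds.

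\textbf{Step 3 (the factorisation).} Let $Y=[e_{n_k}:k\in\N]$. By symmetry, $J\colon X\to X$, $e_k\mapsto e_{n_k}$, is an isometric embedding. The coordinate projection $P$ onto $Y$ has norm $1$, and $Q\colon X\to X$, $e_{n_k}\mapsto e_k$, $e_m\mapsto0$ for $m\notin\{n_k\}$, has norm $1$ by $1$-unconditionality and symmetry.

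Consider $S=QTJ\in\Bop(X)$. Its matrix is $s_{jk}=e_{n_j}^*Te_{n_k}$. Write $S=D+R$, where $D$ is the diagonal part with entries $d_k$, $|d_k|\ge1/2$. By Lemma~\ref{lmm: small-norm}, $\norm{R}\le\delta$.

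The diagonal operator $D$ is invertible, and $\norm{D^{-1}}\le2$ because the norm is $1$-unconditional and each $|d_k^{-1}|\le 2$. This contraction-principle step for complex or unequal multipliers holds for $1$-unconditional bases, up to a factor of $2$ in the complex case if only real unconditionality is used. One should check this and, if needed, use that a symmetric norm is invariant under unimodular multipliers.

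Then $S=D(\Id+D^{-1}R)$ is invertible with
\[
\norm{S^{-1}}\le \frac{2}{1-2\delta}.
\]
Setting $U=S^{-1}Q$ and $V=J$ gives $UTV=\Id_X$ with $\norm{U}\norm{V}\le 2/(1-2\delta)<2+\varepsilon$.

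\textbf{Main obstacle.} The main obstacle is Step 2: the off-diagonal entries must be controlled in both directions simultaneously, with summable weights, so that Lemma~\ref{lmm: small-norm} applies to the compression. This is exactly where the non-$\ell_1$ hypothesis enters, through Lemma~\ref{lmm: sym-basis-affected}.
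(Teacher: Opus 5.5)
Your proof is correct and follows the paper's argument essentially step for step: the same diagonal dichotomy, the same two-sided gliding hump (Lemma~\ref{lmm: sym-basis-affected} for $e_n^*Te_m\to0$ as $m\to\infty$, and the basis expansion of $Te_n$ for $e_m^*Te_n\to0$), and the same Neumann-series inversion of the compression of $T$ to the subsequence. The only cosmetic difference is bookkeeping: you split $QTJ=D+R$ and bound $\norm{D^{-1}R}\le 2\delta$, which gives $2/(1-2\delta)$, whereas the paper first normalises by the diagonal and estimates $\Id_Y-D\pi T\iota$ directly via the ratio bounds, which gives $2/(1-\delta)$; since $\delta$ is arbitrary, this is immaterial.
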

\begin{proof}
    Let $T\colon X\to X$ and fix $\varepsilon>0$. Choose $\delta>0$ small enough so that
    \[
    \frac{2}{1-\delta}<2+\varepsilon.
    \]
    For each $n\in\N$ we have
    \[
    1=e_n^*(\Id_X-T)e_n+e_n^*Te_n.
    \]
    Hence, there exists an infinite set $M\subseteq\N$ such that either
    $|e_n^*Te_n|\ge \tfrac12$ for all $n\in M$, or
    $|e_n^*(\Id_X-T)e_n|\ge \tfrac12$ for all $n\in M$.
    Replacing $T$ by $\Id_X-T$ if necessary, we may assume that
    $|e_n^*Te_n|\ge \tfrac12$ for all $n\in M$.
    Enumerate $M$ increasingly as $(n_k)_{k\in\N}$.
    
    For each fixed $k$, since $(e_n)_{n \in \N}$ is a normalised basis,
    \[
    \lim_{j\to\infty} e_{n_j}^*Te_{n_k}=0,
    \]
    and by Lemma~\ref{lmm: sym-basis-affected}, together with the assumption that the basis is not equivalent to the unit basis of $\ell_1$,
    \[
    \lim_{j\to\infty} e_{n_k}^*Te_{n_j}=0.
    \]
    We now thin the sequence explicitly.  Suppose $n_1,\ldots,n_{k-1}$ have been chosen.  The two preceding limits allow us to choose $n_k\in M$ larger than all previously chosen indices so that, for every $1\leq j<k$,
    \[
        |e_{n_j}^*Te_{n_k}|<\frac{\delta}{2^{j+k+1}}
        \quad\text{and}\quad
        |e_{n_k}^*Te_{n_j}|<\frac{\delta}{2^{j+k+1}}.
    \]
    Since all diagonal entries on $M$ have modulus at least $1/2$, the resulting subsequence, still denoted $(n_k)_{k\in\N}$, satisfies for all $j\ne k$
    \[
        \left|\frac{e_{n_j}^*Te_{n_k}}{e_{n_j}^*Te_{n_j}}\right|
        < \frac{\delta}{2^{j+k}}.
    \]
    Let $Y= [e_{n_k}:k\in\N ]$. Since $X$ is equipped with a symmetric norm, $Y$ is isometrically isomorphic to $X$.
    Let $\pi\colon X\to Y$ be the coordinate projection and $\iota\colon Y\hookrightarrow X$ the canonical inclusion.
    Define the diagonal operator $D\colon Y\to Y$ by
    \[
    De_{n_k}=\frac{1}{e_{n_k}^*Te_{n_k}}e_{n_k}
    \qquad(k\in\N).
    \]
    Since $|e_{n_k}^*Te_{n_k}|\ge \tfrac12$, we have $\norm{D}\le 2$.
    
    Set
    \[
    A=D\pi T\iota
    \qquad\text{and}\qquad
    S=\Id_Y-A.
    \]
    Then $e_{n_k}^*Se_{n_k}=0$ for every $k$, while for $j\neq k$,
    \[
        |e_{n_j}^*Se_{n_k}|=
        \left|\frac{e_{n_j}^*Te_{n_k}}{e_{n_j}^*Te_{n_j}}\right|
        < \frac{\delta}{2^{j+k}}.
    \]
    By Lemma~\ref{lmm: small-norm}, $\norm{S}<\delta$.
    Therefore $A=\Id_Y-S$ is invertible, and if we put $R=A^{-1}$ then
    \[
    \norm{R}\le \frac{1}{1-\delta}.
    \]
    Hence
    \[
    \Id_Y=R D\pi T\iota.
    \]
    If $J\colon X\to Y$ denotes the canonical isometric isomorphism induced by the subsequence $(e_{n_k})_{k \in \N}$, then
    \[
    \Id_X = J^{-1}R D\pi\, T\, \iota J.
    \]
    Thus $\Id_X$ factors through either $T$ or $\Id_X-T$, and the factorisation constant is bounded by
    \[
    \norm{J^{-1}RD\pi}\,\norm{\iota J}
    \le \frac{2}{1-\delta}
    <2+\varepsilon.
    \]
    This proves that $X$ has the $2^+$-PFP.
\end{proof}

\begin{remark}
    The same argument applies to spaces with a subsymmetric basis after replacing the isometric subsequence identifications by the canonical isomorphisms supplied by subsymmetry and tracking the corresponding constants. The non-separable setting is treated separately in Section~\ref{sec: uncountable-direct-sums}.
\end{remark}

\subsubsection{Iterated constructions not involving $\ell_1$}\label{subsec: symmetric-basis-separable-iterated}

For a Banach space $X$ with an unconditional basis $\mathcal{E} = (e_n)_{n \in \N}$ and a Banach space $Y$, we recall that we can define the $(X, \mathcal{E})$-sum of $Y$, as the space
\begin{equation*}
    (X,\mathcal{E})(Y) = \{(y_n)_{n \in \N} \in \prod_{n \in \N} Y: \sum_{n \in \N} \norm{y_n} e_n \in X\},
\end{equation*}
equipped with norm
\begin{equation*}
    \norm{(y_n)_{n \in \N}} = \norm{\sum_{n \in \N} \norm{y_n} e_n}
\end{equation*}

Observe that the previous sum depends not only on $X$ but also on the choice of unconditional basis $\mathcal{E} = (e_n)_{n \in \N}$. In an abuse of notation, we will simply write $X(Y)$, where the choice of basis of $X$ will be implicitly assumed.

For each $k \in \N$, let $\N^k$ be the set of sequences $\{(i_1, \dots, i_k): i_1, i_2, \ldots, i_k \in \N\}$. Given two sequences $I = (i_1, \dots, i_k) \in \N^k$ and $J = (j_1, \dots, j_r) \in \N^r$ we define its concatenation $I \smallfrown J = (i_1, \dots, i_k, j_1, \dots, j_r) \in \N^{k+r}$. For a sequence $I = (i_1, \dots, i_k)$ we denote by $|I|$ the length of the sequence, that is, $|I| = k$.

Let $X_1, \dots, X_N$ be Banach spaces with symmetric basis $\mathcal{E}_j = (e^j_n)_{n \in \N}$, $j = 1, \dots, N$ and symmetric norms, and let $Y = X_1(X_{2}(\dots (X_{N-1}(X_N))))$.

For each ordered $I = (i_1, \dots, i_N) \in \N^N$ we can define $e_I = (0, \dots, e_{J}, 0, \dots, 0)$ where $J = (i_2, \dots, i_N) \in \N^{N-1}$, $e_J \in X_2(X_3(\dots(X_N)))$ is defined recursively in the natural way and it is placed in the $i_1$th-coordinate. Let $\Phi: \N \to \N^N$ be the Cantor ordering. It is clear that $\mathcal{E} = (e_{\Phi(n)})_{n \in \N}$ is a $1$-unconditional basis for $Y$.

We now show that a Banach space $Y$ constructed as above always has the $2^+$-PFP. The proof is an iterative version of the argument used in Theorem~\ref{th: Cassaza-Lin}; see also the ideas underlying \cite[Theorem~3]{CKL1976}. Before proceeding to the proof, some notation is in order.

\begin{definition}
    Let $Y =X_1(X_{2}(\dots (X_{N-1}(X_N))))$ and $T \colon Y \to Y$ be an operator. We define the property of a sequence $J \in \bigcup_{k=1}^N \mathbb{N}^k$ \emph{supporting} $T$ by backward induction on the length of $J$:
    \begin{enumerate}
        \item Base Case ($|J|=N$): A sequence $I \in \mathbb{N}^N$ supports $T$ if $|e_I^* T e_I| \geq 1/2$.
        \item Inductive Step ($|J|<N)$: A sequence $J \in \mathbb{N}^k$ (for $1 \le k < N$) supports $T$ if the set of extensions
        \begin{equation*}
            \{ j \in \mathbb{N} : J \smallfrown (j) \text{ supports } T \}
        \end{equation*}
        is infinite.
    \end{enumerate}
\end{definition}

The following is immediate by backward induction, exploiting the fact that for any operator $T \colon  Y \to Y$ and any $I \in \N^N$, then $I$ supports either $T$ or $\Id_Y-T$.

\begin{lemma}\label{lmm: infinitely-support}
    Let $Y=X_1(X_2(\cdots(X_{N-1}(X_N))))$ and let $T\in\Bop(Y)$. Then one of the two operators $T$ and $\Id_Y-T$ is supported by infinitely many first-level nodes $(i_1)$.
\end{lemma}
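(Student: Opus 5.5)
The plan is to prove a stronger statement by backward induction on the length $k$, for $k = N, N-1, \dots, 1$:

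\emph{($\ast$) For every $J\in\N^k$, $J$ supports $T$ or $J$ supports $\Id_Y-T$.}

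The lemma is then ($\ast$) at level $k=1$, combined with a pigeonhole argument over the infinitely many first-level nodes.

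\textbf{Base case $k=N$.} Fix $I\in\N^N$. Since $e_I^*e_I=1$, we have
\[
1=e_I^*Te_I+e_I^*(\Id_Y-T)e_I.
\]
So at least one of the two summands has modulus at least $1/2$. By the base case of the definition, $I$ supports $T$ or $I$ supports $\Id_Y-T$.

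\textbf{Inductive step.} Suppose ($\ast$) holds for all sequences of length $k+1\le N$, and fix $J\in\N^k$. Every extension $J\smallfrown(j)$, $j\in\N$, supports $T$ or $\Id_Y-T$. Hence
\[
\N = A_T\cup A_{\Id_Y-T},
\]
where $A_S=\{j\in\N : J\smallfrown(j)\text{ supports } S\}$. Since $\N$ is infinite, one of $A_T$ and $A_{\Id_Y-T}$ is infinite. By the inductive clause of the definition, $J$ supports the corresponding operator.

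\textbf{Conclusion.} Apply ($\ast$) at $k=1$: every first-level node $(i_1)$, $i_1\in\N$, supports $T$ or $\Id_Y-T$. Splitting $\N$ into the two corresponding sets, one of them is infinite. Thus one of $T$ and $\Id_Y-T$ is supported by infinitely many first-level nodes, as claimed.

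There is no real obstacle here. The only point needing care is that "supports" is defined separately for $T$ and for $\Id_Y-T$ at each level, so the dichotomy has to be carried through every level of the induction; it cannot be read off only at the top. This is exactly what formulating ($\ast$) for all lengths handles, and each step of the induction is a pigeonhole argument on an infinite set.
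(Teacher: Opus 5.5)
Your proposal is correct and follows the paper's proof essentially verbatim: backward induction showing every node at every level supports $T$ or $\Id_Y-T$ (base case via $1=e_I^*Te_I+e_I^*(\Id_Y-T)e_I$, inductive step by pigeonhole on the immediate successors), then a final pigeonhole over the first-level nodes. The only cosmetic difference is that the paper phrases the inductive step as a contradiction, whereas you argue directly.
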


\begin{proof}
    We prove by backward induction that every node $J\in\bigcup_{k=1}^{N-1}\N^k$ supports at least one of $T$ and $\Id_Y-T$. At level $N$, every terminal node $I\in\N^N$ supports at least one of them, since
    \[
        1=e_I^*Te_I+e_I^*(\Id_Y-T)e_I
    \]
    and hence at least one of the two summands has modulus at least $1/2$.
    
    Assume the assertion has been proved at level $k+1$. If a node $J\in\N^k$ supported neither $T$ nor $\Id_Y-T$, then only finitely many immediate successors $J\smallfrown(j)$ would support $T$, and only finitely many would support $\Id_Y-T$. This contradicts the induction hypothesis applied to all immediate successors. Thus, every node at level $k$ supports at least one of the two operators. In particular, every first-level node supports at least one of the two alternatives. Since there are only two alternatives, one of them is supported by infinitely many first-level nodes.
\end{proof}
\begin{lemma}[CKL tree selection]\label{lem:CKL-tree-selection}
    Let $Y=X_1(X_2(\cdots(X_{N-1}(X_N))))$, where each $X_j$ has a symmetric basis not equivalent to the unit vector basis of $\ell_1$. Let $T\in\Bop(Y)$ and let $\eta>0$. Suppose that infinitely many first-level nodes support $T$. Then there is a subset $\mathcal A\subseteq\N^N$, order-isomorphic to $\N^N$ in the natural tree sense, such that
    \[
        Z=[e_I:I\in\mathcal A]
    \]
    is isometrically isomorphic to $Y$, every $I\in\mathcal A$ satisfies
    \[
        |e_I^*Te_I|\ge \tfrac12,
    \]
    and, after enumerating $\mathcal A$ as $(I_k)_{k\in\N}$,
    \[
        |e_{I_j}^*Te_{I_k}|<\frac{\eta}{2^{j+k+1}}
        \qquad(j\neq k).
    \]
\end{lemma}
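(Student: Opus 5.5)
We build $\mathcal A$ by a gliding-hump construction along the tree. Fix the Cantor-type enumeration of $\N^N$, say $(K_k)_{k\in\N}$, chosen so that every proper prefix of $K_k$ (and every node $K$ that is a predecessor of $K_k$ among siblings at the same level, i.e.\ $L\smallfrown(a)$ before $L\smallfrown(b)$ for $a<b$) appears before $K_k$. We then define an injective, tree-preserving map $\phi\colon\N^N\to\N^N$ by choosing $\phi(K_k)=I_k$ inductively. A prefix-compatible choice on each level of the tree will do this. Namely, $\phi$ is induced by choosing, for every node $L\in\N^{m}$ ($0\le m<N$) and every $a\in\N$, a label $\psi(L\smallfrown(a))\in\N$, strictly increasing in $a$. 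We require that the image node at level $m+1$ supports $T$, which at level $N$ means $|e_{I}^*Te_I|\ge\tfrac12$. This is possible because, by definition of support, each supporting node has infinitely many supporting extensions. The root supports $T$ by hypothesis, since infinitely many first-level nodes do.

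Symmetry of the norms then gives the isometry. Take $\mathcal A=\phi(\N^N)$, so that $\phi$ respects the tree structure: nodes sharing a prefix of length $m$ map to nodes sharing a prefix of length $m$, and the coordinates at each level are injected into $\N$ increasingly. Iterating the definition of $X_1(X_2(\cdots))$, and using that each $X_j$ norm is symmetric (so that restricting to a subsequence of $(e^j_n)$ is isometric to the whole space), the map $e_K\mapsto e_{\phi(K)}$ extends to an isometry of $Y$ onto $Z=[e_I:I\in\mathcal A]$. This is verified by induction on $N$.

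The off-diagonal estimates are where the inductive choice matters. When defining $I_k=\phi(K_k)$, only the last-level label, together with possibly new labels at higher levels if $K_k$ opens a new branch, is free. We may take the newly chosen label at the deepest new level as large as we like among infinitely many supporting choices. It then remains to ensure that, for the finitely many $j<k$,
\[
|e_{I_j}^*Te_{I_k}|<\frac{\eta}{2^{j+k+1}}\quad\text{and}\quad |e_{I_k}^*Te_{I_j}|<\frac{\eta}{2^{j+k+1}}.
\]
The first inequality holds for large labels by the analogue of Lemma~\ref{lmm: sym-basis-affected} for $Y$. The basis $\mathcal E$ of $Y$ is $1$-unconditional, and no subsequence of it spans $\ell_1$ along the relevant direction: the sequence $(e_{L\smallfrown(a)\smallfrown M})_a$, with varying label at a fixed level, is isometrically equivalent to the basis of some $X_m$, which is not the $\ell_1$-basis. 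Hence, since the $X_m$ basis is symmetric and not $\ell_1$, the sequence is weakly null, and so $e_{I_j}^*Te_{L\smallfrown(a)\smallfrown M}\to0$ as $a\to\infty$. The second inequality follows from $\lim_a e^*_{L\smallfrown(a)\smallfrown M}(x)=0$ for each fixed $x$, which is valid because $\mathcal E$ is a basis and these are distinct coordinates; we apply this with $x=Te_{I_j}$.

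The main obstacle is that the new label may sit at a level $m<N$ while the deeper coordinates are simultaneously being chosen. Weak nullity then has to be applied to vectors $e_{L\smallfrown(a)\smallfrown M_a}$ whose tail $M_a$ also varies. We handle this by first fixing the tail pattern (for instance the minimal supporting continuation, chosen via a diagonal argument) and arguing that the vectors $\bigl(e_{L\smallfrown(a)\smallfrown M_a}\bigr)_a$ are disjointly supported and isometrically equivalent to the basis of $X_{m+1}$. This equivalence is what makes them weakly null, provided that basis is not the $\ell_1$-basis, and then the argument of Lemma~\ref{lmm: sym-basis-affected}, with its rank-one operator and Schur argument, applies verbatim. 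We choose the labels strictly increasing along the enumeration so that the Cantor-order enumeration $(I_k)$ of $\mathcal A$ agrees with the construction order.
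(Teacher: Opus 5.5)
Your proposal is correct and follows essentially the paper's argument: a greedy, tree-compatible choice of terminal nodes through supporting nodes, in which the finitely many earlier nodes are protected by two coordinate-decay limits. These are $e_J^*Te_I\to0$, from the Schauder basis, and $e_I^*Te_J\to0$, from the absence of $\ell_1$; symmetry of the norms then gives the isometry $Y\cong Z$. The only difference is in the second limit: the paper shows by induction on $N$ that no subsequence of the whole product basis is equivalent to the $\ell_1$-basis and applies Lemma~\ref{lmm: sym-basis-affected} globally, whereas you use only weak nullity of the sibling sequences $(e_{L\smallfrown(a)\smallfrown M_a})_a$. These are isometrically equivalent to the basis of $X_{m+1}$ for arbitrary tails $M_a$, so no special choice of tails is needed, and the ingredient you actually need is the lower $\ell_1$-estimate in the forward direction of that lemma rather than its rank-one/Schur argument.
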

\begin{proof}
    The supporting condition gives an infinitely splitting subtree of terminal nodes on which the diagonal estimate holds. The product basis $(e_I)_{I\in\N^N}$ has no subsequence equivalent to the unit vector basis of $\ell_1$: this follows by induction on $N$, since an infinite sequence of terminal nodes either has infinitely many distinct first coordinates, in which case it is equivalent to a subsequence of the basis of $X_1$, or else it has an infinite subsequence contained in a single first fibre.

    Lemma~\ref{lmm: sym-basis-affected} therefore gives
    \(
        e_I^*Te_J\longrightarrow 0
    \)
    whenever $I$ is fixed and $J$ tends to infinity inside any infinitely splitting branch-subtree. Since $(e_I)$ is a Schauder basis, we also have
    \(
        e_J^*Te_I\longrightarrow 0
    \)
    for fixed $I$ as $J$ tends to infinity. Now enumerate the terminal nodes of the desired subtree as they are chosen. At the $m$th step only finitely many previously selected terminal nodes have to be protected, and the two displayed limits allow us to discard finitely many successors from finitely many nodes while keeping every remaining node infinitely splitting. Choosing the next terminal node from the remaining tree and iterating this thinning gives an infinitely splitting subtree whose terminal set can be enumerated as $(I_k)_{k\in\N}$ and satisfies
    \[
        |e_{I_j}^*Te_{I_k}|<\frac{\eta}{2^{j+k+1}}
        \qquad(j\neq k).
    \]
    The resulting subtree is order-isomorphic to $\N^N$, so symmetry of the bases identifies its closed span isometrically with $Y$.
\end{proof}

\begin{theorem}\label{th: Cassaza-Kottman-Lin}
    Let $X_1, \dots, X_N$ be Banach spaces with symmetric norms and symmetric bases $\mathcal{E}_j$, none of which is equivalent to the unit vector basis of $\ell_1$. Then the Banach space $Y = X_1(X_{2}(\dots (X_{N-1}(X_N))))$ has the $2^+$-PFP.
\end{theorem}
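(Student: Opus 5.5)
We follow the scheme of Theorem~\ref{th: Cassaza-Lin}, with the single-sequence thinning replaced by the tree selection of Lemma~\ref{lem:CKL-tree-selection}. Fix $T\in\Bop(Y)$ and $\varepsilon>0$, and choose $\delta>0$ with $2/(1-\delta)<2+\varepsilon$.

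\emph{Choosing the operator.} By Lemma~\ref{lmm: infinitely-support}, either $T$ or $\Id_Y-T$ is supported by infinitely many first-level nodes. Replacing $T$ by $\Id_Y-T$ if necessary, we assume this holds for $T$.

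\emph{Tree selection.} We apply Lemma~\ref{lem:CKL-tree-selection} with $\eta=\delta$. This yields a subtree $\mathcal A\subseteq\N^N$, order-isomorphic to $\N^N$, enumerated as $(I_k)_{k\in\N}$, such that:
\begin{itemize}
\item $Z=[e_I:I\in\mathcal A]$ is isometrically isomorphic to $Y$ via a map $J\colon Y\to Z$ sending basis vectors to basis vectors;
\item $|e_{I_k}^*Te_{I_k}|\ge\tfrac12$ for every $k$;
\item the off-diagonal entries satisfy $|e_{I_j}^*Te_{I_k}|<\delta/2^{j+k+1}$ for $j\ne k$.
\end{itemize}

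\emph{Projection and diagonal correction.} Since $(e_I)$ is a $1$-unconditional basis of $Y$, the coordinate projection $\pi\colon Y\to Z$ has norm at most $1$; let $\iota\colon Z\hookrightarrow Y$ be the inclusion. Define a diagonal operator on $Z$ by $De_{I_k}=(e_{I_k}^*Te_{I_k})^{-1}e_{I_k}$. By $1$-unconditionality, $\|D\|\le 2$.

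\emph{Inversion.} Put $A=D\pi T\iota\in\Bop(Z)$ and $S=\Id_Z-A$. Then $S$ has zero diagonal, and for $j\ne k$,
\[
|e_{I_j}^*Se_{I_k}|\le 2\cdot\delta/2^{j+k+1}=\delta/2^{j+k}.
\]
The basis $(e_{I_k})_k$ of $Z$ is normalised and $1$-unconditional, so Lemma~\ref{lmm: small-norm} applies and gives $\|S\|\le\delta$. Hence $A$ is invertible with $\|A^{-1}\|\le 1/(1-\delta)$.

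\emph{Conclusion.} We obtain
\[
\Id_Y=J^{-1}A^{-1}D\pi\,T\,\iota J,
\]
with $\|J^{-1}A^{-1}D\pi\|\,\|\iota J\|\le 2/(1-\delta)<2+\varepsilon$. This gives the $2^+$-PFP.

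\emph{Main obstacle.} The substantive step is the tree selection. It has two parts:
\begin{itemize}
\item showing that the product basis has no subsequence equivalent to the $\ell_1$ basis, so that Lemma~\ref{lmm: sym-basis-affected} yields column decay $e_I^*Te_J\to0$;
\item thinning so that only finitely many constraints are imposed at each stage while every retained node remains infinitely splitting, so that the final span is isometric to $Y$ by symmetry of each $X_j$.
\end{itemize}
Both are supplied by Lemma~\ref{lem:CKL-tree-selection}. What remains is the bookkeeping check that the Cantor enumeration of $\mathcal A$ is compatible with the $2^{-(j+k)}$ weights in Lemma~\ref{lmm: small-norm}. It is, because that lemma only uses the enumeration as a countable index set with summable weights, not any order structure.
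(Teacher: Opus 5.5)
Your proposal is correct and follows the paper's proof essentially step for step: you reduce to $T$ via Lemma~\ref{lmm: infinitely-support}, apply the tree selection of Lemma~\ref{lem:CKL-tree-selection} with $\eta=\delta$, and invert $D\pi T\iota$ by a Neumann-series argument after Lemma~\ref{lmm: small-norm}. Your extra remark is correct and is left implicit in the paper: any enumeration of $\mathcal A$ is admissible there, because $(e_{I_k})_k$ is an unconditional basic sequence.
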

\begin{proof}
    Let $T\in\Bop(Y)$ and fix $\varepsilon>0$. Choose $\delta>0$ such that
    \[
        \frac{2}{1-\delta}<2+\varepsilon.
    \]
    By Lemma~\ref{lmm: infinitely-support}, after replacing $T$ by $\Id_Y-T$ if necessary, infinitely many first-level nodes support $T$.
    
    Apply Lemma~\ref{lem:CKL-tree-selection} with $\eta=\delta$ and write the selected terminal nodes as $(I_k)_{k\in\N}$. Let
    \[
        Z=[e_{I_k}:k\in\N].
    \]
    Then $Z$ is isometrically isomorphic to $Y$. Let $\pi\colon Y\to Z$ be the coordinate projection and $\iota\colon Z\hookrightarrow Y$ the inclusion. Define $D\colon Z\to Z$ by
    \[
        De_{I_k}=\frac{1}{e_{I_k}^*Te_{I_k}}e_{I_k}
        \qquad(k\in\N).
    \]
    Then $\|D\|\le 2$. As in the proof of Theorem~\ref{th: Cassaza-Lin}, the operator
    \(
        S=\Id_Z-D\pi T\iota
    \)
    satisfies $\|S\|<\delta$. Hence $D\pi T\iota$ is invertible; writing $R=(D\pi T\iota)^{-1}$, we have
    \[
        \Id_Z=R D\pi T\iota
        \qquad\text{and}\qquad
        \|RD\|\le \frac{2}{1-\delta}<2+\varepsilon.
    \]
    If $J\colon Y\to Z$ is the canonical isometric isomorphism induced by the selected subtree, then
    \[
        \Id_Y=J^{-1}RD\pi\,T\,\iota J.
    \]
    Thus the identity on $Y$ factors through either $T$ or $\Id_Y-T$ with constant smaller than $2+\varepsilon$.
\end{proof}

\subsection{The \texorpdfstring{$\ell_1$}{l-1}-case} \label{subsec: l_1-case-separable}

\subsubsection{The space $\ell_1$. } The fact that $\ell_1$ is primary, and indeed prime, goes back to Pe{\l}czy\'nski \cite{Pelczynski1960}, whose argument exploits the complete homogeneity of the unit vector basis of $\ell_1$. To establish the UPFP for this space, we shall use a different method, which will also be useful for the iterated constructions.

Observe that the key difference with other Banach spaces with a symmetric basis is that Lemma \ref{lmm: sym-basis-affected} is no longer true. We see now how to solve this.

\begin{lemma}\label{lmm: ell1-future-interference}
    Let $T \colon \ell_1 \to \ell_1$ be an operator. Then for any $\varepsilon > 0$ there exists an infinite subset $M = \{m_k: k \in \N \} \subseteq \N$ such that for all $k < j$ we have
    \(
        |e_{m_k}^* T e_{m_j}| < \varepsilon.
    \)
\end{lemma}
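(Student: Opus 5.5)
Our plan is to exploit the column structure of an operator on $\ell_1$: the quantity $\sum_{m}|e_m^*Te_n| = \norm{Te_n}$ is bounded by $\norm{T}$ for every $n$. This is the substitute for Lemma~\ref{lmm: sym-basis-affected}. We can no longer say that a fixed row is small far out, but each column is summable. Hence each $e_{m_j}$ can substantially affect only finitely many coordinates, uniformly in $j$, and we will choose the indices inductively so that the relevant "past" rows avoid these large entries.

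Concretely, we use a Ramsey-type pigeonhole argument. Fix $\varepsilon>0$ and set $N=\lfloor \norm{T}/\varepsilon\rfloor+1$. For each $n$, the set
\[
F_n=\{m\in\N: |e_m^*Te_n|\ge\varepsilon\}
\]
has cardinality less than $N$, because $\sum_m|e_m^*Te_n|\le\norm{T}$. We seek an infinite $M=\{m_1<m_2<\cdots\}$ such that $m_k\notin F_{m_j}$ whenever $k<j$. Equivalently, the directed graph on $\N$ with edges $n\to m$ for $m\in F_n$, $m\neq n$, has out-degree less than $N$, and we want an infinite set in which no later element points to an earlier one.

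The cleanest approach is to apply Ramsey's theorem for pairs. Colour a pair $\{a<b\}$ \emph{bad} if $a\in F_b$ and \emph{good} otherwise. Ramsey's theorem then provides an infinite monochromatic set $M$. Suppose $M$ were entirely bad. Take its first $N$ elements $a_1<\dots<a_N$ and any $b\in M$ larger than all of them. Then $\{a_1,\dots,a_N\}\subseteq F_b$, contradicting $|F_b|<N$. So $M$ is good, which is exactly the statement we need.

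Alternatively, we can avoid Ramsey's theorem with a direct inductive thinning, as follows. Having chosen $m_1,\dots,m_{k}$ together with an infinite reservoir $L_k$, we want to pass to an infinite $L_{k+1}\subseteq L_k$ such that $m_{k+1}=\min L_k$ satisfies $m_{k+1}\notin F_n$ for all $n\in L_{k+1}$. This step can fail, because infinitely many $n$ may have $m_{k+1}\in F_n$. The fix is to discard such candidates $m_{k+1}$. Only fewer than $N$ of the earlier-chosen minima can be "hit" by infinitely many columns in a nested fashion, and this is the counting argument that Ramsey's theorem encapsulates.

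We expect this counting step to be the only real obstacle, so we will use Ramsey's theorem for the final write-up. The bounded out-degree observation is the key input, and everything else is bookkeeping.
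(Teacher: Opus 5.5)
Your proof is correct, but it takes a different route from the paper. The paper argues by hand. At stage $k$ it takes a finite set $F_k$ of more than $\norm{T}/\varepsilon$ candidates from the current infinite reservoir $M_{k-1}$. It notes that each later column $Te_n$ has entries of modulus at least $\varepsilon$ in at most $\norm{T}/\varepsilon<|F_k|$ positions of $F_k$, so every such column misses some candidate. The pigeonhole principle then gives a single $m_k\in F_k$ missed by infinitely many columns, and these columns form the next reservoir $M_k$.

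You package all of this into one application of Ramsey's theorem for pairs. The column bound $|F_b|\le\norm{T}/\varepsilon$ is used only to rule out an infinite bad homogeneous set. The orientation of your colouring ($a\in F_b$ with $a<b$) matches the required estimate $|e^*_{m_k}Te_{m_j}|<\varepsilon$ for $k<j$.

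Your argument is shorter, at the cost of invoking the infinite Ramsey theorem. The paper's argument is elementary, and its reservoir format is what gets reused later: in Theorem~\ref{th: ell1-case}, with stage-dependent thresholds $\delta/2^{k+2}$ interleaved with a further tail-sum thinning, and in Lemma~\ref{lmm: ell1-iterated-interference}\ref{it: ell1-c}. Your version can serve there too. At each stage, apply it inside the current infinite reservoir, take the first element of the resulting set as $m_k$, and keep the remaining elements as the new reservoir.

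Your sketched Ramsey-free alternative is too vague as written, but it can be completed. Suppose a candidate $c$ is discarded by passing to the infinite set $\{n: c\in F_n\}$. Then $c\in F_n$ for every $n$ in all later reservoirs, so fewer than $N$ discards can ever occur.
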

\begin{proof}
    Let $\varepsilon > 0$ be fixed. We will construct inductively a strictly increasing sequence $(m_k)_{k \in \N}$ and a decreasing sequence of infinite subsets $(M_k)_{k \in \N}$ of $\N$ such that $m_k \in M_{k-1}$, $M_k \subseteq M_{k-1}$, $m_k < n$ for every $n \in M_k$, and
    \begin{equation*}
        |e_{m_k}^* T e_n| < \varepsilon \quad \text{for all } n \in M_k.
    \end{equation*}
    Once this is done, if $k < j$, then $m_j \in M_{j-1} \subseteq M_k$, and hence
    \(
        |e_{m_k}^* T e_{m_j}| < \varepsilon,
    \)
    as required.

    Set $M_0 = \N$. Suppose that $M_{k-1}$ has already been chosen and is infinite. Choose $N_k \in \N$ such that $N_k > m_{k-1}$ if $k > 1$, and such that
    \begin{equation*}
        |M_{k-1} \cap \{1, \dots, N_k\}| > \norm{T}/\varepsilon.
    \end{equation*}
    Set $F_k = M_{k-1} \cap \{1, \dots, N_k\}$, so that $F_k$ is finite and $|F_k| > \norm{T}/\varepsilon$. For each $n \in M_{k-1}$ with $n > N_k$, define
    \begin{equation*}
        A_k(n) = \{m \in F_k : |e_m^* T e_n| \geq \varepsilon\}.
    \end{equation*}
    Since
    \begin{equation*}
        \norm{T} \geq \norm{Te_n} \geq \sum_{m \in A_k(n)} |e_m^* T e_n| \geq |A_k(n)| \varepsilon,
    \end{equation*}
    we have $|A_k(n)| \leq \norm{T}/\varepsilon < |F_k|$. Hence, for each $n \in M_{k-1}$ with $n > N_k$, there exists $m \in F_k$ such that $m \notin A_k(n)$, that is, $|e_m^* T e_n| < \varepsilon$.

    Since $F_k$ is finite and $M_{k-1} \cap \{n \in \N : n > N_k\}$ is infinite, the pigeonhole principle yields some $m_k \in F_k$ such that the set
    \begin{equation*}
        M_k = \{n \in M_{k-1} : n > N_k \text{ and } |e_{m_k}^* T e_n| < \varepsilon\}
    \end{equation*}
    is infinite. This completes the induction.
\end{proof}

We also have the following analogue to Lemma \ref{lmm: small-norm}.

\begin{lemma}\label{lmm: small-norm-ell1}
    Let $T \colon \ell_1 \to \ell_1$ be an operator. Suppose that there exists $\varepsilon > 0$ such that for all $n \in \N$ we have
    \begin{equation*}
        \sum_{m \in \N} |e^*_m T e_n| < \varepsilon.
    \end{equation*}
    Then $\norm{T} \leq \varepsilon$.
\end{lemma}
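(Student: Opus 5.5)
The plan is to compute $\norm{T}$ directly from the columns of $T$, since in $\ell_1$ the norm of an operator is the supremum of the norms of the images of the unit vectors. The first step is a bound on each column. For each $n\in\N$ we have $Te_n=\sum_{m\in\N}(e_m^*Te_n)e_m$, where the series converges in $\ell_1$, and therefore
\[
\norm{Te_n}_1=\sum_{m\in\N}|e_m^*Te_n|<\varepsilon .
\]

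Next we extend this to an arbitrary vector. Let $x=\sum_{n\in\N}a_ne_n\in\ell_1$. The series converges in norm and $T$ is bounded, so $Tx=\sum_{n}a_nTe_n$. The triangle inequality then gives
\[
\norm{Tx}_1\le\sum_{n\in\N}|a_n|\,\norm{Te_n}_1\le \varepsilon\sum_{n\in\N}|a_n|=\varepsilon\norm{x}_1 .
\]
Hence $\norm{T}\le\varepsilon$.

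There is no real obstacle. The only point needing care is passing $T$ through the infinite sum, which is justified by boundedness of $T$ and norm convergence of the basis expansion in $\ell_1$. Compared with Lemma~\ref{lmm: small-norm}, we no longer need the geometric weights $2^{-n-m}$, because the $\ell_1$-norm of $x$ exactly controls $\sum_n|a_n|$.
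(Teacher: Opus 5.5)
Your proposal is correct and follows essentially the same argument as the paper: bound each column norm $\norm{Te_n}_1=\sum_{m}|e_m^*Te_n|$ by $\varepsilon$, then use $Tx=\sum_n x_n Te_n$ and the triangle inequality to get $\norm{Tx}\le\varepsilon\norm{x}_1$. The paper phrases the last step via $\sup_n\norm{Te_n}$ on the unit ball, which is the same estimate.
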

\begin{proof}
    Let $x = \sum_{n \in \N} x_n e_n \in B_{\ell_1}$. Then
    \begin{equation*}
        \norm{Tx} = \norm{\sum_{n \in \N} x_n T e_n} \leq \sum_{n \in \N} |x_n| \norm{T e_n} \leq \sup_{n \in \N} \norm{T e_n} \sum_{n \in \N} |x_n| \leq \sup_{n \in \N} \norm{T e_n}.
    \end{equation*}
    On the other hand, for each $n \in \N$ we have
    \begin{equation*}
        \norm{T e_n} = \sum_{m \in \N} |e_m^* T e_n| < \varepsilon.
    \end{equation*}
    Hence $\norm{Tx} < \varepsilon$ for every $x \in B_{\ell_1}$, and therefore $\norm{T} \leq \varepsilon$.
\end{proof}

\begin{theorem}\label{th: ell1-case}
    The space $\ell_1$ has the $2^+$-PFP.
\end{theorem}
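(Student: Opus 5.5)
The plan follows the proof of Theorem~\ref{th: Cassaza-Lin}. The difference is that the pointwise off-diagonal estimate of Lemma~\ref{lmm: small-norm} is replaced by the column-sum estimate of Lemma~\ref{lmm: small-norm-ell1}. The reason is that Lemma~\ref{lmm: sym-basis-affected} is unavailable in $\ell_1$: the rows of the matrix of $T$ need not tend to zero.

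Fix $T\in\Bop(\ell_1)$ and $\varepsilon>0$, and choose $\delta>0$ with $2/(1-\delta)<2+\varepsilon$. Since $1=e_n^*Te_n+e_n^*(\Id-T)e_n$, after possibly replacing $T$ by $\Id-T$ we obtain an infinite set $M\subseteq\N$ with $|e_n^*Te_n|\ge\frac12$ for all $n\in M$.

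Next I would select $m_1<m_2<\cdots$ in $M$ such that every column of the selected submatrix has small off-diagonal $\ell_1$-mass. The target is
\[
\sum_{j\neq k}|e_{m_j}^*Te_{m_k}|<\delta/2\qquad(k\in\N).
\]
I split this sum into two parts.

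The part with $j>k$ (entries below the diagonal) is harmless. Because $Te_{m_k}\in\ell_1$, its tail beyond any sufficiently large index is as small as we like. So at step $k$ I fix $N$ with $\sum_{m>N}|e_m^*Te_{m_k}|<\delta/4$, and discard from the pool all indices $\le N$. All later choices then lie beyond $N$.

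The part with $j<k$ (entries above the diagonal) is the main obstacle. Lemma~\ref{lmm: ell1-future-interference} with a fixed $\varepsilon$ only gives $|e_{m_j}^*Te_{m_k}|<\varepsilon$ for each $j<k$, and summing over $j<k$ yields $(k-1)\varepsilon$, which is unbounded.

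To fix this, I would rerun the inductive proof of Lemma~\ref{lmm: ell1-future-interference} with a varying threshold $\varepsilon_k=\delta 2^{-k-2}$. At stage $k$ one takes $F_k\subseteq M_{k-1}$ with $|F_k|>\norm{T}/\varepsilon_k$. The same counting argument, using $\norm{Te_n}_1\le\norm T$, shows that each later $n$ is "bad" for fewer than $|F_k|$ elements of $F_k$. Pigeonhole then yields $m_k\in F_k$ and an infinite set $M_k\subseteq M_{k-1}$ with $|e_{m_k}^*Te_n|<\varepsilon_k$ for all $n\in M_k$. I also intersect $M_k$ with the tail condition from the previous paragraph. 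Since $M_0=M$, all $m_k$ keep the diagonal bound $\ge\frac12$. For $j<k$ this gives $|e_{m_j}^*Te_{m_k}|<\varepsilon_j$, and $\sum_j\varepsilon_j<\delta/4$.

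Finally I conclude as in Theorem~\ref{th: Cassaza-Lin}. Let $Y=[e_{m_k}]$, which is isometric to $\ell_1$. Let $\pi$ be the coordinate projection onto $Y$, $\iota$ the inclusion, and $D$ the diagonal operator with entries $1/e_{m_k}^*Te_{m_k}$, so $\norm D\le2$. Set $S=\Id_Y-D\pi T\iota$. Its diagonal vanishes, and each of its columns has $\ell_1$-norm at most $2\cdot\delta/2=\delta$. By Lemma~\ref{lmm: small-norm-ell1}, $\norm S\le\delta$. Hence $R=(D\pi T\iota)^{-1}$ exists with $\norm R\le 1/(1-\delta)$, and composing with the canonical isometry $J\colon\ell_1\to Y$ gives
\[
\Id=J^{-1}RD\pi\,T\,\iota J
\]
with factorisation constant at most $2/(1-\delta)<2+\varepsilon$. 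To get a strict margin, I would use strict inequalities throughout or shrink $\delta$ slightly.
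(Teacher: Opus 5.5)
Your proposal is correct and follows essentially the same route as the paper. The paper also reruns the counting argument of Lemma~\ref{lmm: ell1-future-interference} with the summable thresholds $\delta/2^{k+2}$ to control the entries above the diagonal, uses $Te_{m_k}\in\ell_1$ to make the tail of each column small, and concludes via the column-sum estimate of Lemma~\ref{lmm: small-norm-ell1} and a Neumann series. The only cosmetic differences are that the paper bounds the tail of column $k$ by $\delta/2^{k+2}$ rather than $\delta/4$, and obtains the diagonal lower bound by reindexing rather than by starting the recursion at $M_0=M$.
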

\begin{proof}
    Let $T\colon \ell_1 \to \ell_1$ and fix $\varepsilon>0$. Choose $\delta>0$ small enough so that
    \[
    \frac{2}{1-\delta}<2+\varepsilon.
    \]
    As in the proof of Theorem~\ref{th: Cassaza-Lin}, after replacing $T$ by $\Id_{\ell_1}-T$ if necessary, passing to an isometric copy and reindexing, we may assume that
    \[
    |e_n^*Te_n|\ge \tfrac12
    \qquad(n\in\N).
    \]
    We now recursively choose a strictly increasing sequence $(m_k)_{k\in\N}$ and decreasing infinite sets
    \[
    \N=M_0\supseteq M_1\supseteq M_2\supseteq\ldots
    \]
    so that for every $k\in\N$:
    \begin{enumerate}[label=(\roman*), ref=(\roman*)]
        \item $m_k\in M_{k-1}$;
        \item $|e_{m_k}^*Te_n|<\delta/2^{k+2}$ for all $n\in M_k$;
        \item $\sum_{n\in M_k}|e_n^*Te_{m_k}|<\delta/2^{k+2}$.
    \end{enumerate}
    Suppose that $M_{k-1}$ has been chosen. By the argument used in Lemma~\ref{lmm: ell1-future-interference}, applied inside the infinite set $M_{k-1}$ with threshold $\delta/2^{k+2}$, we may choose $m_k\in M_{k-1}$ and an infinite set $A_k\subseteq M_{k-1}$ with $m_k<n$ for all $n\in A_k$ such that
    \[
    |e_{m_k}^*Te_n|<\delta/2^{k+2}
    \qquad(n\in A_k).
    \]
    Since $Te_{m_k}\in\ell_1$, there exists an infinite subset $M_k\subseteq A_k$ such that
    \[
    \sum_{n\in M_k}|e_n^*Te_{m_k}|<\delta/2^{k+2}.
    \]
    This completes the construction.
    
    Set $Y=[e_{m_k}:k\in\N]$. Then $Y$ is isometric to $\ell_1$. Let $\pi\colon \ell_1\to Y$ be the coordinate projection and $\iota\colon Y\hookrightarrow \ell_1$ the inclusion. Define the diagonal operator $D\colon Y\to Y$ by
    \[
    De_{m_k}=\frac{1}{e_{m_k}^*Te_{m_k}}e_{m_k}
    \qquad(k\in\N).
    \]
    Since $|e_{m_k}^*Te_{m_k}|\ge \tfrac12$, we have $\norm{D}\le 2$.
    
    Set
    \[
    A=D\pi T\iota
    \qquad\text{and}\qquad
    S=\Id_Y-A.
    \]
    Then $e_{m_k}^*Se_{m_k}=0$ for every $k$. Moreover, for fixed $j$,
    \begin{align*}
    \sum_{k\neq j}|e_{m_k}^*Se_{m_j}|
    &\le 2\sum_{k\neq j}|e_{m_k}^*Te_{m_j}| \\
    &=2\sum_{k<j}|e_{m_k}^*Te_{m_j}|+2\sum_{k>j}|e_{m_k}^*Te_{m_j}| \\
    &< 2\sum_{k<j}\frac{\delta}{2^{k+2}} + 2\sum_{n\in M_j}|e_n^*Te_{m_j}| \\
    &< \frac\delta2 + \frac\delta{2^{j+1}} < \delta.
    \end{align*}
    Hence, by Lemma~\ref{lmm: small-norm-ell1}, $\norm{S}\le \delta$. Therefore $A=\Id_Y-S$ is invertible, and if we write $R=A^{-1}$ then
    \[
    \norm{R}\le \frac{1}{1-\delta}.
    \]
    Thus
    \[
    \Id_Y=RD\pi T\iota.
    \]
    If $J\colon \ell_1\to Y$ is the canonical isometric isomorphism induced by the subsequence $(e_{m_k})$, then
    \[
    \Id_{\ell_1}=J^{-1}RD\pi\,T\,\iota J.
    \]
    Therefore $\Id_{\ell_1}$ factors through either $T$ or $\Id_{\ell_1}-T$, and the factorisation constant is bounded by
    \[
    \norm{J^{-1}RD\pi}\,\norm{\iota J}
    \le \frac{2}{1-\delta}<2+\varepsilon.
    \]
    This proves that $\ell_1$ has the $2^+$-PFP.
\end{proof}

\subsubsection{Iterated constructions with $\ell_1$}\label{subsec:l_1-iterated}

We now consider iterated constructions involving $\ell_1$. The result proved below is for $X(\ell_1)$; this is the case needed in the sequel. Throughout the section, $X$ will denote a Banach space with a symmetric basis and endowed with a symmetric norm, and we assume that this basis is not equivalent to the unit vector basis of $\ell_1$.

Recall that $e_{(i,j)}$ denotes the vector $(0,\dots,0,e_j,0,\dots)\in X(\ell_1)$ with $e_j$ in the $i$th coordinate, and that $e_{(i,j)}^*$ denotes the corresponding coordinate functional. When $\mathbb N^2$ is endowed with the Cantor ordering, the family $(e_{(i,j)})_{(i,j)\in\mathbb N^2}$ forms a $1$-unconditional basis for $X(\ell_1)$.

We can now state the first step of the proof, which provides the technical ingredients needed to construct vectors spanning an isometric copy of $X(\ell_1)$ with only small interference between them.

\begin{lemma}\label{lmm: ell1-iterated-interference}
    Let $X$ be a Banach space with a symmetric basis and symmetric norm whose basis is not equivalent to the unit vector basis of $\ell_1$, and let $T \colon X(\ell_1) \to X(\ell_1)$ be an operator. Then the following hold:
\begin{enumerate}[label = (\alph*), ref = (\alph*)]
        \item \label{it: ell1-a} For every $(i,j) \in \N^2$ and every $\varepsilon > 0$, the set 
        \begin{equation*}
            \{r \in \N: \exists s \in \N \text{ such that } |e^*_{(i,j)} T e_{(r,s)}| \geq \varepsilon\}
        \end{equation*}
        is finite.
        \item \label{it: ell1-b} For every $(i,j) \in \N^2$ and every $\varepsilon >0$, the set
        \begin{equation*}
            \{(r,s) \in \N^2: |e^*_{(r,s)} T e_{(i,j)}| \geq \varepsilon\}
        \end{equation*}
        is finite.
        \item \label{it: ell1-c} For every $r \in \N$, every infinite set $M \subseteq \N$, every $\varepsilon > 0$, every finite set $\{i_1, \dots, i_K\}\subseteq \N$ and every choice of infinite sets $M_1, \dots, M_K \subseteq \N$, there exists $s \in M$ such that 
        \begin{equation*}
            \{m \in M_k: |e^*_{(r, s)} T e_{(i_k, m)}| \leq \varepsilon \}
        \end{equation*}
        is infinite for every $k = 1, \dots, K$.
    \end{enumerate}
\end{lemma}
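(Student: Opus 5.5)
For \ref{it: ell1-a} I would argue by contradiction, mirroring the ($\Rightarrow$) direction of Lemma~\ref{lmm: sym-basis-affected}. Suppose that for some $(i,j)$ and $\varepsilon>0$ there are infinitely many distinct $r_1<r_2<\cdots$ and indices $s_k$ with $|e^*_{(i,j)}Te_{(r_k,s_k)}|\ge\varepsilon$. Put $f=T^*e^*_{(i,j)}$. Since the first coordinates $r_k$ are distinct, the definition of the norm of $X(\ell_1)$ gives
\[
\Bigl\|\sum_k a_k e_{(r_k,s_k)}\Bigr\| = \Bigl\|\sum_k |a_k| e_{r_k}\Bigr\|_X.
\]
So $(e_{(r_k,s_k)})_k$ is isometrically equivalent to a subsequence of the symmetric basis of $X$, hence to the basis of $X$ itself. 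Passing to a subsequence, and in the complex case to a sector, I would arrange that $\operatorname{Re}(\lambda f(e_{(r_k,s_k)}))\ge\varepsilon/2$ for a fixed unimodular $\lambda$. The estimate in the proof of Lemma~\ref{lmm: sym-basis-affected}, using $1$-unconditionality, then shows that this sequence is equivalent to the unit vector basis of $\ell_1$. This contradicts the assumption that the basis of $X$ is not equivalent to the $\ell_1$-basis.

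For \ref{it: ell1-b} I would simply note that $Te_{(i,j)}\in X(\ell_1)$ has a basis expansion with respect to the basis $(e_{(r,s)})$ in the Cantor ordering. The coefficients $e^*_{(r,s)}Te_{(i,j)}$ of a convergent basis expansion tend to $0$, because the basis is normalised. Hence only finitely many of them have modulus at least $\varepsilon$.

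Part \ref{it: ell1-c} is the genuinely $\ell_1$-type statement, and the main point is to obtain a single $s$ that works for all $k$ simultaneously. I would adapt the counting argument of Lemma~\ref{lmm: ell1-future-interference} inside the $r$th fibre. Observe first that for any vector $y=(y_n)\in X(\ell_1)$ we have $\|y_r\|_{\ell_1}\le\|y\|$, since the basis of $X$ is normalised and $1$-unconditional. Consequently, for every $k$ and $m$,
\[
\sum_{s}|e^*_{(r,s)}Te_{(i_k,m)}|\le\|T\|.
\]
Choose a finite set $F\subseteq M$ with $|F|>K\|T\|/\varepsilon$. For each $k\le K$, call $s\in F$ \emph{bad for $k$} if $|e^*_{(r,s)}Te_{(i_k,m)}|>\varepsilon$ for all but finitely many $m\in M_k$. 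Since $F$ is finite, there is a single $m\in M_k$ beyond all the finite exceptional sets. At this $m$, every bad $s$ satisfies $|e^*_{(r,s)}Te_{(i_k,m)}|>\varepsilon$, and the displayed $\ell_1$-bound then shows that there are at most $\|T\|/\varepsilon$ bad $s$ for each $k$. Altogether at most $K\|T\|/\varepsilon<|F|$ elements of $F$ are bad for some $k$. Any remaining $s\in F\subseteq M$ is good for every $k$, meaning that $|e^*_{(r,s)}Te_{(i_k,m)}|\le\varepsilon$ for infinitely many $m\in M_k$, which is exactly the claim. I expect this simultaneous pigeonhole to be the only delicate step; parts \ref{it: ell1-a} and \ref{it: ell1-b} are routine.
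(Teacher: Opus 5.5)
Your proof is correct. Parts \ref{it: ell1-a} and \ref{it: ell1-b} are essentially the paper's arguments: the $\ell_1$ lower estimate along distinct rows in \ref{it: ell1-a}, and the decay of Schauder coefficients in \ref{it: ell1-b}.

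For \ref{it: ell1-c} you take a genuinely different and cleaner route. The paper argues by induction on $K$. For $K=1$ it fixes $m\in M_1$, counts the $s\in F$ with a large entry, and then pigeonholes over $F$. In the inductive step it partitions $M$ into $N>\|T\|/\varepsilon$ disjoint infinite pieces and applies the induction hypothesis in each piece. This produces distinct candidates $s_1,\dots,s_N$, and it then pigeonholes once more against $M_{K+1}$.

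You instead do the count the other way round. For each $k$ you isolate the indices $s$ whose admissible set in $M_k$ is finite. You then find a single $m\in M_k$ at which all of them are simultaneously large. The fibre bound $\sum_s|e^*_{(r,s)}Te_{(i_k,m)}|\le\|T\|$ caps their number by $\|T\|/\varepsilon$, and a union bound over $k$ replaces the whole induction.

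Besides being shorter, this gives more than the statement asks. The bad set for $k$ does not depend on $M$, and every finite subset of it obeys the same bound. So at most $K\|T\|/\varepsilon$ indices $s\in\N$ fail in total, and all but finitely many $s\in M$ work. The paper's inductive scheme produces only one admissible $s$ per application. That is all the recursion in Lemma~\ref{lmm: ell1-iterated-true-diagonal} needs, but your version would serve equally well there.
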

\begin{proof}
    We first prove \ref{it: ell1-a}. Fix $(i,j) \in \N^2$ and $\varepsilon > 0$. Suppose, towards a contradiction, that there are strictly increasing rows $(r_k)_{k\in\N}$ and indices $(s_k)_{k\in\N}$ such that
    \begin{equation*}
        |e^*_{(i,j)}T e_{(r_k,s_k)}| \geq \varepsilon
        \qquad (k\in\N).
    \end{equation*}
    Passing to a subsequence and, in the complex case, to a sector, we may assume that there is a unimodular scalar $\lambda$ such that
    \begin{equation*}
        \operatorname{Re}\bigl(\lambda e^*_{(i,j)}T e_{(r_k,s_k)}\bigr)\geq \varepsilon/2
        \qquad (k\in\N).
    \end{equation*}
    Put $f=\lambda e^*_{(i,j)}T$. For finitely supported scalar sequences $(a_k)$, unconditionality gives
    \begin{align*}
        \Bigl\| \sum_k a_k e_{(r_k,s_k)}\Bigr\|_{X(\ell_1)}
        &\geq \Bigl\| \sum_k |a_k| e_{(r_k,s_k)}\Bigr\|_{X(\ell_1)} \\
        &\geq \frac{1}{\|f\|}
        \operatorname{Re} f\Bigl(\sum_k |a_k|e_{(r_k,s_k)}\Bigr)
        \geq \frac{\varepsilon}{2\|f\|}\sum_k |a_k|.
    \end{align*}
    The reverse estimate follows from the normalisation of the basis. Hence $(e_{(r_k,s_k)})_{k\in\N}$ is equivalent to the unit vector basis of $\ell_1$. Since the rows $r_k$ are distinct and the norm of $\sum_k a_k e_{(r_k,s_k)}$ is exactly $\|\sum_k |a_k|e_{r_k}\|_X$, this would make a subsequence of the basis of $X$ equivalent to the unit vector basis of $\ell_1$. By symmetry this is equivalent to saying that the whole basis of $X$ is equivalent to the unit vector basis of $\ell_1$, a contradiction.

    We next prove \ref{it: ell1-b}. Since $(e_{(r,s)})_{(r,s)\in\N^2}$, with the Cantor ordering, is a Schauder basis for $X(\ell_1)$, the scalar coordinates of the vector $T e_{(i,j)}$ tend to zero. Therefore, for each $\varepsilon>0$, only finitely many pairs $(r,s)$ can satisfy $|e^*_{(r,s)}T e_{(i,j)}|\geq \varepsilon$.

    For \ref{it: ell1-c}, we argue by induction on $K$. The case $K=1$ is the same pigeonhole argument used in Lemma~\ref{lmm: ell1-future-interference}. Choose $N \in \N$ with $N > \|T\|/\varepsilon$, and choose a finite subset $F \subseteq M$ with $|F|=N$. For each $m\in M_1$, set
    \begin{equation*}
        A(m)=\{s\in F: |e_{(r,s)}^*T e_{(i_1,m)}|>\varepsilon\}.
    \end{equation*}
    Since the projection onto the $r$th $\ell_1$-fibre has norm one,
    \begin{equation*}
        \|T\|
        \geq \|T e_{(i_1,m)}\|
        \geq \sum_{s\in A(m)} |e_{(r,s)}^*T e_{(i_1,m)}|
        > |A(m)|\varepsilon.
    \end{equation*}
    Thus $|A(m)|<|F|$. For every $m\in M_1$ there is therefore some $s\in F$ such that $|e_{(r,s)}^*T e_{(i_1,m)}|\leq\varepsilon$. Since $F$ is finite and $M_1$ is infinite, one such $s$ works for infinitely many $m\in M_1$.

    Suppose now that the assertion has been proved for $K$, and consider rows $i_1,\dots,i_K,i_{K+1}$ with infinite sets $M_1,\dots,M_K,M_{K+1}$. Choose $N\in\N$ with $N>\|T\|/\varepsilon$, and partition $M$ into pairwise disjoint infinite subsets $M^{(1)},\dots,M^{(N)}$. By the induction hypothesis, for each $n=1,\dots,N$ there is $s_n\in M^{(n)}$ such that
    \begin{equation*}
        \{m\in M_k: |e_{(r,s_n)}^*T e_{(i_k,m)}|\leq\varepsilon\}
    \end{equation*}
    is infinite for every $k=1,\dots,K$. Put $F=\{s_1,\dots,s_N\}$. For each $m\in M_{K+1}$, define
    \begin{equation*}
        A(m)=\{n\in\{1,\dots,N\}: |e_{(r,s_n)}^*T e_{(i_{K+1},m)}|>\varepsilon\}.
    \end{equation*}
    As above, $|A(m)|<N$ for every $m\in M_{K+1}$. Hence, by the pigeonhole principle, there is $n_0\in\{1,\dots,N\}$ such that
    \begin{equation*}
        \{m\in M_{K+1}: |e_{(r,s_{n_0})}^*T e_{(i_{K+1},m)}|\leq\varepsilon\}
    \end{equation*}
    is infinite. The choice of $s_{n_0}$ already gives the required infinite sets for $k=1,\dots,K$, so $s=s_{n_0}$ completes the induction.
\end{proof}

We now show that a suitable small-interference condition forces the resulting operator to have small norm.

\begin{lemma}\label{lmm: ell1-small-operator}
    Let $S \colon X(\ell_1) \to X(\ell_1)$ be an operator. Suppose that, for every $r,i \in \N$,
    \begin{equation*}
        \sup_{j \in \N} \sum_{s=1}^\infty |e^*_{(r,s)} S e_{(i,j)}|
        \leq
        \varepsilon 2^{-(r+i)}.
    \end{equation*}
    Then  $\norm{S} \leq \varepsilon$.
\end{lemma}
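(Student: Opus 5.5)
We will estimate $\norm{Sx}$ directly, row by row, using the structure of the norm of $X(\ell_1)$.

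\emph{Step 1: decompose.} Write $x=\sum_i x_i$ with $x_i=\sum_j a_{ij}e_{(i,j)}$, so that $x_i$ lies in the $i$th $\ell_1$-fibre. Then
\[
\norm{x}=\Bigl\|\sum_i \norm{x_i}_{\ell_1}e_i\Bigr\|_X ,
\qquad\text{hence}\qquad \norm{x_i}_{\ell_1}\le\norm{x}.
\]
The last inequality holds because the basis is normalised and $1$-unconditional.

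\emph{Step 2: one row of $S$ against one fibre of $x$.} Fix $r$ and let $P_r$ be the norm-one projection onto the $r$th fibre. For each $i$,
\[
\norm{P_rSx_i}_{\ell_1}\le\sum_j|a_{ij}|\sum_s|e^*_{(r,s)}Se_{(i,j)}|\le \varepsilon2^{-(r+i)}\norm{x_i}_{\ell_1}.
\]
Here we apply the hypothesis termwise and use that $\norm{x_i}_{\ell_1}=\sum_j|a_{ij}|$. Summing over $i$ gives
\[
\norm{P_rSx}_{\ell_1}\le \varepsilon2^{-r}\sum_i 2^{-i}\norm{x}\le\varepsilon2^{-r}\norm{x}.
\]

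\emph{Step 3: assemble.} We have
\[
\norm{Sx}=\Bigl\|\sum_r\norm{P_rSx}_{\ell_1}e_r\Bigr\|_X\le\sum_r\norm{P_rSx}_{\ell_1}\le\varepsilon\norm{x}.
\]
This uses the triangle inequality and the normalisation of the basis of $X$.

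\emph{Main obstacle.} The only delicate point is convergence: passing from finitely supported $x$ to general $x$, and making sure the series $\sum_j a_{ij}Se_{(i,j)}$ really represents $Sx_i$. For the first, we prove the bound on finitely supported vectors, which are dense, and extend by continuity of $S$. For the second, $x_i$ is a finite sum in that case, so no issue arises; the inner sums over $s$ are then controlled by the hypothesis.
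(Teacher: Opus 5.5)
Your proof is correct and follows the paper's argument. The paper packages your Step 2 as the block bound $\norm{Q_rSJ_i}_{\ell_1\to\ell_1}\le\varepsilon2^{-(r+i)}$ and then sums over $i$ and $r$ exactly as you do. The only difference is that you handle convergence by density of finitely supported vectors, whereas the paper uses the convergent expansion $x=\sum_i J_ix_i$ directly.
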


\begin{proof}
    For $r,i \in \N$, let
    \begin{equation*}
        S_{r,i}
        =
        Q_r S J_i \colon \ell_1 \to \ell_1,
    \end{equation*}
    where $J_i \colon \ell_1 \to X(\ell_1)$ denotes the canonical embedding of $\ell_1$ into the $i$th coordinate and $Q_r \colon X(\ell_1) \to \ell_1$ denotes the $r$th coordinate projection. Then
    \begin{equation*}
        \norm{S_{r,i}}
        =
        \sup_{j \in \N} \sum_{s=1}^\infty |e^*_{(r,s)} S e_{(i,j)}|
        \leq
        \varepsilon 2^{-(r+i)}.
    \end{equation*}

    Let $x = (x_i)_{i \in \N} \in B_{X(\ell_1)}$, so that $\norm{x_i} \leq 1$ for all $i \in \N$. For each $r \in \N$ we have
    \begin{align*}
        \norm{Q_rSx}_{\ell_1} = \norm{Q_r S \sum_{i \in \N} J_i x_i } \leq
        \sum_{i=1}^\infty \norm{S_{r,i}} \norm{x_i}_{\ell_1}
        &\leq
        \varepsilon \sum_{i=1}^\infty 2^{-(r+i)} = \varepsilon 2^{-r}.
    \end{align*}
    Hence
    \begin{equation*}
        \norm{Sx}_{X(\ell_1)} =
        \left\|
            \sum_{r=1}^\infty \norm{Q_r S x}_{\ell_1} e_r
        \right\|_X \leq
        \varepsilon \sum_{r \in \N} 2^{-r} = \varepsilon ,
    \end{equation*}
    so that $\norm{S} \leq \varepsilon$. This completes the proof.
\end{proof}

Lastly, we show that Lemma \ref{lmm: ell1-iterated-interference} allows us to pass to an isometric copy on which the interference is sufficiently small.

\begin{lemma}\label{lmm: ell1-iterated-true-diagonal}
    Let $X$ be a Banach space with a symmetric basis and symmetric norm whose basis is not equivalent to the unit vector basis of $\ell_1$, and let $T \colon X(\ell_1) \to X(\ell_1)$ be an operator. Then, for every $\varepsilon > 0$, there exist a strictly increasing sequence $(n_i)_{i \in \N}$ and infinite sets $M_i=\{m_i^j : j \in \N\}$, with $m_i^1<m_i^2<\ldots$ for every $i$, such that
    \begin{equation*}
        |e^*_{(n_r,m_r^s)} T e_{(n_i,m_i^j)}|
        \leq
        \varepsilon 2^{-(r+i+s)}
        \qquad ((r,s) \neq (i,j)).
    \end{equation*}
\end{lemma}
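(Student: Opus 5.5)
The plan is to build the rows $n_1<n_2<\ldots$ and the columns $m_i^1<m_i^2<\ldots$ by one recursive construction that adds one point $(n_r,m_r^s)$ at a time. Points are added in the Cantor ordering of the index pairs $(r,s)\in\N^2$. Each time a point is added, every required inequality involving it and any point already chosen is enforced in both directions. In addition, every inequality it will need with any point chosen later is made achievable at that later time.

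Throughout, every row $k$ that has been opened, i.e.\ whose index $n_k$ has already been fixed, carries an infinite reservoir $R_k\subseteq\N$. All later columns of row $k$ will be taken from $R_k$, and $R_k$ only shrinks as the construction proceeds.

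Suppose the next pair in the Cantor ordering is $(r,s)$, so $n_1,\ldots,n_K$ are fixed for some $K$ and $r\le K+1$.

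\emph{Opening a new row ($r=K+1$).} We have to choose $n_r>n_K$. For every previously chosen point $(n_i,m_i^j)$, part~\ref{it: ell1-a} of Lemma~\ref{lmm: ell1-iterated-interference}, applied with threshold $\varepsilon 2^{-(i+j+r)}$, excludes only finitely many rows. Since only finitely many points have been chosen, all of these exclusions are avoided by taking $n_r$ large. This gives
\[
|e^*_{(n_i,m_i^j)}Te_{(n_r,m)}|\le \varepsilon2^{-(i+j+r)}\qquad\text{for every }m\in\N .
\]
So the effect of the row $n_r$ on earlier functionals is controlled uniformly in the column. We then set $R_r=\N$ and proceed as in the old-row case.

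\emph{Adding a point in an existing row ($r\le K$).} The column $m=m_r^s$ must satisfy three requirements.
\begin{enumerate}[label=(\roman*)]
\item It must lie in $R_r$ and exceed $m_r^{s-1}$.
\item For every earlier point $(n_i,m_i^j)$ we need $|e^*_{(n_r,m)}Te_{(n_i,m_i^j)}|\le\varepsilon2^{-(r+i+s)}$. By part~\ref{it: ell1-b} of Lemma~\ref{lmm: ell1-iterated-interference}, each earlier vector $Te_{(n_i,m_i^j)}$ has only finitely many coordinates of this size. So removing finitely many columns from $R_r$ leaves an infinite set $M$ on which this holds.
\item For all later points $(n_i,m_i^j)$ in the already opened rows $i=1,\ldots,K$, we need $|e^*_{(n_r,m)}Te_{(n_i,m_i^j)}|\le \varepsilon 2^{-(r+i+s)}$. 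The exponent does not involve $j$, and this is what makes the requirement achievable. We apply part~\ref{it: ell1-c} of Lemma~\ref{lmm: ell1-iterated-interference} with row index $n_r$, the infinite set $M$, the rows $n_1,\ldots,n_K$, the reservoirs $R_1,\ldots,R_K$, and the single threshold $\varepsilon2^{-(r+K+s)}$, which is at most each required bound. It yields a column $m\in M$ such that, for each $k$, infinitely many elements of $R_k$ satisfy the bound. We replace each $R_k$ by this infinite subset; removing finitely many initial elements, we may also keep all later columns above those already chosen.
\end{enumerate}
Later points in rows not yet opened are handled automatically by the choice of $n_i$ in the new-row case above.

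The inequalities between any two distinct points are then verified by case analysis on which of the two points was chosen first. Suppose $(n_r,m_r^s)$ was chosen after $(n_i,m_i^j)$. Then the functional at the new point, applied to $T$ of the old vector, is small by requirement~(ii). The reverse direction, the old functional applied to $T$ of the new vector, is small by one of two facts. If row $r$ was opened after the old point was chosen, the choice of $n_r$ in the new-row case covers it. Otherwise the column $m_r^s$ was drawn from a reservoir that had already been shrunk when the old point was placed.

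The main obstacle is this bookkeeping. The exponents must be arranged so that the tolerance needed against all future points depends only on data known at the current stage: the row indices and the current $s$, but never the future column index $j$. This is exactly what allows part~\ref{it: ell1-c} of Lemma~\ref{lmm: ell1-iterated-interference}, with finitely many rows and a single threshold, to suffice. One must also check that the reservoirs stay infinite and that rows opened later are protected by part~\ref{it: ell1-a} rather than by part~\ref{it: ell1-c}.
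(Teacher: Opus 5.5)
Your proposal is correct and follows the paper's proof essentially step for step. Both build the points one at a time with shrinking reservoirs. Part~(a) of Lemma~\ref{lmm: ell1-iterated-interference} chooses each new row index so that all earlier functionals are uniformly small on that row. Part~(b) removes finitely many columns so that the new target is protected against old sources. Part~(c), with the single threshold $\min_i \varepsilon 2^{-(r+i+s)}$, shrinks every active reservoir, including that of the current row, so that future sources are protected. The remaining differences are immaterial: you use the Cantor ordering instead of an arbitrary enumeration that opens rows in order, and you use the unreduced reservoir rather than the reduced one as the target set for the current row.
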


\begin{proof}
    Fix an enumeration $(\rho_t)_{t\in\N}$ of $\N^2$ such that $(k,1)$ appears before $(k,2)$, $(k,2)$ before $(k,3)$, and the rows are opened in the order $1,2,\dots$. Write $\rho_t=(k_t,\ell_t)$.

    We construct the coordinates recursively. After stage $n$ we have selected $m_i^j$ for all pairs $(i,j)=\rho_t$ with $t\leq n$, selected outer indices $n_i$ for the active rows, and chosen infinite reservoirs $R_i^{(n)}\subseteq\N$ for the active rows. Let $A_n$ be the set of active rows. We require:
    \begin{enumerate}[label=(\roman*), ref=(\roman*)]
        \item \label{it: ell1-true-invariant-reservoir} for every selected pair $\rho_t=(q,w)$, every active row $i\in A_n$, and every $m\in R_i^{(n)}$,
        \begin{equation*}
            |e^*_{(n_q,m_q^w)}T e_{(n_i,m)}|
            \leq \varepsilon 2^{-(q+i+w)};
        \end{equation*}
        \item \label{it: ell1-iterated-true-diagonal-ii} for every two selected pairs $\rho_s=(i,j)$ and $\rho_t=(q,w)$ with $(q,w)\neq(i,j)$,
        \begin{equation*}
            |e^*_{(n_q,m_q^w)}T e_{(n_i,m_i^j)}|
            \leq \varepsilon 2^{-(q+i+w)};
        \end{equation*}
        \item every element of $R_i^{(n)}$ is larger than all coordinates already selected in row $i$.
    \end{enumerate}
    At stage $0$ there is nothing to verify.

    Suppose the construction has been completed up to stage $n$, and let $\rho_{n+1}=(k,\ell)$. If $\ell=1$, row $k$ has not yet been opened. Choose $n_k$ as follows. If there are no previously selected pairs, take $n_k=1$. Otherwise, for each previously selected pair $(q,w)$ put
    \begin{equation*}
        B_{q,w}=\{r\in\N: \exists s\in\N \text{ such that } |e^*_{(n_q,m_q^w)}T e_{(r,s)}|>\varepsilon2^{-(q+k+w)}\}.
    \end{equation*}
    By Lemma~\ref{lmm: ell1-iterated-interference}\ref{it: ell1-a}, each $B_{q,w}$ is finite. Hence we may choose $n_k$ larger than all previously chosen outer indices and outside the finite union of the sets $B_{q,w}$. Then
    \begin{equation}\label{eq: ell1-new-row-old-targets}
        |e^*_{(n_q,m_q^w)}T e_{(n_k,m)}|
        \leq \varepsilon2^{-(q+k+w)}
    \end{equation}
    for every previously selected pair $(q,w)$ and every $m\in\N$. In this case set $\widetilde R=\N$.

    If $\ell>1$, row $k$ is already active. Set $\widetilde R=R_k^{(n)}$. Then \ref{it: ell1-true-invariant-reservoir} gives \eqref{eq: ell1-new-row-old-targets} for every previously selected pair $(q,w)$ and every $m\in\widetilde R$.

    We next make sure that the new target coordinate is not significantly affected by any previously selected source coordinate. For every previously selected pair $(q,w)$ define
    \begin{equation*}
        F_{q,w}=\{m\in\widetilde R: |e^*_{(n_k,m)}T e_{(n_q,m_q^w)}|>\varepsilon2^{-(k+q+\ell)}\}.
    \end{equation*}
    By Lemma~\ref{lmm: ell1-iterated-interference}\ref{it: ell1-b}, each $F_{q,w}$ is finite. Thus
    \begin{equation*}
        R=\widetilde R\setminus\bigcup_{\rho_t=(q,w),\,t\le n}F_{q,w}
    \end{equation*}
    is infinite, and every $m\in R$ satisfies
    \begin{equation}\label{eq: ell1-new-target-old-sources}
        |e^*_{(n_k,m)}T e_{(n_q,m_q^w)}|
        \leq \varepsilon2^{-(k+q+\ell)}
    \end{equation}
    for every previously selected pair $(q,w)$.

    Let $A_{n+1}=A_n\cup\{k\}$. For $i\in A_{n+1}$ define
    \begin{equation*}
        H_i=
        \begin{cases}
            R, & i=k,\\
            R_i^{(n)}, & i\in A_n\setminus\{k\}.
        \end{cases}
    \end{equation*}
    Put
    \begin{equation*}
        \eta=\min_{i\in A_{n+1}} \varepsilon2^{-(k+i+\ell)}.
    \end{equation*}
    Apply Lemma~\ref{lmm: ell1-iterated-interference}\ref{it: ell1-c} with source row $n_k$, source reservoir $R$, threshold $\eta$, target rows $\{n_i:i\in A_{n+1}\}$ and target reservoirs $(H_i)_{i\in A_{n+1}}$. We obtain $m_k^\ell\in R$ such that, for every $i\in A_{n+1}$,
    \begin{equation*}
        L_i=\{m\in H_i: |e^*_{(n_k,m_k^\ell)}T e_{(n_i,m)}|\leq\eta\}
    \end{equation*}
    is infinite. Since $\eta\leq \varepsilon2^{-(k+i+\ell)}$, these sets preserve the reservoir estimate for the newly selected target.

    For each $i\in A_{n+1}$, let $p_i$ be the largest coordinate already selected in row $i$ after adding $m_k^\ell$, and set
    \begin{equation*}
        R_i^{(n+1)}=\{m\in L_i:m>p_i\}.
    \end{equation*}
    These reservoirs are infinite. The estimates above verify \ref{it: ell1-true-invariant-reservoir} and \ref{it: ell1-iterated-true-diagonal-ii} at stage $n+1$: old estimates are inherited from the previous reservoirs; \eqref{eq: ell1-new-row-old-targets} handles old targets against the new source; \eqref{eq: ell1-new-target-old-sources} handles the new target against old sources; and the definition of the sets $L_i$ handles the new target against future sources.

    This completes the recursion. For each row $i$, put $M_i=\{m_i^j:j\in\N\}$. The construction makes each $M_i$ infinite and strictly increasing. Since every pair of distinct selected coordinates appears at some finite stage, \ref{it: ell1-iterated-true-diagonal-ii} gives
    \begin{equation*}
        |e^*_{(n_r,m_r^s)}T e_{(n_i,m_i^j)}|
        \leq \varepsilon2^{-(r+i+s)}
        \qquad ((r,s)\neq(i,j)),
    \end{equation*}
    as required.
\end{proof}

Finally, we are ready to prove the UPFP for $X(\ell_1)$.

\begin{theorem}\label{th: l_1-iterated}
    Let $X$ be a Banach space with a symmetric basis and symmetric norm whose basis is not equivalent to the unit vector basis of $\ell_1$. Then $X(\ell_1)$ has the $2^+$-PFP.
\end{theorem}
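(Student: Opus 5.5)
The argument follows the template of Theorems~\ref{th: Cassaza-Lin} and~\ref{th: ell1-case}: first reduce to large diagonal entries, then select an isometric copy of $X(\ell_1)$ with tiny off-diagonal interference, and finally invert a near-identity operator.

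Fix $T\in\Bop(X(\ell_1))$ and $\varepsilon>0$, and choose $\delta>0$ with $2/(1-\delta)<2+\varepsilon$. \textbf{Step 1 (diagonal dichotomy).} Since $1=e_{(i,j)}^*Te_{(i,j)}+e_{(i,j)}^*(\Id-T)e_{(i,j)}$, every node $(i,j)$ supports $T$ or $\Id-T$ in the sense of the CKL tree (the case $N=2$). By Lemma~\ref{lmm: infinitely-support}, after possibly replacing $T$ by $\Id-T$, there are infinitely many rows $i$ each containing infinitely many $j$ with $|e_{(i,j)}^*Te_{(i,j)}|\ge\tfrac12$. Passing to these rows and columns gives an isometric copy of $X(\ell_1)$, by symmetry of $X$ and of $\ell_1$. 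Compressing $T$ to this copy and reindexing, we may assume $|e_{(i,j)}^*Te_{(i,j)}|\ge\tfrac12$ for all $(i,j)$. The compression is again a bounded operator on $X(\ell_1)$, so Lemma~\ref{lmm: ell1-iterated-interference} still applies.

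\textbf{Step 2 (selection).} Apply Lemma~\ref{lmm: ell1-iterated-true-diagonal} to the reduced $T$ with a small parameter $\eta$, to be fixed in Step~3. This yields rows $n_i$ and columns $m_i^j$ with off-diagonal entries bounded by $\eta 2^{-(r+i+s)}$. Let $Z=[e_{(n_i,m_i^j)}]$, which is isometric to $X(\ell_1)$. Let $\pi$ be the coordinate projection onto $Z$ (norm one by $1$-unconditionality), let $\iota$ be the inclusion, and let $D$ be the diagonal operator with entries $(e^*Te)^{-1}$, so $\|D\|\le2$. Put $S=\Id_Z-D\pi T\iota$. Its diagonal vanishes, and its off-diagonal entries are bounded by $2\eta2^{-(r+i+s)}$.

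\textbf{Step 3 (norm estimate).} Lemma~\ref{lmm: ell1-small-operator} requires, for each pair of rows $r,i$,
\[
\sup_j\sum_s|e^*_{(r,s)}Se_{(i,j)}|\le \delta 2^{-(r+i)}.
\]
Summing the bound from Step~2 over $s$ gives $\sum_s 2\eta2^{-(r+i+s)}\le 2\eta 2^{-(r+i)}$, uniformly in $j$. Choosing $\eta=\delta/2$ therefore gives the hypothesis, and hence $\|S\|\le\delta$.

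This is where the real work lies. The $\ell_1$-norm inside a fibre sums over the whole column index $s$, so the entries must decay in the target index $s$; decay in the source index $j$ does not help. Lemma~\ref{lmm: ell1-iterated-true-diagonal} supplies exactly the factor $2^{-s}$, with $s$ the target column, so the bound closes. If the indexing were the other way, I would rerun the recursion of that lemma with thresholds indexed by the target column, which the reservoir construction permits.

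\textbf{Step 4 (conclusion).} We get $D\pi T\iota=\Id_Z-S$, which is invertible with inverse $R$ satisfying $\|R\|\le(1-\delta)^{-1}$. Hence
\[
\Id_{X(\ell_1)}=J^{-1}RD\pi\,T\,\iota J,
\]
where $J$ is the isometry onto $Z$ composed with the Step~1 identification. The factorisation constant is at most $2/(1-\delta)<2+\varepsilon$.
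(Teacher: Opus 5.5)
Your proposal is correct and follows essentially the same route as the paper. You reduce to diagonal entries of modulus at least $\tfrac12$ on an isometric copy, apply Lemma~\ref{lmm: ell1-iterated-true-diagonal} with parameter $\delta/2$, use Lemma~\ref{lmm: ell1-small-operator} to get $\|\Id - D\pi T\iota\|\le\delta$, and invert. The hedge at the end of Step~3 is unnecessary, since the bound in that lemma already decays in the target column index $s$.
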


\begin{proof}
    Let $T \colon X(\ell_1) \to X(\ell_1)$ and fix $\varepsilon > 0$. Choose $\delta > 0$ small enough so that
    \begin{equation*}
        \frac{2}{1-\delta} < 2+\varepsilon.
    \end{equation*}

    For each row $i$, colour an index $j$ red if
    $|e^*_{(i,j)}Te_{(i,j)}|\geq\tfrac12$ and blue otherwise. In the blue case,
    $|e^*_{(i,j)}(\Id_{X(\ell_1)}-T)e_{(i,j)}|\geq\tfrac12$. In every row at least one colour occurs infinitely often, and therefore one colour occurs infinitely often in infinitely many rows. Passing to those rows and to the corresponding infinite subsets of their fibres gives a $1$-complemented isometric copy of $X(\ell_1)$. Replacing $T$ by $\Id_{X(\ell_1)}-T$ if necessary, and reindexing, we may assume that
    \begin{equation*}
        |e^*_{(i,j)} T e_{(i,j)}| \geq \tfrac12
        \qquad ((i,j)\in\N^2).
    \end{equation*}

    Apply Lemma \ref{lmm: ell1-iterated-true-diagonal} with parameter $\delta/2$, and pass to a further $1$-complemented isometric copy so that, after reindexing, we have
    \begin{equation*}
        |e^*_{(r,s)} T e_{(i,j)}|
        \leq
         (\delta/2) 2^{-(r+i+s)}
        \qquad ((r,s)\neq(i,j)).
    \end{equation*}

    Define the diagonal operator $D \colon X(\ell_1) \to X(\ell_1)$ by
    \begin{equation*}
        D(e_{(i,j)})
        =
        \frac{1}{e^*_{(i,j)} T e_{(i,j)}}\,e_{(i,j)}
        \qquad ((i,j)\in\N^2).
    \end{equation*}
    Since
    \begin{equation*}
        \left|\frac{1}{e^*_{(i,j)} T e_{(i,j)}}\right| \leq 2
        \qquad ((i,j)\in\N^2),
    \end{equation*}
    we have $\norm{D} \leq 2$.  We \emph{claim} that
    \begin{equation*}
        \norm{\Id_{X(\ell_1)} - DT} \leq \delta.
    \end{equation*}
    Indeed, let $r,i,j \in \N$. For each $s \in \N$, if $(r,s)\neq(i,j)$ then
    \begin{equation*}
        e^*_{(r,s)}(\Id_{X(\ell_1)} - DT)e_{(i,j)}
        =
        -\frac{e^*_{(r,s)} T e_{(i,j)}}{e^*_{(r,s)} T e_{(r,s)}}.
    \end{equation*}
    On the other hand,
    \begin{equation*}
        e^*_{(i,j)}(\Id_{X(\ell_1)} - DT)e_{(i,j)} = 0.
    \end{equation*}
    Therefore
    \begin{equation*}
        \sup_{j \in \N} \sum_{s=1}^\infty |e^*_{(r,s)}(\Id_{X(\ell_1)} - DT)e_{(i,j)}|
        \leq
        2 \sup_{j \in \N} \sum_{\substack{s \in \N\\(r,s)\neq(i,j)}} |e^*_{(r,s)} T e_{(i,j)}| \leq
        \delta 2^{-(r+i)}.
    \end{equation*}
    Lemma \ref{lmm: ell1-small-operator} applied to $\Id_{X(\ell_1)} - DT$ gives $\norm{\Id_{X(\ell_1)} - DT} \leq \delta$.

    Thus $DT$ is invertible and $\norm{(DT)^{-1}} \leq 1/(1 - \delta)$ and $(DT)^{-1} DT = \Id_{X(\ell_1)}$.
    Consequently, on the selected copy, $\Id_{X(\ell_1)}$ factors through the selected restriction of $T$ with factorisation constant bounded by
    \begin{equation*}
        \norm{(DT)^{-1}}\norm{D} \leq \frac{2}{1-\delta} <
        2+\varepsilon.
    \end{equation*}
    Composing with the canonical isometry onto the selected copy, its inclusion, and the norm-one coordinate projection gives the same bound for the original operator, or for $\Id_{X(\ell_1)}-T$ if that was the chosen colour. This proves that $X(\ell_1)$ has the $2^+$-PFP.
\end{proof}

As an immediate consequence, we obtain the following primarity result for $c_0(\ell_1)$ and $\ell_p(\ell_1)$, $1<p<\infty$, in the spirit of the theorem of Casazza, Kottman, and Lin \cite{CKL1976}. We note that the $\ell_p(\ell_1)$ cases also follow from an earlier paper of Samuel \cite{Samuel1978}, where the primarity of $\ell_r(\ell_s)$ is proved for $1\leq r,s\leq\infty$, with the non-standard convention that $\ell_\infty$ denotes $c_0$. The corresponding $c_0$-case is not explicitly carried out there, although Samuel indicates that it is analogous. Since these consequences do not seem to have been explicitly recorded in the later literature, we include the details here.

\begin{corollary}
    The space $c_0(\ell_1)$ and the spaces $\ell_p(\ell_1)$, $1 < p < \infty$, are primary.
\end{corollary}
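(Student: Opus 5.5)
We derive this from Theorem~\ref{th: l_1-iterated} with $X=c_0$ or $X=\ell_p$, $1<p<\infty$, via the standard two-step scheme of Section~\ref{sec:prelim}. The unit vector bases of $c_0$ and $\ell_p$ are symmetric, carry symmetric norms, and are not equivalent to the unit vector basis of $\ell_1$. Hence $E=c_0(\ell_1)$ or $E=\ell_p(\ell_1)$ has the $2^+$-PFP.

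The first step is the projection dichotomy. Let $P\colon E\to E$ be a bounded projection. By the PFP applied to $T=P$, there are $U,V\in\Bop(E)$ with $UPV=\Id_E$ or $U(\Id_E-P)V=\Id_E$. Say $UPV=\Id_E$, and put $Q=\Id_E-P$ in the other case. Then $PV\colon E\to PE$ is an isomorphic embedding, since $\norm{x}\le \norm{U}\,\norm{PVx}$. Moreover, $PVU|_{PE}$ is a projection of $PE$ onto $PV(E)$: indeed $(PVU)(PVU)=PV(UPV)U=PVU$. So $PE$ contains a complemented copy of $E$. Trivially, $PE$ is also complemented in $E$.

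The second step invokes Theorem~\ref{th: pelcdecmethod}\ref{dec:methodb} with $X=E$ and $Y=PE$. We need $E\simeq c_0(E)$, respectively $E\simeq\ell_p(E)$. This holds because
\[
c_0(c_0(\ell_1))\simeq c_0(\mathbb N\times\mathbb N;\ell_1)\simeq c_0(\ell_1),
\]
by reindexing $\mathbb N\times\mathbb N$ with $\mathbb N$, and the isometric regrouping is the same for $\ell_p$. Then Pe{\l}czy\'nski's method yields $PE\simeq E$, or $(\Id_E-P)E\simeq E$ in the other case. This gives primariness.

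No step is a genuine obstacle. The only point requiring care is checking that the decomposition method applies in the form~\ref{dec:methodb}, which needs only that $E$ is self-similar under its outer sum. Verifying that self-similarity is the isometric identification above.
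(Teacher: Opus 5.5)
Your proposal is correct and follows the same route as the paper: you obtain the $2^+$-PFP from Theorem~\ref{th: l_1-iterated} with $X=c_0$ or $X=\ell_p$, and then apply Pe{\l}czy\'nski's decomposition method using the $c_0$- or $\ell_p$-stability of the space. The only difference is that you spell out a standard step the paper leaves implicit: $UPV=\Id_E$ makes $PV$ an embedding whose range is complemented in $PE$ via $PVU|_{PE}$.
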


\begin{proof}
    Apply Theorem~\ref{th: l_1-iterated} with $X=c_0$ and with $X=\ell_p$, $1 < p < \infty$. Since the canonical bases of $c_0$ and $\ell_p$ are symmetric, it follows that $c_0(\ell_1)$ and $\ell_p(\ell_1)$ have the $2^+$-PFP. Since they are $c_0$- and $\ell_p$-stable, respectively, Pe\l czy\'nski's decomposition method, Theorem~\ref{th: pelcdecmethod}, yields their primariness.
\end{proof}

\subsection{The \texorpdfstring{$\ell_\infty$}{l-infinity}-case}  \label{sec: l_infty-case-separable} 

We have seen that if a Banach space $X$ has a symmetric basis, then for every operator $T \colon X \to X$ one can pass to a subsequence of the original basis so that either $T$ or $\Id_X - T$ acts, up to an arbitrarily small perturbation, as a diagonal operator whose diagonal entries are uniformly bounded away from zero. This then allows us to factor the identity on $X$.

An inspection of the proof reveals that, for this construction, we require two properties. First, for each fixed $n \in \mathbb{N}$, we need
\begin{equation*}
\lim_{k \to \infty} e_k^* T e_n = 0,
\end{equation*}
meaning that each basis vector contributes, up to a small perturbation, to only finitely many coordinates. This followed easily from the fact that $(e_n)_{n \in \mathbb{N}}$ was a basis. Second, we need that
\begin{equation*}
    \lim_{k \to \infty} e_n^* T e_k = 0,
\end{equation*}
meaning that each coordinate is influenced, up to a small perturbation, by only finitely many basis vectors. This is essentially Lemma~\ref{lmm: sym-basis-affected}. Using these properties, a straightforward iterative argument yields the desired diagonal form. More concretely, one exploits the two conditions to construct a subsequence $(e_{n_k})_{k \in \N}$ so that, for every $k \in \N$, we have that $\sum_{r \not= k} |e^*_{n_k} T e_{n_r}|$ is arbitrarily small. In other words, the off-diagonal interactions between the images are negligible, which leads to the required diagonalisation. \\

In this section, we consider the case $X = \ell_\infty$. Although this space does not possess a Schauder basis in the usual sense, one may still view the canonical unit vectors $e_n = (0, \dots, 0, 1, 0, \dots)$, with $1$ in the $n$th coordinate, as playing the role of a basis. This can be formalised by identifying $\ell_\infty$ with the dual of $\ell_1$, in which case the canonical vectors $(e_n)_{n \in \mathbb{N}}$ form a weak$^*$-Schauder basis. For our purposes, we simply emphasise that the unit vectors $(e_n)_{n \in \N}$ in $\ell_\infty$ exhibit sufficient basis-like behaviour. Consequently, a suitable modification of the techniques discussed previously, combined with some ideas of Lindenstrauss \cite{Lin1967}, will enable us to establish the desired results.

\subsubsection{The space $\ell_\infty$} \label{subsec: l_infty-case-separable} Let $(e_n)_{n \in \N}$ denote the canonical unit vectors in $\ell_\infty$, and let $e_n^*$ be the coordinate functionals defined by $e_n^*(x) = x(n)$ for $x = (x(k))_{k \in \N} \in \ell_\infty$. As before, our goal is to argue that we can pass to a subsequence so that the cross-interaction between images of the unit vectors $(e_n)_{n \in \N}$ is small. The following results are streamlined version tailored to our purposes; the underlying ideas go back to the original paper of Lindenstrauss, see \cite[Proof of Lemma 5]{Lin1967}. For a vector $x\in\ell_\infty$ and $\varepsilon>0$, define
\[
    M(x,\varepsilon)=\{m\in\N: |e_m^*x|>\varepsilon\}.
\]

\begin{lemma}\label{lmm: Lind-preliminary-1}
    Let $(x_n)_{n \in \N}$ be a sequence in $\ell_\infty$ and suppose that there exists $K > 0$ such that, for every finite set $F\subseteq\N$ and every scalar family $(\lambda_n)_{n\in F}$,
    \begin{equation*}
        \norm{\sum_{n\in F} \lambda_n x_n} \leq K \max_{n\in F} |\lambda_n|.
    \end{equation*}
    Then the following hold:
    \begin{enumerate}[label = (\alph*), ref = (\alph*)]
        \item \label{it: l_infty-to-past} For every $m \in \N$, $\lim_{n \to \infty} e^*_m x_n = 0$.
        \item \label{it: l_infty-to-future} For every $\varepsilon > 0$, every infinite set $M \subseteq \N$ and every $N_0 \in \N$, there exists $n \in M$ with $n>N_0$ such that $M \setminus M(x_n, \varepsilon)$ is infinite.
    \end{enumerate}
\end{lemma}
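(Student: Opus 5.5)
The two parts rest on the same idea. If a single coordinate $m$ sees a value larger than $\varepsilon$ in modulus on many of the $x_n$ at once, then a suitably unimodular-weighted finite sum has $m$th coordinate exceeding $K$. This contradicts the upper $c_0$-estimate $\norm{\sum_{n\in F}\lambda_n x_n}\le K\max_{n\in F}|\lambda_n|$. We fix $\varepsilon>0$ throughout and an integer $N>K/\varepsilon$.

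The basic computation is as follows. Suppose $F$ is a set of $N$ indices and $m\in\N$ satisfies $|e_m^*x_n|\ge\varepsilon$ for every $n\in F$. Put $\lambda_n=\overline{\operatorname{sgn}(e_m^*x_n)}$, a unimodular scalar in the complex case and a sign in the real case, so that $\lambda_n e_m^*x_n=|e_m^*x_n|$. Then, since $\norm{e_m^*}=1$ on $\ell_\infty$,
\[
K\ \ge\ \Bigl\|\sum_{n\in F}\lambda_n x_n\Bigr\|\ \ge\ \Bigl|e_m^*\Bigl(\sum_{n\in F}\lambda_n x_n\Bigr)\Bigr| \ =\ \sum_{n\in F}|e_m^*x_n|\ \ge\ N\varepsilon\ >\ K,
\]
which is a contradiction. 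So no coordinate can be ``large'' simultaneously on $N$ of the vectors $x_n$.

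For \ref{it: l_infty-to-past}, argue by contradiction. If $e_m^*x_n\not\to0$ for some $m$, then there are $\varepsilon>0$ and infinitely many $n$ with $|e_m^*x_n|\ge\varepsilon$. Taking $N$ of these indices as $F$ and applying the computation above at the fixed coordinate $m$ gives the contradiction.

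For \ref{it: l_infty-to-future}, suppose the conclusion fails. Then for every $n\in M$ with $n>N_0$ the set $M\setminus M(x_n,\varepsilon)$ is finite, that is, $M(x_n,\varepsilon)\cap M$ is cofinite in $M$. Choose $N$ distinct indices $n_1<\dots<n_N$ in $M$, all larger than $N_0$. A finite intersection of cofinite subsets of the infinite set $M$ is still infinite, so some $m\in M$ lies in $M(x_{n_i},\varepsilon)$ for every $i$. That is, $|e_m^*x_{n_i}|>\varepsilon$ for all $i$. The computation above with $F=\{n_1,\dots,n_N\}$ now gives the contradiction.

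The only delicate point is the choice of phases $\lambda_n$, which makes the contributions at coordinate $m$ add without cancellation. This is exactly the device already used in Lemma~\ref{lmm: sym-basis-affected} and Lemma~\ref{lmm: ell1-iterated-interference}. Otherwise the argument is a direct pigeonhole, so there is no real obstacle.
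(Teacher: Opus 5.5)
Your proposal is correct and follows essentially the same route as the paper: in both parts you assume the conclusion fails, find one coordinate $m$ that is large on $N>K/\varepsilon$ of the vectors $x_n$ (via infinitely many indices in (a), and via a finite intersection of cofinite subsets of $M$ in (b)), and choose unimodular phases so that the $m$th coordinate of $\sum_{n\in F}\lambda_n x_n$ exceeds $K$. The only difference is that you isolate the phase-alignment estimate as a common computation, which the paper writes out separately in (a) and (b).
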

\begin{proof}
    We prove \ref{it: l_infty-to-past} by contradiction. If it fails, then there are $m_0\in\N$, $\delta>0$ and distinct indices $n_1,n_2,\dots$ such that $|e^*_{m_0}x_{n_k}|>\delta$ for all $k$. Choose $L\in\N$ with $L\delta>K$ and choose unimodular scalars $\lambda_1,\dots,\lambda_L$ so that the numbers $\lambda_k e^*_{m_0}x_{n_k}$ have the same argument. Then
    \begin{equation*}
        K \geq \norm{\sum_{k=1}^L \lambda_k x_{n_k}}
        \geq \left|e^*_{m_0}\Bigl(\sum_{k=1}^L \lambda_k x_{n_k}\Bigr)\right|
        =\sum_{k=1}^L |e^*_{m_0}x_{n_k}|
        >L\delta,
    \end{equation*}
    a contradiction.

    We prove \ref{it: l_infty-to-future}. Assume it is false. Then there are $\varepsilon>0$, an infinite set $M\subseteq\N$ and $N_0\in\N$ such that $M\setminus M(x_n,\varepsilon)$ is finite for every $n\in M$ with $n>N_0$. Choose $L\in\N$ with $L\varepsilon>K$, and choose distinct $n_1,\dots,n_L\in M$ with $n_k>N_0$. Since each $M\setminus M(x_{n_k},\varepsilon)$ is finite, the intersection
    \begin{equation*}
        M\cap\bigcap_{k=1}^L M(x_{n_k},\varepsilon)
    \end{equation*}
    is non-empty; choose $m$ in it. Pick unimodular scalars $\lambda_1,\dots,\lambda_L$ aligning the numbers $e_m^*x_{n_k}$. Then
    \begin{equation*}
        K \geq \norm{\sum_{k=1}^L \lambda_k x_{n_k}}
        \geq \left|e_m^*\Bigl(\sum_{k=1}^L \lambda_k x_{n_k}\Bigr)\right|
        =\sum_{k=1}^L |e_m^*x_{n_k}|
        >L\varepsilon,
    \end{equation*}
    a contradiction.
\end{proof}

We will also need the following result.

\begin{lemma}\label{lmm: l_inf-separate-interference}
    Let $(x_n)_{n \in \N} \subseteq \ell_\infty$ be a sequence of vectors and suppose that there exists $K > 0$ such that, for every finite set $F\subseteq\N$ and every scalar family $(\lambda_n)_{n\in F}$,
    \begin{equation}\label{eq: l_infty_summable}
        \norm{\sum_{n\in F} \lambda_n x_n} \leq K \max_{n\in F} |\lambda_n|.
    \end{equation}
     Then for every $\varepsilon > 0$ there exists $(n_k)_{k \in \N}$ such that
     \begin{equation*}
         \sum_{j \not=k} |e_{n_k}^* x_{n_j}| < \varepsilon \quad \text{ for all } k \in \N.
     \end{equation*}
\end{lemma}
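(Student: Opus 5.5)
We construct $(n_k)_{k\in\N}$ recursively, together with a decreasing sequence of infinite reservoirs $\N=M_0\supseteq M_1\supseteq M_2\supseteq\dots$. The target estimate $\sum_{j\ne k}|e_{n_k}^*x_{n_j}|<\varepsilon$ splits into two parts. The \emph{past} part collects the terms with $j<k$. The \emph{future} part collects the terms with $j>k$. We handle them separately, allotting $\varepsilon/2$ to each. The past part uses Lemma~\ref{lmm: Lind-preliminary-1}\ref{it: l_infty-to-past}. The future part uses Lemma~\ref{lmm: Lind-preliminary-1}\ref{it: l_infty-to-future} with thresholds $\varepsilon 2^{-j-2}$.

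Suppose $n_1<\dots<n_{k-1}$ and $M_{k-1}$ have been chosen. We maintain the following invariant: every $m\in M_{k-1}$ satisfies
\[
\sum_{j<k}|e_m^*x_{n_j}|<\tfrac{\varepsilon}{2}\Bigl(1-2^{-(k-1)}\Bigr),
\]
or some similarly summable bound. The new index $n_k$ is taken from $M_{k-1}$. Once $n_k$ is fixed, the past estimate for $n_k$ is immediate from the invariant.

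For the future part we apply Lemma~\ref{lmm: Lind-preliminary-1}\ref{it: l_infty-to-future} with $M=M_{k-1}$, threshold $\varepsilon 2^{-k-2}$ and $N_0=n_{k-1}$. This gives $n_k\in M_{k-1}$ with $n_k>n_{k-1}$ such that $M_{k-1}\setminus M(x_{n_k},\varepsilon2^{-k-2})$ is infinite. We then intend to shrink the reservoir to
\[
M_k=\bigl(M_{k-1}\setminus M(x_{n_k},\varepsilon2^{-k-2})\bigr)\cap(n_k,\infty).
\]
Every later index $n_\ell$ with $\ell>k$ lies in $M_k$, so $|e_{n_\ell}^*x_{n_k}|\le\varepsilon2^{-k-2}$. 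This is the wrong direction for the target sum: it controls the coordinate of a \emph{future} row evaluated on the \emph{past} vector $x_{n_k}$. Summed over $k<\ell$, it bounds the past part for row $n_\ell$ by $\sum_k\varepsilon2^{-k-2}<\varepsilon/4$. So part (b) in fact handles the past sum, and the invariant above is automatic from it.

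The future part $\sum_{j>k}|e_{n_k}^*x_{n_j}|$ asks that the coordinate $n_k$ be small on all later vectors. For this we use Lemma~\ref{lmm: Lind-preliminary-1}\ref{it: l_infty-to-past}: for the fixed $m=n_k$, we have $e_{n_k}^*x_n\to0$. So we may further shrink $M_k$ by removing the finitely many $n$ with $|e_{n_k}^*x_n|\ge\varepsilon 2^{-n-2}$. Here there is a subtlety: $e^*_{n_k}x_n\to0$ does not give a summable decay like $2^{-n}$. The fix is to impose the condition relative to the order of selection, not the value of $n$. At stage $\ell$, when choosing $n_\ell$, we additionally require
\[
|e_{n_k}^*x_{n_\ell}|<\varepsilon2^{-\ell-2}\qquad\text{for all }k<\ell.
\]
Only finitely many candidates are excluded, by (a) applied to the finitely many fixed coordinates $n_1,\dots,n_{\ell-1}$. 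Then $\sum_{\ell>k}|e_{n_k}^*x_{n_\ell}|<\varepsilon/4$.

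The main obstacle is compatibility at each stage. The candidate $n_\ell$ must be chosen from $M_{\ell-1}$ as given by (b), which is applied to an infinite set. It must also avoid a finite exceptional set coming from (a). We address this by first removing that finite set from $M_{\ell-1}$, which leaves it infinite, and only then applying (b), with $N_0$ large. Both requirements then hold simultaneously, and combining the two $\varepsilon/4$ bounds gives the claim. The uniform bound \eqref{eq: l_infty_summable} enters only through Lemma~\ref{lmm: Lind-preliminary-1}, which is where the analogue of Lemma~\ref{lmm: sym-basis-affected} fails for $\ell_\infty$ and is replaced by the reservoir-shrinking in (b).
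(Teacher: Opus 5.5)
Your argument is correct and is essentially the paper's. Lemma~\ref{lmm: Lind-preliminary-1}\ref{it: l_infty-to-future}, used with shrinking reservoirs, controls the entries $e^*_{n_k}x_{n_j}$ with $j<k$ (as you eventually note, after first assigning the two parts of the lemma the other way round), and part~\ref{it: l_infty-to-past} controls those with $j>k$. The only cosmetic difference is that the paper makes the $j>k$ terms summable by passing at stage $k$ to a sub-reservoir along which $e^*_{n_k}x_m$ decays geometrically, whereas you impose $|e^*_{n_k}x_{n_\ell}|<\varepsilon 2^{-\ell-2}$ for all $k<\ell$ when choosing $n_\ell$, which amounts to taking $N_0$ large in part~\ref{it: l_infty-to-future}.
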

\begin{proof}
Fix $\varepsilon>0$. We recursively choose a decreasing sequence of infinite sets
\[
\N = A_0\supseteq A_1\supseteq A_2\supseteq \ldots,
\]
and integers $n_k\in A_{k-1}$ such that for every $k\in\N$,
\begin{enumerate}[label=(\roman*), ref =(\roman*)]
    \item \label{it: proof1i} $n_k\in A_{k-1}$;
    \item \label{it: proof1ii} $|e_m^*x_{n_k}|<\varepsilon/2^{k+1}$ for every $m\in A_k$;
    \item \label{it: proof1iii} $\sum_{m\in A_k}|e_{n_k}^*x_m|<\varepsilon/2^{k+1}$.
\end{enumerate}
We start with $A_0=\N$. Assume that $A_{k-1}$ has been chosen. Apply Lemma~\ref{lmm: Lind-preliminary-1}\,\ref{it: l_infty-to-future} with $M=A_{k-1}$, with $N_0=0$ if $k=1$ and $N_0=n_{k-1}$ otherwise, and with parameter $\varepsilon/2^{k+2}$. We obtain $n_k\in A_{k-1}$ such that the set
\[
B_k:=A_{k-1}\setminus M(x_{n_k},\varepsilon/2^{k+2})
\]
is infinite. Therefore
\[
|e_m^*x_{n_k}|<\frac{\varepsilon}{2^{k+1}}
\qquad(m\in B_k).
\]
By Lemma~\ref{lmm: Lind-preliminary-1}\,\ref{it: l_infty-to-past}, we have
$e_{n_k}^*x_m\to 0$ as $m\to\infty$ along $B_k$.
Hence we may choose an infinite subset
$A_k=\{m_r^{(k)}:r\in\N\}\subseteq B_k$ such that
\[
|e_{n_k}^*x_{m_r^{(k)}}|<\frac{\varepsilon}{2^{k+1+r}}
\qquad(r\in\N).
\]
Then \ref{it: proof1ii} and \ref{it: proof1iii} hold.

Now fix $k\in\N$.
If $j<k$, then $n_k\in A_j$, so by \ref{it: proof1ii} at stage $j$,
\[
|e_{n_k}^*x_{n_j}|<\frac{\varepsilon}{2^{j+1}}.
\]
On the other hand, since $n_j\in A_k$ for every $j>k$, property \ref{it: proof1iii} at stage $k$ gives
\[
\sum_{j>k}|e_{n_k}^*x_{n_j}|
\le \sum_{m\in A_k}|e_{n_k}^*x_m|
<\frac{\varepsilon}{2^{k+1}}.
\]
Therefore,
\[
\sum_{j\neq k}|e_{n_k}^*x_{n_j}|
= \sum_{j<k}|e_{n_k}^*x_{n_j}| + \sum_{j>k}|e_{n_k}^*x_{n_j}|
< \sum_{j<k}\frac{\varepsilon}{2^{j+1}} + \frac{\varepsilon}{2^{k+1}}
< \varepsilon.
\]
This proves the lemma.
\end{proof}

If $(e_n)_{n \in \N}$ were a basis, then for every $x =(x_n)_{n \in \N}$ we would have $Tx = \sum_{n \in \N} x_n\, T e_n$, so that the action of $T$ would be completely determined by its behaviour on the vectors $(e_n)_{n \in \N}$. However, this representation is no longer available in $\ell_\infty$, since $(e_n)_{n \in \N}$ does not form a Schauder basis of the space. We now see how to recover this property by exploiting a clever observation of Lindenstrauss, as seen in \cite[Proof of the main theorem]{Lin1967}. 

For an element $x \in \ell_\infty$, we define the \emph{support of} $x$ as $\supp(x) = \{n \in \N: e_n^*(x) \not = 0\}$. For an infinite subset $M \subseteq \N$, we write $\ell_\infty(M)$ for the $1$-complemented subspace of $\ell_\infty$ defined by $\ell_\infty(M) = \{ x \in \ell_\infty : \supp(x) \subseteq M \}$ and denote by $P_M: \ell_\infty \to \ell_\infty$ the canonical projection onto it.

\begin{proposition}\label{prop: l_inf-almost-basis-operators}
    Let $T\colon \ell_\infty \to \ell_\infty$ be an operator. Then there exists an infinite subset $M \subseteq \N$ such that for all $m \in \N$ and all $x = (x(n))_{n \in \N} \in \ell_\infty(M)$ we have
    \begin{equation*}
        e_m^* Tx = \sum_{n \in \N} x(n) (e_m^* T e_n).
    \end{equation*}
\end{proposition}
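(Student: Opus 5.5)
The plan is to follow Lindenstrauss' argument and treat each row functional $f_m = e_m^* T \in \ell_\infty^*$ as a bounded, finitely additive scalar measure on $\mathcal P(\N)$. Concretely, set $\mu_m(A) = f_m(\mathbf 1_A)$. By the standard identification $\ell_\infty^* \cong ba(\N)$, $\mu_m$ has finite total variation $|\mu_m|(\N) \le \norm{T}$, and $f_m(x) = \int x\, d\mu_m$ for every $x\in\ell_\infty$. I would then use the decomposition $\mu_m = \mu_m^c + \mu_m^s$, where:
\begin{itemize}
\item $\mu_m^c(A) = \sum_{n\in A} \mu_m(\{n\}) = \sum_{n\in A} e_m^* T e_n$ is the countably additive part;
\item the purely finitely additive part $\mu_m^s$ vanishes on finite sets.
\end{itemize}
The family $(e_m^* T e_n)_{n}$ is absolutely summable with sum at most $|\mu_m|(\N)$ (choose unimodular signs on a finite set). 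Hence $\int x\, d\mu_m^c = \sum_n x(n)\, e_m^* T e_n$ for all $x\in \ell_\infty$, with absolute convergence. The claimed identity on $\ell_\infty(M)$ therefore reduces to finding a single infinite $M$ such that $|\mu_m^s|(M)=0$ for every $m\in\N$ simultaneously. Indeed, for $x$ supported in $M$ we have $\bigl|\int x\, d\mu_m^s\bigr| \le \norm{x}_\infty\, |\mu_m^s|(M)$.

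To find $M$, I would fix an uncountable almost disjoint family $\{N_\alpha : \alpha < \omega_1\}$ of infinite subsets of $\N$; this means pairwise intersections are finite, and such a family can be obtained, for instance, from branches of the binary tree. The key counting step is as follows. Fix $m$ and $k\in\N$, and take distinct $\alpha_1,\dots,\alpha_L$. The sets
\[
N_{\alpha_i}' = N_{\alpha_i} \setminus \bigcup_{j\ne i} N_{\alpha_j}
\]
differ from $N_{\alpha_i}$ by a finite set and are pairwise disjoint. Since $\mu_m^s$ (and hence $|\mu_m^s|$) vanishes on finite sets, finite additivity gives
\[
\sum_{i=1}^L |\mu_m^s|(N_{\alpha_i}) = \sum_{i=1}^L |\mu_m^s|(N_{\alpha_i}') \le \norm{\mu_m^s} \le 2\norm{T}.
\]
Consequently, only finitely many $\alpha$ satisfy $|\mu_m^s|(N_\alpha) > 1/k$. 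Taking the union over $k$, only countably many $\alpha$ have $|\mu_m^s|(N_\alpha) > 0$. Taking a further countable union over $m\in\N$, the set of bad indices is countable. Choosing any $\alpha$ outside it and putting $M = N_\alpha$ finishes the proof.

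The main obstacle is the measure-theoretic bookkeeping rather than any hard estimate. I would need to justify that $|\mu_m^s|$ is a finitely additive measure vanishing on finite sets. This follows from $\mu_m^s(\{n\}) = 0$ together with the fact that the variation of a finitely additive measure on a finite set is the sum over its points. I would also need to check that $\int x\, d\mu_m^c$ equals the absolutely convergent series. Should one prefer to avoid the $ba(\N)$ machinery, the same argument can be phrased directly with the functional $g_m = f_m - \sum_n (e_m^* T e_n)\, e_n^*$, which annihilates $c_{00}$. In that form one works with $\sup\{|g_m(x)| : \norm{x}\le1,\ \supp x\subseteq A\}$ in place of $|\mu_m^s|(A)$, and the almost-disjointness count goes through verbatim by summing $g_m$ over suitably signed disjointified vectors.
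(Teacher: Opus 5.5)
Your proposal is correct and is essentially the paper's argument. The paper works directly with $\Lambda_m(x)=e_m^*Tx-\sum_{n} x(n)\,e_m^*Te_n$, which is exactly your $g_m$ and vanishes on finitely supported vectors. Using disjointified tails of an uncountable almost disjoint family, it shows that for each $m$ and $\varepsilon>0$ only finitely many members support some $x$ in the unit ball with $|\Lambda_m(x)|>\varepsilon$, and then picks a member outside the resulting countable union. Your quantity $|\mu_m^s|(N_\alpha)$ is the measure-theoretic name for $\sup\{|\Lambda_m(x)|:\norm{x}\le 1,\ \supp x\subseteq N_\alpha\}$, so the two routes differ only in packaging.
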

\begin{proof}
For each fixed $m\in\N$ and $N\in\N$, choose unimodular scalars
$\lambda_1,\dots,\lambda_N$ so that
\[
\sum_{n=1}^N |e_m^*Te_n|
= \Bigl|e_m^*T\Bigl(\sum_{n=1}^N \lambda_n e_n\Bigr)\Bigr|
\le \norm{T}.
\]
Hence
\[
\sum_{n\in\N}|e_m^*Te_n|\le \norm{T}.
\]
Therefore, for every $x=(x(n))_{n\in\N}\in\ell_\infty$, the series
\[
\sum_{n\in\N} x(n)\, e_m^*Te_n
\]
is absolutely convergent, and we may define
\[
\Lambda_m(x)=e_m^*Tx-\sum_{n\in\N} x(n)\,e_m^*Te_n,
\qquad
\Theta(m,x)=|\Lambda_m(x)|.
\]
Clearly,
\[
\Theta(m,x)\le 2\norm{T}\norm{x}_{\ell_\infty}
\qquad (m\in\N,\ x\in\ell_\infty).
\]

If $x$ is finitely supported, then by linearity
\[
Tx=\sum_{n\in\N} x(n)\,Te_n,
\]
so $\Lambda_m(x)=0$. Consequently, for every $N\in\N$,
\[
\Lambda_m(x)=\Lambda_m\bigl(P_{[N,\infty)}x\bigr),
\]
because $x-P_{[N,\infty)}x$ is finitely supported. Thus $\Theta(m,x)$ depends only on the tail of $x$.

Moreover, given $x,y\in\ell_\infty$, choose a unimodular scalar $\lambda\in\mathbb K$
so that $\Lambda_m(x)$ and $\lambda\Lambda_m(y)$ have the same argument (or one of them is zero).
Then
\[
\Theta(m,x+\lambda y)
=|\Lambda_m(x)+\lambda\Lambda_m(y)|
=|\Lambda_m(x)|+|\Lambda_m(y)|
=\Theta(m,x)+\Theta(m,y).
\]

Now let $(M_\gamma)_{\gamma\in\Gamma}$ be an uncountable almost disjoint family of infinite subsets of $\N$.
For $m\in\N$ and $\varepsilon>0$, define
\[
\Gamma(m,\varepsilon)=
\Bigl\{\gamma\in\Gamma:\exists x\in B_{\ell_\infty(M_\gamma)}\text{ with }\Theta(m,x)>\varepsilon\Bigr\}.
\]
We claim that $\Gamma(m,\varepsilon)$ is finite.

Indeed, suppose that $\gamma_1,\dots,\gamma_k\in\Gamma(m,\varepsilon)$ are distinct.
Choose $x_j\in B_{\ell_\infty(M_{\gamma_j})}$ with $\Theta(m,x_j)>\varepsilon$.
Since the family is almost disjoint, there exists $N\in\N$ such that the vectors
$P_{[N,\infty)}x_1,\dots,P_{[N,\infty)}x_k$ have pairwise disjoint supports.
Replacing each $x_j$ by $P_{[N,\infty)}x_j$, we preserve the inequality
$\Theta(m,x_j)>\varepsilon$ and may therefore assume from the outset that the supports are pairwise disjoint.

Choose unimodular scalars $\lambda_1,\dots,\lambda_k$ inductively so that
\[
\Theta\Bigl(m,\sum_{j=1}^k \lambda_j x_j\Bigr)=\sum_{j=1}^k \Theta(m,x_j).
\]
Because the supports are disjoint and each $\norm{x_j}_\infty\le 1$, the vector
\[
 x=\sum_{j=1}^k \lambda_j x_j
\]
belongs to $B_{\ell_\infty}$. Hence
\[
2\norm{T}
\ge \Theta(m,x)
=\sum_{j=1}^k \Theta(m,x_j)
>k\varepsilon.
\]
Thus $k<2\norm{T}/\varepsilon$, proving that $\Gamma(m,\varepsilon)$ is finite.

Finally, set
\[
\widetilde\Gamma=\bigcup_{m\in\N}\bigcup_{r\in\N}\Gamma(m,1/r).
\]
This is countable, while $\Gamma$ is uncountable, so choose
$\gamma_0\in\Gamma\setminus\widetilde\Gamma$ and put $M=M_{\gamma_0}$.
Then for every $m\in\N$ and every $x\in\ell_\infty(M)$ we have $\Theta(m,x)=0$, that is,
\[
 e_m^*Tx=\sum_{n\in\N}x(n)\,e_m^*Te_n.
\]
This is exactly the desired conclusion.
\end{proof}

The previous proposition shows that, after restricting to a suitable isometric copy of $\ell_\infty$, the operator $T$ can be represented as an infinite matrix. Equivalently, this restriction of $T$ may be viewed as a weak$^*$–to–weak$^*$ continuous operator. As a consequence, we obtain the following corollary, which, to the best of our knowledge, has not been noted in the literature. In the following, we see $P_M$ as landing inside of $\ell_\infty(M)$ in the natural way.

\begin{corollary}\label{cor: weak-star-continuity}
    Let $T\colon \ell_\infty \to \ell_\infty$. Then there exists an infinite subset $M \subseteq \N$ such that $P_M T|_{\ell_\infty(M)} \colon \ell_\infty(M) \to \ell_\infty(M)$ is weak$^*$-to-weak$^*$ continuous, where $\ell_\infty(M)$ is seen as the dual space of $\ell_1(M)$.
\end{corollary}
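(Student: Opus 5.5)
The plan is to take the set $M$ produced by Proposition~\ref{prop: l_inf-almost-basis-operators} and show that the compressed operator $S:=P_M T|_{\ell_\infty(M)}$ is the adjoint of a bounded operator on $\ell_1(M)$. Since an operator between dual spaces is weak$^*$-to-weak$^*$ continuous exactly when it is an adjoint, this gives the corollary.

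First, I will define the candidate predual operator $R\colon \ell_1(M)\to\ell_1(M)$ on unit vectors by
\[
R e_m=\sum_{n\in M}(e_m^*Te_n)\,e_n \qquad (m\in M).
\]
In the proof of Proposition~\ref{prop: l_inf-almost-basis-operators} we already showed $\sum_{n\in\N}|e_m^*Te_n|\le\norm{T}$. Hence each $Re_m$ lies in $\ell_1(M)$ with norm at most $\norm{T}$. Because the unit vectors form a $1$-unconditional basis of $\ell_1(M)$, $R$ extends linearly to a bounded operator with $\norm{R}\le\norm{T}$.

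Second, I will check that $R^*=S$ by using the proposition. Take $x\in\ell_\infty(M)=\ell_1(M)^*$ and $m\in M$. By definition of the adjoint,
\[
\langle R^*x,e_m\rangle=\langle x,Re_m\rangle=\sum_{n\in M}x(n)\,e_m^*Te_n.
\]
Since $x$ is supported in $M$, Proposition~\ref{prop: l_inf-almost-basis-operators} identifies this sum with $e_m^*Tx$, which in turn equals $e_m^*Sx$ because $m\in M$. The coordinates of two elements of $\ell_\infty(M)$ indexed by $M$ determine them, so $R^*x=Sx$ for every $x$. Therefore $S=R^*$ is weak$^*$-to-weak$^*$ continuous.

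I do not expect a real obstacle, since all the substantive work is done in Proposition~\ref{prop: l_inf-almost-basis-operators}. The one point needing care is to keep the duality pairing between $\ell_\infty(M)$ and $\ell_1(M)$ consistent, so that $P_M$ is read as landing in $\ell_\infty(M)$. As a fallback, one can argue directly: if a bounded net $x_\alpha\to x$ weak$^*$, then $x_\alpha(n)\to x(n)$ for every $n$. Dominated convergence against the summable row $(e_m^*Te_n)_n$ then gives $e_m^*Sx_\alpha\to e_m^*Sx$ for each $m$, which is weak$^*$ convergence on bounded sets. The Krein--Smulian theorem upgrades this to full weak$^*$ continuity.
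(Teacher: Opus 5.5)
Your proposal is correct and follows the paper's proof. Your $R$ is exactly the paper's predual operator $S$, given by $(Sa)(n)=\sum_{m\in M}a_m\,e_m^*Te_n$, and identifying its adjoint with $P_MT|_{\ell_\infty(M)}$ via Proposition~\ref{prop: l_inf-almost-basis-operators} is the same computation. One small correction: boundedness of $R$ comes from the $\ell_1$ triangle inequality $\|\sum_m a_mRe_m\|_1\le\sum_m|a_m|\,\|Re_m\|_1$, not from unconditionality, though your bound $\|R\|\le\|T\|$ is right.
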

\begin{proof}
Let $M\subseteq\N$ be given by Proposition~\ref{prop: l_inf-almost-basis-operators}, and for $m,n\in M$ set
\[
a_{m,n}=e_m^*Te_n.
\]
By the first part of the proof of Proposition~\ref{prop: l_inf-almost-basis-operators},
for each fixed $m\in M$ we have
\[
\sum_{n\in M}|a_{m,n}|\le \norm{T}.
\]
Define $S\colon \ell_1(M)\to \ell_1(M)$ by
\[
(Sa)(n)=\sum_{m\in M} a_m a_{m,n},
\qquad a=(a_m)_{m\in M}\in\ell_1(M),\ \ n\in M.
\]
Then
\[
\norm{Sa}_{\ell_1(M)}
\le \sum_{n\in M}\sum_{m\in M}|a_m|\,|a_{m,n}|
= \sum_{m\in M}|a_m|\sum_{n\in M}|a_{m,n}|
\le \norm{T}\,\norm{a}_{\ell_1(M)},
\]
so $S\in\mathcal B(\ell_1(M))$.

Now let $x\in\ell_\infty(M)$ and $a\in\ell_1(M)$. By Proposition~\ref{prop: l_inf-almost-basis-operators},
\begin{align*}
\langle P_MTx,a\rangle
&= \sum_{m\in M} a_m e_m^*Tx
 = \sum_{m\in M} a_m \sum_{n\in M} x(n)a_{m,n} \\
&= \sum_{n\in M} x(n)\sum_{m\in M} a_m a_{m,n}
 = \sum_{n\in M} x(n)(Sa)(n)
 = \langle x,Sa\rangle.
\end{align*}
Thus
\[
P_MT|_{\ell_\infty(M)}=S^*,
\]
so that $P_MT|_{\ell_\infty(M)}$ is weak$^*$-to-weak$^*$ continuous.
\end{proof}

In particular, one may simply invoke the fact that $\ell_1$ has the $2^+$-PFP property to deduce the corresponding result for $\ell_\infty$. Indeed, given an operator $T \colon \ell_\infty \to \ell_\infty$, applying Corollary~\ref{cor: weak-star-continuity} and restricting to a suitable isometric copy of $\ell_\infty$, we may assume that $T$ is weak$^*$–to–weak$^*$ continuous. Consequently, there exists an operator $S \colon \ell_1 \to \ell_1$ such that $T = S^*$. Since $\ell_1$ has the $2^+$-PFP property, either $S$ or $\Id_{\ell_1} - S$ factors the identity on $\ell_1$ with appropriate constant. Dualising, it follows that the same holds for either $T$ or $\Id_{\ell_\infty} - T$. It is conceivable that similar arguments apply to spaces of the form $X(\ell_\infty)$ and $\ell_\infty(X)$ whenever $X$ is a dual space; however, we do not pursue this direction here.

Nevertheless, since we wish to emphasise the method based on bases, we provide a different proof using the previous results. Observe that the argument is now essentially the same as that of Theorem~\ref{th: Cassaza-Lin}.

\begin{theorem}\label{th: UPFP_l_infty}
    The space $\ell_\infty$ has the $2^+$-PFP.
\end{theorem}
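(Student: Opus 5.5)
Fix $T\colon\ell_\infty\to\ell_\infty$ and $\varepsilon>0$, and choose $\delta>0$ with $2/(1-\delta)<2+\varepsilon$. The plan is to run the argument of Theorem~\ref{th: Cassaza-Lin}, with Proposition~\ref{prop: l_inf-almost-basis-operators} replacing the basis expansion and Lemma~\ref{lmm: l_inf-separate-interference} replacing the thinning based on Lemma~\ref{lmm: sym-basis-affected}. We pass to isometric, $1$-complemented copies $\ell_\infty(M)$ several times. Each passage is harmless: if $\Id$ factors through $P_MT|_{\ell_\infty(M)}$ with constant $C$, then composing with the isometry $\ell_\infty\simeq\ell_\infty(M)$, with $P_M$ and with the inclusion gives a factorisation through $T$ with the same constant. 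The same applies to $\Id-T$, because $P_M(\Id-T)|_{\ell_\infty(M)}=\Id_{\ell_\infty(M)}-P_MT|_{\ell_\infty(M)}$.

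\textbf{Step 1: reduce to a matrix.} Apply Proposition~\ref{prop: l_inf-almost-basis-operators} to get $M$ such that $e_m^*Tx=\sum_n x(n)\,e_m^*Te_n$ for all $x\in\ell_\infty(M)$ and all $m$. This representation survives passing to any infinite subset $M'\subseteq M$, since $\ell_\infty(M')\subseteq\ell_\infty(M)$.

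\textbf{Step 2: diagonal dichotomy.} Since $e_n^*Te_n+e_n^*(\Id-T)e_n=1$, there is an infinite $M'\subseteq M$ on which either $|e_n^*Te_n|\ge\tfrac12$ throughout or $|e_n^*(\Id-T)e_n|\ge\tfrac12$ throughout. Replacing $T$ by $\Id-T$ if necessary, we may assume the first case. Note that $\Id-T$ also admits the matrix representation on $\ell_\infty(M)$.

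\textbf{Step 3: off-diagonal thinning.} Enumerate $M'$ and set $x_n=P_{M'}Te_n$ for $n\in M'$. For any finite $F$ and scalars $(\lambda_n)_{n\in F}$,
\[
\Bigl\|\sum_{n\in F}\lambda_nx_n\Bigr\|\le\|T\|\max_{n\in F}|\lambda_n|,
\]
because $\|\sum\lambda_ne_n\|_\infty=\max|\lambda_n|$. So the hypothesis \eqref{eq: l_infty_summable} holds with $K=\|T\|$. Lemma~\ref{lmm: l_inf-separate-interference} with parameter $\delta/2$ then gives a subsequence $(n_k)$ in $M'$ such that
\[
\sum_{j\ne k}|e_{n_k}^*Te_{n_j}|<\delta/2\qquad\text{for every }k.
\]
Let $N=\{n_k:k\in\N\}$.

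\textbf{Step 4: invert.} Define the diagonal operator $D$ on $\ell_\infty(N)$ by $De_{n_k}=(e_{n_k}^*Te_{n_k})^{-1}e_{n_k}$, extended coordinatewise; then $\|D\|\le2$. Put $S=\Id-DP_NT|_{\ell_\infty(N)}$. For $x\in\ell_\infty(N)$, Step 1 gives
\[
e_{n_k}^*Sx=-\sum_{j\ne k}x(n_j)\,\frac{e_{n_k}^*Te_{n_j}}{e_{n_k}^*Te_{n_k}},
\]
so $|e_{n_k}^*Sx|\le2\cdot\frac{\delta}{2}\|x\|=\delta\|x\|$. Taking the supremum over $k$ gives $\|S\|\le\delta$; this row-sum bound is the $\ell_\infty$ analogue of Lemma~\ref{lmm: small-norm}. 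Hence $R=(\Id-S)^{-1}$ exists with $\|R\|\le1/(1-\delta)$, and $\Id_{\ell_\infty(N)}=RDP_N\,T\,\iota$. This yields a factorisation constant below $2+\varepsilon$.

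\textbf{Main obstacle.} The delicate point is Step 4. Without Step 1, $e_{n_k}^*Tx$ could not be computed from the matrix entries, since $(e_n)$ is not a basis of $\ell_\infty$ and $T$ need not be weak$^*$-continuous. The proof therefore relies essentially on Proposition~\ref{prop: l_inf-almost-basis-operators}, together with the observation that its conclusion is inherited by all infinite subsets.
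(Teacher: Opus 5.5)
Your proposal is correct and follows essentially the same route as the paper's proof. Both use Proposition~\ref{prop: l_inf-almost-basis-operators} to get the matrix representation, then the diagonal dichotomy, then Lemma~\ref{lmm: l_inf-separate-interference} applied to $x_n=Te_n$ (after reindexing) with parameter $\delta/2$, and finally the row-sum estimate $\|\Id-DT\|\le\delta$ followed by a Neumann-series inversion, giving the constant $2/(1-\delta)<2+\varepsilon$.
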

\begin{proof}
Let $T\colon \ell_\infty\to \ell_\infty$ and fix $\varepsilon>0$.
Choose $\delta>0$ small enough so that
\[
\frac{2}{1-\delta}<2+\varepsilon.
\]

By Proposition~\ref{prop: l_inf-almost-basis-operators}, after passing to an isometric $1$-complemented copy of $\ell_\infty$ and reindexing, we may assume that
\begin{equation}\label{eq: l_infty_matrix_behaviour_patch}
    e_m^*Tx = \sum_{n\in\N} x(n)\,e_m^*Te_n
    \qquad(m\in\N,\ x\in\ell_\infty).
\end{equation}

For each $n\in\N$,
\[
1=e_n^*(\Id_{\ell_\infty}-T)e_n+e_n^*Te_n.
\]
Hence, there exists an infinite subset $M\subseteq\N$ such that either
$|e_n^*Te_n|\ge \tfrac12$ for all $n\in M$, or
$|e_n^*(\Id_{\ell_\infty}-T)e_n|\ge \tfrac12$ for all $n\in M$.
Replacing $T$ by $\Id_{\ell_\infty}-T$ if necessary, and then reindexing, we may assume that
\[
|e_n^*Te_n|\ge \tfrac12
\qquad(n\in\N).
\]

Set $x_n=Te_n$. Then for every finite set $F\subseteq\N$ and every scalar family $(\lambda_n)_{n\in F}$,
\[
\Bigl\|\sum_{n\in F} \lambda_n x_n\Bigr\|
= \Bigl\|T\Bigl(\sum_{n\in F} \lambda_n e_n\Bigr)\Bigr\|
\le \norm{T}\,\max_{n\in F}|\lambda_n|.
\]
Therefore, by Lemma~\ref{lmm: l_inf-separate-interference}, after passing to a further subsequence and reindexing, we may assume that
\[
\sum_{m\neq n}|e_n^*Te_m|<\frac{\delta}{2}
\qquad(n\in\N).
\]

Define the diagonal operator $D\colon \ell_\infty\to\ell_\infty$ by
\[
Dx=\Bigl(\frac{x(n)}{e_n^*Te_n}\Bigr)_{n\in\N}.
\]
Since $|e_n^*Te_n|\ge \tfrac12$, we have $\norm{D}\le 2$.

Let $x\in B_{\ell_\infty}$ and $n\in\N$. Using \eqref{eq: l_infty_matrix_behaviour_patch},
\begin{align*}
 e_n^*(\Id_{\ell_\infty}-DT)x
 &= x(n)-\frac{1}{e_n^*Te_n}\sum_{m\in\N}x(m)e_n^*Te_m \\
 &= -\sum_{m\neq n}x(m)\,\frac{e_n^*Te_m}{e_n^*Te_n}.
\end{align*}
Hence
\[
|e_n^*(\Id_{\ell_\infty}-DT)x|
\le 2\sum_{m\neq n}|e_n^*Te_m|
<\delta.
\]
Taking the supremum over $x\in B_{\ell_\infty}$ and $n\in\N$, we obtain
\(
\norm{\Id_{\ell_\infty}-DT}<\delta.
\)
Thus $DT$ is invertible; writing $R=(DT)^{-1}$, we have
\[
\norm{R}\le \frac{1}{1-\delta}
\qquad\text{and}\qquad
RDT=\Id_{\ell_\infty}.
\]
Therefore, on the selected copy, $\Id_{\ell_\infty}$ factors through either $T$ or $\Id_{\ell_\infty}-T$, and the factorisation constant is bounded by
\[
\norm{RD}\le \frac{2}{1-\delta}<2+\varepsilon.
\]
Composing with the canonical isometry onto the selected copy, its inclusion, and the norm-one coordinate projection gives the same bound for the original operator. This proves that $\ell_\infty$ has the $2^+$-PFP.
\end{proof}

\subsubsection{Iterated construction with $\ell_\infty$}\label{subsec: l_infty-iterated} We now argue that, in the same spirit as the iterated constructions available for spaces with a symmetric basis, an analogous procedure can be carried out for spaces of the form $X(\ell_\infty)$, where $X$ is a Banach space with a symmetric basis and symmetric norm. Throughout this subsubsection we assume that the basis of $X$ is not equivalent to the unit vector basis of $\ell_1$.

Before proceeding, we recall the relevant primariness results. The primariness of $\ell_p(\ell_\infty)$ for $1 \le p < \infty$ was established by Casazza, Kottman and Lin \cite{CKL1976}, while the primariness of $\ell_\infty(X)$, when $X$ has a symmetric basis, was proved by Capon \cite{Capon1982PLMS}.

The idea is to replicate the reasoning in Subsection \ref{subsec: symmetric-basis-separable-iterated}, but using the techniques developed in Subsection \ref{subsec: l_infty-case-separable} to bypass the lack of a Schauder basis in the space. Thus, let $(e_n)_{n \in \N}$ be the canonical unit vectors in $\ell_\infty$ and for each $(i,j) \in \N^{2}$ we let $e_{(i,j)} = (0, \dots, 0, e_j, 0, \dots) \in X(\ell_\infty)$ where the unit vector $e_j \in \ell_\infty$ is placed in the $i$th-coordinate. Naturally, we denote by $e_{(i,j)}^*$ the associated coordinate functional, so that $e^*_{(i,j)} e_{(r,k)} = 1$ if $(i,j) = (r,k)$ and $0$ otherwise. For $x \in X(\ell_\infty)$ we define the support of $x$ by $\supp{(x)} = \{(i,j) \in \N^2: e^*_{(i,j)}x \not = 0 \}$. For any $L, M \subseteq \N$, we let
\begin{align*}
X(L)(\ell_\infty(M))
    &= \{ x \in X(\ell_\infty) : \supp(x) \subseteq L \times M \} \\
    &= \{ (x_n)_{n \in \mathbb{N}} \in X(\ell_\infty) :
       x_n = 0 \text{ for } n \notin L
       \text{ and } x_n \in \ell_\infty(M) \text{ for } n \in L \}.
\end{align*}
and observe that naturally $X(L)(\ell_\infty(M))$ is $1$-complemented in $X(\ell_\infty)$. 

As in the case of $\ell_\infty$, we begin by showing that, after passing to a subsequence, the off-diagonal interactions can be made arbitrarily small. We first introduce a preliminary lemma. Throughout, for $(r,s) \in \N^2$, $i \in \N$, and $\varepsilon > 0$, define
\begin{equation*}
    M((r,s), i, \varepsilon) = \{ j \in \N : |e^*_{(i,j)} T e_{(r,s)}| > \varepsilon \}.
\end{equation*}
The set $M((r,s), i, \varepsilon)$ records the indices in the $i$-th row where the image of $e_{(r,s)}$ under $T$ has magnitude exceeding $\varepsilon$.

\begin{lemma}\label{lmm: preliminary-l-infinity-iterated}
    Let $X$ be a Banach space with a symmetric basis and symmetric norm whose basis is not equivalent to the unit vector basis of $\ell_1$, and let $T\colon X(\ell_\infty)\to X(\ell_\infty)$ be an operator. Then the following hold:
    \begin{enumerate}[label = (\alph*), ref = (\alph*)]
        \item \label{it: l_infty-iterated-finitely-many-affect} For every $(i,j) \in \N^2$ and every $\varepsilon > 0$, the set
        \begin{equation*}
            \{(r,s) \in \N^2: |e^*_{(i,j)} T e_{(r,s)}| > \varepsilon\}
        \end{equation*}
        is finite.
        \item \label{it: l_infty-iterated-future-rows} For every $r \in \N$, every infinite set $M\subseteq\N$, every $\varepsilon > 0$, every finite set $\{i_1, \dots, i_K\}\subseteq\N$, every choice of infinite sets $M_1, \dots, M_K \subseteq \N$, and every $N_0\in\N$, there exists $s\in M$ with $s>N_0$ such that
        \begin{equation*}
            M_k \setminus M((r,s), i_k, \varepsilon)
        \end{equation*}
        is infinite for every $k = 1, \dots, K$.
        \item \label{it: l_infty-iterated-influence-future} For every $(r,s) \in \N^2$ we have
        \begin{equation*}
            \lim_{i \to \infty} \sup_{j \in \N} |e^*_{(i,j)} T e_{(r,s)}| = 0.
        \end{equation*}
    \end{enumerate}
\end{lemma}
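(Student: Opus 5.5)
For \ref{it: l_infty-iterated-finitely-many-affect} and \ref{it: l_infty-iterated-future-rows} I will use the aligning-unimodular-scalars argument of Lemma~\ref{lmm: Lind-preliminary-1}. For the multi-row version of \ref{it: l_infty-iterated-future-rows} I will add the partition-and-pigeonhole induction from Lemma~\ref{lmm: ell1-iterated-interference}\ref{it: ell1-c}. Part \ref{it: l_infty-iterated-influence-future} is immediate from the definition of the norm on $X(\ell_\infty)$.

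\emph{Part \ref{it: l_infty-iterated-influence-future}.} Fix $(r,s)$ and write $y=Te_{(r,s)}=(y_i)_{i\in\N}$. Then
\[
\sup_{j}|e^*_{(i,j)}y|=\norm{y_i}_{\ell_\infty},
\]
and by definition $\sum_i \norm{y_i}_{\ell_\infty}e_i\in X$. Since $(e_i)$ is a basis of $X$, the coefficients of this vector tend to $0$, which is the claim.

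\emph{Part \ref{it: l_infty-iterated-finitely-many-affect}.} Put $f=e^*_{(i,j)}T\in X(\ell_\infty)^*$ and suppose the set of $(r,s)$ with $|f(e_{(r,s)})|>\varepsilon$ is infinite. There are two cases.
\begin{itemize}
\item Infinitely many of these pairs lie in a single row $r$. Take $L$ of them with $L\varepsilon>\norm{f}$ and align their phases with unimodular $\lambda_k$. The vector $\sum_k\lambda_k e_{(r,s_k)}$ has norm $1$, because it sits in one $\ell_\infty$-fibre. Yet $f$ of this vector exceeds $L\varepsilon$, a contradiction.
\item The pairs meet infinitely many rows. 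Choose one pair $(r_k,s_k)$ per row, with the $r_k$ distinct. Pass to a subsequence (and, in the complex case, to a sector) so that $\operatorname{Re}\lambda f(e_{(r_k,s_k)})\ge\varepsilon/2$. For finitely supported $(a_k)$ we have
\[
\Bigl\|\sum_k a_k e_{(r_k,s_k)}\Bigr\| = \Bigl\|\sum_k |a_k| e_{r_k}\Bigr\|_X .
\]
Exactly as in Lemma~\ref{lmm: ell1-iterated-interference}\ref{it: ell1-a}, this quantity is at least $\frac{\varepsilon}{2\norm{f}}\sum_k|a_k|$. So a subsequence of the basis of $X$ is equivalent to the $\ell_1$-basis, and by symmetry so is the whole basis, contradicting the hypothesis.
\end{itemize}

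\emph{Part \ref{it: l_infty-iterated-future-rows}.} I argue by induction on $K$.

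For $K=1$, suppose no admissible $s$ exists. Then for every $s\in M$ with $s>N_0$, the set $M_1\setminus M((r,s),i_1,\varepsilon)$ is finite. Pick distinct $s_1,\dots,s_L\in M$ above $N_0$ with $L\varepsilon>\norm{T}$. The intersection $M_1\cap\bigcap_k M((r,s_k),i_1,\varepsilon)$ is cofinite in $M_1$, so choose $j$ in it. Align the phases of the numbers $e^*_{(i_1,j)}Te_{(r,s_k)}$ with unimodular $\lambda_k$. The vector $\sum_k\lambda_k e_{(r,s_k)}$ lies in one $\ell_\infty$-fibre, so it has norm $1$. Its image under $e^*_{(i_1,j)}T$ has modulus greater than $L\varepsilon>\norm{T}$, a contradiction.

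For the step from $K$ to $K+1$, choose $N$ with $N\varepsilon>\norm{T}$ and split $\{s\in M:s>N_0\}$ into $N$ disjoint infinite pieces $M^{(1)},\dots,M^{(N)}$. The induction hypothesis gives $s_n\in M^{(n)}$ that works for $M_1,\dots,M_K$. If none of $s_1,\dots,s_N$ worked for $M_{K+1}$, then each set $M_{K+1}\setminus M((r,s_n),i_{K+1},\varepsilon)$ would be finite. Picking a common $j\in M_{K+1}$ and aligning phases as in the base case then gives a contradiction. So some $s_n$ works for all $K+1$ rows.

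The main obstacle is this induction in \ref{it: l_infty-iterated-future-rows}. The danger is losing the rows $1,\dots,K$ already handled while securing row $K+1$. The partition of $M$ into disjoint pieces resolves this: every candidate $s_n$ has already been certified for $M_1,\dots,M_K$ before the pigeonhole step on $M_{K+1}$ is applied.
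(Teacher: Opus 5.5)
Your proof is correct. Parts (a) and (c) follow the paper's argument: the $\ell_1$-subsequence contradiction when infinitely many rows are involved, the single-fibre phase-alignment bound inside one row, and the decay of the coefficients of $\sum_i\norm{y_i}_{\ell_\infty}e_i\in X$.

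Part (b) is where you differ. The paper does not induct on $K$. It negates the whole statement, so that every $s\in M$ with $s>N_0$ fails some row. It then takes $KQ$ distinct such $s$ with $Q\varepsilon>\norm{T}$ and uses the pigeonhole principle to find a row $k_0$ failed by $Q$ of them. Your base-case contradiction then applies at a common $j_0\in M_{k_0}$.

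In fact your base-case estimate already shows more: for each fixed $k$, fewer than $\norm{T}/\varepsilon$ indices $s$ can fail row $i_k$. Hence the set of $s$ failing at least one of the $K$ rows is finite, and every sufficiently large $s\in M$ works. Your partition-and-induction step, borrowed from Lemma~\ref{lmm: ell1-iterated-interference}\ref{it: ell1-c}, is valid but unnecessary. The obstacle you flag, losing earlier rows, never actually arises. The direct route is shorter and gives the stronger conclusion that all but finitely many $s$ are admissible.
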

\begin{proof}
    We first prove \ref{it: l_infty-iterated-finitely-many-affect}. Fix $(i,j)\in\N^2$ and $\varepsilon>0$. If there were infinitely many rows $r$ for which some $s$ satisfies $|e_{(i,j)}^*Te_{(r,s)}|>\varepsilon$, then we could choose distinct rows $r_k$ and indices $s_k$ with this property. The vectors $(e_{(r_k,s_k)})_k$ are isometrically equivalent to a subsequence of the basis of $X$, while the bounded functional $e_{(i,j)}^*T$ is bounded below on them. The argument from Lemma~\ref{lmm: sym-basis-affected} would then force that subsequence, and hence by symmetry the whole basis of $X$, to be equivalent to the unit vector basis of $\ell_1$, a contradiction.

    Thus only finitely many outer rows can occur. For each such row $r$, suppose that $s_1,\ldots,s_m$ satisfy $|e_{(i,j)}^*Te_{(r,s_t)}|>\varepsilon$. Choose unimodular scalars $\lambda_t$ so that
    \begin{equation*}
        \lambda_t e_{(i,j)}^*Te_{(r,s_t)}=|e_{(i,j)}^*Te_{(r,s_t)}|.
    \end{equation*}
    The vector $x=\sum_{t=1}^m\lambda_t e_{(r,s_t)}$ lies in a single $\ell_\infty$-fibre and has norm one. Hence
    \begin{equation*}
        \|T\|\geq |e_{(i,j)}^*Tx|
        =\sum_{t=1}^m |e_{(i,j)}^*Te_{(r,s_t)}|
        >m\varepsilon.
    \end{equation*}
    Hence $m\leq \|T\|/\varepsilon$, which proves \ref{it: l_infty-iterated-finitely-many-affect}.

    We prove \ref{it: l_infty-iterated-future-rows}. Suppose the assertion fails. Then there are $r\in\N$, an infinite set $M\subseteq\N$, $\varepsilon>0$, rows $i_1,\ldots,i_K$, infinite sets $M_1,\ldots,M_K\subseteq\N$, and $N_0\in\N$ such that, for every $s\in M$ with $s>N_0$, at least one of the sets
    \begin{equation*}
        M_k\setminus M((r,s),i_k,\varepsilon)
    \end{equation*}
    is finite. Choose $Q\in\N$ with $Q\varepsilon>\|T\|$, and choose distinct $s_1,\dots,s_{KQ}\in M$ with $s_u>N_0$. For at least one $k_0\in\{1,\dots,K\}$, the set $M_{k_0}\setminus M((r,s_u),i_{k_0},\varepsilon)$ is finite for at least $Q$ of these indices; call them $s_{u_1},\dots,s_{u_Q}$. Since $M_{k_0}$ is infinite, choose
    \begin{equation*}
        j_0\in M_{k_0}\cap\bigcap_{q=1}^Q M((r,s_{u_q}),i_{k_0},\varepsilon).
    \end{equation*}
    Pick unimodular scalars $\lambda_q$ aligning the numbers $e_{(i_{k_0},j_0)}^*Te_{(r,s_{u_q})}$. The vector $x=\sum_{q=1}^Q\lambda_q e_{(r,s_{u_q})}$ has norm one, and therefore
    \begin{equation*}
        \|T\|\geq |e_{(i_{k_0},j_0)}^*Tx|
        =\sum_{q=1}^Q |e_{(i_{k_0},j_0)}^*Te_{(r,s_{u_q})}|
        >Q\varepsilon,
    \end{equation*}
    a contradiction.

    Finally, if $y=Te_{(r,s)}$, then the sequence of outer row norms $(\|Q_i y\|_{\ell_\infty})_{i\in\N}$ belongs to $X$. Since the basis of $X$ is Schauder, its coordinate tails tend to zero. Therefore
    \begin{equation*}
        \sup_{j\in\N}|e^*_{(i,j)}Te_{(r,s)}|\leq \|Q_i y\|_{\ell_\infty}\longrightarrow 0,
    \end{equation*}
    proving \ref{it: l_infty-iterated-influence-future}.
\end{proof}

The next selection is the only place where the mixed $X(\ell_\infty)$ geometry requires more care than the ordinary symmetric-basis argument.  Inside a fixed outer row, finite sums of coordinate vectors have uniformly bounded norm, so one cannot simply rely on coordinate-wise decay.  The reservoir condition in Lemma~\ref{lmm: preliminary-l-infinity-iterated}\ref{it: l_infty-iterated-future-rows} is designed precisely to preserve infinitely many admissible future coordinates in each row.

\begin{lemma}\label{lmm: l_inf_cross-interaction}
    Let $X$ be a Banach space with a symmetric basis and symmetric norm whose basis is not equivalent to the unit vector basis of $\ell_1$, and let $T \colon X(\ell_\infty) \to X(\ell_\infty)$ be an operator. Then, for every $\varepsilon > 0$, there exist a strictly increasing sequence $(n_i)_{i \in \N}$ and infinite sets $M_i=\{m_i^j:j\in\N\}$, with $m_i^1<m_i^2<\cdots$, such that for every $(i,j)\in\N^2$,
    \begin{equation*}
        \sum_{\substack{(r,s) \in \N^2 \\ (r,s) \ne (i,j)}} |e_{(n_i, m_i^j)}^* T e_{(n_r, m_r^s)}| < \varepsilon 2^{-i}.
    \end{equation*}
\end{lemma}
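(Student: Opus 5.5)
We mimic the recursive reservoir construction of Lemma~\ref{lmm: ell1-iterated-true-diagonal}, but with the roles of sources and targets adapted to the $\ell_\infty$-geometry, where the row-sum $\sum_{(r,s)}|e^*_{(i,j)}Te_{(r,s)}|$ (rather than the column sum) must be made small. First note that for fixed $(i,j)$ this row sum is at most $\|T\|$ inside any single $\ell_\infty$-fibre (align signs on finitely many coordinates of one fibre, as in the proof of Lemma~\ref{lmm: preliminary-l-infinity-iterated}\ref{it: l_infty-iterated-finitely-many-affect}), and by Lemma~\ref{lmm: preliminary-l-infinity-iterated}\ref{it: l_infty-iterated-finitely-many-affect} only finitely many outer rows carry entries above any threshold. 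Fix an enumeration $(\rho_t)=(k_t,\ell_t)$ of $\N^2$ opening rows in order and listing each row increasingly, as before. For the pair $\rho_t=(k,\ell)$ we split the sum $\sum_{(r,s)\neq(k,\ell)}|e^*_{(n_k,m_k^\ell)}Te_{(n_r,m_r^s)}|$ into sources selected before stage $t$ and sources selected after, and we budget $\varepsilon2^{-k}/2$ to each part.

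\emph{Past sources (new target against old sources).} When choosing $m_k^\ell$ (and, if $\ell=1$, $n_k$), we must make $|e^*_{(n_k,m)}Te_{(n_q,m_q^w)}|$ tiny for the finitely many previously selected $(q,w)$. For a new row, use Lemma~\ref{lmm: preliminary-l-infinity-iterated}\ref{it: l_infty-iterated-influence-future}: choose $n_k$ so large that $\sup_j|e^*_{(n_k,j)}Te_{(n_q,m_q^w)}|<\varepsilon 2^{-k-t-2}$ for all earlier pairs. For an already open row, the needed smallness must be guaranteed by the reservoir $R_k$: we maintain the invariant that every $m\in R_i$ satisfies $|e^*_{(n_i,m)}Te_{(n_q,m_q^w)}|\le\varepsilon 2^{-i-t_{q,w}-2}$ for every selected $(q,w)$, where $t_{q,w}$ is its stage. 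To preserve this when a new source $(n_k,m_k^\ell)$ is added, we choose $m_k^\ell$ via Lemma~\ref{lmm: preliminary-l-infinity-iterated}\ref{it: l_infty-iterated-future-rows}, applied with source row $n_k$, source set the current reservoir $R_k$, target rows $\{n_i\}$ for the active rows and target sets the current reservoirs, at threshold $\min_i\varepsilon2^{-i-t-2}$, with $N_0$ beyond all chosen coordinates; we then shrink each reservoir to $R_i\setminus M((n_k,m_k^\ell),n_i,\cdot)$ with the previously used elements removed. This replaces Lemma~\ref{lmm: ell1-iterated-interference}\ref{it: ell1-c}. Summing over $t$ gives a past contribution below $\varepsilon2^{-k}/2$.

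\emph{Future sources (old targets against new sources).} This is the main obstacle: each target $(n_k,m_k^\ell)$ must tolerate infinitely many later sources, so pointwise smallness is not enough; we need summability. We use Lemma~\ref{lmm: preliminary-l-infinity-iterated}\ref{it: l_infty-iterated-finitely-many-affect}: for the fixed target functional $f=e^*_{(n_k,m_k^\ell)}T$ and any $\eta>0$ only finitely many $(r,s)$ have $|f e_{(r,s)}|>\eta$. So when selecting the source at stage $u>t$, we additionally demand, for each of the finitely many earlier targets $t<u$, that $|e^*_{\rho_t}Te_{\rho_u}|<\varepsilon 2^{-k_t-u-2}$; this excludes finitely many rows (when opening $n_{k_u}$) and finitely many coordinates in the row (when choosing $m^{\ell_u}$), which is compatible with the reservoirs being infinite and Lemma~\ref{lmm: preliminary-l-infinity-iterated}\ref{it: l_infty-iterated-future-rows} (apply it to the reservoir with those finitely many points removed). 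Summing over $u>t$ bounds the future contribution by $\varepsilon2^{-k_t}/2$. Combining both parts yields the strict inequality $<\varepsilon2^{-i}$ for every $(i,j)$, and the strict monotonicity of $(n_i)$ and of each $(m_i^j)_j$ is ensured by the $N_0$ choices. The delicate point to check carefully is that all constraints at one stage only remove finitely many candidates from infinite sets, except the reservoir-preserving requirement, which is exactly what Lemma~\ref{lmm: preliminary-l-infinity-iterated}\ref{it: l_infty-iterated-future-rows} handles.
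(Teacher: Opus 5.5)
Your proposal is correct and follows essentially the same recursive reservoir construction as the paper. New rows are opened using Lemma~\ref{lmm: preliminary-l-infinity-iterated}\ref{it: l_infty-iterated-influence-future}; old targets are protected from each new source by discarding the finitely many bad coordinates supplied by part~\ref{it: l_infty-iterated-finitely-many-affect}; and the reservoirs are preserved via part~\ref{it: l_infty-iterated-future-rows}. The only difference is bookkeeping: you use stage-indexed thresholds and split each row sum into past and future sources, whereas the paper imposes the single pairwise bound $\varepsilon 2^{-(i+r+s+2)}$ on every pair of distinct selected coordinates and sums it directly.
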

\begin{proof}
    Fix an enumeration $(\rho_t)_{t\in\N}$ of $\N^2$ such that $(k,1)$ appears before $(k,2)$, $(k,2)$ before $(k,3)$, and the rows are opened in the order $1,2,\dots$. Write $\rho_t=(k_t,\ell_t)$, and put
    \begin{equation*}
        \alpha(i,r,s)=\varepsilon 2^{-(i+r+s+2)}.
    \end{equation*}

    We construct the selected coordinates recursively. After stage $n$ we have active rows $A_n$, outer indices $n_i$ for $i\in A_n$, selected inner coordinates $m_i^j$ for the pairs $\rho_t=(i,j)$ with $t\leq n$, and infinite reservoirs $R_i^{(n)}\subseteq\N$ for $i\in A_n$. The induction requirements are:
    \begin{enumerate}[label=(\roman*), ref=(\roman*)]
        \item \label{it: linfty-cross-selected} if $\rho_a=(i,j)$ and $\rho_b=(r,s)$ are distinct selected pairs, then
        \begin{equation*}
            |e^*_{(n_i,m_i^j)}Te_{(n_r,m_r^s)}|\leq \alpha(i,r,s);
        \end{equation*}
        \item \label{it: linfty-cross-reservoir} if $\rho_b=(r,s)$ is a selected source pair, $i\in A_n$, and $m\in R_i^{(n)}$, then
        \begin{equation*}
            |e^*_{(n_i,m)}Te_{(n_r,m_r^s)}|\leq \alpha(i,r,s);
        \end{equation*}
        \item every element of $R_i^{(n)}$ is larger than all coordinates already selected in row $i$.
    \end{enumerate}
    These conditions are void at stage $0$.

    Suppose the construction has been carried out up to stage $n$, and let $\rho_{n+1}=(k,\ell)$. If $\ell=1$, choose a new outer index $n_k$ larger than all previously chosen outer indices and so large that, for every previously selected source pair $\rho_b=(r,s)$,
    \begin{equation}\label{eq: linfty-new-row-old-sources}
        \sup_{m\in\N}|e^*_{(n_k,m)}Te_{(n_r,m_r^s)}|\leq \alpha(k,r,s).
    \end{equation}
    This is possible by Lemma~\ref{lmm: preliminary-l-infinity-iterated}\ref{it: l_infty-iterated-influence-future}. Set $\widetilde R=\N$.

    If $\ell>1$, row $k$ is already active. Set $\widetilde R=R_k^{(n)}$. Then \ref{it: linfty-cross-reservoir} gives \eqref{eq: linfty-new-row-old-sources} with $m\in\widetilde R$ in place of the supremum over $\N$.

    We now protect all old target coordinates from the new source coordinate. For every previously selected target pair $\rho_a=(i,j)$ define
    \begin{equation*}
        G_{i,j}=\{m\in\widetilde R: |e^*_{(n_i,m_i^j)}Te_{(n_k,m)}|>\alpha(i,k,\ell)\}.
    \end{equation*}
    Lemma~\ref{lmm: preliminary-l-infinity-iterated}\ref{it: l_infty-iterated-finitely-many-affect} shows that each $G_{i,j}$ is finite. Hence
    \begin{equation*}
        R=\widetilde R\setminus\bigcup_{\rho_a=(i,j),\,a\le n}G_{i,j}
    \end{equation*}
    is infinite, and every $m\in R$ satisfies
    \begin{equation}\label{eq: linfty-old-target-new-source}
        |e^*_{(n_i,m_i^j)}Te_{(n_k,m)}|\leq\alpha(i,k,\ell)
    \end{equation}
    for every previously selected target pair $(i,j)$.

    Let $A_{n+1}=A_n\cup\{k\}$ and define
    \begin{equation*}
        H_i=
        \begin{cases}
            R, & i=k,\\
            R_i^{(n)}, & i\in A_n\setminus\{k\}.
        \end{cases}
    \end{equation*}
    Put $\eta=\min_{i\in A_{n+1}}\alpha(i,k,\ell)$. Apply Lemma~\ref{lmm: preliminary-l-infinity-iterated}\ref{it: l_infty-iterated-future-rows} with source row $n_k$, source reservoir $R$, threshold $\eta$, target rows $\{n_i:i\in A_{n+1}\}$, target reservoirs $(H_i)_{i\in A_{n+1}}$, and with $N_0$ equal to the largest coordinate already selected in row $k$ (or $0$ if none has been selected). We obtain $m_k^\ell\in R$ such that, for every $i\in A_{n+1}$,
    \begin{equation*}
        L_i=H_i\setminus M((n_k,m_k^\ell),n_i,\eta)
    \end{equation*}
    is infinite. Equivalently,
    \begin{equation}\label{eq: linfty-new-source-future-targets}
        |e^*_{(n_i,m)}Te_{(n_k,m_k^\ell)}|\leq \eta\leq\alpha(i,k,\ell)
        \qquad(m\in L_i).
    \end{equation}

    For each $i\in A_{n+1}$, let $p_i$ be the largest coordinate already selected in row $i$ after adding $m_k^\ell$, and set
    \begin{equation*}
        R_i^{(n+1)}=\{m\in L_i:m>p_i\}.
    \end{equation*}
    These reservoirs are infinite. Conditions \ref{it: linfty-cross-selected} and \ref{it: linfty-cross-reservoir} at stage $n+1$ follow from the induction hypotheses, \eqref{eq: linfty-new-row-old-sources}, \eqref{eq: linfty-old-target-new-source}, and \eqref{eq: linfty-new-source-future-targets}.

    This completes the recursion. Let $M_i=\{m_i^j:j\in\N\}$. For every pair of distinct selected coordinates we have
    \begin{equation*}
        |e^*_{(n_i,m_i^j)}Te_{(n_r,m_r^s)}|\leq \varepsilon2^{-(i+r+s+2)}.
    \end{equation*}
    Therefore, for fixed $(i,j)$,
    \begin{align*}
        \sum_{\substack{(r,s)\in\N^2 \\ (r,s)\ne(i,j)}} |e^*_{(n_i,m_i^j)}Te_{(n_r,m_r^s)}|
        &\leq \varepsilon2^{-i-2}\sum_{r=1}^\infty2^{-r}\sum_{s=1}^\infty2^{-s} \\
        &<\varepsilon2^{-i}.
    \end{align*}
    This proves the lemma.
\end{proof}

We argue that, after restricting to a suitable copy, the operator $T$ can be represented as an infinite matrix.

\begin{lemma}\label{lmm: preliminary-l-infty-iterated}
    Let $T \colon X(\ell_\infty) \to X(\ell_\infty)$ be an operator. Then for any $(i,j) \in \N^2$ and any $x = (x_n)_{n \in \N} \in X(\ell_\infty)$ we have
    \begin{equation*}
        \sum_{(r,s) \in \N^2} |(e^*_{(r,s)} x) (e^*_{(i,j)} T e_{(r,s)})| \leq \norm{T} \norm{x}.
    \end{equation*}
    In particular $\sum_{(r,s) \in \N^2} (e^*_{(r,s)} x) (e^*_{(i,j)} T e_{(r,s)})$ is absolutely summable.
\end{lemma}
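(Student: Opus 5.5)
It suffices to bound every finite partial sum by $\norm{T}\norm{x}$; the full series then converges absolutely with the same bound. Fix $(i,j)\in\N^2$, $x=(x_n)_{n\in\N}\in X(\ell_\infty)$ and a finite set $F\subseteq\N^2$. For each $(r,s)\in F$ we choose a unimodular scalar $\lambda_{(r,s)}$ such that
\[
\lambda_{(r,s)}\,(e^*_{(r,s)}x)\,(e^*_{(i,j)}Te_{(r,s)})=\bigl|(e^*_{(r,s)}x)(e^*_{(i,j)}Te_{(r,s)})\bigr|,
\]
exactly as in the proofs of Lemma~\ref{lmm: Lind-preliminary-1} and Proposition~\ref{prop: l_inf-almost-basis-operators}. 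Then we form the finitely supported test vector
\[
y_F=\sum_{(r,s)\in F}\lambda_{(r,s)}\,(e^*_{(r,s)}x)\,e_{(r,s)}\in X(\ell_\infty).
\]
Since $y_F$ is a finite linear combination of the $e_{(r,s)}$, linearity of $T$ gives
\[
e^*_{(i,j)}Ty_F=\sum_{(r,s)\in F}\bigl|(e^*_{(r,s)}x)(e^*_{(i,j)}Te_{(r,s)})\bigr|.
\]
Because $\norm{e^*_{(i,j)}}=1$ (the coordinate functional on a single $\ell_\infty$-fibre, composed with the norm-one row projection), the left-hand side is at most $\norm{T}\norm{y_F}$.

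It remains to check that $\norm{y_F}\le\norm{x}$; this is the only point requiring care, since $X(\ell_\infty)$ has no basis and we cannot simply appeal to unconditionality of a basis. The $n$th row of $y_F$ is the finitely supported vector of $\ell_\infty$ whose $s$th entry is $\lambda_{(n,s)}x_n(s)$ for $(n,s)\in F$ and $0$ otherwise. Its $\ell_\infty$-norm is therefore at most $\norm{x_n}_{\ell_\infty}$. Hence the sequence of row norms of $y_F$ is dominated coordinatewise by $(\norm{x_n}_{\ell_\infty})_{n}$. Since the basis of $X$ is $1$-unconditional with a symmetric (in particular lattice-monotone) norm, we get
\[
\norm{y_F}=\Bigl\|\sum_n\norm{(y_F)_n}e_n\Bigr\|_X\le\Bigl\|\sum_n\norm{x_n}e_n\Bigr\|_X=\norm{x}.
\]
The monotonicity step here is the standard fact that $|a_n|\le|b_n|$ for all $n$ implies $\norm{\sum a_ne_n}\le\norm{\sum b_ne_n}$ for a $1$-unconditional basis, obtained by averaging over signs or by convexity.

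Combining these gives $\sum_{(r,s)\in F}|(e^*_{(r,s)}x)(e^*_{(i,j)}Te_{(r,s)})|\le\norm{T}\norm{x}$ for every finite $F$. Taking the supremum over $F$ yields the stated inequality, and absolute summability follows. No assumption on $\ell_1$ is needed here. I expect the only genuine obstacle to be the norm comparison $\norm{y_F}\le\norm{x}$, which, as shown, reduces to fibrewise domination plus lattice monotonicity of the norm of $X$.
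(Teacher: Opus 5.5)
Your proposal is correct and is essentially the paper's proof. Both arguments rotate each coordinate by a unimodular scalar, form a finitely supported test vector dominated fibrewise by $x$ (you use arbitrary finite sets $F$, the paper uses squares $r,s\le N$), bound its norm by $\norm{x}$ via $1$-unconditionality, and then apply $\lvert e^*_{(i,j)}Ty\rvert\le\norm{T}\norm{y}$.
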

\begin{proof}
    Let $(i,j) \in \N^2$ and $x \in X(\ell_\infty)$ be fixed. For each $(r,s) \in \N^2$ choose an unimodular scalar $\lambda_{(r,s)}$ such that
    \begin{equation*}
        \lambda_{(r,s)}(e^*_{(r,s)} x) (e^*_{(i,j)} T e_{(r,s)}) = |(e^*_{(r,s)} x) (e^*_{(i,j)} T e_{(r,s)})|. 
    \end{equation*}
    For each $N \in \N$, consider the vector $x_N \in X(\ell_\infty)$ defined coordinate-wise by $e_{(r,s)}^* x_N = \lambda_{(r,s)} e_{(r,s)}^* x$ if $r,s \leq N$ and $e_{(r,s)}^* x_N = 0$ otherwise. Since the basis of $X$ is $1$-unconditional, it is clear that $\norm{x_N} \leq \norm{x}$, so that
    \begin{align*}
        \norm{T} \norm{x} &\geq \norm{T} \norm{x_N} \geq |e^*_{(i,j)} T x_N| = \sum_{r,s \leq N}  \lambda_{(r,s)}(e^*_{(r,s)} x) (e^*_{(i,j)} T e_{(r,s)}) \\
        &= \sum_{r,s \leq N}  |(e^*_{(r,s)} x) (e^*_{(i,j)} T e_{(r,s)})|,
    \end{align*}
    and since this holds for all $N \in \N$, the result follows.
\end{proof}

We now prove the matrix-representation step needed for $X(\ell_\infty)$. The proof is the almost-disjoint-family argument of Proposition~\ref{prop: l_inf-almost-basis-operators}, applied one outer row at a time and then extended by density from finite row support.

\begin{proposition}\label{prop: l_inf-iterated-almost-basis-operators}
    Let $T \colon X(\ell_\infty) \to X(\ell_\infty)$ be an operator. Then there exists an infinite subset $M \subseteq \N$ such that, for all $x \in X(\ell_\infty(M))$ and every $(i,j) \in \N^2$,
    \begin{equation*}
        e^*_{(i,j)} T x = \sum_{(r,s) \in \N^2} (e^*_{(r,s)} x) (e^*_{(i,j)} T e_{(r,s)}).
    \end{equation*}
\end{proposition}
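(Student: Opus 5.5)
We follow the almost-disjoint-family argument of Proposition~\ref{prop: l_inf-almost-basis-operators}, with the same set $M$ used in every outer row. For $(i,j)\in\N^2$ and $x\in X(\ell_\infty)$ we put
\[
\Lambda_{(i,j)}(x)=e^*_{(i,j)}Tx-\sum_{(r,s)\in\N^2}(e^*_{(r,s)}x)(e^*_{(i,j)}Te_{(r,s)}),\qquad \Theta((i,j),x)=|\Lambda_{(i,j)}(x)|.
\]
By Lemma~\ref{lmm: preliminary-l-infty-iterated} the series converges absolutely, so $\Lambda_{(i,j)}$ is a bounded linear functional with $\norm{\Lambda_{(i,j)}}\le 2\norm{T}$. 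It vanishes on finitely supported vectors. Hence, for a vector supported in a single outer row $r$, it depends only on the tail of that row. Because $\Lambda_{(i,j)}$ is linear, unimodular scalars can be aligned, which gives additivity of $\Theta$ exactly as before.

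\textbf{Step 1 (one row at a time).} Fix an uncountable almost disjoint family $(M_\gamma)_{\gamma\in\Gamma}$. For $(i,j)\in\N^2$, an outer row $r$ and $\varepsilon>0$, let $\Gamma(i,j,r,\varepsilon)$ be the set of $\gamma$ for which some $x$ in the unit ball of the $r$th fibre $\ell_\infty(M_\gamma)$, placed in row $r$, satisfies $\Theta((i,j),x)>\varepsilon$. We show this set is finite, as in Proposition~\ref{prop: l_inf-almost-basis-operators}. Given distinct $\gamma_1,\dots,\gamma_k$ in it, cut the witnesses to tails so their supports inside row $r$ are pairwise disjoint; $\Theta$ is unchanged. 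Align them with unimodular scalars. The sum still lies in the unit ball of the row-$r$ fibre, because the $\ell_\infty$-norm of disjoint pieces is a maximum and a single outer coordinate has $X$-norm equal to its $\ell_\infty$-norm. Then $2\norm{T}\ge\sum_l\Theta((i,j),x_l)>k\varepsilon$, so $k<2\norm T/\varepsilon$. Taking the countable union over $(i,j),r$ and $\varepsilon=1/q$ leaves some $\gamma_0$ outside it. With $M=M_{\gamma_0}$, every $\Lambda_{(i,j)}$ vanishes on every vector supported in a single row with inner support in $M$.

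\textbf{Step 2 (extension to $X(\ell_\infty(M))$).} By linearity, $\Lambda_{(i,j)}$ vanishes on vectors with finitely many nonzero outer rows. For general $x=(x_n)\in X(\ell_\infty(M))$, let $x^{(N)}$ keep the first $N$ outer rows. Since $\sum_n\norm{x_n}e_n\in X$ and $(e_n)$ is a Schauder basis of $X$ (a symmetric basis, not $\ell_\infty$-like at the outer level), we get $\norm{x-x^{(N)}}=\norm{\sum_{n>N}\norm{x_n}e_n}_X\to0$. Continuity of $\Lambda_{(i,j)}$ then gives $\Lambda_{(i,j)}(x)=0$, which is the claimed identity.

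\textbf{Main obstacle.} The delicate point is the continuity of $\Lambda_{(i,j)}$, specifically of the series part $x\mapsto\sum(e^*_{(r,s)}x)(e^*_{(i,j)}Te_{(r,s)})$. Lemma~\ref{lmm: preliminary-l-infty-iterated} bounds it by $\norm T\norm x$, so it is a bounded linear functional, and the density step goes through. The other point to check is that the disjoint-support construction in Step 1 stays inside one outer row, so that the norm bound is genuinely a maximum. This is why we work row by row rather than with a single almost disjoint family on $\N^2$.
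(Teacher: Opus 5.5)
Your proposal is correct and follows the paper's proof essentially step for step. It uses the same functionals $\Lambda_{(i,j)}$ bounded via Lemma~\ref{lmm: preliminary-l-infty-iterated}, the same row-by-row finiteness count over an uncountable almost disjoint family to produce a single $M=M_{\gamma_0}$, and the same extension from vectors with finitely many outer rows to all of $X(\ell_\infty(M))$ by density, using that the outer basis of $X$ is Schauder.
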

\begin{proof}
For each $(i,j)\in\N^2$, define a bounded linear functional
$\Lambda_{(i,j)}\colon X(\ell_\infty)\to\mathbb K$ by
\[
\Lambda_{(i,j)}(x)
= e^*_{(i,j)}Tx - \sum_{(r,s)\in\N^2}(e^*_{(r,s)}x)(e^*_{(i,j)}Te_{(r,s)}).
\]
Lemma~\ref{lmm: preliminary-l-infty-iterated} shows that the series converges absolutely, and moreover
\[
|\Lambda_{(i,j)}(x)|\le 2\norm{T}\,\norm{x}
\qquad(x\in X(\ell_\infty)).
\]
Also, by linearity, $\Lambda_{(i,j)}$ vanishes on every finitely supported vector.

Let $(M_\gamma)_{\gamma\in\Gamma}$ be an uncountable family of infinite subsets of $\N$ such that
$M_\gamma\cap M_\beta$ is finite whenever $\gamma\neq\beta$.
For $l\in\N$, $(i,j)\in\N^2$ and $\eta>0$, define
\[
\Gamma(l,(i,j),\eta)
=\Bigl\{\gamma\in\Gamma:\ \exists x\in B_{X(\{l\})(\ell_\infty(M_\gamma))}
\text{ with } |\Lambda_{(i,j)}(x)|>\eta\Bigr\}.
\]
We claim that every set $\Gamma(l,(i,j),\eta)$ is finite.

Indeed, assume that $\gamma_1,\dots,\gamma_k\in\Gamma(l,(i,j),\eta)$ are distinct, and choose
$x_t\in B_{X(\{l\})(\ell_\infty(M_{\gamma_t}))}$ with
$|\Lambda_{(i,j)}(x_t)|>\eta$ for $t=1,\dots,k$.
Since the sets $M_{\gamma_t}$ have pairwise finite intersections, there exists a finite set
$F\subseteq\N$ such that the sets $M_{\gamma_t}\setminus F$ are pairwise disjoint.
Let $y_t$ be obtained from $x_t$ by deleting the coordinates in $F$.
Because $x_t-y_t$ is finitely supported, we still have
$\Lambda_{(i,j)}(y_t)=\Lambda_{(i,j)}(x_t)$.
Thus the vectors $y_1,\dots,y_k$ are supported in the same outer coordinate $l$ and have pairwise disjoint inner supports.
In particular,
\[
\Bigl\|\sum_{t=1}^k \lambda_t y_t\Bigr\|\le 1
\]
for every choice of unimodular scalars $(\lambda_t)_{t=1}^k$.
Choose $\lambda_t$ so that
$\lambda_t\Lambda_{(i,j)}(y_t)=|\Lambda_{(i,j)}(y_t)|$.
Then, with $y=\sum_{t=1}^k \lambda_t y_t$, we obtain
\[
2\norm{T}
\ge |\Lambda_{(i,j)}(y)|
= \sum_{t=1}^k |\Lambda_{(i,j)}(y_t)|
> k\eta.
\]
Hence $k<2\norm{T}/\eta$, proving the claim.

Now set
\[
\widetilde\Gamma
= \bigcup_{l\in\N}\ \bigcup_{(i,j)\in\N^2}\ \bigcup_{m\in\N}
\Gamma\bigl(l,(i,j),1/m\bigr).
\]
This is countable. Choose $\gamma_0\in\Gamma\setminus\widetilde\Gamma$ and set $M=M_{\gamma_0}$.
Then $\Lambda_{(i,j)}$ vanishes on each one-row subspace
$X(\{l\})(\ell_\infty(M))$, for every $l\in\N$.
Since $\Lambda_{(i,j)}$ is linear, it vanishes on every finite sum of such subspaces, i.e. on every vector of $X(\ell_\infty(M))$ supported in finitely many rows.
Those vectors are dense in $X(\ell_\infty(M))$ because the outer basis of $X$ is a Schauder basis.
Therefore $\Lambda_{(i,j)}$ vanishes identically on $X(\ell_\infty(M))$.
This is exactly the desired matrix representation.
\end{proof}

Finally, we can prove the following result.

\begin{theorem}\label{th: l_inf-iterated}
    Let $X$ be a Banach space with a symmetric basis and symmetric norm, and assume that this basis is not equivalent to the unit vector basis of $\ell_1$. Then $X(\ell_\infty)$ has the $2^+$-PFP.
\end{theorem}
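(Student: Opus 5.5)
The proof will combine the $\ell_\infty$ argument of Theorem~\ref{th: UPFP_l_infty} with the row-by-row bookkeeping of Theorem~\ref{th: l_1-iterated}. Let $T\in\Bop(X(\ell_\infty))$ and $\varepsilon>0$, and choose $\delta>0$ with $2/(1-\delta)<2+\varepsilon$.

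\emph{Step 1: matrix form.} Apply Proposition~\ref{prop: l_inf-iterated-almost-basis-operators} to obtain an infinite set $M\subseteq\N$. The subspace $X(\ell_\infty(M))$ is a $1$-complemented isometric copy of $X(\ell_\infty)$. Compressing $T$ to it and reindexing, we may assume that
\[
e^*_{(i,j)}Tx=\sum_{(r,s)\in\N^2}(e^*_{(r,s)}x)(e^*_{(i,j)}Te_{(r,s)})
\]
for every $x$ and every $(i,j)$. This representation survives passage to any further subspace of the form $X(L)(\ell_\infty(M'))$, and more generally to subspaces indexed by row-wise subsets $\{(n_i,m_i^j)\}$. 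This is because the compressed operator's coordinates are just restrictions of the same absolutely convergent sums (Lemma~\ref{lmm: preliminary-l-infty-iterated}).

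\emph{Step 2: colouring.} For each $(i,j)$, one of $|e^*_{(i,j)}Te_{(i,j)}|$ and $|e^*_{(i,j)}(\Id-T)e_{(i,j)}|$ is at least $\tfrac12$. As in Theorem~\ref{th: l_1-iterated}, some colour occurs infinitely often in infinitely many rows. We pass to those rows and to infinite subsets of the corresponding fibres. By symmetry of $X$, this is again a $1$-complemented isometric copy of $X(\ell_\infty)$. After possibly replacing $T$ by $\Id-T$, which keeps the matrix form, we may assume $|e^*_{(i,j)}Te_{(i,j)}|\ge\tfrac12$ for all $(i,j)$.

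\emph{Step 3: small off-diagonal part.} Apply Lemma~\ref{lmm: l_inf_cross-interaction} with parameter $\delta/2$ to get rows $(n_i)$ and fibres $M_i$. Compress to the span of $\{e_{(n_i,m_i^j)}\}$. Strictly speaking, the $i$th row should be the copy of $\ell_\infty(M_i)$ inside row $n_i$, and the $\ell_\infty$-closure is what we use; it is still a $1$-complemented isometric copy of $X(\ell_\infty)$. After reindexing we then have
\[
\sum_{(r,s)\neq(i,j)}|e^*_{(i,j)}Te_{(r,s)}|<\tfrac{\delta}{2}2^{-i}
\qquad\text{for all }(i,j).
\]

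\emph{Step 4: inverting.} Define the diagonal operator $D$ by $De_{(i,j)}=(e^*_{(i,j)}Te_{(i,j)})^{-1}e_{(i,j)}$, acting coordinatewise. Then $\norm{D}\le2$ by $1$-unconditionality and the lattice structure. For $x\in B_{X(\ell_\infty)}$, the matrix form gives
\[
|e^*_{(i,j)}(\Id-DT)x|\le 2\sum_{(r,s)\neq(i,j)}|e^*_{(r,s)}x|\,|e^*_{(i,j)}Te_{(r,s)}|\le \delta 2^{-i},
\]
using $|e^*_{(r,s)}x|\le\norm{x}\le1$. Taking the supremum over $j$, the $i$th row of $(\Id-DT)x$ has $\ell_\infty$-norm at most $\delta2^{-i}$. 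Hence
\[
\norm{(\Id-DT)x}\le\Bigl\|\sum_i\delta2^{-i}e_i\Bigr\|_X\le\delta.
\]
So $DT$ is invertible with $\norm{(DT)^{-1}}\le1/(1-\delta)$. Composing with the canonical isometry onto the selected copy, the inclusion and the norm-one projections then gives the factorisation constant $2/(1-\delta)<2+\varepsilon$.

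\emph{Main obstacle.} The delicate point is Step 3: the selected subspace must be a genuine $1$-complemented isometric copy of $X(\ell_\infty)$ on which the matrix form still holds. In Step 1 the representation was established only for vectors supported in $\N\times M$. I would first check that restricting to index sets $\{n_i\}\times M_i$ with $M_i\subseteq M$ preserves both properties. The matrix identity is simply restricted, and the copy is isometric because the norm depends only on the row sup-norms and on $X$'s symmetric norm along the rows $n_i$.
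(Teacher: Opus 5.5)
Your proposal is correct and follows the paper's proof essentially step for step: the colouring, the matrix representation via Proposition~\ref{prop: l_inf-iterated-almost-basis-operators}, the off-diagonal control via Lemma~\ref{lmm: l_inf_cross-interaction} with parameter $\delta/2$, and the Neumann-series inversion of $DT$ with $\norm{D}\le 2$. The only difference is that you obtain the matrix form before the colouring rather than after, which is harmless since, as you note, the representation passes to $\Id-T$ and to compressions onto coordinate-support subspaces (the full support subspace over $\bigcup_i\{n_i\}\times M_i$, not the norm-closed span).
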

\begin{proof}
    Let $T\colon X(\ell_\infty)\to X(\ell_\infty)$ and fix $\varepsilon>0$.
    Choose $\delta>0$ so small that
    \[
    \frac{2}{1-\delta}<2+\varepsilon.
    \]
    For each row $i$, colour an index $j$ red if
    $|e_{(i,j)}^*Te_{(i,j)}|\ge \tfrac12$ and blue otherwise.  In the blue case we have
    $|e_{(i,j)}^*(\Id-T)e_{(i,j)}|\ge \tfrac12$.  For every $i$ at least one colour occurs infinitely often, and therefore one of the two colours occurs infinitely often in infinitely many rows.  Passing to those rows, and to infinite subsets of the corresponding fibres, we obtain a $1$-complemented isometric copy of $X(\ell_\infty)$.  Replacing $T$ by $\Id-T$ if necessary, and then reindexing, we may assume that
    \[
        |e_{(i,j)}^*Te_{(i,j)}|\ge \tfrac12
        \qquad ((i,j)\in\N^2).
    \]
    By Proposition~\ref{prop: l_inf-iterated-almost-basis-operators}, after passing to a further $1$-complemented isometric copy, we may also assume that
    \begin{equation}\label{eq: l_infinite-iterated-1-patch}
        e^*_{(i,j)}Tx
        = \sum_{(r,s)\in\N^2}(e^*_{(r,s)}x)(e^*_{(i,j)}Te_{(r,s)})
    \end{equation}
    for all $x\in X(\ell_\infty)$ and all $(i,j)\in\N^2$.
    
    Finally, apply Lemma~\ref{lmm: l_inf_cross-interaction} with parameter
    $\delta/2$, and pass to another $1$-complemented isometric copy so that
    \begin{equation}\label{eq: l_infinite-iterated-2-patch}
    \sum_{\substack{(r,s)\in\N^2\\(r,s)\neq(i,j)}}
    |e^*_{(i,j)}Te_{(r,s)}|
    < \delta\,2^{-i-1}
    \qquad((i,j)\in\N^2).
    \end{equation}
    
    Define the diagonal operator $D\colon X(\ell_\infty)\to X(\ell_\infty)$ by
    \[
    D\bigl(e_{(i,j)}\bigr)=\frac{1}{e^*_{(i,j)}Te_{(i,j)}}e_{(i,j)}
    \qquad((i,j)\in\N^2).
    \]
    Equivalently,
    \[
    D\bigl((x_i(j))_{i,j\in\N}\bigr)
    =\Bigl(\frac{x_i(j)}{e^*_{(i,j)}Te_{(i,j)}}\Bigr)_{i,j\in\N}.
    \]
    Because $|e^*_{(i,j)}Te_{(i,j)}|\ge \tfrac12$, we have $\norm{D}\le 2$.
    
    Let $x\in B_{X(\ell_\infty)}$ and define
    \[
    c_i=\sup_{j\in\N}|e^*_{(i,j)}(\Id_{X(\ell_\infty)}-DT)x|,
    \qquad i\in\N.
    \]
    Using \eqref{eq: l_infinite-iterated-1-patch}, for each $(i,j)\in\N^2$ we get
    \[
    e^*_{(i,j)}(\Id_{X(\ell_\infty)}-DT)x
    = -\sum_{\substack{(r,s)\in\N^2\\(r,s)\neq(i,j)}}
    (e^*_{(r,s)}x)\,\frac{e^*_{(i,j)}Te_{(r,s)}}{e^*_{(i,j)}Te_{(i,j)}}.
    \]
    Hence, by \eqref{eq: l_infinite-iterated-2-patch} and the estimate
    $|e^*_{(r,s)}x|\le 1$, we get
    \[
    c_i\le \delta\,2^{-i}
    \qquad(i\in\N).
    \]
    The $i$-th row of $(\Id_{X(\ell_\infty)}-DT)x$ has sup norm exactly $c_i$, so
    \[
    \norm{(\Id_{X(\ell_\infty)}-DT)x}
    = \Bigl\|\sum_{i=1}^\infty c_i e_i\Bigr\|_X
    \le \delta
    \Bigl\|\sum_{i=1}^\infty 2^{-i}e_i\Bigr\|_X
    \leq \delta.
    \]
    Therefore
    \[
    \norm{\Id_{X(\ell_\infty)}-DT}\le \delta.
    \]
    Thus $DT$ is invertible; writing $R=(DT)^{-1}$, we have
    \[
    \norm{R}\le \frac{1}{1-\delta}
    \qquad\text{and}\qquad
    RDT=\Id_{X(\ell_\infty)}.
    \]
    Consequently, on the selected copy, $\Id_{X(\ell_\infty)}$ factors through either $T$ or
    $\Id_{X(\ell_\infty)}-T$, and the factorisation constant is bounded by
    \[
    \norm{RD}\le \frac{2}{1-\delta}<2+\varepsilon.
    \]
    Composing with the canonical isometry onto the selected copy, its inclusion, and the norm-one coordinate projection gives the same bound for the original operator. This proves that $X(\ell_\infty)$ has the $2^+$-PFP.
\end{proof}

As an automatic consequence, we extend the results of Casazza, Kottman, and Lin \cite{CKL1976} to include the $c_0$-sum of $\ell_\infty$.

\begin{corollary}
    The space $c_0(\ell_\infty)$ is primary.
\end{corollary}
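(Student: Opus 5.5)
The plan is to mimic the two corollaries recorded earlier in this section. We combine Theorem~\ref{th: l_inf-iterated} with Pe{\l}czy\'nski's decomposition method (Theorem~\ref{th: pelcdecmethod}\ref{dec:methodb}).

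\emph{Step 1: the factorisation dichotomy.} Take $X=c_0$ with its unit vector basis. This basis is symmetric, the sup norm is a symmetric norm, and the basis is not equivalent to the unit vector basis of $\ell_1$. Theorem~\ref{th: l_inf-iterated} therefore applies and shows that $E:=c_0(\ell_\infty)$ has the $2^+$-PFP. Now let $P\colon E\to E$ be a bounded projection. Applying the PFP to $T=P$, there are $U,V\in\Bop(E)$ with $UPV=\Id_E$ or $U(\Id_E-P)V=\Id_E$.

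\emph{Step 2: from factorisation to complemented copies.} Suppose $UPV=\Id_E$; the other case is symmetric. Put $A=PV\colon E\to PE$ and $B=U|_{PE}\colon PE\to E$. Then $BA=\Id_E$, so $AB$ is a projection on $PE$. Its range $A(E)$ is isomorphic to $E$, because $A$ is bounded below. Hence $E$ is isomorphic to a complemented subspace of $PE$. Trivially, $PE$ is complemented in $E$.

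\emph{Step 3: the decomposition method.} The space $E$ is $c_0$-stable, since
\[
c_0(E)=c_0(c_0(\ell_\infty))\simeq c_0(\N\times\N;\ell_\infty)\simeq c_0(\ell_\infty)=E,
\]
by reindexing $\N\times\N$ as $\N$. The spaces $E$ and $PE$ are each isomorphic to a complemented subspace of the other. Theorem~\ref{th: pelcdecmethod}, under hypothesis \ref{dec:methodb} with $X=E$, then gives $PE\simeq E$. In the other case the same argument gives $(\Id_E-P)E\simeq E$. Hence $E$ is primary.

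\emph{Main obstacle.} All the analytic difficulty lies in Theorem~\ref{th: l_inf-iterated}, which has already been established. The only remaining point needing care is the $c_0$-stability identification. One must check that the reindexing $c_0(c_0(\ell_\infty))\simeq c_0(\ell_\infty)$ is an isometry. This holds because both norms equal the supremum over all pairs of indices of the $\ell_\infty$-norms of the entries, and both spaces are determined by the condition that these entries tend to zero along the (countable) index set.
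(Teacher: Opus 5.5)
Your proposal is correct and follows the paper's proof: apply Theorem~\ref{th: l_inf-iterated} with $X=c_0$ to get the PFP for $c_0(\ell_\infty)$, then use $c_0(\ell_\infty)\simeq c_0(\N,c_0(\ell_\infty))$ and Pe{\l}czy\'nski's decomposition method. Your Step~2, which extracts a complemented copy of $E$ inside $PE$ from $UPV=\Id_E$, makes explicit a step the paper leaves implicit.
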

\begin{proof}
    Apply Theorem~\ref{th: l_inf-iterated} with $X=c_0$, whose canonical basis is symmetric.
    Thus $c_0(\ell_\infty)$ has the primary factorisation property.
    Since $c_0(\ell_\infty)\cong c_0(\N,c_0(\ell_\infty))$, Pe{\l}czy\'nski's decomposition method \ref{th: pelcdecmethod} yields that $c_0(\ell_\infty)$ is primary.
\end{proof}

%% file: 3_Uncountable_direct.tex
\section{Uncountable direct sums}\label{sec: uncountable-direct-sums}

\subsection{Overview and main results}

In the previous section we studied spaces with symmetric bases in the separable setting.  In the non-separable setting one has an additional tool: after reducing the action of an operator to countable coordinate supports, set-mapping theorems can be used to pass to a large subfamily on which all off-diagonal coordinate interactions vanish.

There are two different regimes, and it is important not to conflate them.  If both the outgoing and incoming interaction sets of each coordinate are countable, a direct transfinite recursion gives the required free set in ZFC for every uncountable index set.  This is the case for sums of separable coefficient spaces over scalar bases other than \(\ell_1\).  If only one side is known to be countable, we use Hajnal's set mapping theorem with values of size at most \(\omega\), and this requires the ambient cardinal to be strictly larger than \(\omega_1\).  For continuum-indexed sums this latter assumption is exactly \(\neg\CH\).

Let \(E\) be a Banach space with a symmetric basis \((e_n)_{n \in \mathbb{N}}\).  Then \(E\) can be realised as an \(E\)-sum of copies of the scalar field \(\mathbb{K}\), that is,
\begin{equation*}
E = \bigl(\mathbb{K} \oplus \mathbb{K} \oplus \cdots \bigr)_E.
\end{equation*}
Since \(\mathbb{K}\) trivially has the UPFP, the results of Section~\ref{sec: symmetric-like-basis} may be viewed as preservation results for countable sums.  The purpose of this section is to give corresponding transfer principles for uncountable sums of Banach spaces.

Our first result concerns primariness.  The \(\ell_1\)-part is deliberately stated with the cardinal restriction \(\abs{\Gamma}>\omega_1\); when \(\abs{\Gamma}=2^{\aleph_0}\), this is precisely the negation of the continuum hypothesis.  The \(\ell_p\)- and \(c_0\)-parts for separable coefficient spaces are ZFC statements for arbitrary uncountable \(\Gamma\).

\begin{theorem}[Primariness of \(\ell_p\)- and \(c_0\)-sums]\label{thm:A}
Let \(\Gamma\) be an uncountable set with \(\kappa=\abs{\Gamma}\).

\begin{enumerate}[label=\textup{(\roman*)}, ref=\textup{(\roman*)}]
    \item\label{it:thmA-l1}
    Let \(Y\) be a WCG Banach space and suppose that \(\kappa>\omega_1\).  Put
    \[
        X_1=\ell_1(\mathbb N,Y).
    \]
    Assume that either
    \begin{enumerate}
        \nameditem{\textup{(U)}}{it:U1} \(X_1\) is uniformly primary;
        \nameditem{\textup{(CF)}}{it:CF1} \(X_1\) is primary and \(\cf(\kappa)>\omega\).
    \end{enumerate}
    Then \(\ell_1(\Gamma,Y)\) is primary, and uniformly primary in case~\ref{it:U1}.

    \item\label{it:thmA-lpc0}
    Let \(Y\) be separable.  For \(1<p<\infty\), put
    \[
        X_p=\ell_p(\mathbb N,Y),
    \]
    and put \(X_0=c_0(\mathbb N,Y)\).  If the corresponding countable model \(X_p\), respectively \(X_0\), is uniformly primary, then \(\ell_p(\Gamma,Y)\), respectively \(c_0(\Gamma,Y)\), is uniformly primary.  If the corresponding countable model is primary and \(\cf(\kappa)>\omega\), then the corresponding uncountable sum is primary.
\end{enumerate}
\end{theorem}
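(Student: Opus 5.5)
The plan is to reduce a projection \(P\) on \(E=\ell_p(\Gamma,Y)\) (respectively \(c_0(\Gamma,Y)\), \(\ell_1(\Gamma,Y)\)) to a family of projections on copies of the countable model \(X_p\). Primariness of \(X_p\) is then applied blockwise, and the pieces are reassembled with Pe{\l}czy\'nski's decomposition method (Theorem~\ref{th: pelcdecmethod}), using \(E\simeq\ell_p(\Gamma,E)\) and \(X_p\simeq\ell_p(X_p)\).

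\emph{Support reduction.} Write \(Y_\gamma\) for the \(\gamma\)th copy of \(Y\) in \(E\), with inclusion \(J_\gamma\) and coordinate projection \(Q_\gamma\). For each \(\gamma\) let \(\sigma(\gamma)\) be the set of \(\beta\) with \(Q_\beta PJ_\gamma\neq 0\), the \emph{outgoing} set. Since \(Y\) is separable, \(P(Y_\gamma)\) is separable and hence countably supported, so \(\sigma(\gamma)\) is countable. In the WCG case I would use that operators from a WCG space into \(\ell_1(\Gamma,Y)\) have ranges with countable support on weakly compact generating sets. For \(1<p<\infty\) and \(c_0\), I also want the \emph{incoming} set \(\tau(\gamma)=\{\beta: Q_\gamma PJ_\beta\neq 0\}\) to be countable. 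To get this, I would test \(Q_\gamma P\) against a countable norming family of functionals on the separable space \(Y\). Each such functional, composed with \(Q_\gamma P\), lies in \(\ell_q(\Gamma,Y^*)\) (or \(\ell_1(\Gamma,Y^*)\)) and so has countable support; countably many of them detect \(Q_\gamma P J_\beta\neq0\). In the \(\ell_1\) case the dual is \(\ell_\infty(\Gamma,Y^*)\), and \(\tau\) need not be countable; this is exactly the \(\ell_1\) obstruction from Section~\ref{sec: symmetric-like-basis}.

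\emph{Blocking in cases \ref{it:thmA-lpc0}.} When both \(\sigma\) and \(\tau\) are countable, a transfinite closure argument partitions \(\Gamma\) into \(\kappa\) many pairwise disjoint countable sets \(C_\alpha\), each closed under \(\sigma\) and \(\tau\). The procedure is to enumerate \(\Gamma\) and repeatedly take the countable \(\sigma\cup\tau\)-closure of the least unused index, minus what has already been used. Then \(P\) commutes with each coordinate projection \(P_{C_\alpha}\), so \(P=\bigoplus_\alpha P_\alpha\) with \(P_\alpha\) a projection on \(\ell_p(C_\alpha,Y)\simeq X_p\) and \(\norm{P_\alpha}\le\norm{P}\).

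In the uniform case, for each \(\alpha\) either \(P_\alpha\)'s range or its kernel is \(C(\norm P)\)-isomorphic to \(X_p\). One alternative occurs for a set \(A\) of \(\kappa\) many \(\alpha\). Then \(PE\) contains the \(1\)-complemented subspace \(\ell_p(A,\ran P_\alpha)\simeq\ell_p(A,X_p)\simeq E\), with uniform constants making the \(\ell_p\)-sum of isomorphisms bounded, and conversely \(PE\) is complemented in \(E\). The decomposition method then gives \(PE\simeq E\). In the merely primary case, with \(\cf(\kappa)>\omega\), I would split \(A\) according to the integer \(n\) bounding the Banach--Mazur distance. Since \(\kappa\) is not a countable union of sets of smaller size, some \(n\) occurs \(\kappa\) times, and the same argument runs with constant \(n\).

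\emph{The \(\ell_1\) case, expected main obstacle.} Here only \(\sigma\) is countable, so closed blocks need not exist. First I would group indices into countable \(\sigma\)-closed blocks \(D_\alpha\). Then I would apply Hajnal's set mapping theorem, valid because \(\kappa>\omega_1\), to the map sending \(\alpha\) to the countable set of blocks met by \(\sigma(D_\alpha)\). This yields a free set \(F\) of size \(\kappa\): for distinct \(\alpha,\beta\in F\), the block \(D_\beta\) is not hit from \(D_\alpha\). On \(Z=\ell_1(\bigcup_{\alpha\in F}D_\alpha,Y)\) the compression \(P_ZPP_Z\) is exactly block diagonal. The delicate point is that such a compression is no longer a projection. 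So instead of projections I would apply the primariness of \(X_1\) through a compression-stable dichotomy: from each block obtain a complemented copy of \(X_1\) inside \(PE\) or \((\Id-P)E\), with uniform constants, or with one constant for \(\kappa\) many blocks when \(\cf(\kappa)>\omega\). These copies are then summed in \(\ell_1\), as above, to get a complemented copy of \(E\), and the decomposition method finishes. Making this block dichotomy rigorous from mere (uniform) primariness of \(X_1\) is the step I expect to require the most care. The first thing I would try is to use that \(\sigma\)-closedness makes \(P_{D}PP_{D}\) agree with \(PP_D\), so that \(P_D P P_D\) is idempotent on each \(\sigma\)-closed \(D\).
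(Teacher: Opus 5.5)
Your part~(ii) is sound, and it is more elementary than the paper's argument. Since $\beta\in\sigma(\gamma)$ iff $\gamma\in\tau(\beta)$, the relation $\sigma\cup\tau$ is symmetric. Your closures are therefore the connected components of a graph of countable degree. They are countable and automatically disjoint from earlier blocks, so the ``minus what has been used'' step is vacuous. Moreover $P$ is \emph{globally} block diagonal, so $PE$ is literally the $\ell_p$- (or $c_0$-)sum of the block ranges and no free-set selection is needed. The paper instead builds triangular blocks in all cases and extracts a free family via Proposition~\ref{prop:two-sided-countable-free}; what that buys is a single argument for (i) and (ii).

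Part~(ii) needs two repairs:
\begin{itemize}
\item Components can be finite; they are all singletons when $P$ is a coordinate projection. Then $\ell_p(C_\alpha,Y)\simeq X_p$ fails. Merge countably infinitely many components into one block, or first rewrite $\ell_p(\Gamma,Y)\cong\ell_p(\Gamma,X_p)$ as the paper does.
\item In both parts, uniform primariness needs the quantitative decomposition method, Theorem~\ref{thm:quant-pelc}, not Theorem~\ref{th: pelcdecmethod}.
\end{itemize}

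Part~(i) has a genuine gap, exactly at the step you flag.

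\textbf{Disjoint $\sigma$-closed blocks need not exist.} Take the rank-one projection $Px=f(x)u$ with $u=J_{\eta_0}y_0$, $f((y_\gamma)_\gamma)=\sum_\gamma y^*(y_\gamma)$ and $y^*(y_0)=1$. Then $\sigma(\gamma)=\{\eta_0\}$ for every $\gamma$, so every non-empty $\sigma$-closed set contains $\eta_0$. Hence your idempotence of $P_DPP_D$ is unavailable. Primariness of $X_1$ says nothing about non-idempotent compressions, so the ``compression-stable dichotomy'' you hope for has no source.

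\textbf{The missing idea is to close each block only relative to earlier blocks.} Put $\Delta_\alpha=\bigcup_{\beta<\alpha}\Gamma_\beta$, let $C_0$ be the least unused point, set $C_{n+1}=C_n\cup(\sigma(C_n)\setminus\Delta_\alpha)$, and take $\Gamma_\alpha=\bigcup_n C_n$. Then every initial segment $\ell_1(\Delta_\alpha,\cdot)$ is $P$-invariant. This forces $Q_\alpha=R_{\Gamma_\alpha}P|_{\ell_1(\Gamma_\alpha,\cdot)}$ to be a projection: write $Px=u+Q_\alpha x$ with $u$ supported in $\Delta_\alpha$, and apply $R_{\Gamma_\alpha}$ to $Pu+PQ_\alpha x=u+Q_\alpha x$. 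So primariness applies directly to each $Q_\alpha$, with $\norm{Q_\alpha}\le\norm P$. The only surviving off-diagonal entries are the backward ones, $R_{\Gamma_\beta}PR_{\Gamma_\alpha}\neq0$ with $\beta<\alpha$. There are countably many for each $\alpha$, and this is the set mapping to which Hajnal's theorem is applied.

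\textbf{Summing the copies needs a further step.} Even on the free family $H$ you cannot ``sum the copies as above''. Let $P_0\in\{P,\Id-P\}$ and let $Q^0_\alpha$ be the matching choice. For $z_\alpha\in\operatorname{ran}Q^0_\alpha$, the vector $P_0z_\alpha$ may have mass in earlier blocks outside $H$, so $P_0E$ is not a sum of block ranges. What does survive is $R_{\Gamma_\beta}P_0z_\alpha=\delta_{\alpha\beta}z_\alpha$ for $\alpha,\beta\in H$. The paper exploits this through
\[
T\Bigl(\sum_\alpha z_\alpha\Bigr)=\sum_\alpha P_0z_\alpha,
\qquad
Sx=\sum_{\alpha\in H}Q^0_\alpha R_{\Gamma_\alpha}x,
\]
which satisfy $ST=\Id$. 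This places a complemented copy of $E$ inside $P_0E$.
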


The cofinality assumption \(\cf(\kappa)>\omega\) in the non-uniform alternatives is used only to extract a full-size family of blocks with uniformly bounded Banach--Mazur constants.  It is not a set-theoretic assumption such as CH; in particular, \(\cf(2^{\aleph_0})>\omega\) by K\"onig's theorem.  The separate hypothesis \(\kappa>\omega_1\) in the WCG \(\ell_1\)-case is different: it is the price paid for using a one-sided countable support reduction.

\begin{remark}\label{rem:cardinal-regimes}
The results below distinguish two mechanisms.  The two-sided countability mechanism applies to sums of separable spaces, over scalar bases not equivalent to \(\ell_1\), and works in ZFC for every uncountable \(\Gamma\).  The one-sided mechanism applies to WCG \(\ell_1\)-sums and to certain non-separable coefficient spaces; it requires \(\abs{\Gamma}>\omega_1\).  Thus, for continuum-indexed WCG sums, the hypothesis becomes exactly \(\neg\CH\).
\end{remark}

In the same spirit, we prove inheritance theorems for the PFP and UPFP.

\begin{theorem}[Primary factorisation property]\label{thm:B}
Let \(\Gamma\) be an uncountable set with \(\kappa = \abs{\Gamma}\), and let \(X\) be a Banach space.  Assume that one of the following holds:
\begin{enumerate}
    \nameditem{\textup{(UF)}}{it:UF2} \(X\) has the uniform primary factorisation property;
    \nameditem{\textup{(CF)}}{it:CF2} \(X\) has the primary factorisation property and \(\cf(\kappa) > \omega\).
\end{enumerate}
Then the following assertions hold.
\begin{enumerate}[label=\textup{(\roman*)}, ref=\textup{(\roman*)}]
    \item\label{it:thmB-l1-wcg}
    If \(\kappa>\omega_1\) and \(X\) is WCG, then \(\ell_1(\Gamma, X)\) has the primary factorisation property, uniformly in case~\ref{it:UF2}.

    \item\label{it:thmB-separable-zfc}
    Let \(E\) be a Banach space with a symmetric basis \((e_\gamma)_{\gamma\in\Gamma}\) not equivalent to the canonical basis of \(\ell_1(\Gamma)\).  If \(X\) is separable, then \(E(\Gamma,X)\) has the primary factorisation property, uniformly in case~\ref{it:UF2}.  This part has no cardinal assumption beyond the uncountability of \(\Gamma\).

    \item\label{it:thmB-countably-total-large}
    Let \(E\) be a Banach space with a symmetric basis \((e_\gamma)_{\gamma\in\Gamma}\) not equivalent to the canonical basis of \(\ell_1(\Gamma)\).  If \(\kappa>\omega_1\) and \(X^*\) contains a countable total set, then \(E(\Gamma,X)\) has the primary factorisation property, uniformly in case~\ref{it:UF2}.
\end{enumerate}
\end{theorem}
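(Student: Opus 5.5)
The plan is to reduce to a countable model in three steps. First, a support reduction makes the operator, restricted to a large subfamily of coordinates, act coordinatewise. Then we apply the PFP/UPFP of \(X\) on each surviving coordinate. Finally, we glue the resulting blocks, uniformly in case \ref{it:UF2} or after a pigeonhole on constants in case \ref{it:CF2}.

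Fix \(T\in\Bop(E(\Gamma,X))\), and write \(J_\gamma\colon X\to E(\Gamma,X)\) and \(Q_\gamma\colon E(\Gamma,X)\to X\) for the canonical embeddings and coordinate projections. Set \(T_{\gamma\beta}=Q_\gamma TJ_\beta\in\Bop(X)\).

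\emph{Step 1 (countable interaction sets).} For each \(\beta\), the outgoing set \(\{\gamma:T_{\gamma\beta}\neq0\}\) is countable.
\begin{itemize}
\item In parts \ref{it:thmB-separable-zfc} and \ref{it:thmB-countably-total-large}, \(TJ_\beta(X)\) is separable (or is tested against a countable total set), and vectors of \(E(\Gamma,X)\) have countable support.
\item In part \ref{it:thmB-l1-wcg}, a WCG \(X\) is the closed span of a weakly compact set \(K\). Each \(TJ_\beta x\) has countable support, and a Rosenthal/Amir--Lindenstrauss-type argument should give that \(TJ_\beta(K)\) has a countable union of supports.
\end{itemize}
For the incoming set \(\{\beta:T_{\gamma\beta}\neq0\}\) with \(X\) separable, fix a dense sequence \((x_n)\) in \(B_X\). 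Apply the argument of Lemma~\ref{lmm: sym-basis-affected} to the functionals \(x^*\circ Q_\gamma T\), with \(x^*\) ranging over a countable norming set. Unimodular alignment then shows that for each \(\varepsilon\) only finitely many \(\beta\) have \(\norm{T_{\gamma\beta}}>\varepsilon\): otherwise the corresponding \(e_\beta\) would span \(\ell_1\), contradicting that \(E\) is not \(\ell_1(\Gamma)\) up to equivalence. So the incoming sets are countable too.

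\emph{Step 2 (free set).} In the two-sided case, a transfinite recursion of length \(\kappa\) selects \(\Gamma'\subseteq\Gamma\) with \(\abs{\Gamma'}=\kappa\) such that \(T_{\gamma\beta}=0\) for distinct \(\gamma,\beta\in\Gamma'\). This works because each chosen index only forbids countably many others, and \(\kappa\) is uncountable. In the one-sided case (part \ref{it:thmB-l1-wcg}, and part \ref{it:thmB-countably-total-large} where only one side is controlled), we use Hajnal's set mapping theorem for \(f\colon\Gamma\to[\Gamma]^{\le\omega}\). It yields a free set of size \(\kappa\) provided \(\kappa>\omega_1\), which is exactly the cardinal hypothesis.

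On \(E(\Gamma',X)\), which is \(1\)-complemented and isometric to \(E(\Gamma,X)\) by symmetry, the compression \(P_{\Gamma'}T|\) is then the diagonal operator \(\bigoplus_\gamma T_{\gamma\gamma}\). One still has to check that the off-diagonal part vanishes, not merely that its entries do. In the \(\ell_1\)-case this follows from the column structure. In the \(E\)-case it needs the matrix entries to determine the operator, which should follow from the Schauder-type property of \((e_\gamma)\) combined with the fact that finitely supported vectors are dense.

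\emph{Step 3 (gluing).} Apply the PFP to each \(T_{\gamma\gamma}\). This gives \(U_\gamma,V_\gamma\) with either \(U_\gamma T_{\gamma\gamma}V_\gamma=\Id_X\) or \(U_\gamma(\Id_X-T_{\gamma\gamma})V_\gamma=\Id_X\).
\begin{itemize}
\item Colour each \(\gamma\) according to which alternative holds. One colour class has size \(\kappa\).
\item In case \ref{it:UF2} all constants are at most \(C\); normalise so that \(\norm{U_\gamma}\le C\) and \(\norm{V_\gamma}\le 1\).
\item In case \ref{it:CF2}, write \(\kappa\) as a countable union of the sets \(\{\gamma:\norm{U_\gamma}\norm{V_\gamma}\le n\}\). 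Since \(\cf(\kappa)>\omega\), one of these sets has size \(\kappa\).
\end{itemize}
Restrict to such a subfamily \(\Gamma''\). The diagonal operators \(\bigoplus U_\gamma\) and \(\bigoplus V_\gamma\) are bounded on any \(E\)-sum because the basis is \(1\)-unconditional with a lattice-type norm. They factor \(\Id\) through the restriction of \(T\) or \(\Id-T\), and composing with the isometry \(E(\Gamma,X)\cong E(\Gamma'',X)\) finishes the argument.

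The main obstacle is Step 1 in part \ref{it:thmB-l1-wcg}, namely countability of the outgoing supports from WCG compactness. The second delicate point is ensuring in Step 2 that vanishing matrix blocks really force the compressed operator to be diagonal on all of \(E(\Gamma',X)\), not merely on its finitely supported vectors.
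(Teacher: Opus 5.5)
Your overall architecture is the paper's: countable interaction sets, then a free set of size $\kappa$ (by the ZFC recursion in the two-sided case, by Hajnal for $\kappa>\omega_1$ in the one-sided case), then coordinatewise PFP, colouring with the $\cf(\kappa)>\omega$ pigeonhole, and block-diagonal gluing. Your ``second delicate point'' is harmless, since the compression and $\bigoplus_\gamma T_{\gamma\gamma}$ are both bounded and agree on the dense set of finitely supported vectors.

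The problems are in Step~1. In part~(iii) you claim countable \emph{outgoing} sets from the countable total set, and this is false. Take an uncountable $\Gamma_0\subseteq\Gamma$ with $\abs{\Gamma_0}\le\mathfrak c$ and let $X=\ell_1(\Gamma_0)$. Identifying $\Gamma_0$ with a subset of $[0,1]$, the functionals $x\mapsto\sum_t x(t)\,t^n$, $n\ge0$, form a countable total set by uniqueness of moments. Fix a unit vector $x_0\in X$ and put $Tx=\sum_{\gamma\in\Gamma_0}x_{\beta_0}(\gamma)J_\gamma x_0$, where $x_{\beta_0}(\gamma)$ is the $\gamma$-th scalar coordinate of $x_{\beta_0}\in\ell_1(\Gamma_0)$. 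Then $\norm{T}\le1$ on every $E(\Gamma,X)$, yet $T_{\gamma\beta_0}\neq0$ for all $\gamma\in\Gamma_0$.

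A countable total set controls only the \emph{incoming} side, and in~(iii) Hajnal must be applied to that side; this is Lemma~\ref{lmm: disjoint-2}. Your incoming argument is the right mechanism, but you run it only for separable $X$, and its intermediate claim is wrong. ``Only finitely many $\beta$ have $\norm{T_{\gamma\beta}}>\varepsilon$'' already fails in $c_0(\Gamma,c_0)$. There the operator with $Q_{\gamma_0}Tx=\sum_n e_n^*(x_{\beta_n})e_n$ (distinct $\beta_n$, all other coordinates zero) has norm one, and $\norm{T_{\gamma_0\beta_n}}=1$ for every $n$. Unimodular alignment needs one fixed functional. What it actually gives is that for each $x^*$ and each $\varepsilon>0$ only finitely many $\beta$ satisfy $\norm{x^*T_{\gamma\beta}}>\varepsilon$. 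Countability then follows by taking the union over a countable total set, with no norming or separability needed.

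In part~(i) you leave the outgoing countability open, and this is the only place where WCG enters. No Rosenthal or Amir--Lindenstrauss machinery is needed, but you must scalarise, because $\ell_1(\Gamma,X)$ need not have the Schur property. Suppose $TJ_\beta(X)$ met uncountably many fibres. For each such $\alpha$ choose $x_\alpha\in B_X$ and $x_\alpha^*\in B_{X^*}$ with $x_\alpha^*(Q_\alpha TJ_\beta x_\alpha)\neq0$, and set $\widehat Sx=\sum_\alpha x_\alpha^*(Q_\alpha TJ_\beta x)e_\alpha$. This is a bounded operator $X\to\ell_1(\Gamma)$ with $\norm{\widehat Sx}\le\norm{TJ_\beta x}$. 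If $X=\overline{\spn}\,K$ with $K$ weakly compact, then $\widehat S(K)$ is weakly compact, hence norm compact by the Schur property of $\ell_1(\Gamma)$. So $\widehat S(X)$ is separable and has countable support, contradicting $\widehat Sx_\alpha(\alpha)\neq0$ for uncountably many $\alpha$. This is Lemma~\ref{lem:scalar-to-vector-support} combined with Corollary~\ref{cor:operator-countable}. With these two repairs, your Steps~2 and~3 go through as written.
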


\begin{remark}
The space \(\ell_\infty(\Gamma,X)\) is not covered by the arguments below: its canonical unit vectors do not form a Schauder basis in the sense used here, and the countable-support lemmas fail in that setting.  We therefore make no assertion about \(\ell_\infty\)-sums in this section.
\end{remark}

\begin{remark}\label{rem:separable-range}
In the parts of Theorem~\ref{thm:B} where \(\kappa>\omega_1\), the WCG assumption in the \(\ell_1\)-case may be replaced by the requirement that every operator \(S\colon X\to\ell_1(\Gamma)\) has separable range.  Likewise, in the non-\(\ell_1\) \(E\)-sum case one may replace the countable-total-set assumption by the requirement that every operator \(S\colon X\to E\) has separable range.  In both replacements one uses the one-sided outgoing-coordinate alternative in Lemma~\ref{lem:coordinate-diagonalisation}, and therefore the assumption \(\kappa>\omega_1\) remains essential for this proof.  The ZFC separable-coefficient part of Theorem~\ref{thm:B}\ref{it:thmB-separable-zfc} uses a different, two-sided argument.
\end{remark}

\subsection{Applications and consequences}

We now record some immediate consequences of the transfer theorems.

\begin{corollary}[WCG \(\ell_1\)-sums at the continuum under \(\neg\CH\)]\label{cor:wcg-continuum}
Assume \(\neg\CH\), and let \(Y\) be a WCG Banach space.  Put \(\mathfrak c=2^{\aleph_0}\).  If \(\ell_1(\mathbb N,Y)\) is uniformly primary, then \(\ell_1(\mathfrak c,Y)\) is uniformly primary.  If \(\ell_1(\mathbb N,Y)\) is primary, then \(\ell_1(\mathfrak c,Y)\) is primary.
\end{corollary}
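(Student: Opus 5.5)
This is a direct specialisation of Theorem~\ref{thm:A}\ref{it:thmA-l1}, taking \(\Gamma\) to be a set of cardinality \(\kappa=\mathfrak c=2^{\aleph_0}\). The task is to check that the hypotheses of that theorem hold in each of the two alternatives.

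First, the cardinal hypothesis. The theorem requires \(\kappa>\omega_1\). The continuum hypothesis asserts exactly \(2^{\aleph_0}=\aleph_1\), and Cantor's theorem gives \(2^{\aleph_0}\ge\aleph_1\) in ZFC. So under \(\neg\CH\) we have \(\mathfrak c>\omega_1\), and in particular \(\Gamma\) is uncountable. The coefficient space \(Y\) is WCG by assumption, so \(X_1=\ell_1(\mathbb N,Y)\) is exactly the countable model appearing in Theorem~\ref{thm:A}\ref{it:thmA-l1}.

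In the uniform case, \(X_1\) is uniformly primary. This is alternative~\ref{it:U1}, so Theorem~\ref{thm:A}\ref{it:thmA-l1} shows that \(\ell_1(\mathfrak c,Y)\) is uniformly primary. No cofinality assumption is needed here.

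In the non-uniform case, we want alternative~\ref{it:CF1}, which additionally requires \(\cf(\mathfrak c)>\omega\). This follows from König's theorem, as the paper already notes after Theorem~\ref{thm:A}. If \(\mathfrak c\) were the supremum of countably many smaller cardinals \(\kappa_n\), then \(\mathfrak c^{\aleph_0}>\mathfrak c\). But \(\mathfrak c^{\aleph_0}=2^{\aleph_0\cdot\aleph_0}=\mathfrak c\), a contradiction. Hence alternative~\ref{it:CF1} applies, and \(\ell_1(\mathfrak c,Y)\) is primary.

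There is no real obstacle here; the only point needing care is verifying the cofinality condition in the non-uniform case. All the substantive work, namely the one-sided support reduction and the Hajnal set-mapping argument, is contained in Theorem~\ref{thm:A}.
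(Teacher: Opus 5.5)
Your proposal is correct and follows the paper's proof: specialise Theorem~\ref{thm:A}\ref{it:thmA-l1} to \(\abs{\Gamma}=\mathfrak c\), using \(\neg\CH\) to get \(\mathfrak c>\omega_1\) and K\"onig's theorem to get \(\cf(\mathfrak c)>\omega\). Your explicit derivation of \(\cf(\mathfrak c)>\omega\) from \(\mathfrak c^{\aleph_0}=\mathfrak c\) is a welcome expansion of the one-line citation in the paper.
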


\begin{proof}
Under \(\neg\CH\) we have \(\mathfrak c>\omega_1\), while \(\cf(\mathfrak c)>\omega\) by K\"onig's theorem.  The assertion is therefore Theorem~\ref{thm:A}\ref{it:thmA-l1} applied with \(\Gamma\) of cardinality \(\mathfrak c\).
\end{proof}

\begin{corollary}\label{cor:intro-C01}
Assume \(\neg\CH\).  Then the space \(\mathcal{M}[0,1] = C[0,1]^*\) is primary.
\end{corollary}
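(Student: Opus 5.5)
We will deduce the statement from Corollary~\ref{cor:wcg-continuum}, which requires two things: a structural description of $C[0,1]^*$ as a continuum-indexed $\ell_1$-sum of WCG spaces, and primariness of the countable model.

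\textbf{Step 1: structural decomposition.} By the Lebesgue decomposition, $\mathcal M[0,1]=L_1[0,1]\oplus_1 \mathcal M_s$, where $\mathcal M_s$ is the space of singular measures. The standard maximal-family argument (Zorn's lemma applied to families of mutually singular probability measures) gives
\[
\mathcal M[0,1]\simeq \ell_1(\Gamma, L_1(\mu_\gamma))
\]
for a maximal family $(\mu_\gamma)_{\gamma\in\Gamma}$ of mutually singular probability measures. The family is indexed by a set of cardinality $\mathfrak c$: the point masses alone give $\mathfrak c$ such measures, and there are at most $\mathfrak c$ Borel measures in total. Each $L_1(\mu_\gamma)$ is separable, since $\mu_\gamma$ is a Borel measure on a compact metric space. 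Therefore each $L_1(\mu_\gamma)$ is isometric to one of $L_1[0,1]$, $\ell_1$, $\ell_1^n$, or $L_1[0,1]\oplus_1\ell_1^{(n)}$. By the $\ell_1$-associativity of the sum, regrouping the summands gives
\[
\mathcal M[0,1]\simeq \ell_1(\mathfrak c, L_1[0,1]),
\]
where we absorb the atomic summands using $\ell_1(\mathfrak c)\oplus_1 \ell_1(\mathfrak c, L_1)\simeq \ell_1(\mathfrak c,L_1)$. This absorption holds because $L_1\simeq L_1\oplus \mathbb K$, so each $L_1$ summand can absorb a scalar coordinate, and there are $\mathfrak c$ copies of $L_1$ available, since there are $\mathfrak c$ mutually singular continuous measures (for instance, Cantor-type measures on disjoint translates). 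Taking $Y=L_1[0,1]$, this space is separable and hence WCG.

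\textbf{Step 2: the countable model.} We need $\ell_1(\N,L_1[0,1])=\ell_1(L_1)$ to be primary. This is Capon's theorem \cite{Capon1980} (listed in the catalogue). Alternatively, since $\ell_1(L_1)\simeq L_1$ fails, we would otherwise have to invoke that result directly; we simply cite it.

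\textbf{Step 3: conclusion.} Under $\neg\CH$ we have $\mathfrak c>\omega_1$. Corollary~\ref{cor:wcg-continuum} then yields that $\ell_1(\mathfrak c,L_1)$ is primary, since primariness is used in the non-uniform (CF) form and $\cf(\mathfrak c)>\omega$ by K\"onig's theorem. Primariness is an isomorphism invariant, so $\mathcal M[0,1]$ is primary.

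\textbf{Main obstacle.} The step requiring most care is Step 1: getting the cardinality of the index set and the isomorphism types of the summands right, in particular the absorption of the atomic and purely atomic parts into copies of $L_1$. The deep inputs, namely Capon's theorem and the transfer Theorem~\ref{thm:A}\ref{it:thmA-l1}, are taken as given.
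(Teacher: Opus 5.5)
Your argument is essentially the paper's. The paper quotes the decomposition $C[0,1]^*\simeq\ell_1(2^{\aleph_0},L_1[0,1])$ from \cite[Proposition~5.6]{LT2} rather than rederiving it, and then applies Corollary~\ref{cor:wcg-continuum} exactly as in your Step~3.

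Your rederivation in Step~1 is fine in outline. One count is left implicit: that $\mathfrak c$ members of the maximal family have non-zero non-atomic part. The easiest way to secure it is to run Zorn's lemma starting from the family of all point masses together with $\mathfrak c$ mutually singular non-atomic probability measures. For the latter you can take the images of the Bernoulli product measures with parameter $s\in(0,1)$ under the binary expansion map. This also makes your appeal to the point masses literal, which is not automatic for an arbitrary maximal family.

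There is, however, a false statement in Step~2. The isomorphism $\ell_1(\N,L_1[0,1])\simeq L_1[0,1]$ does hold, and even isometrically. Indeed, $L_1[0,1]=\bigl(\bigoplus_{n\in\N}L_1[2^{-n},2^{1-n}]\bigr)_{\ell_1}$, and each summand is isometric to $L_1[0,1]$ via an affine change of variables. Consequently, primariness of the countable model is just the classical primariness of $L_1[0,1]$ due to Enflo--Maurey and Enflo--Starbird \cite{Maurey1975,ES1979}, and this is what the paper uses. Invoking Capon's theorem is unnecessary, since for $p=1$ it reduces to this case anyway. You should delete the remark that the isomorphism fails; the rest of the proof is unaffected.
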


\begin{proof}
By a classical result (see \cite[Proposition~5.6]{LT2}), the space \(C[0,1]^*\) is isomorphic to \(\ell_1(2^{\aleph_0},L_1[0,1])\).  The space \(L_1[0,1]\) is separable, hence WCG, and
\[
    \ell_1(\mathbb N,L_1[0,1])\simeq L_1[0,1]
\]
is primary.  Since \(\neg\CH\) is equivalent to \(2^{\aleph_0}>\omega_1\), the result follows from Corollary~\ref{cor:wcg-continuum}.  This gives an interesting example of a positive structural result obtained under the mere negation of the continuum hypothesis.
\end{proof}

\begin{remark}\label{rem:c01-not-zfc-from-decomposition}
Corollary~\ref{cor:intro-C01} is not a ZFC consequence of the present direct-sum argument.  The representation
\[
    C[0,1]^*\simeq \ell_1(2^{\aleph_0},L_1[0,1])
\]
writes \(C[0,1]^*\) as an \(\ell_1\)-sum of separable spaces, but separability of the coefficient space does not give the two-sided countability needed in Proposition~\ref{prop:two-sided-countable-free}.  Indeed, if \(X\neq\{0\}\), choose \(x_0\in X\) and \(x_0^*\in X^*\) with \(x_0^*(x_0)=1\).  For a fixed \(\eta_0\in\Gamma\), the formula
\[
    R_{\eta_0}T((x_\gamma)_{\gamma\in\Gamma})
    =
    \sum_{\gamma\in\Gamma} x_0^*(x_\gamma)x_0,
    \qquad
    R_\eta T=0\quad(\eta\neq\eta_0),
\]
defines a bounded operator on \(\ell_1(\Gamma,X)\), and
\[
    R_{\eta_0}TR_\gamma\neq 0
    \qquad(\gamma\in\Gamma).
\]
Thus a single target coordinate may receive non-zero contributions from all source coordinates.  This is precisely the one-sided phenomenon in the \(\ell_1\)-case, and at \(2^{\aleph_0}=\omega_1\) the Hajnal argument cannot be applied.  A ZFC proof of the primariness of \(C[0,1]^*\), if one is available, would therefore have to use additional structure of \(C[0,1]^*\), not merely the above \(\ell_1(2^{\aleph_0},L_1[0,1])\) decomposition.
\end{remark}

\begin{corollary}\label{cor:separable-coeff-zfc}
Let \(\Gamma\) be an uncountable set, let \(X\) be a separable Banach space with the UPFP, and let \(E\) be a Banach space with a symmetric basis \((e_\gamma)_{\gamma\in\Gamma}\) not equivalent to the canonical basis of \(\ell_1(\Gamma)\).  Then \(E(\Gamma,X)\) has the UPFP.  In particular, \(\ell_p(\Gamma,X)\), \(1<p<\infty\), and \(c_0(\Gamma,X)\) have the UPFP.
\end{corollary}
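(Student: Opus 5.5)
This corollary follows directly from Theorem~\ref{thm:B}\ref{it:thmB-separable-zfc} in the uniform alternative~\ref{it:UF2}. First, since \(X\) has the UPFP, there is a constant \(C\ge 1\) such that \(X\) has the \(C\)-PFP, so hypothesis~\ref{it:UF2} holds. Second, \(X\) is separable and \(E\) has a symmetric basis \((e_\gamma)_{\gamma\in\Gamma}\) not equivalent to the canonical basis of \(\ell_1(\Gamma)\). These are exactly the standing assumptions of Theorem~\ref{thm:B}\ref{it:thmB-separable-zfc}. That part carries no cardinal restriction beyond uncountability of \(\Gamma\) and, in particular, no cofinality hypothesis in case~\ref{it:UF2}. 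So it gives that \(E(\Gamma,X)\) has the PFP uniformly, i.e.\ the UPFP, in ZFC.

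For the particular cases, it remains only to check that the relevant scalar spaces satisfy the hypothesis on \(E\). The canonical unit vector basis of \(\ell_p(\Gamma)\), \(1<p<\infty\), is symmetric: the norm is invariant under permutations of \(\Gamma\) and under sign changes. The same holds for the basis of \(c_0(\Gamma)\). Neither basis is equivalent to the \(\ell_1(\Gamma)\)-basis. Indeed, already on a countable subset the finite sums \(\sum_{k\le n} e_{\gamma_k}\) have norm \(n^{1/p}\), respectively \(1\), rather than \(n\), so the lower \(\ell_1\)-estimate fails. Applying the first part with \(E=\ell_p(\Gamma)\) and with \(E=c_0(\Gamma)\) then yields the UPFP for \(\ell_p(\Gamma,X)\) and \(c_0(\Gamma,X)\).

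There is no genuine obstacle here, since all the work sits inside Theorem~\ref{thm:B}\ref{it:thmB-separable-zfc}, namely the two-sided countable support reduction and the ZFC free-set recursion for separable coefficient spaces. The only point to watch is matching hypotheses: the uniform alternative~\ref{it:UF2} must be used rather than~\ref{it:CF2}, so that no assumption on \(\cf(\abs{\Gamma})\) is needed and the conclusion is uniform.
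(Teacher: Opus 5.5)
Your proposal is correct and matches the paper's proof: the paper also obtains the corollary as the uniform case~\ref{it:UF2} of Theorem~\ref{thm:B}\ref{it:thmB-separable-zfc}, and adds only that the scalar bases of $\ell_p(\Gamma)$, $1<p<\infty$, and $c_0(\Gamma)$ are not equivalent to the canonical basis of $\ell_1(\Gamma)$. Your computation that a sum of $n$ basis vectors has norm $n^{1/p}$, respectively $1$, simply spells out that remark.
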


\begin{proof}
This is Theorem~\ref{thm:B}\ref{it:thmB-separable-zfc} in the uniform case.  The scalar bases of \(\ell_p(\Gamma)\), \(1<p<\infty\), and \(c_0(\Gamma)\) are not equivalent to the canonical basis of \(\ell_1(\Gamma)\).
\end{proof}

\begin{corollary}\label{cor:intro-C02}
Let \(\alpha\) be an ordinal.
\begin{enumerate}[label=\textup{(\roman*)}, ref=\textup{(\roman*)}]
    \item\label{it:ordinal-large}
    If \(\abs{\Gamma}>\omega_1\) and \(1\le p<\infty\), then \(\ell_p(\Gamma,C(\alpha))\) has the uniform primary factorisation property.  In particular, it is primary.

    \item\label{it:ordinal-separable-zfc}
    If \(\alpha<\omega_1\), \(\Gamma\) is uncountable, and \(1<p<\infty\), then \(\ell_p(\Gamma,C(\alpha))\) has the uniform primary factorisation property.  The same is true of \(c_0(\Gamma,C(\alpha))\).
\end{enumerate}
\end{corollary}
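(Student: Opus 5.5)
The plan is to deduce both parts of Corollary~\ref{cor:intro-C02} from Theorem~\ref{thm:B}, taking $X=C(\alpha)$ in the uniform alternative~\ref{it:UF2}. The only real input is a uniform factorisation result for $C(\alpha)$ itself.

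\textbf{The ordinal input.} For $\alpha<\omega_1$ the space $C(\alpha)$ is separable, and by the classical theorems of Billard, Wolfe and Alspach--Benyamini recorded in the catalogue it has the UPFP. For uncountable $\alpha$ I would use a reduction to a standard representative: if $\alpha\ge\omega_1$ is not of the special ``exceptional'' form, then $C(\alpha)$ is isomorphic to $C(\omega^{\omega^\beta})$, and the Alspach--Benyamini dichotomy for operators applies with uniform constants. By Proposition~\ref{prop:iso-invariance}, the UPFP survives the isomorphism. This step is the main obstacle: the statement claims the result for every ordinal $\alpha$, and the published primariness results for uncountable $\alpha$ cover ``most'' ordinals only. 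I would first check whether the Alspach--Benyamini argument really yields a factorisation of the identity with a uniform constant, and not merely a complemented-copy dichotomy. If a gap remains for exceptional $\alpha$, I would try to restrict to the $\alpha$ where the factorisation is available.

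\textbf{Part~\ref{it:ordinal-large}.} Here $|\Gamma|>\omega_1$ and we consider $\ell_p(\Gamma,C(\alpha))$.
\begin{itemize}
\item For $p=1$, use Theorem~\ref{thm:B}\ref{it:thmB-l1-wcg}. This needs $C(\alpha)$ to be WCG, which holds because $[0,\alpha]$ is a scattered Eberlein compact (indeed, $C(\alpha)$ is WCG for every ordinal). The conclusion is then the UPFP.
\item For $1<p<\infty$, use Theorem~\ref{thm:B}\ref{it:thmB-countably-total-large} with $E=\ell_p(\Gamma)$, whose basis is not equivalent to the $\ell_1(\Gamma)$ basis. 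The hypothesis that $X^*=C(\alpha)^*$ has a countable total set fails when $\alpha$ is uncountable. I would therefore invoke Remark~\ref{rem:separable-range} instead: every operator $S\colon C(\alpha)\to\ell_p(\Gamma)$ has separable range. The reason is that $S$ is weakly compact, since $\ell_p$ is reflexive. Its adjoint $S^*\colon\ell_{p'}(\Gamma)\to C(\alpha)^*=\ell_1([0,\alpha])$ is then weakly compact, hence compact by the Schur property. So $S$ is compact and its range is separable.
\end{itemize}
Primariness then follows from Pe{\l}czy\'nski's decomposition method (Theorem~\ref{th: pelcdecmethod}), since $\ell_p(\Gamma,C(\alpha))\simeq\ell_p(\ell_p(\Gamma,C(\alpha)))$.

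\textbf{Part~\ref{it:ordinal-separable-zfc}.} Here $\alpha<\omega_1$, so $C(\alpha)$ is separable with the UPFP. Corollary~\ref{cor:separable-coeff-zfc}, which is Theorem~\ref{thm:B}\ref{it:thmB-separable-zfc}, applied with $E=\ell_p(\Gamma)$ for $1<p<\infty$ or $E=c_0(\Gamma)$, gives the UPFP in ZFC for every uncountable $\Gamma$.
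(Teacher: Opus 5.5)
Your argument for part~(ii), and for part~(i) with $1<p<\infty$ once the coefficient space is known to have the UPFP, is essentially the paper's. (You get weak compactness of $S\colon C(\alpha)\to\ell_p(\Gamma)$ from reflexivity rather than from Pe{\l}czy\'nski's theorem, which is fine for $p>1$.) There are, however, two genuine gaps.

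First, you cannot ``restrict to the $\alpha$ where the factorisation is available''. Part~(i) is asserted for every ordinal, and when $C(\alpha)\simeq C(\xi\cdot n)$ with $\xi$ an uncountable regular ordinal and $n\ge 2$, no UPFP for $C(\alpha)$ is available. So Theorem~\ref{thm:B} cannot be applied with $X=C(\alpha)$. The missing idea is that the coefficient space need not have the UPFP itself. The interval $[0,\xi\cdot n]$ splits into $n$ clopen pieces homeomorphic to $[0,\xi]$, so $C(\alpha)\simeq C(\xi)\oplus_\infty\cdots\oplus_\infty C(\xi)$. Since $\abs{\Gamma\times\{1,\dots,n\}}=\abs{\Gamma}$, the finite multiplicity is absorbed into the index set:
\[
    \ell_p(\Gamma,C(\alpha))\simeq\ell_p(\Gamma,C(\xi\cdot n))\simeq\ell_p(\Gamma,C(\xi)),
\]
and likewise for $c_0$-sums. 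One then runs the argument with $X=C(\xi)$ and transfers back via Proposition~\ref{prop:iso-invariance}. For non-exceptional $\alpha$, the paper does not re-examine Alspach--Benyamini or reduce to some $C(\omega^{\omega^\beta})$. It imports the UPFP of $C(\alpha)$ from \cite[Proposition~1.9]{Acuaviva2025}. The same absorption is also needed for finite $\alpha\ge1$, where $C(\alpha)=\mathbb K^{\alpha+1}$ fails the PFP. So your blanket claim for all $\alpha<\omega_1$ is not literally true either, though the paper's text is equally silent on this degenerate case.

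Second, your $p=1$ step rests on a claim that is false beyond the separable case: $C(\alpha)$ is \emph{not} WCG for $\alpha\ge\omega_1$. The compact $[0,\omega_1]$ is not Eberlein. It is not even Corson, since $\omega_1$ lies in the closure of $[0,\omega_1)$ but is not the limit of any sequence from it. Moreover, $[0,\omega_1]$ is clopen in $[0,\alpha]$, so $C(\omega_1)$ is complemented in $C(\alpha)$ and would inherit weak compact generation. Thus Theorem~\ref{thm:B}\ref{it:thmB-l1-wcg} is unavailable, and your reflexivity argument for the separable-range variant does not reach $\ell_1(\Gamma)$.

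The paper treats all $1\le p<\infty$ uniformly. By Pe{\l}czy\'nski's theorem, an operator $S\colon C(\beta)\to\ell_p(\Gamma)$ either fixes a copy of $c_0$ or is weakly compact. Since $\ell_p(\Gamma)$ contains no $c_0$, $S$ is weakly compact. It is then compact by the Schur property of $C(\beta)^*\cong\ell_1([0,\beta])$, hence of separable range. Remark~\ref{rem:separable-range} then yields the UPFP in the $\ell_1$-case as well.
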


\begin{proof}
We first reduce to a coefficient space known to have the UPFP.  If \(C(\alpha)\) is not isomorphic to \(C(\xi\cdot n)\) for any uncountable regular ordinal \(\xi\) and any integer \(n\ge 2\), set \(\beta=\alpha\); then \(C(\beta)\) has the UPFP by \cite[Proposition~1.9]{Acuaviva2025}.  In the remaining case, \(C(\alpha)\simeq C(\xi\cdot n)\) for some such \(\xi\) and \(n\), and the standard finite-decomposition of ordinal \(C\)-spaces gives
\[
    \ell_p(\Gamma,C(\alpha))\simeq \ell_p(\Gamma,C(\xi\cdot n))\simeq \ell_p(\Gamma,C(\xi)).
\]
For \(c_0\)-sums the same finite reindexing gives
\[
    c_0(\Gamma,C(\alpha))\simeq c_0(\Gamma,C(\xi\cdot n))\simeq c_0(\Gamma,C(\xi)).
\]
Set \(\beta=\xi\) in this case.  Thus it is enough, by Proposition~\ref{prop:iso-invariance}, to prove the stated assertions with \(C(\beta)\) in place of \(C(\alpha)\), where \(C(\beta)\) has the UPFP.

Assume first that \(\abs{\Gamma}>\omega_1\) and \(1\le p<\infty\).  We verify the separable-range hypothesis from Remark~\ref{rem:separable-range}.  Let
\[
    S\colon C(\beta)\to \ell_p(\Gamma)
\]
be bounded.  We claim that \(S\) is compact.  By Pe{\l}czy\'nski's theorem \cite{pelczynski1965strictly}, every operator from a \(C(K)\)-space either fixes a copy of \(c_0\) or is weakly compact.  Since the compact ordinal interval \([0,\beta]\) is scattered, \(C(\beta)^*\cong \ell_1([0,\beta])\) has the Schur property.  Hence the adjoint of every weakly compact operator from \(C(\beta)\) is compact, and Schauder's theorem implies that every weakly compact operator from \(C(\beta)\) is compact.

The alternative that \(S\) fixes a copy of \(c_0\) cannot occur.  Indeed, \(\ell_p(\Gamma)\) contains no subspace isomorphic to \(c_0\) for \(1\le p<\infty\): every separable subspace of \(\ell_p(\Gamma)\) is contained in \(\ell_p(\Lambda)\) for some countable \(\Lambda\subseteq\Gamma\), and \(\ell_p(\Lambda)\) contains no copy of \(c_0\).  Consequently \(S\) is compact, and in particular has separable range.  By the separable-range formulation of Theorem~\ref{thm:B}, recorded in Remark~\ref{rem:separable-range}, \(\ell_p(\Gamma,C(\beta))\) has the UPFP.  Since \(\ell_p(\Gamma,C(\beta))\simeq \ell_p(\mathbb N,\ell_p(\Gamma,C(\beta)))\), primariness follows from Pe{\l}czy\'nski's decomposition method.  This proves~\ref{it:ordinal-large}.

Now assume that \(\alpha<\omega_1\), that \(\Gamma\) is uncountable, and that \(1<p<\infty\).  Then the ordinal interval \([0,\alpha]\) is countable and compact, so \(C(\alpha)\) is separable.  Part~\ref{it:ordinal-separable-zfc} follows directly from Corollary~\ref{cor:separable-coeff-zfc}.  The preceding finite-decomposition reduction is unnecessary in this separable case, but it is compatible with it.
\end{proof}

\begin{remark}
The \(\ell_1\)-case for separable coefficient spaces is not included in the ZFC statement above.  Even for scalar coefficients, a bounded functional on \(\ell_1(\Gamma)\) may depend on all coordinates, so the incoming interaction sets need not be countable.  This is exactly why the WCG \(\ell_1\)-results are stated under the one-sided hypothesis \(\abs{\Gamma}>\omega_1\), and why continuum-indexed WCG applications are stated under \(\neg\CH\).
\end{remark}

\subsection{`Small' spaces and support lemmas} 

\subsubsection{Direct sums and coordinate projections}

Let $X$ be a Banach space and let $E$ be a Banach space with a symmetric basis $(e_\gamma)_{\gamma \in \Gamma}$. We denote by $E(\Gamma, X)$ the Banach space of all families $x = (x_\gamma)_{\gamma \in \Gamma}$ with $x_\gamma \in X$ such that
\begin{equation*}
    \sum_{\gamma \in \Gamma} \norm{x_\gamma}_X e_\gamma \in E,
\end{equation*}
equipped with the norm
\begin{equation*}
    \norm{x} = \norm{\sum_{\gamma \in \Gamma} \norm{x_\gamma}_X e_\gamma}_E.
\end{equation*}

In contrast with Section~\ref{sec: symmetric-like-basis}, and to emphasise the role of $\Gamma$, we write $E(\Gamma, X)$ instead of $E(X)$. In this spirit, we write $\ell_p(\Gamma, X)$ and $c_0(\Gamma, X)$ when $(e_\gamma)_{\gamma \in \Gamma}$ is the canonical basis of $\ell_p(\Gamma)$ or $c_0(\Gamma)$, respectively, for $1 \le p < \infty$.

For $x \in E(\Gamma, X)$, we define
\begin{equation*}
\supp(x) = \{ \gamma \in \Gamma : x_\gamma \neq 0 \}.
\end{equation*} 

For $\Delta \subseteq \Gamma$, we denote by $R_\Delta$ the canonical coordinate projection onto the subspace
\begin{equation*}
E(\Delta, X) = \{ x \in E(\Gamma, X) : \supp(x) \subseteq \Delta  \}.
\end{equation*}
By renormalising, we may always assume that $E$ is equipped with a symmetric norm and thus that $R_\Delta$ is a contractive projection, and in particular $E(\Delta, X)$ is $1$-complemented in $E(\Gamma, X)$.

In the case where $\Delta$ is a singleton, say $\Delta = \{\gamma\}$, we use the standard abuse of notation and write $R_\gamma$ for $R_{\{\gamma\}}$ and $E(\gamma, X)$ for $E(\{\gamma\}, X)$.

\subsubsection{Support lemmas} For a bounded operator $T \colon E(\Gamma, X) \to E(\Gamma, X)$, the idea is to separate the influence of a given coordinate in two ways: the influence that a single coordinate has on the others or the influence that the other coordinates have on a single coordinate. This will determine when a space $X$ is ``small", and we shall formalise this through separation lemmas. We start by studying how a given coordinate may influence all the other coordinates. Before doing so, we will need some elementary results.

\begin{lemma}\label{lem:countable-support-lp-c0}
Let $X$ be a Banach space. Then every $x \in E(\Gamma, X)$ has countable support.
\end{lemma}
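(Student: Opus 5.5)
The plan is to reduce the statement to the corresponding fact for the scalar space $E$ itself. Given $x=(x_\gamma)_{\gamma\in\Gamma}\in E(\Gamma,X)$, I would look at the scalar vector
\[
    u=\sum_{\gamma\in\Gamma}\norm{x_\gamma}_X e_\gamma\in E .
\]
By construction, $\supp(x)=\{\gamma\in\Gamma:e_\gamma^*(u)\neq 0\}$. It therefore suffices to show that every vector of $E$ has only countably many non-zero coordinates with respect to the symmetric basis $(e_\gamma)_{\gamma\in\Gamma}$.

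For that scalar fact I would use the standing convention that $(e_\gamma)_{\gamma\in\Gamma}$ is an (unconditional) basis of $E$. That means the finitely supported vectors are dense in $E$, and the expansion $u=\sum_\gamma e_\gamma^*(u)e_\gamma$ converges unconditionally. Choose finitely supported $u_n$ with $\norm{u-u_n}<1/n$, and let $\Lambda=\bigcup_n\supp(u_n)$. This $\Lambda$ is countable. For $\gamma\notin\Lambda$, the functional $e_\gamma^*$ vanishes on every $u_n$. Since the basis is normalised and $1$-unconditional (symmetric norm), $\norm{e_\gamma^*}\le 1$. Hence
\[
    |e_\gamma^*(u)|=|e_\gamma^*(u-u_n)|<1/n
\]
for every $n$, so $e_\gamma^*(u)=0$. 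Thus the support of $u$, and hence of $x$, is contained in $\Lambda$ and is countable.

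As a cross-check that avoids density, I could also argue directly from unconditionality. For each $k$, let $F$ be any finite set of $\gamma$ with $|e^*_\gamma(u)|\ge 1/k$. Then $\norm{u}\ge\norm{\sum_{\gamma\in F}e_\gamma^*(u)e_\gamma}$, and by symmetry this is at least $\tfrac1k\norm{\sum_{\gamma\in F}e_\gamma}$. However, this only bounds $|F|$ when $\norm{\sum_{\gamma\in F}e_\gamma}\to\infty$, which fails for $c_0(\Gamma)$. So the density argument is the one I would actually use.

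The main point needing care is the convention that "symmetric basis" for uncountable $\Gamma$ means the closed span of $(e_\gamma)$ is all of $E$. This is what rules out spaces like $\ell_\infty(\Gamma)$, which the paper explicitly excludes in the remark preceding this subsection. Once that convention is in place, the proof is routine.
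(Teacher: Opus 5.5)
Your proof is correct and is essentially the paper's argument. The paper uses the tail condition for the unconditionally convergent net $\sum_{\gamma}\norm{x_\gamma}e_\gamma$ to find finite sets $\Delta_n$ with $\{\gamma:\norm{x_\gamma}\ge 1/n\}\subseteq\Delta_n$, applying normalisation of the basis to singleton tails; this is the same approximation by finitely supported vectors as your choice of $u_n$ combined with $\norm{e_\gamma^*}\le 1$, and you correctly identify the spanning-basis convention as what excludes $\ell_\infty(\Gamma)$.
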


\begin{proof}
Fix $x = (x_\gamma)_{\gamma \in \Gamma} \in E(\Gamma, X)$. Observe that by definition $(\norm{x_\gamma})_{\gamma \in \Gamma} \in E$, in other words the net of finite partial sums
\begin{equation*}
    \sum_{\gamma \in \Delta} \norm{x_\gamma} e_\gamma \to (\norm{x_\gamma})_{\gamma \in \Gamma} \in E 
    \quad (\Delta \subseteq \Gamma,\ \Delta \text{ finite}).
\end{equation*}
Equivalently, the tails go to zero, that is, for every $\varepsilon > 0$, there exists a finite set $\Delta_\varepsilon \subseteq \Gamma$ such that for every finite $\Delta \subseteq \Gamma \setminus \Delta_\varepsilon$, then
\begin{equation*}
    \norm{\sum_{\gamma \in \Delta} \norm{x_\gamma} e_\gamma} < \varepsilon.
\end{equation*}

For $n \in \mathbb{N}$, by the tail condition, there exists a finite set $\Delta_n \subseteq \Gamma$ such that for every finite $\Delta \subseteq \Gamma \setminus \Delta_n$, we have
\begin{equation*}
    \norm{\sum_{\gamma \in \Delta} \norm{x_\gamma} e_\gamma} < \frac{1}{n}.
\end{equation*}
In particular, since $\norm{e_\gamma} = 1$, i.e the basis is normalised,  the previous condition implies
\begin{equation*}
    \{ \gamma \in \Gamma : \norm{x_\gamma} \geq 1/n\} \subseteq \Delta_n.
\end{equation*}
It follows that
\begin{equation*}
    \supp(x) \subseteq \bigcup_{n \in \mathbb{N}} \Delta_n,
\end{equation*}
which is countable, finishing the proof.
\end{proof}

\begin{lemma}[Separable sets have countable support]\label{lem:separable-support-lp-c0}
    If $Y\subseteq E(\Gamma, X)$ is separable, then there exists a countable $\Delta\subseteq\Gamma$ such that $Y\subseteq E(\Delta, X)$.
\end{lemma}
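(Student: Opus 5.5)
The plan is to reduce to Lemma~\ref{lem:countable-support-lp-c0} by way of a countable dense subset. Since $Y$ is separable, I fix a countable set $\{y_n:n\in\N\}\subseteq Y$ that is dense in $Y$. By Lemma~\ref{lem:countable-support-lp-c0}, each $y_n$ has countable support. I then set
\[
    \Delta=\bigcup_{n\in\N}\supp(y_n).
\]
This is a countable union of countable sets, so $\Delta$ is countable, and by construction every $y_n$ lies in $E(\Delta,X)$.

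It remains to pass from the dense sequence to all of $Y$. The key observation is that $E(\Delta,X)$ is closed in $E(\Gamma,X)$. One way to see this is that it is the range of the contractive projection $R_\Delta$, and ranges of bounded projections are closed. Alternatively, it is an intersection of kernels: for each $\gamma\notin\Delta$ the map $x\mapsto x_\gamma$ is bounded, since the basis is normalised and the norm is symmetric give $\norm{x_\gamma}\le\norm{x}$. In either case, $Y\subseteq\overline{\{y_n:n\in\N\}}\subseteq E(\Delta,X)$.

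I do not expect a genuine obstacle here. The only point needing care is the closedness of $E(\Delta,X)$, that is, the estimate $\norm{x_\gamma}_X\le\norm{x}$. This follows from $1$-unconditionality of the symmetric norm, since the norm of $\norm{x_\gamma}e_\gamma$ is dominated by the norm of $\sum_{\beta\in\Gamma}\norm{x_\beta}e_\beta$. Using the projection $R_\Delta$ already introduced in the paper avoids even that.
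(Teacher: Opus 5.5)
Your proposal is correct and follows the paper's proof essentially verbatim: a countable dense subset of $Y$, the union $\Delta$ of its supports (countable by Lemma~\ref{lem:countable-support-lp-c0}), and closedness of $E(\Delta,X)$ to pass to all of $Y$. Your justification of closedness, via the contractive projection $R_\Delta$ or via the bounded coordinate maps, fills in the one step the paper only asserts.
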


\begin{proof}
    Choose a dense sequence $(y_n)_{n\in\N}$ in $Y$ and set
    $\Delta=\bigcup_{n\in\N}\supp(y_n)$, which is countable by Lemma~\ref{lem:countable-support-lp-c0}.
    Then $y_n\in E(\Delta, X)$ for all $n$, and $E(\Delta, X)$ is closed in $E(\Gamma, X)$, hence $Y\subseteq E(\Delta, X)$.
\end{proof}

\begin{lemma}\label{lem:scalar-to-vector-support}
Let $E$ be a Banach space with a symmetric basis $(e_\gamma)_{\gamma\in\Gamma}$, and let $X$ be a Banach space.  Assume that for every bounded operator $S \colon X \to E$ there exists a countable set $\Lambda \subseteq \Gamma$ such that $S(X) \subseteq E(\Lambda)$.  Then, for every bounded operator $T \colon X \to E(\Gamma,X)$ there exists a countable set $\Lambda \subseteq \Gamma$ such that $T(X) \subseteq E(\Lambda, X)$.
\end{lemma}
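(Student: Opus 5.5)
The plan is to reduce the vector-valued statement to the scalar hypothesis by composing $T$ with a norm-like scalar functional on each fibre. The difficulty is that $\gamma\mapsto \norm{(Tx)_\gamma}$ is not linear in $x$, so we cannot simply feed "$x\mapsto (\norm{(Tx)_\gamma})_\gamma$" into the hypothesis. Instead we linearise using functionals. We argue by contradiction: suppose that $\Lambda_0=\bigcup_{x\in X}\supp(Tx)$ is uncountable. For each $\gamma\in\Lambda_0$ pick $x_\gamma\in B_X$ with $(Tx_\gamma)_\gamma\neq 0$, and then a norm-one functional $f_\gamma\in X^*$ with $f_\gamma((Tx_\gamma)_\gamma)\neq 0$. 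Define
\[
 S\colon X\to E,\qquad Sx=\sum_{\gamma\in\Gamma} f_\gamma\bigl((Tx)_\gamma\bigr)e_\gamma ,
\]
where $f_\gamma:=0$ for $\gamma\notin\Lambda_0$.

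First we check that $S$ is a well-defined bounded operator. Since the basis is symmetric, hence $1$-unconditional after renorming, and $|f_\gamma((Tx)_\gamma)|\le\norm{(Tx)_\gamma}$, the lattice-type domination in $E$ gives $\norm{Sx}_E\le\norm{\sum_\gamma\norm{(Tx)_\gamma}e_\gamma}_E=\norm{Tx}\le\norm{T}\norm{x}$. Convergence of the series needs a little care. The support of $Tx$ is countable by Lemma~\ref{lem:countable-support-lp-c0}, so the sum is really over a countable set. Moreover, because $\sum_\gamma\norm{(Tx)_\gamma}e_\gamma\in E$ converges unconditionally, the dominated series with coefficients of smaller modulus also converges, by $1$-unconditionality (tails are dominated by tails). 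Linearity of $S$ is clear, since each coordinate map $x\mapsto f_\gamma((Tx)_\gamma)$ is linear.

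Next we apply the hypothesis: there is a countable $\Lambda\subseteq\Gamma$ with $S(X)\subseteq E(\Lambda)$. But for every $\gamma\in\Lambda_0$ we have $e_\gamma^*(Sx_\gamma)=f_\gamma((Tx_\gamma)_\gamma)\neq0$, so $\Lambda_0\subseteq\Lambda$. This contradicts the assumption that $\Lambda_0$ is uncountable. Hence $\Lambda_0$ is countable, and $T(X)\subseteq E(\Lambda_0,X)$ since each $Tx$ has support inside $\Lambda_0$ by definition. (Alternatively, we can avoid the contradiction and simply take $\Lambda=\Lambda_0$ directly after proving its countability this way.)

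The main obstacle is the first step: justifying that the coordinatewise-defined $S$ genuinely lands in $E$ and is bounded. This relies on $1$-unconditionality of the symmetric norm, in the form $\norm{\sum a_\gamma e_\gamma}\le\norm{\sum b_\gamma e_\gamma}$ whenever $|a_\gamma|\le b_\gamma$, together with the convergence of the dominated series in $E$. Everything else is bookkeeping.
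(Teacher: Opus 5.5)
Your proof is correct and is essentially the paper's argument: both linearise the fibre norms with functionals $f_\gamma$ chosen so that $f_\gamma\bigl((Tx_\gamma)_\gamma\bigr)\neq0$, form the scalar operator $x\mapsto\sum_\gamma f_\gamma\bigl((Tx)_\gamma\bigr)e_\gamma$ (bounded by domination in the symmetric norm), and conclude that every coordinate of $\bigcup_{x}\supp(Tx)$ lies in the countable set given by the hypothesis. Your version omits the paper's unnecessary passage to an uncountable subfamily with a uniform lower bound $\varepsilon$, and it adds a careful justification of the convergence of the dominated series, which the paper leaves implicit.
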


\begin{proof}
Suppose, towards a contradiction, that $T\colon X\to E(\Gamma,X)$ is bounded and that $T(X)$ is not contained in $E(\Lambda,X)$ for any countable $\Lambda\subseteq\Gamma$.  Set
\[
    \Delta=\{\alpha\in\Gamma:\ R_\alpha T x\neq 0\text{ for some }x\in B_X\}.
\]
Then $\Delta$ is uncountable.  For every $\alpha\in\Delta$ choose $x_\alpha\in B_X$ with $R_\alpha T x_\alpha\neq0$.  After passing to an uncountable subset of $\Delta$, there is $\varepsilon>0$ such that
\[
    \norm{R_\alpha T x_\alpha}\ge \varepsilon\qquad(\alpha\in\Delta).
\]
Choose $x_\alpha^*\in B_{X^*}$ with
\[
    x_\alpha^*(R_\alpha T x_\alpha)=\norm{R_\alpha T x_\alpha}
\]
in the real case, and with modulus equal to this norm in the complex case.  Multiplying $x_\alpha^*$ by a scalar of modulus one in the complex case, we may assume the displayed equality is literal.

Define $\widehat T\colon X\to E(\Gamma)$ by
\[
    \widehat T x
    =\sum_{\alpha\in\Delta} x_\alpha^*(R_\alpha T x)e_\alpha.
\]
For each $x\in X$, the scalar family defining $\widehat T x$ is coordinate-wise dominated by $(\norm{R_\alpha Tx})_{\alpha\in\Gamma}$.  Since the norm of $E$ is symmetric, this gives
\[
    \norm{\widehat T x}_{E(\Gamma)}
    \le \norm{Tx}_{E(\Gamma,X)}
    \le \norm T\,\norm x,
\]
so $\widehat T$ is bounded.  On the other hand,
\[
    \widehat T x_\alpha(\alpha)\neq0\qquad(\alpha\in\Delta),
\]
and therefore $\widehat T(X)$ is not contained in $E(\Lambda)$ for any countable $\Lambda\subseteq\Gamma$.  This contradicts the hypothesis.
\end{proof}

We are finally ready to state our first support reduction lemma.

\begin{lemma}[Support reduction]\label{lem:operator-countable-general}
    Let $E$ be a Banach space with a symmetric basis $(e_\gamma)_{\gamma\in\Gamma}$, let $X$ be a Banach space, and let
    \begin{equation*}
        T \colon E(\Gamma,X)\to E(\Gamma,X)
    \end{equation*}
    be a bounded operator. Assume that every bounded operator $S\colon X\to E$ has separable range. Then, for every countable set $\Delta\subseteq\Gamma$, there exists a countable set $\Lambda\subseteq\Gamma$ such that
    \begin{equation*}
        T(E(\Delta,X))\subseteq E(\Lambda,X).
    \end{equation*}
\end{lemma}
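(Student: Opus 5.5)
The plan is to reduce to single coordinates and then apply Lemma~\ref{lem:scalar-to-vector-support}. Since $\Delta$ is countable and $E(\Delta,X)$ is the closed linear span of the subspaces $E(\gamma,X)$, $\gamma\in\Delta$, it suffices to show that for each fixed $\gamma\in\Gamma$ there is a countable $\Lambda_\gamma\subseteq\Gamma$ with $T(E(\gamma,X))\subseteq E(\Lambda_\gamma,X)$. We then put $\Lambda=\bigcup_{\gamma\in\Delta}\Lambda_\gamma$, which is countable. The subspace $E(\Lambda,X)$ is closed and contains $T$ of every finitely supported vector of $E(\Delta,X)$. Such vectors are dense in $E(\Delta,X)$, because the basis is symmetric, hence a basis, so tails tend to zero (as in the proof of Lemma~\ref{lem:countable-support-lp-c0}). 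Continuity of $T$ then gives $T(E(\Delta,X))\subseteq E(\Lambda,X)$.

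For a single coordinate, let $J_\gamma\colon X\to E(\Gamma,X)$ be the isometric embedding onto the $\gamma$th coordinate, and consider $T J_\gamma\colon X\to E(\Gamma,X)$. We want to apply Lemma~\ref{lem:scalar-to-vector-support} to this operator. That lemma's hypothesis is that every bounded $S\colon X\to E$ maps into $E(\Lambda)$ for some countable $\Lambda$. Our hypothesis gives that $S(X)$ is separable, and then the scalar case ($X=\mathbb K$) of Lemma~\ref{lem:separable-support-lp-c0} produces a countable $\Lambda$ with $S(X)\subseteq E(\Lambda)$. Lemma~\ref{lem:scalar-to-vector-support} then yields a countable $\Lambda_\gamma$ with $TJ_\gamma(X)\subseteq E(\Lambda_\gamma,X)$, as required.

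I do not expect a genuine obstacle. The only points needing care are the identification $E=E(\Gamma,\mathbb K)$ with the scalar support lemma, and the density of finitely supported vectors in $E(\Delta,X)$. If density were in doubt, an alternative is to note directly that $T(E(\Delta,X))$ is separable whenever $X$ is. That route is unavailable in general, however, since $X$ need not be separable, which is why we go through coordinates.
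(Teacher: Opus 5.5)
Your proposal is correct and follows essentially the same route as the paper: you apply Lemma~\ref{lem:scalar-to-vector-support} to each $TJ_\gamma$, with its hypothesis supplied by the separable-range assumption together with Lemma~\ref{lem:separable-support-lp-c0}. You then take the countable union $\Lambda=\bigcup_{\gamma\in\Delta}\Lambda_\gamma$ and conclude using the convergence $x=\sum_{\gamma\in\Delta}J_\gamma x_\gamma$ and the closedness of $E(\Lambda,X)$. Your explicit remark that Lemma~\ref{lem:separable-support-lp-c0} is used in the scalar case $X=\mathbb K$, via $E=E(\Gamma,\mathbb K)$, spells out a step the paper leaves implicit.
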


\begin{proof}
    For each $\alpha\in\Delta$, let
    \begin{equation*}
        J_\alpha \colon X \to E(\Gamma,X)
    \end{equation*}
    denote the canonical embedding into the $\alpha$-th coordinate. Then $T\circ J_\alpha$ is a bounded operator from $X$ into $E(\Gamma,X)$, so by the assumption and Lemmas~\ref{lem:separable-support-lp-c0} and~\ref{lem:scalar-to-vector-support}, there exists a countable set $\Lambda_\alpha\subseteq \Gamma$ such that
    \begin{equation*}
        T(J_\alpha(X))\subseteq E(\Lambda_\alpha,X).
    \end{equation*}
    Set
    \begin{equation*}
        \Lambda=\bigcup_{\alpha\in\Delta}\Lambda_\alpha.
    \end{equation*}
    Since $\Delta$ is countable and each $\Lambda_\alpha$ is countable, the set $\Lambda$ is countable.

    Let $x=(x_\alpha)_{\alpha\in\Gamma}\in E(\Delta,X)$. Since $\supp(x)\subseteq\Delta$, we may write
    \begin{equation*}
        x=\sum_{\alpha\in\Delta} J_\alpha x_\alpha,
    \end{equation*}
    with convergence in $E(\Gamma,X)$. Hence
    \begin{equation*}
        T x=\sum_{\alpha\in\Delta} T(J_\alpha x_\alpha).
    \end{equation*}
    Each term belongs to $E(\Lambda_\alpha,X)\subseteq E(\Lambda,X)$, and since $E(\Lambda,X)$ is a closed subspace of $E(\Gamma,X)$, it follows that
    \(
        T x\in E(\Lambda,X).
    \)
    Therefore,
    \(
        T(E(\Delta,X))\subseteq E(\Lambda,X),
    \)
    as required.
\end{proof}

As a consequence, we obtain the following.

\begin{corollary}[Support reduction]\label{cor:operator-countable}
    Let $E$ be a Banach space with a symmetric basis $(e_\gamma)_{\gamma\in\Gamma}$, let $X$ be a Banach space, and let
    \begin{equation*}
        T \colon E(\Gamma,X)\to E(\Gamma,X)
    \end{equation*}
    be a bounded operator. Assume that one of the following holds:
    \begin{enumerate}
        \item $X$ is separable;
        \item $E=\ell_1(\Gamma)$ and $X$ is WCG.
    \end{enumerate}
    Then, for every countable set $\Delta\subseteq \Gamma$, there exists a countable set $\Lambda\subseteq \Gamma$ such that
    \begin{equation*}
        T(E(\Delta,X))\subseteq E(\Lambda,X).
    \end{equation*}
\end{corollary}

\begin{proof}
The first assertion is immediate from Lemma~\ref{lem:operator-countable-general},
since every operator from a separable Banach space has separable range.

Assume now that $E=\ell_1(\Gamma)$ and that $X$ is WCG.
Let $S\colon X\to \ell_1(\Gamma)$ be bounded.  Choose a weakly compact set $K\subseteq X$ with $X=\overline{\spn}(K)$.  Then $S(K)$ is weakly compact in $\ell_1(\Gamma)$, and hence norm compact because $\ell_1(\Gamma)$ has the Schur property.  Thus $S(K)$ is separable, and
\[
    \overline{S(X)}=\overline{\spn}(S(K))
\]
is separable.  In particular, $S(X)$ is separable.  Applying Lemma~\ref{lem:operator-countable-general} completes the proof.
\end{proof}

So far, we have explored how one coordinate affects the others. For a space $E$ with a~symmetric basis not equivalent to the unit vector basis of $\ell_1(\Gamma)$, we now establish a further separation result under the assumption that $X^*$ admits a countable total set. Roughly speaking, this condition ensures that each coordinate can be influenced by at most countably many others. Recall that if $X$ is a Banach space, a subset $M \subseteq X^*$ is said to be \emph{total for $X$} if $x^*(x) = 0$ for all $x^* \in M$ implies that $x = 0$. Before stating the separation lemma, we will need the following preliminary result.

\begin{lemma}\label{lmm: disjoint-1}
Let $E$ be a Banach space with a symmetric basis $(e_\gamma)_{\gamma \in \Gamma}$ not equivalent to the unit vector basis of $\ell_1(\Gamma)$, and let $X$ be a Banach space.  Then for every $x^*\in X^*$ and every bounded operator
$T\colon E(\Gamma,X)\to X$ there exists a countable set $\Delta\subseteq \Gamma$ such that
\[
    x^*TR_\gamma=0\qquad(\gamma\in\Gamma\setminus\Delta).
\]
\end{lemma}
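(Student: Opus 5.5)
The natural approach is to argue by contradiction, mimicking the proof of Lemma~\ref{lmm: sym-basis-affected}. Set $f=x^*T\in E(\Gamma,X)^*$ and consider the set
\[
\Delta=\{\gamma\in\Gamma:\ fR_\gamma\neq 0\}.
\]
If $\Delta$ is countable we are done. Otherwise, $\Delta=\bigcup_{n}\{\gamma: \norm{fR_\gamma}>1/n\}$ is an uncountable countable union, so some piece is uncountable, and in particular infinite. Thus there are $\varepsilon>0$ and an infinite sequence of distinct indices $(\gamma_k)_{k\in\N}$ with $\norm{fR_{\gamma_k}}>\varepsilon$. For each $k$ we choose $u_k\in B_X$ with $|f(J_{\gamma_k}u_k)|>\varepsilon$, where $J_\gamma$ is the canonical embedding of $X$ into the $\gamma$-th coordinate. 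Multiplying $u_k$ by a unimodular scalar, we may take $f(J_{\gamma_k}u_k)$ real and at least $\varepsilon$. Using unimodular scalars on whole vectors avoids the sector argument entirely.

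Next we compare norms on finitely supported scalar families $(a_k)$. Put $y=\sum_k |a_k| J_{\gamma_k}u_k$. Then
\[
\norm{f}\,\norm{y}\ \ge\ \operatorname{Re} f(y)\ \ge\ \varepsilon\sum_k|a_k|.
\]
On the other hand, $\norm{y}=\norm{\sum_k |a_k|\,\norm{u_k}\,e_{\gamma_k}}_E\le \norm{\sum_k |a_k| e_{\gamma_k}}_E$, by $1$-unconditionality (symmetric norm) and $\norm{u_k}\le1$. Combining these,
\[
\Bigl\|\sum_k a_k e_{\gamma_k}\Bigr\|_E\ \ge\ \frac{\varepsilon}{\norm{f}}\sum_k |a_k|.
\]
The reverse inequality $\le\sum_k|a_k|$ is trivial from normalisation. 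Hence $(e_{\gamma_k})_k$ is equivalent to the unit vector basis of $\ell_1$.

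The remaining step is to upgrade this to equivalence of the whole basis $(e_\gamma)_{\gamma\in\Gamma}$ with the canonical basis of $\ell_1(\Gamma)$, which would contradict the hypothesis. By symmetry, every finite subfamily $(e_{\beta_1},\dots,e_{\beta_m})$ of $(e_\gamma)_{\gamma\in\Gamma}$ is isometrically equivalent to $(e_{\gamma_1},\dots,e_{\gamma_m})$. Therefore the lower $\ell_1$-estimate with constant $\varepsilon/\norm f$ holds for every finitely supported family over $\Gamma$, and passes to closures.

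The main point requiring care is exactly this last step: making sure that symmetry of an uncountable basis legitimately transfers the estimate from a countable subsequence to all finite subsets of $\Gamma$. With a symmetric norm this is immediate, since the norm of a finite combination depends only on the multiset of moduli of its coefficients. One should also note why $\Delta$ countable suffices: it gives exactly $x^*TR_\gamma=0$ for $\gamma\notin\Delta$.
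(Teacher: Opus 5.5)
Your proof is correct and follows essentially the same route as the paper: you rotate vectors lying in distinct fibres by unimodular scalars to obtain a lower $\ell_1$-estimate on the corresponding scalar basis vectors, and symmetry of the norm then transfers this estimate to every finite subfamily of $\Gamma$, contradicting the hypothesis. The only difference is cosmetic: you extract an infinite sequence $(\gamma_k)$ rather than an uncountable subfamily $J$, which suffices because symmetry makes the finite estimates independent of which indices are used.
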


\begin{proof}
The assertion is trivial if $x^*=0$ or $T=0$, so assume otherwise.  Suppose that
\[
    A=\{\gamma\in\Gamma:\ x^*TR_\gamma\neq0\}
\]
is uncountable.  Passing to an uncountable subset, choose $\varepsilon>0$ and an uncountable $J\subseteq A$ such that, for every $\gamma\in J$, there is $u_\gamma\in E(\gamma,X)$ with $\norm{u_\gamma}=1$ and
\[
    |x^*(Tu_\gamma)|\ge \varepsilon.
\]
Multiplying $u_\gamma$ by a scalar of modulus one, we may assume that $x^*(Tu_\gamma)=|x^*(Tu_\gamma)|\ge\varepsilon$ for all $\gamma\in J$.

Let $(b_\gamma)_{\gamma\in J}$ be a finitely supported family of non-negative scalars.  Since the vectors $u_\gamma$ are supported on pairwise distinct coordinates and have norm one in their respective fibres,
\[
    \Bigl\|\sum_{\gamma\in J} b_\gamma u_\gamma\Bigr\|_{E(\Gamma,X)}
    =\Bigl\|\sum_{\gamma\in J} b_\gamma e_\gamma\Bigr\|_E.
\]
Consequently,
\[
    \|x^*\|\,\|T\|\,\Bigl\|\sum_{\gamma\in J} b_\gamma e_\gamma\Bigr\|_E
    \ge
    \Bigl|x^*T\Bigl(\sum_{\gamma\in J}b_\gamma u_\gamma\Bigr)\Bigr|
    \ge \varepsilon\sum_{\gamma\in J}b_\gamma.
\]
By symmetry of the basis, this lower estimate is equivalent to
\[
    \Bigl\|\sum_{\gamma\in J} a_\gamma e_\gamma\Bigr\|_E
    \ge
    \frac{\varepsilon}{\|x^*\|\,\|T\|}
    \sum_{\gamma\in J}|a_\gamma|
\]
for every finitely supported scalar family $(a_\gamma)_{\gamma\in J}$.  The reverse estimate follows from the normalisation and the triangle inequality.  Hence $(e_\gamma)_{\gamma\in J}$ is equivalent to the unit vector basis of $\ell_1(J)$.  Since the basis is symmetric, the same two-sided estimates hold, with the same constants, on every finite subset of $\Gamma$.  Thus the full basis of $E$ is equivalent to the unit vector basis of $\ell_1(\Gamma)$, a contradiction.
\end{proof}

\begin{lemma}\label{lmm: disjoint-2}
Let $E$ be a Banach space with a symmetric basis $(e_\gamma)_{\gamma \in \Gamma}$ not equivalent to the unit vector basis of $\ell_1(\Gamma)$, and let $X$ be a Banach space which admits a countable total set $M \subseteq X^*$.  Then for every bounded operator $T \colon E(\Gamma, X) \to X$, there exists a countable set $\Delta \subseteq \Gamma$ such that
\[
    T R_{\gamma} = 0\qquad(\gamma \in \Gamma \setminus \Delta).
\]
\end{lemma}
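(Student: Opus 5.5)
The plan is to apply Lemma~\ref{lmm: disjoint-1} once for each element of the countable total set $M$ and take the union of the resulting countable sets. Enumerate $M=\{x_n^*:n\in\N\}$; if $M$ is finite, the same argument works with a finite enumeration. Fix a bounded operator $T\colon E(\Gamma,X)\to X$. For each $n$, Lemma~\ref{lmm: disjoint-1} applied to $x_n^*$ and $T$ gives a countable set $\Delta_n\subseteq\Gamma$ such that
\[
    x_n^*TR_\gamma=0\qquad(\gamma\in\Gamma\setminus\Delta_n).
\]
Set $\Delta=\bigcup_{n\in\N}\Delta_n$. It is countable, being a countable union of countable sets.

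Now fix $\gamma\in\Gamma\setminus\Delta$ and $x\in E(\Gamma,X)$, and put $y=TR_\gamma x\in X$. Since $\gamma\notin\Delta_n$ for every $n$, we have $x_n^*(y)=0$ for all $n$. Because $M$ is total for $X$, this forces $y=0$. Hence $TR_\gamma=0$ for every $\gamma\in\Gamma\setminus\Delta$, which is the claim.

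There is no real obstacle here. All of the analytic content, including the use of the non-$\ell_1$ hypothesis on the basis, is already in Lemma~\ref{lmm: disjoint-1}. The only point to check is that countability of $M$ is what keeps the union $\Delta$ countable. Totality is what upgrades the statement from the scalar functionals $x_n^*TR_\gamma$ vanishing to the operator $TR_\gamma$ itself vanishing.
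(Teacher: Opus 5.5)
Your proof is correct and is the same argument as the paper's: apply Lemma~\ref{lmm: disjoint-1} to each $x_n^*$ in the enumeration of $M$, take $\Delta=\bigcup_n\Delta_n$, and use totality of $M$ to pass from $x_n^*TR_\gamma=0$ for all $n$ to $TR_\gamma=0$. Your added remark about a finite enumeration of $M$ is a harmless refinement that the paper leaves implicit.
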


\begin{proof}
Let $M = \{x_n^*: n \in \mathbb{N}\}$ be an enumeration of a countable total set.  By Lemma~\ref{lmm: disjoint-1}, for each $n$ there is a countable set $\Delta_n\subseteq\Gamma$ such that $x_n^*TR_\gamma=0$ whenever $\gamma\notin\Delta_n$.  Put $\Delta=\bigcup_{n\in\N}\Delta_n$.  If $\gamma\notin\Delta$, then $x_n^*TR_\gamma=0$ for every $n$.  Since $M$ is total, $TR_\gamma=0$.
\end{proof}

\subsection{A combinatorial reduction to diagonal projections}\label{sec:diagonal}

We start with the following elementary observation.

\begin{lemma}[Countable blocks]\label{lem:countable-blocks-lp-c0}
Let $Y$ be a Banach space and $\Delta$ a non-empty countable set.
\begin{enumerate}[label=(\alph*), ref=(\alph*)]
    \item\label{it:lpa} If $1\le p<\infty$ and $X=\ell_p(\N,Y)$, then $\ell_p(\Delta,X)$ is linearly isometric to $X$.
    \item \label{it:lpb} If $X=c_0(\N,Y)$, then $c_0(\Delta,X)$ is linearly isometric to $X$.
\end{enumerate}
\end{lemma}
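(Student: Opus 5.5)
The plan is to reduce both parts to the elementary fact that a countable disjoint union of copies of $\N$ is in bijection with $\N$, and that $\ell_p$- and $c_0$-norms of a family depend only on the multiset of the norms of its members, not on how they are indexed. Since $\Delta$ is non-empty and countable, the product $\Delta\times\N$ is countably infinite. We fix a bijection $\phi\colon \Delta\times\N\to\N$.

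For \ref{it:lpa}, we unfold an element $x=(x_\delta)_{\delta\in\Delta}\in\ell_p(\Delta,X)$. Each coordinate $x_\delta$ lies in $X=\ell_p(\N,Y)$, so we can write it as $x_\delta=(y_{\delta,n})_{n\in\N}$ with $y_{\delta,n}\in Y$. We then define $\Phi(x)=(z_k)_{k\in\N}$ by setting $z_{\phi(\delta,n)}=y_{\delta,n}$. The norm computation is
\[
\norm{\Phi(x)}^p=\sum_{k}\norm{z_k}^p=\sum_{(\delta,n)\in\Delta\times\N}\norm{y_{\delta,n}}^p=\sum_{\delta\in\Delta}\norm{x_\delta}_X^p=\norm{x}^p.
\]
Here the rearrangement of the non-negative double series is justified by Tonelli's theorem, or simply by unconditional summation of non-negative terms. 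Hence $\Phi$ is a well-defined linear isometry into $X$. Surjectivity follows from the same identity read backwards: given $(z_k)\in X$, we set $y_{\delta,n}=z_{\phi(\delta,n)}$. Each row $(y_{\delta,n})_n$ then lies in $\ell_p(\N,Y)$, and the family of rows lies in $\ell_p(\Delta,X)$, so the inverse map is also an isometry.

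For \ref{it:lpb}, we use the same map $\Phi$ and replace sums of $p$th powers by suprema. The sup-norm identity $\sup_k\norm{z_k}=\sup_\delta\sup_n\norm{y_{\delta,n}}=\sup_\delta\norm{x_\delta}$ is immediate, so the only real work is checking the vanishing-at-infinity condition in both directions.
\begin{enumerate}
\item[(a)] \emph{From $c_0(\Delta,X)$ to $c_0(\N,Y)$.} Let $x\in c_0(\Delta,X)$ and fix $\varepsilon>0$. Only finitely many $\delta$ satisfy $\norm{x_\delta}\ge\varepsilon$. For each of these finitely many $\delta$, only finitely many $n$ satisfy $\norm{y_{\delta,n}}\ge\varepsilon$, because $x_\delta\in c_0(\N,Y)$. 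For every other $\delta$ we have $\norm{y_{\delta,n}}\le\norm{x_\delta}<\varepsilon$ for all $n$. Hence only finitely many pairs $(\delta,n)$, and therefore finitely many $k$, have $\norm{z_k}\ge\varepsilon$. So $\Phi(x)\in c_0(\N,Y)$.
\item[(b)] \emph{From $c_0(\N,Y)$ back to $c_0(\Delta,X)$.} Let $(z_k)\in c_0(\N,Y)$ and fix $\varepsilon>0$. Only finitely many $k$ satisfy $\norm{z_k}\ge\varepsilon/2$. These $k$ correspond to finitely many pairs $(\delta,n)$, and hence involve only finitely many rows $\delta$. Every other row satisfies $\norm{x_\delta}=\sup_n\norm{y_{\delta,n}}\le\varepsilon/2<\varepsilon$. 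Each row itself lies in $c_0(\N,Y)$, since a subsequence of a null sequence is null. So the family of rows lies in $c_0(\Delta,X)$.
\end{enumerate}

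I expect the only (mild) obstacle to be this two-sided verification of the $c_0$ condition in \ref{it:lpb}, in particular the uniformity step in (a), which uses the bound $\norm{y_{\delta,n}}\le\norm{x_\delta}$ to control all entries of the small rows at once. Everything else in both parts is bookkeeping via the bijection $\phi$.
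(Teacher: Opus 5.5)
Your proof is correct and takes the same approach as the paper: fix a bijection $\phi\colon\Delta\times\N\to\N$ and reindex, using that sums of non-negative terms and suprema are invariant under rearrangement. You also check the two-sided $c_0$ condition, which the paper leaves implicit. One small terminological point: $n\mapsto\phi(\delta,n)$ is an injective reindexing rather than a literal subsequence, but since it tends to infinity the rows are still null.
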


\begin{proof}
Fix a bijection $\phi\colon \Delta\times\N\to\N$.
In case \ref{it:lpa},
\[
\ell_p(\Delta,X)=\ell_p(\Delta,\ell_p(\N,Y))\cong \ell_p(\Delta\times\N,Y)\cong \ell_p(\N,Y)=X
\]
via reindexing by $\phi$, which preserves the $\ell_p$-norm.
In case \ref{it:lpb} the same reindexing argument applies with the $c_0$-norm, since $\sup$ is preserved under bijections.
\end{proof}

We now present the standard free-set selection theorem. For a set $S$ and a cardinal $\mu$ we write $[S]^{<\mu}$ for the family of subsets of $S$ of cardinality $<\mu$. First, we record the following almost-trivial lemma with a proof for the sake of completeness.

\begin{lemma}[Cofinality pigeonhole]\label{lem:cofinal-pigeon}
Let $\kappa$ be an infinite cardinal with $\cf(\kappa)>\omega$, and let $S$ be a set with $\abs{S}=\kappa$.
If $S=\bigcup_{n\in\N} S_n$, then $\abs{S_n}=\kappa$ for some $n\in\N$.
\end{lemma}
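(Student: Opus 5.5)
The plan is to argue by contradiction, using only the definition of cofinality. Suppose that $\abs{S_n}<\kappa$ for every $n\in\N$. Write $\mu_n=\abs{S_n}$, so that each $\mu_n$ is a cardinal strictly below $\kappa$. The first step is to observe that the countable set of cardinals $\{\mu_n:n\in\N\}$ cannot be cofinal in $\kappa$, because $\cf(\kappa)>\omega$. Hence there is a cardinal $\mu<\kappa$ with $\mu_n\le\mu$ for all $n$; concretely, one can take $\mu=\sup_n\mu_n$. If this supremum were equal to $\kappa$, the sequence $(\mu_n)_{n\in\N}$ would be a countable cofinal subset of $\kappa$, forcing $\cf(\kappa)\le\omega$.

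The second step is the cardinal arithmetic estimate
\[
    \kappa=\abs{S}\le\sum_{n\in\N}\mu_n\le \aleph_0\cdot\mu=\max(\aleph_0,\mu).
\]
Since $\kappa$ is infinite and $\cf(\kappa)>\omega$, we have $\kappa>\omega$, i.e.\ $\kappa\ge\omega_1>\aleph_0$. We also have $\mu<\kappa$. Therefore $\max(\aleph_0,\mu)<\kappa$, which contradicts the display.

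Consequently, some $S_n$ satisfies $\abs{S_n}\ge\kappa$. Combined with $S_n\subseteq S$, this gives $\abs{S_n}=\kappa$.

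The only point requiring any care is the justification that $\sup_n\mu_n<\kappa$, which is exactly where the hypothesis $\cf(\kappa)>\omega$ enters. There, one should note that the cofinality of a cardinal, viewed as an initial ordinal, is witnessed by any cofinal subset, including a countable set of smaller cardinals. The step $\abs{\bigcup_n S_n}\le\aleph_0\cdot\sup_n\abs{S_n}$ uses the axiom of choice, which is standard in this setting (ZFC). The remaining steps are routine.
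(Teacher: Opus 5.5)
Your proof is correct and follows the same route as the paper: assume every $\abs{S_n}<\kappa$ and use $\cf(\kappa)>\omega$ to conclude $\abs{\bigcup_n S_n}<\kappa$. The paper states this last inference in one line, and you simply spell it out via $\sup_n\abs{S_n}<\kappa$ and $\kappa\le\aleph_0\cdot\sup_n\abs{S_n}$.
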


\begin{proof}
If $\abs{S_n}<\kappa$ for all $n$, then $\abs{\bigcup_{n\in\N}S_n}<\kappa$ because $\cf(\kappa)>\omega$.
This contradicts $\abs{S}=\kappa$.
\end{proof}

A classical theorem of Hajnal provides the
combinatorial ingredient needed in our one-sided arguments. The result,
originally conjectured by Ruziewicz~\cite{Ruziewicz1936}
and proved by Hajnal~\cite{Hajnal1961}, is used below in the form where
$F:\kappa\to[\kappa]^{<\lambda}$ for some cardinal $\lambda<\kappa$.
For countable-valued mappings this means that the one-sided argument applies
when $\kappa>\omega_1$.

\begin{proposition}[Hajnal's set mapping theorem]\label{prop:hajnal}
Let $\kappa$ be an infinite cardinal and let $\lambda < \kappa$ be a cardinal. 
Let $F\colon\kappa\to[\kappa]^{<\lambda}$ be a set mapping (i.e., $\alpha \notin F(\alpha)$ for all $\alpha$).
Then there exists $H\subseteq\kappa$ with $\abs{H}=\kappa$ such that for all distinct
$\alpha,\beta\in H$ one has $\alpha\notin F(\beta)$ and $\beta\notin F(\alpha)$.
\end{proposition}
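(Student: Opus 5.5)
The plan is the classical club/pressing-down argument for regular \(\kappa\), followed by a separate reduction for singular \(\kappa\). The singular case is the main obstacle. Throughout, write \(\Phi(\alpha,\beta)\) for the condition \(\alpha\notin F(\beta)\) and \(\beta\notin F(\alpha)\). A set \(H\) is \emph{free} if \(\Phi\) holds for all distinct \(\alpha,\beta\in H\).

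\emph{Regular \(\kappa\).} Let
\[
C=\{\delta<\kappa:\ F(\alpha)\subseteq\delta \text{ for all } \alpha<\delta\}.
\]
Since \(\kappa\) is regular and each \(\abs{F(\alpha)}<\lambda<\kappa\), a standard closure argument shows that \(C\) is club in \(\kappa\).

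Next I fix a regular cardinal \(\mu\) with \(\lambda\le\mu<\kappa\); the choice \(\mu=\lambda^+\) works whenever \(\lambda^+<\kappa\). The set \(S_\mu=\{\delta<\kappa:\cf(\delta)=\mu\}\) is stationary, and so is \(S=S_\mu\cap C\). For \(\delta\in S\) the set \(F(\delta)\cap\delta\) has size \(<\lambda\le\cf(\delta)\), so it is bounded below \(\delta\). The regressive map \(\delta\mapsto\sup(F(\delta)\cap\delta)\) is therefore constant, equal to some \(\gamma\), on a stationary \(S'\subseteq S\) by Fodor's lemma.

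I then choose \(H=\{\delta_\xi:\xi<\kappa\}\subseteq S'\setminus(\gamma+1)\) increasing, with a point of \(C\) strictly between consecutive chosen points. Take \(\delta<\delta'\) in \(H\). We have \(\delta\notin F(\delta')\), because \(F(\delta')\cap\delta'\subseteq\gamma+1\) and \(\delta>\gamma\). We have \(\delta'\notin F(\delta)\), because \(F(\delta)\subseteq c\) for the club point \(c\) lying strictly between \(\delta\) and \(\delta'\), and \(c\le\delta'\). So \(H\) is free and has size \(\kappa\).

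The remaining regular subcase is \(\kappa=\lambda^+\) with \(\lambda\) singular, where no regular \(\mu\) with \(\lambda\le\mu<\kappa\) exists. Here I would refine the argument. Write \(\Gamma=\bigcup_{\nu}\{\alpha:\abs{F(\alpha)}\le\nu\}\) over cardinals \(\nu<\lambda\), a union of \(\cf(\lambda)<\kappa\) sets. By regularity of \(\kappa\), one of these sets has size \(\kappa\). On it, apply the above argument with \(\mu=\nu^+\), where \(\nu^+\) can be taken \(<\kappa\) and regular. The stationary-set bookkeeping must then be relativised to that size-\(\kappa\) subset, for instance by transferring it along an order isomorphism with \(\kappa\). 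Since the paper only needs \(\lambda=\omega_1\) (countable values) with \(\kappa>\omega_1\), this subcase is not essential for the application.

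\emph{Singular \(\kappa\).} This is the obstacle I expect to be hardest. Let \(\theta=\cf(\kappa)\), and fix an increasing sequence of regular cardinals \((\kappa_i)_{i<\theta}\) cofinal in \(\kappa\), with \(\kappa_0>\max(\lambda,\theta)\).

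I would first thin out the interaction between blocks: partition \(\kappa\) into disjoint pieces \(A_i\) with \(\abs{A_i}=\kappa_i\). Each point of \(A_i\) has \(<\lambda\) images, so the set of points meeting any \(F\)-image from other blocks, or mapping into another block, is controlled by counting sizes. Concretely, \(\bigcup_{\alpha\in A_j}F(\alpha)\) has size \(\le\kappa_j\). Using this, I would recursively choose, for each \(i<\theta\), a set \(B_i\subseteq A_i\) of size \(\kappa_i\) avoiding \(\bigcup_{j<i}\bigcup_{\alpha\in B_j}F(\alpha)\). The harder direction is that points of \(B_i\) must not map into earlier \(B_j\). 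For this I would apply a pigeonhole over the \(<\lambda\) possible traces \(F(\alpha)\cap\bigcup_{j<i}B_j\). I expect this to require the regular case applied to an auxiliary set mapping, namely \(\alpha\mapsto F(\alpha)\cap\bigcup_{j<i}B_j\), together with a free-set argument inside \(A_i\) to find \(\kappa_i\) points whose traces avoid a prescribed large subset. The recursion then has to retroactively shrink the earlier \(B_j\), keeping them of full size \(\kappa_j\). This retroactive shrinking along a sequence of length \(\theta\) is where the care lies.

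Finally, apply the regular case inside each \(B_i\), after transporting to \(\kappa_i\), to make each block free. The union \(H=\bigcup_{i<\theta}H_i\) then has size \(\sup_i\kappa_i=\kappa\) and is free.
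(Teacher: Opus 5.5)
You handle regular uncountable $\kappa$ correctly. $C$ is club, $S=S_\mu\cap C$ is stationary, Fodor's lemma fixes $\sup(F(\delta)\cap\delta)=\gamma$ on a stationary $S'$, and any increasing $\kappa$-sequence in $S'\setminus(\gamma+1)$ is free. You do not even need the intermediate club points, since $S'\subseteq C$. Your reduction for $\kappa=\lambda^+$ with $\lambda$ singular is also fine. The pieces should be indexed by a cofinal sequence of cardinals below $\lambda$, or you can simply note that there are at most $\lambda<\kappa$ of them. Two small points. First, the club argument needs $\kappa>\omega$, so $\kappa=\omega$ (where $\lambda$ is finite) must be treated separately, e.g.\ by Ramsey's theorem. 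Second, the paper does not prove this proposition at all; it cites \cite[Theorem~46.1]{EHRR} and \cite[\S3.1]{Williams1977}.

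The genuine gap is the singular case, which is where the substance of Hajnal's theorem lies. Your sketch breaks at the backward direction.
\begin{itemize}
\item There are not ``$<\lambda$ possible traces'' $F(\alpha)\cap\bigcup_{j<i}B_j$. The number of candidate traces is $\bigl|\bigcup_{j<i}B_j\bigr|^{<\lambda}$, which in general is not even $<\kappa_i$. So the pigeonhole you describe is unavailable, and a common trace would still have to be removed from the earlier blocks anyway.
\item Retroactive shrinking cannot be avoided: if every $\alpha\in A_i$ has $F(\alpha)\cap B_0\neq\emptyset$, no choice of $B_i$ alone works. But shrinking each earlier $B_j$ at every later stage gives $\subseteq$-decreasing sequences, of length up to $\theta$, of full-size subsets of $B_j$. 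Such an intersection can be small or empty. Your proposal gives no mechanism preventing this, and that mechanism is exactly the missing idea.
\end{itemize}

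If $\lambda$ is infinite and $\lambda<\cf(\kappa)$, you can bypass the recursion altogether. A closure/induction argument shows that the domain of a set mapping with values of size $<\lambda$ is a union of $\lambda$ free sets. Since $\lambda<\cf(\kappa)$, one of these has size $\kappa$.

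The case $\cf(\kappa)\le\lambda<\kappa$, however, is precisely what Hajnal's theorem adds, and the paper needs it. Lemma~\ref{lem:large-diagonal} and Lemma~\ref{lem:coordinate-diagonalisation}\ref{it:coord-hajnal-large} apply the proposition with $\lambda=\omega_1$ and arbitrary $\kappa>\omega_1$. Examples are $\kappa=\aleph_\omega$, or $\kappa=\mathfrak c$ under $\neg\CH$, which may equal $\aleph_{\omega_1}$. As it stands, your argument proves the proposition only for regular $\kappa$. For singular $\kappa$ you must either supply Hajnal's argument or cite it, as the paper does.
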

\begin{proof}
For background and further discussion see
\cite[Theorem~46.1]{EHRR} or \cite[\S3.1]{Williams1977}.
\end{proof}

\begin{proposition}[Two-sided countable free set]\label{prop:two-sided-countable-free}
Let $\kappa$ be an uncountable cardinal and let
\[
    F\colon \kappa\to[\kappa]^{\le\omega}
\]
be a set mapping.  Assume, in addition, that every inverse fibre
\[
    F^{-1}(\{\alpha\})
    =
    \{\beta<\kappa:\alpha\in F(\beta)\}
\]
is countable.  Then there is $H\subseteq\kappa$ with $\abs{H}=\kappa$ such that, for all distinct
$\alpha,\beta\in H$,
\[
    \alpha\notin F(\beta)
    \qquad\text{and}\qquad
    \beta\notin F(\alpha).
\]
\end{proposition}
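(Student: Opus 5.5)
We will build the required set \(H\) as the range of a transfinite free sequence of length \(\kappa\), using only the two countability hypotheses. The first step is to symmetrise the set mapping. Define
\[
    G(\alpha)=F(\alpha)\cup F^{-1}(\{\alpha\})\qquad(\alpha<\kappa).
\]
By hypothesis both pieces are countable, so \(G(\alpha)\) is countable. Moreover \(G\) is symmetric: \(\beta\in G(\alpha)\) if and only if \(\alpha\in F(\beta)\) or \(\beta\in F(\alpha)\), and this condition is unchanged when \(\alpha\) and \(\beta\) are swapped. Since \(F\) is a set mapping, \(\alpha\notin G(\alpha)\). It therefore suffices to find \(H\) of size \(\kappa\) with \(\beta\notin G(\alpha)\) for all distinct \(\alpha,\beta\in H\), that is, an independent set of size \(\kappa\) in the graph whose edges are the pairs \(\{\alpha,\beta\}\) with \(\beta\in G(\alpha)\). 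This graph has all vertex degrees at most countable.

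The second step is to decompose the graph into connected components. For each \(\alpha\), the component \(C(\alpha)\) is the union over \(n\) of the sets reachable from \(\alpha\) in \(n\) steps. Each of these sets is countable, because a countable union of countable sets is countable. Hence every component is countable. The components partition \(\kappa\), and since each is countable and \(\kappa\) is uncountable, there are exactly \(\kappa\) of them. This uses only the basic cardinal equality \(\omega\cdot\mu=\mu\) for infinite \(\mu\), with \(\mu\) the number of components. We then pick one point from each component and let \(H\) be the set of chosen points. Then \(\abs{H}=\kappa\). If \(\alpha\neq\beta\) lie in \(H\), they belong to different components, so \(\beta\notin G(\alpha)\). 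Unwinding the definition of \(G\), this gives \(\beta\notin F(\alpha)\) and \(\alpha\notin F(\beta)\), as required.

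If a more recursive argument is preferred, one can instead choose \(\alpha_\xi\) for \(\xi<\kappa\) outside the closure \(\bigcup_{\eta<\xi}C(\alpha_\eta)\). That closure has cardinality at most \(\abs{\xi}\cdot\omega<\kappa\), so such a choice is always possible. This is equivalent to the component argument, so I would present the component version.

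I expect no real obstacle here; the only care needed is the step establishing that components are countable, which is exactly where the countability of the inverse fibres enters. Without that hypothesis, \(G(\alpha)\) could be uncountable, and one would need Hajnal's theorem (Proposition~\ref{prop:hajnal}) together with \(\kappa>\omega_1\). The count of components also relies on the axiom of choice, as is standard.
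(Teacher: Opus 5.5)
Your argument is correct. It rests on the same two ingredients as the paper's proof:
\begin{itemize}
\item the symmetrised neighbourhoods $F(\alpha)\cup F^{-1}(\{\alpha\})$ are countable;
\item a cardinality count shows that fewer than $\kappa$ countable sets cannot exhaust $\kappa$.
\end{itemize}
The organisation differs, however. The paper runs a greedy transfinite recursion. It chooses $\alpha_\xi$ outside the set $B_\xi$ formed by the previously chosen points and their one-step neighbourhoods $F(\alpha_\eta)\cup F^{-1}(\{\alpha_\eta\})$, and uses $\abs{B_\xi}\le\abs{\xi}\cdot\omega<\kappa$. It never iterates neighbourhoods.

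You instead close up under the symmetric relation to get countable connected components, show there are exactly $\kappa$ of them, and take any transversal. This avoids recursion altogether and yields a stronger structural statement: $\kappa$ splits into countable pieces with no interaction between any two of them, and every transversal is free. In the setting of Lemma~\ref{lem:coordinate-diagonalisation}\ref{it:coord-two-sided}, applied to the graph of non-zero coordinate interactions $R_\eta T R_\gamma\neq 0$, this shows more than the lemma asks for. The operator $T$ leaves each $E(C,X)$ invariant for a partition of $\Gamma$ into countable sets $C$, so it is exactly block-diagonal on all of $\Gamma$, not merely diagonal on a large subset $\Lambda$.

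The paper's recursion, for its part, is the form that parallels the free-set arguments behind Hajnal's theorem, and it needs only one-step neighbourhoods. Your recursive variant is essentially the paper's proof, with the larger sets $C(\alpha_\eta)$ in place of those one-step neighbourhoods.
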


\begin{proof}
We construct $H=\{\alpha_\xi:\xi<\kappa\}$ by recursion.  Suppose that
$\alpha_\eta$ has been chosen for every $\eta<\xi$.  Put
\[
    B_\xi
    =
    \{\alpha_\eta:\eta<\xi\}
    \cup
    \bigcup_{\eta<\xi} F(\alpha_\eta)
    \cup
    \bigcup_{\eta<\xi} F^{-1}(\{\alpha_\eta\}).
\]
Each set appearing in the two unions is countable, and $\xi<\kappa$.  Hence
\[
    \abs{B_\xi}\le \abs{\xi}\cdot\omega<\kappa.
\]
Choose $\alpha_\xi\in\kappa\setminus B_\xi$.  This completes the recursion.

If $\eta<\xi$, then $\alpha_\xi\notin F(\alpha_\eta)$ because
$\alpha_\xi\notin B_\xi$, and also $\alpha_\eta\notin F(\alpha_\xi)$ because
$\alpha_\xi\notin F^{-1}(\{\alpha_\eta\})$.  Thus $H$ is free.
\end{proof}

\begin{remark}\label{rem:omega-one-obstruction}
The one-sided countable-valued hypothesis is not enough at $\kappa=\omega_1$.
Indeed, the map $F(\alpha)=\{\beta:\beta<\alpha\}$ on $\omega_1$ is
countable-valued and has no two-point free set.  Thus every use below of
Hajnal's theorem with merely countable values is made under the explicit
assumption $\kappa>\omega_1$, unless a two-sided countability hypothesis is
available.
\end{remark}

\begin{lemma}\label{lem:block-decomp}
Let $X$ be a WCG Banach space, let $P\colon \ell_1(\Gamma,X)\to\ell_1(\Gamma,X)$ be a bounded projection, and put $\kappa=\abs{\Gamma}$.  Then there exist pairwise disjoint countable sets $(\Gamma_\alpha)_{\alpha<\kappa}$ with
\[
    \Gamma=\bigsqcup_{\alpha<\kappa}\Gamma_\alpha
\]
such that the following hold.
\begin{enumerate}[label=(\roman*), ref=(\roman*)]
    \item \label{item: bd-i} for each $\alpha<\kappa$,
    \[
    P(\ell_1(\Gamma_\alpha, X))\subseteq \ell_1\Bigl(\bigcup_{\beta\le \alpha}\Gamma_\beta, X\Bigr);
    \]
    \item \label{item: bd-ii} for each $\alpha<\kappa$, the operator
    \[
    Q_\alpha:= R_{\Gamma_\alpha} P|_{\ell_1(\Gamma_\alpha, X)}\colon \ell_1(\Gamma_\alpha, X)\to \ell_1(\Gamma_\alpha, X)
    \]
    is a bounded projection;
    \item\label{item: bd-iii} the set mapping
    \[
    F(\alpha)=\Bigl\{\beta<\alpha:\ R_{\Gamma_\beta} P|_{\ell_1(\Gamma_\alpha, X)}\neq 0\Bigr\}
    \]
    takes countable values.
\end{enumerate}
\end{lemma}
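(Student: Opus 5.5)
The plan is a transfinite closing-off argument that uses the support reduction of Corollary~\ref{cor:operator-countable}(2). Since $X$ is WCG, for every countable $\Delta\subseteq\Gamma$ there is a countable $\Lambda\supseteq\Delta$ with $P(\ell_1(\Delta,X))\subseteq\ell_1(\Lambda,X)$. Write $\Delta^+$ for such a $\Lambda$; it can be chosen canonically as the union of supports obtained from Lemma~\ref{lem:operator-countable-general}. Iterating $\omega$ times, $\Delta\subseteq\Delta^+\subseteq\Delta^{++}\subseteq\cdots$, and taking the union gives a countable set $\overline{\Delta}\supseteq\Delta$ which is \emph{$P$-closed}, meaning $P(\ell_1(\overline\Delta,X))\subseteq\ell_1(\overline\Delta,X)$. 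Arbitrary unions of $P$-closed sets are again $P$-closed: $\ell_1(\bigcup_i\Delta_i,X)$ is the closed span of the $\ell_1(\Delta_i,X)$, each of which $P$ maps into $\ell_1(\bigcup_i\Delta_i,X)$.

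The construction runs as follows. Enumerate $\Gamma=\{\gamma_\xi:\xi<\kappa\}$. By recursion on $\alpha<\kappa$, build an increasing continuous chain of $P$-closed sets $C_\alpha$ with $|C_\alpha|\le|\alpha|+\omega$. Set $C_0=\emptyset$ and $C_\lambda=\bigcup_{\alpha<\lambda}C_\alpha$ at limit stages. At successor stages, let $C_{\alpha+1}=C_\alpha\cup\overline{\{\gamma_\xi\}}$, where $\gamma_\xi$ is the least element not yet in $C_\alpha$; if all elements are already used, add nothing. Put $\Gamma_\alpha=C_{\alpha+1}\setminus C_\alpha$. These sets are countable because $\overline{\{\gamma_\xi\}}$ is countable. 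They are pairwise disjoint by construction and exhaust $\Gamma$ after $\kappa$ steps. Some $\Gamma_\alpha$ could be empty; if the statement needs nonempty blocks, reindex the nonempty ones by $\kappa$, which is possible since there are exactly $\kappa$ of them when $\kappa$ is uncountable. Property~\ref{item: bd-i} then follows because $\ell_1(\Gamma_\alpha,X)\subseteq\ell_1(C_{\alpha+1},X)$ and $C_{\alpha+1}=\bigcup_{\beta\le\alpha}\Gamma_\beta$ is $P$-closed.

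For~\ref{item: bd-ii}, write $Q_\alpha=R_{\Gamma_\alpha}P|_{\ell_1(\Gamma_\alpha,X)}$ and compute $Q_\alpha^2$. For $x\in\ell_1(\Gamma_\alpha,X)$, decompose $Px=R_{\Gamma_\alpha}Px+R_{C_\alpha}Px$, using~\ref{item: bd-i}. Then
\[
Q_\alpha^2x=R_{\Gamma_\alpha}PR_{\Gamma_\alpha}Px=R_{\Gamma_\alpha}P(Px)-R_{\Gamma_\alpha}PR_{C_\alpha}Px=R_{\Gamma_\alpha}Px-R_{\Gamma_\alpha}PR_{C_\alpha}Px .
\]
Because $C_\alpha$ is $P$-closed, $PR_{C_\alpha}Px\in\ell_1(C_\alpha,X)$, so the last term vanishes and $Q_\alpha^2=Q_\alpha$. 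The norm bound $\|Q_\alpha\|\le\|P\|$ is immediate. The key point of this step is that closedness of the \emph{earlier} union $C_\alpha$, and not only of $C_{\alpha+1}$, is what kills the cross term. This is exactly why the chain is built to be continuous and $P$-closed at every stage.

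For~\ref{item: bd-iii}, the image $P(\ell_1(\Gamma_\alpha,X))$ lies in $\ell_1(\Lambda,X)$ for a countable $\Lambda$, by Corollary~\ref{cor:operator-countable} applied to the countable set $\Gamma_\alpha$. Hence $R_{\Gamma_\beta}P|_{\ell_1(\Gamma_\alpha,X)}\neq0$ forces $\Gamma_\beta\cap\Lambda\ne\emptyset$. This can happen for only countably many $\beta$, since the $\Gamma_\beta$ are disjoint. So $F(\alpha)$ is countable, and by~\ref{item: bd-i} it consists of indices $\beta<\alpha$ only. I expect the main obstacle to be the bookkeeping in~\ref{item: bd-ii}: making sure the chain is $P$-closed at every stage, limits included, so that the cross term vanishes.
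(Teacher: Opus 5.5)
Your proposal is correct and follows essentially the same route as the paper: a transfinite recursion that closes countable sets off under $P$ via Corollary~\ref{cor:operator-countable}, so that every initial union $\bigcup_{\beta<\alpha}\Gamma_\beta$ is $P$-invariant. That invariance gives (i) and kills the cross term in $Q_\alpha^2$, and (iii) follows from countable support. The only difference is cosmetic: you maintain the continuous chain of $P$-closed sets $C_\alpha$ explicitly, whereas the paper takes a relative closure outside $\Delta_\alpha$ and deduces the $P$-invariance of $\Delta_\alpha$ afterwards by density. Your hedge about empty blocks is unnecessary, since $|C_\alpha|\le|\alpha|\cdot\aleph_0<\kappa$ forces every block to be nonempty when $\kappa$ is uncountable.
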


\begin{proof}
Fix a well-ordering $(\theta_\xi)_{\xi<\kappa}$ of $\Gamma$.  We construct the sets $\Gamma_\alpha$ by transfinite recursion.  Assume that disjoint countable sets $\Gamma_\beta$ have been chosen for $\beta<\alpha$, and put
\[
    \Delta_\alpha=\bigcup_{\beta<\alpha}\Gamma_\beta.
\]
Then $\abs{\Delta_\alpha}<\kappa$.  Let $\gamma_\alpha$ be the first element of the well-ordering which is not in $\Delta_\alpha$.

Define increasing countable sets $C_n\subseteq\Gamma\setminus\Delta_\alpha$ as follows.  Put $C_0=\{\gamma_\alpha\}$.  Once $C_n$ is known, Corollary~\ref{cor:operator-countable} gives a countable $\Lambda_n\subseteq\Gamma$ such that
\[
    P(\ell_1(C_n,X))\subseteq \ell_1(\Lambda_n,X).
\]
Set
\[
    C_{n+1}=C_n\cup(\Lambda_n\setminus\Delta_\alpha).
\]
Finally put $\Gamma_\alpha=\bigcup_{n\in\N}C_n$.  Then $\Gamma_\alpha$ is countable and disjoint from $\Delta_\alpha$.

The use of the first unused point in the fixed well-ordering ensures that the construction exhausts $\Gamma$.  Indeed, if $\theta_\xi$ were not in $\bigcup_{\alpha<\kappa}\Gamma_\alpha$, then at every stage the first unused point would have index at most $\xi$; this would give $\kappa$ many distinct chosen points among the set $\{\theta_\eta:\eta\le\xi\}$, whose cardinality is strictly smaller than $\kappa$, a contradiction.

We verify the three asserted properties.  Since $\Gamma_\alpha=\bigcup_n C_n$ and $(C_n)$ is increasing,
\[
    \ell_1(\Gamma_\alpha,X)=\overline{\bigcup_{n\in\N}\ell_1(C_n,X)}.
\]
For each $n$,
\[
    P(\ell_1(C_n,X))
    \subseteq \ell_1(\Lambda_n,X)
    \subseteq \ell_1(\Delta_\alpha\cup C_{n+1},X)
    \subseteq \ell_1(\Delta_\alpha\cup\Gamma_\alpha,X).
\]
Taking closures gives
\begin{equation}\label{eq:block-decomp-closure}
    P(\ell_1(\Gamma_\alpha,X))
    \subseteq \ell_1(\Delta_\alpha\cup\Gamma_\alpha,X)
    =\ell_1\Bigl(\bigcup_{\beta\le\alpha}\Gamma_\beta,X\Bigr),
\end{equation}
which is \ref{item: bd-i}.

Fix $\alpha<\kappa$.  From \eqref{eq:block-decomp-closure} applied to every $\beta<\alpha$ and from density of finite block sums, we obtain
\begin{equation}\label{eq:previous-blocks-invariant}
    P(\ell_1(\Delta_\alpha,X))\subseteq \ell_1(\Delta_\alpha,X).
\end{equation}
Let $x\in\ell_1(\Gamma_\alpha,X)$ and write, using \eqref{eq:block-decomp-closure},
\[
    Px=u+v,
    \qquad
    u\in\ell_1(\Delta_\alpha,X),\quad v\in\ell_1(\Gamma_\alpha,X).
\]
Then $v=R_{\Gamma_\alpha}Px=Q_\alpha x$.  Since $P^2=P$, we have $P u+P v=u+v$.  Applying $R_{\Gamma_\alpha}$ and using \eqref{eq:previous-blocks-invariant} yields
\(
    R_{\Gamma_\alpha}P v=v.
\)
Consequently,
\[
    Q_\alpha^2x
    =R_{\Gamma_\alpha}P(R_{\Gamma_\alpha}Px)
    =R_{\Gamma_\alpha}P v
    =v
    =Q_\alpha x,
\]
so $Q_\alpha$ is a projection.

Finally, for each $\alpha$, another application of Corollary~\ref{cor:operator-countable} gives a countable set $\Lambda_\alpha\subseteq\Gamma$ such that
\[
    P(\ell_1(\Gamma_\alpha,X))\subseteq \ell_1(\Lambda_\alpha,X).
\]
Since the blocks $(\Gamma_\beta)_{\beta<\kappa}$ are disjoint, only countably many of them meet $\Lambda_\alpha$.  Hence $F(\alpha)$ is countable.
\end{proof}

\begin{lemma}[A large diagonal family]\label{lem:large-diagonal}
With notation as in Lemma~\ref{lem:block-decomp}, assume in addition that $\kappa>\omega_1$.  Then there exists $H\subseteq\kappa$ with $\abs{H}=\kappa$ such that
\[
R_{\Gamma_\beta} P(\ell_1(\Gamma_\alpha, X))=\{0\}\qquad(\alpha,\beta\in H,\ \alpha\neq \beta).
\]
In particular, on the subspace
\[
W=\Bigl(\bigoplus_{\alpha\in H} \ell_1(\Gamma_\alpha, X)\Bigr)_{\ell_1(H)}\subseteq \ell_1(\Gamma,X),
\]
the operator $P$ is diagonal: if $w=\sum_{\beta\in H}x_\beta\in W$, with $x_\beta\in\ell_1(\Gamma_\beta,X)$, then
\[
R_{\Gamma_\alpha} Pw=Q_\alpha x_\alpha\qquad(\alpha\in H).
\]
\end{lemma}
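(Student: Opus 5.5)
The plan is to apply Hajnal's set mapping theorem (Proposition~\ref{prop:hajnal}) to the set mapping $F$ from Lemma~\ref{lem:block-decomp}\ref{item: bd-iii}, viewed as a map $F\colon\kappa\to[\kappa]^{\le\omega}=[\kappa]^{<\omega_1}$. Since $\kappa>\omega_1$, we may take $\lambda=\omega_1<\kappa$. By definition $F(\alpha)\subseteq\{\beta:\beta<\alpha\}$, so $\alpha\notin F(\alpha)$ and $F$ is a genuine set mapping. Hajnal's theorem then yields $H\subseteq\kappa$ with $\abs{H}=\kappa$ that is free for $F$: for distinct $\alpha,\beta\in H$ we have $\beta\notin F(\alpha)$ and $\alpha\notin F(\beta)$.

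Next I will verify the vanishing $R_{\Gamma_\beta}P(\ell_1(\Gamma_\alpha,X))=\{0\}$ for distinct $\alpha,\beta\in H$, splitting into two cases.
\begin{enumerate}[label=(\alph*)]
\item If $\beta<\alpha$, freeness gives $\beta\notin F(\alpha)$. By the definition of $F$, this means exactly $R_{\Gamma_\beta}P|_{\ell_1(\Gamma_\alpha,X)}=0$.
\item If $\beta>\alpha$, Lemma~\ref{lem:block-decomp}\ref{item: bd-i} gives $P(\ell_1(\Gamma_\alpha,X))\subseteq\ell_1(\bigcup_{\delta\le\alpha}\Gamma_\delta,X)$. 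This set is disjoint from $\Gamma_\beta$ because the blocks are pairwise disjoint, so $R_{\Gamma_\beta}$ annihilates it.
\end{enumerate}
Only the first case genuinely uses freeness; the second comes from the triangular structure already built in.

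For the ``in particular'' assertion, take $w=\sum_{\beta\in H}x_\beta\in W$, where the series converges in $\ell_1(\Gamma,X)$ (only countably many terms are nonzero, and the norms are summable). By continuity of $R_{\Gamma_\alpha}P$,
\[
R_{\Gamma_\alpha}Pw=\sum_{\beta\in H}R_{\Gamma_\alpha}Px_\beta .
\]
Every term with $\beta\neq\alpha$ vanishes by the previous paragraph, which leaves $R_{\Gamma_\alpha}Px_\alpha=Q_\alpha x_\alpha$.

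There is no substantial obstacle. The only points needing care are checking that Hajnal's theorem applies, which requires $\omega_1<\kappa$ for countable values and is where the hypothesis $\kappa>\omega_1$ enters (cf.\ Remark~\ref{rem:omega-one-obstruction}), and justifying the interchange of $R_{\Gamma_\alpha}P$ with the norm-convergent sum.
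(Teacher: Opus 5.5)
Your proposal is correct and matches the paper's proof: Hajnal's theorem with $\lambda=\omega_1$ is applied to $F$, triangularity from Lemma~\ref{lem:block-decomp}\ref{item: bd-i} handles one ordering of $\alpha,\beta$ and freeness handles the other. For the diagonal formula, you pass $R_{\Gamma_\alpha}P$ through the countably supported, absolutely convergent series, whereas the paper checks finite block sums and then approximates; these amount to the same continuity argument.
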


\begin{proof}
Apply Proposition~\ref{prop:hajnal}, with $\lambda=\omega_1$, to the countable-valued set mapping $F$ from Lemma~\ref{lem:block-decomp}\ref{item: bd-iii}.  We obtain $H\subseteq\kappa$, $\abs{H}=\kappa$, such that for distinct $\alpha,\beta\in H$ one has $\beta\notin F(\alpha)$ and $\alpha\notin F(\beta)$.  If $\alpha<\beta$ are in $H$, Lemma~\ref{lem:block-decomp}\ref{item: bd-i} already gives
\[
    R_{\Gamma_\beta}P|_{\ell_1(\Gamma_\alpha,X)}=0,
\]
whereas $\alpha\notin F(\beta)$ gives
\[
    R_{\Gamma_\alpha}P|_{\ell_1(\Gamma_\beta,X)}=0.
\]
Thus all off-diagonal block entries vanish on the family $H$.

For the final assertion, first check the displayed identity for finite block sums.  The general case follows by approximating $w$ by finite block sums and using continuity of $P$ and of the coordinate projection $R_{\Gamma_\alpha}$.
\end{proof}

\begin{lemma}\label{lema:block-diagonal-lp-c0}
Let $X$ be a separable Banach space, and let $E_0(\Gamma,X)$ be either
$\ell_p(\Gamma,X)$, $1<p<\infty$, or $c_0(\Gamma,X)$.  If
$P\colon E_0(\Gamma,X)\to E_0(\Gamma,X)$ is a bounded projection, then there are
pairwise disjoint countable sets $(\Gamma_\alpha)_{\alpha<\kappa}$ partitioning
$\Gamma$, block projections
\[
    Q_\alpha=R_{\Gamma_\alpha}P|_{E_0(\Gamma_\alpha,X)},
\]
and a set $H\subseteq\kappa$ with $\abs H=\kappa$ such that
\[
    R_{\Gamma_\beta}P(E_0(\Gamma_\alpha,X))=\{0\}
    \qquad(\alpha,\beta\in H,\ \alpha\neq\beta).
\]
\end{lemma}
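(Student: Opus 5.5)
The plan is to combine two-sided support reduction with Proposition~\ref{prop:two-sided-countable-free} (no Hajnal, so ZFC and any uncountable $\kappa$), instead of the one-sided Lemma~\ref{lem:large-diagonal}. The scalar bases of $\ell_p(\Gamma)$, $1<p<\infty$, and $c_0(\Gamma)$ are symmetric and not equivalent to the $\ell_1(\Gamma)$-basis. Since $X$ is separable, $X^*$ contains a countable total set: take norming functionals for a dense sequence. Hence both Corollary~\ref{cor:operator-countable} (first alternative) and Lemma~\ref{lmm: disjoint-2} are available.

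\emph{Step 1: building the blocks.} Fix a well-ordering of $\Gamma$ and run the transfinite recursion from the proof of Lemma~\ref{lem:block-decomp}, with one change. At stage $\alpha$, start from the first unused point. Close $C_n$ under two operations.

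\begin{enumerate}[label=(\alph*)]
\item \emph{Outgoing supports.} Add a countable $\Lambda_n$ with $P(E_0(C_n,X))\subseteq E_0(\Lambda_n,X)$, given by Corollary~\ref{cor:operator-countable}.
\item \emph{Incoming supports.} For each $\gamma\in C_n$, add the countable set $\Delta_\gamma$ from Lemma~\ref{lmm: disjoint-2}, applied to $J_\gamma^{-1}R_\gamma P\colon E_0(\Gamma,X)\to X$. This set satisfies $R_\gamma PR_\delta=0$ for $\delta\notin\Delta_\gamma$.
\end{enumerate}

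In both cases, first remove the points already used in $\Delta_\alpha=\bigcup_{\beta<\alpha}\Gamma_\beta$. The union $\Gamma_\alpha=\bigcup_n C_n$ is countable. The exhaustion argument carries over verbatim, so the blocks $(\Gamma_\alpha)_{\alpha<\kappa}$ partition $\Gamma$.

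\emph{Step 2: the set mapping and its fibres.} Define
\[
F(\alpha)=\{\beta\ne\alpha:\ R_{\Gamma_\beta}P|_{E_0(\Gamma_\alpha,X)}\neq0 \text{ or } R_{\Gamma_\alpha}P|_{E_0(\Gamma_\beta,X)}\neq0\}.
\]
\begin{enumerate}[label=(\alph*)]
\item \emph{$F$ is countable-valued.} Since $P(E_0(\Gamma_\alpha,X))\subseteq E_0(\bigcup_n\Lambda_n,X)$, the first condition can hold for only countably many $\beta$, because the blocks are disjoint. For the second condition, $R_\gamma PR_\delta=0$ whenever $\gamma\in\Gamma_\alpha$ and $\delta\notin\bigcup_{\gamma'\in\Gamma_\alpha}\Delta_{\gamma'}$. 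This union is countable, so only countably many $\beta$ qualify.
\item \emph{Inverse fibres are countable.} $F$ is symmetric by definition, so $F^{-1}(\{\alpha\})=F(\alpha)$.
\end{enumerate}
Then Proposition~\ref{prop:two-sided-countable-free} gives $H\subseteq\kappa$ with $|H|=\kappa$ that is free for $F$. This is exactly the vanishing of the off-diagonal blocks $R_{\Gamma_\beta}P(E_0(\Gamma_\alpha,X))$ for distinct $\alpha,\beta\in H$.

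\emph{Step 3: the diagonal blocks are projections.} A Schauder-type expansion along coordinates is legitimate here. In $\ell_p$ with $p<\infty$ and in $c_0$, a vector supported on a set $\Delta$ is the norm limit of its finite restrictions. It follows that $R_{\Gamma_\beta}P$ vanishes on $E_0(\Gamma_\alpha,X)$ as soon as it vanishes on each fibre $R_\delta$, $\delta\in\Gamma_\alpha$.

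For each $\alpha$, this gives $P(E_0(\Gamma_\alpha,X))\subseteq E_0(\Gamma_\alpha\cup\Delta_\alpha,X)$ and $P(E_0(\Delta_\alpha,X))\subseteq E_0(\Delta_\alpha,X)$. The argument of Lemma~\ref{lem:block-decomp}\ref{item: bd-ii} then shows that $Q_\alpha$ is a projection. The idempotency computation there did not use any $\ell_1$-specific structure.

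\emph{Main obstacle.} I expect the delicate step to be making the incoming condition genuinely two-sided at the block level. The construction must guarantee that only countably many \emph{blocks} send anything into $\Gamma_\alpha$; coordinatewise control is not enough. This is why the $\Delta_\gamma$ of Lemma~\ref{lmm: disjoint-2} must be added during the closure defining $\Gamma_\alpha$. Equivalently, one may take the symmetric $F$ as above and check that countability of $F(\alpha)$ follows from the countable unions $\bigcup_{\gamma\in\Gamma_\alpha}\Delta_\gamma$.
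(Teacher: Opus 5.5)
Your proof is correct and is essentially the paper's argument. It uses the same block recursion, outgoing countability from Corollary~\ref{cor:operator-countable}, incoming countability from the $\ell_1$-lower-estimate behind Lemma~\ref{lmm: disjoint-2}, and Proposition~\ref{prop:two-sided-countable-free}. The one difference is packaging. You symmetrise $F$, so inverse fibres are automatically countable. The paper keeps the triangular $F(\alpha)\subseteq\alpha$ and checks the inverse fibres using a countable total subset of $E_0(\Gamma_\beta,X)^*$.

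One remark on Step~1(b). The incoming closure is not needed for Step~2, but once you include it the free-set step becomes superfluous. For $\alpha<\beta$, the two closure properties of the earlier block $\Gamma_\alpha$ already give both $R_{\Gamma_\beta}P|_{E_0(\Gamma_\alpha,X)}=0$ and $R_{\Gamma_\alpha}P|_{E_0(\Gamma_\beta,X)}=0$. Hence $P$ is block diagonal and $H=\kappa$ works.
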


\begin{proof}
The construction of the countable blocks and the proof that the operators
$Q_\alpha$ are projections are identical to the proof of Lemma~\ref{lem:block-decomp},
using Corollary~\ref{cor:operator-countable} in place of the WCG support
reduction.  Define
\[
    F(\alpha)=\{\beta<\alpha:
    R_{\Gamma_\beta}P|_{E_0(\Gamma_\alpha,X)}\neq0\}.
\]
As before, $F(\alpha)$ is countable for every $\alpha$.

We claim that the inverse fibres of $F$ are also countable.  Fix $\beta<\kappa$.
The range space $E_0(\Gamma_\beta,X)$ is separable, so choose a countable total
set $M_\beta\subseteq E_0(\Gamma_\beta,X)^*$.  For each
$y^*\in M_\beta$, the scalar functional
\[
    y^*R_{\Gamma_\beta}P\colon E_0(\Gamma,X)\to\mathbb K
\]
can be non-zero on only countably many coordinate fibres.  This follows from the same finite-disjoint-support estimate used in the proof
of Lemma~\ref{lmm: disjoint-1}: if such a scalar functional were non-zero on
uncountably many coordinate fibres, then on an uncountable subfamily it would be
bounded below by a fixed positive constant, forcing the scalar basis of $E_0$ to
dominate the $\ell_1$-basis, which is impossible for $\ell_p$, $1<p<\infty$, and
for $c_0$.  Hence the set of blocks $\Gamma_\alpha$ on which
$y^*R_{\Gamma_\beta}P$ is non-zero is countable.  Taking the union over the
countable set $M_\beta$ shows that
\[
    F^{-1}(\{\beta\})
    =\{\alpha<\kappa:\beta\in F(\alpha)\}
\]
is countable.

Proposition~\ref{prop:two-sided-countable-free} now gives a free set
$H\subseteq\kappa$ with $\abs H=\kappa$.  If $\alpha<\beta$ are in $H$, then
triangularity of the block construction gives
$R_{\Gamma_\beta}P|_{E_0(\Gamma_\alpha,X)}=0$, while
$\alpha\notin F(\beta)$ gives
$R_{\Gamma_\alpha}P|_{E_0(\Gamma_\beta,X)}=0$.  Thus all off-diagonal block
entries vanish on $H$.
\end{proof}

\subsection{Proof of Theorem \ref{thm:A}: embedding a full copy into one of the halves}\label{sec:main-proof}

Using the previous results, we are ready for the proof of Theorem \ref{thm:A}.

\begin{proof}[Proof of Theorem \ref{thm:A}]
We give one argument which covers all the asserted sums.  Let $\mathcal E$ denote one of the functors
\[
    \ell_1,\qquad \ell_p\ (1<p<\infty),\qquad c_0,
\]
and put $X=\mathcal E(\N,Y)$ and $Z=\mathcal E(\Gamma,X)$.  Since $\abs{\Gamma\times\N}=\abs{\Gamma}$, the space $Z$ is isometric, by reindexing, to $\mathcal E(\Gamma,Y)$.  In the $\ell_1$-case, $Y$ is WCG and hence $X=\ell_1(\N,Y)$ is WCG.  In the $\ell_p$- and $c_0$-cases, $Y$ is separable and hence so is $X$.  Therefore Lemmas~\ref{lem:block-decomp} and~\ref{lem:large-diagonal} apply in the $\ell_1$-case, while Lemma~\ref{lema:block-diagonal-lp-c0} applies in the remaining cases.

Let $P\colon Z\to Z$ be a bounded projection.  Choose countable blocks $(\Gamma_\alpha)_{\alpha<\kappa}$, block projections
\[
    Q_\alpha=R_{\Gamma_\alpha}P|_{\mathcal E(\Gamma_\alpha,X)}
\]
and a set $H\subseteq\kappa$ of cardinality $\kappa$ as in the corresponding block-diagonalisation lemmas.

For each $\alpha\in H$, the block $\mathcal E(\Gamma_\alpha,X)$ is isomorphic, indeed isometric after reindexing, to $X$ by Lemma~\ref{lem:countable-blocks-lp-c0}.  Put
\(
    L=1+\norm P.
\)
If the relevant countable model $X$ is $C$-primary, where $C\colon[1,\infty)\to[1,\infty)$ is increasing, choose, for each $\alpha\in H$, one of the two projections
\(
    Q_\alpha, I-Q_\alpha
\)
whose range is $C(L)$-isomorphic to $\mathcal E(\Gamma_\alpha,X)$.  This is possible because $\norm{Q_\alpha}\le \norm P\le L$ and the block is isometric to $X$.  If instead $X$ is merely primary, choose for each $\alpha\in H$ one of the two projections $Q_\alpha$ or $I-Q_\alpha$ whose range is isomorphic to $\mathcal E(\Gamma_\alpha,X)$.  Passing to a subset of $H$ of cardinality $\kappa$, we may suppose that the same choice, $Q_\alpha$ or $I-Q_\alpha$, has been made for every remaining $\alpha$.

In the first case set $P_0=P$ and $Q_\alpha^0=Q_\alpha$; in the second set $P_0=I-P$ and $Q_\alpha^0=I-Q_\alpha$.  Then $\norm{P_0}\le L$ and $\norm{Q_\alpha^0}\le L$.

Let
\[
    Z_\alpha=Q_\alpha^0\mathcal E(\Gamma_\alpha,X),
    \qquad
    c_\alpha=d_{BM}\bigl(Z_\alpha,\mathcal E(\Gamma_\alpha,X)\bigr).
\]
In the uniformly primary case the preceding choice gives $c_\alpha\le C(L)$ for all $\alpha$ in the chosen family; fix
\(
    M=C(L)+1.
\)
In the primary, non-uniform case, where $\cf(\kappa)>\omega$, write the chosen family as
\[
    \bigcup_{n\in\N}\{\alpha:\ c_\alpha<n\}.
\]
By Lemma~\ref{lem:cofinal-pigeon}, one of these sets has cardinality $\kappa$; after passing to it, fix $M>1$ such that $c_\alpha<M$ for every remaining $\alpha$.  Thus in either case there are $H''\subseteq H$ with $\abs{H''}=\kappa$ and $M>1$ such that
\begin{equation}\label{eq:uniform-block-BM}
    d_{BM}\bigl(Z_\alpha,\mathcal E(\Gamma_\alpha,X)\bigr)<M
    \qquad(\alpha\in H'').
\end{equation}

Set
\(
    \Yspace=\Bigl(\bigoplus_{\alpha\in H''}Z_\alpha\Bigr)_{\mathcal E(H'')}
    \subseteq Z.
\)
For each $\alpha\in H''$, choose an isomorphism
\[
    \widetilde J_\alpha\colon \mathcal E(\Gamma_\alpha,X)\to Z_\alpha
\]
with $\norm{\widetilde J_\alpha}\norm{\widetilde J_\alpha^{-1}}<M$, and rescale it to $J_\alpha$ so that
\(
    \norm{J_\alpha}=\norm{J_\alpha^{-1}}<\sqrt M.
\)
The block-diagonal operator
\[
    J\colon
    \Bigl(\bigoplus_{\alpha\in H''}\mathcal E(\Gamma_\alpha,X)\Bigr)_{\mathcal E(H'')}
    \longrightarrow \Yspace,
    \qquad
    J((x_\alpha)_\alpha)=(J_\alpha x_\alpha)_\alpha,
\]
satisfies $\norm J\norm{J^{-1}}<M$.  Since $\abs{H''}=\kappa$ and all blocks are countable, the domain of $J$ is isometric to $Z$.  Hence
\begin{equation}\label{eq:Yspace-isomorphic-Z}
    \Yspace\simeq_M Z.
\end{equation}

We now embed this copy complementably into the range of $P_0$.  Define
\[
    T\colon\Yspace\to P_0Z,
    \qquad
    T\Bigl(\sum_{\alpha\in H''}z_\alpha\Bigr)=\sum_{\alpha\in H''}P_0z_\alpha.
\]
The sum converges in the corresponding $\mathcal E$-sum and $\norm T\le\norm{P_0}$.  The off-diagonal block entries of $P_0$ vanish on $H''$, and therefore
\[
    R_{\Gamma_\alpha}T\Bigl(\sum_{\beta\in H''}z_\beta\Bigr)=z_\alpha
    \qquad(\alpha\in H'').
\]
Define
\[
    S\colon P_0Z\to\Yspace,
    \qquad
    Sx=\sum_{\alpha\in H''}Q_\alpha^0(R_{\Gamma_\alpha}x).
\]
For $\ell_p$-sums this is bounded by the usual $\ell_p$ estimate, and for $c_0$ by the corresponding supremum estimate; in all cases
\[
    \norm S\le \sup_{\alpha\in H''}\norm{Q_\alpha^0}\le L.
\]
Moreover $ST=\Id_{\Yspace}$.  Hence $TS$ is a bounded projection on $P_0Z$ onto $T(\Yspace)$, and $T(\Yspace)\simeq Z$ by \eqref{eq:Yspace-isomorphic-Z}.  Thus either $PZ$ or $(I-P)Z$ contains a complemented copy of $Z$.

Since $P_0Z$ is itself complemented in $Z$, Pe{\l}czy\'nski's decomposition method, Theorem~\ref{th: pelcdecmethod}, and the stability $Z\simeq\mathcal E(\N,Z)$ imply that $P_0Z\simeq Z$.  This proves primariness.

In the uniformly primary case, the same proof gives uniform constants.  Indeed, $P_0Z$ is $L$-complemented in $Z$; the subspace $T(\Yspace)$ is at most $L^2$-complemented in $P_0Z$; and, by \eqref{eq:Yspace-isomorphic-Z}, it is $M L^2$-isomorphic to $Z$, where $M=C(L)+1$ depends only on $\norm P$.  Applying the quantitative Pe{\l}czy\'nski decomposition theorem, Theorem~\ref{thm:quant-pelc}, gives a bound for $d_{BM}(P_0Z,Z)$ depending only on $\norm P$.  Therefore the uncountable sum is uniformly primary whenever the countable model is uniformly primary.
\end{proof}

\subsection{Proof of Theorem \ref{thm:B}: primary factorisation property}\label{sec:pfp}

We now prove Theorem~\ref{thm:B}.  The only combinatorial input needed for the factorisation argument is the following coordinate-diagonalisation lemma.  It is useful to record it with two one-sided hypotheses, since the $\ell_1$-case naturally gives control of the coordinates influenced by a fixed source coordinate, whereas the non-$\ell_1$ symmetric-basis case gives control of the source coordinates influencing a fixed target coordinate.

\begin{lemma}[Coordinate diagonalisation]\label{lem:coordinate-diagonalisation}
Let $E$ be a Banach space with a symmetric basis $(e_\gamma)_{\gamma\in\Gamma}$,
let $X$ be a Banach space, let
\[
    T\colon E(\Gamma,X)\to E(\Gamma,X)
\]
be bounded, and put $\kappa=\abs{\Gamma}$.  Assume that one of the following
two alternatives holds.

\begin{enumerate}[label=\textup{(\alph*)}, ref=\textup{(\alph*)}]
    \item\label{it:coord-hajnal-large}
    $\kappa>\omega_1$, and at least one of the two sets
    \[
        \{\eta\in\Gamma:\ R_\eta T R_\gamma\neq0\},
        \qquad
        \{\eta\in\Gamma:\ R_\gamma T R_\eta\neq0\}
    \]
    is countable for every $\gamma\in\Gamma$.

    \item\label{it:coord-two-sided}
    Both sets
    \[
        \{\eta\in\Gamma:\ R_\eta T R_\gamma\neq0\},
        \qquad
        \{\eta\in\Gamma:\ R_\gamma T R_\eta\neq0\}
    \]
    are countable for every $\gamma\in\Gamma$.
\end{enumerate}

Then there exists $\Lambda\subseteq\Gamma$ with $\abs{\Lambda}=\kappa$ such that
\[
    R_\beta T R_\alpha=0
    \qquad(\alpha,\beta\in\Lambda,\ \alpha\neq\beta).
\]
\end{lemma}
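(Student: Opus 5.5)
The plan is to translate the hypotheses into a set mapping on $\Gamma$ and then apply a free-set theorem: Hajnal's theorem (Proposition~\ref{prop:hajnal}) in case~\ref{it:coord-hajnal-large}, and Proposition~\ref{prop:two-sided-countable-free} in case~\ref{it:coord-two-sided}. Identify $\Gamma$ with $\kappa$ via a bijection. For each $\gamma\in\Gamma$ write
\[
    \mathrm{Out}(\gamma)=\{\eta\neq\gamma:\ R_\eta TR_\gamma\neq0\},
    \qquad
    \mathrm{In}(\gamma)=\{\eta\neq\gamma:\ R_\gamma TR_\eta\neq0\}.
\]
Note that $\eta\in\mathrm{Out}(\gamma)$ if and only if $\gamma\in\mathrm{In}(\eta)$. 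We want a set $\Lambda$ of size $\kappa$ such that no $\alpha\neq\beta$ in $\Lambda$ satisfies $\beta\in\mathrm{Out}(\alpha)$. This is the same as asking that neither $\beta\in\mathrm{Out}(\alpha)$ nor $\alpha\in\mathrm{Out}(\beta)$ hold, since both ordered pairs get checked.

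\emph{Case~\ref{it:coord-two-sided}.} Put $F(\gamma)=\mathrm{Out}(\gamma)$. This is a countable-valued set mapping, since $\gamma\notin F(\gamma)$. Its inverse fibre is
\[
    F^{-1}(\{\alpha\})=\{\beta:\ \alpha\in\mathrm{Out}(\beta)\}=\mathrm{In}(\alpha),
\]
which is countable by hypothesis. Proposition~\ref{prop:two-sided-countable-free} then gives a free $H$ with $\abs H=\kappa$. For distinct $\alpha,\beta\in H$ we have $\beta\notin F(\alpha)$, that is, $R_\beta TR_\alpha=0$. Set $\Lambda=H$.

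\emph{Case~\ref{it:coord-hajnal-large}.} Define $F(\gamma)=\mathrm{Out}(\gamma)$ if $\mathrm{Out}(\gamma)$ is countable, and $F(\gamma)=\mathrm{In}(\gamma)$ otherwise. By hypothesis $F(\gamma)$ is always countable, so $F\colon\kappa\to[\kappa]^{<\omega_1}$ with $\omega_1<\kappa$, and Proposition~\ref{prop:hajnal} yields a free $H$ of size $\kappa$. The key point is that a free pair never interacts. Take distinct $\alpha,\beta\in H$ and suppose $R_\beta TR_\alpha\neq0$, i.e.\ $\beta\in\mathrm{Out}(\alpha)$ and $\alpha\in\mathrm{In}(\beta)$. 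If $F(\alpha)=\mathrm{Out}(\alpha)$, then $\beta\in F(\alpha)$. Otherwise $F(\alpha)=\mathrm{In}(\alpha)$; we then look at $\beta$: if $F(\beta)=\mathrm{In}(\beta)$, then $\alpha\in F(\beta)$. The remaining subcase is $F(\alpha)=\mathrm{In}(\alpha)$ with $\mathrm{Out}(\alpha)$ uncountable, and $F(\beta)=\mathrm{Out}(\beta)$ countable. Here neither membership is forced, and this mixed subcase is the main obstacle.

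To handle it, I would first split $\Gamma=\Gamma_{\mathrm{out}}\sqcup\Gamma_{\mathrm{in}}$ according to which set is countable (preferring $\mathrm{Out}$ when both are), and pass to a piece of full cardinality $\kappa$. This needs $\kappa=\abs{\Gamma_{\mathrm{out}}}+\abs{\Gamma_{\mathrm{in}}}$, so one piece has size $\kappa$. On $\Gamma_{\mathrm{out}}$ we use $F=\mathrm{Out}$. On $\Gamma_{\mathrm{in}}$ we use $F=\mathrm{In}$, which is enough there: for $\alpha,\beta\in\Gamma_{\mathrm{in}}$ we have $R_\beta TR_\alpha\neq0$ exactly when $\alpha\in\mathrm{In}(\beta)$, so freeness rules it out. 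Restricting everything to the chosen piece and applying Hajnal's theorem to the restricted map (which is still countable-valued on an index set of size $\kappa>\omega_1$) gives $\Lambda$. The mixed subcase then never arises, because all points of $\Lambda$ use the same rule.

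Finally, the sets $\Lambda$ obtained in either case satisfy $R_\beta TR_\alpha=0$ for all distinct $\alpha,\beta\in\Lambda$, as required. The only delicate point is the homogenisation step in case~\ref{it:coord-hajnal-large}; the remaining steps just apply the two free-set propositions.
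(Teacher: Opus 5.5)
Your proposal is correct and follows the paper's proof: in alternative \textup{(a)} you apply Hajnal's theorem with $\lambda=\omega_1$ to the outgoing or incoming set mapping, and in alternative \textup{(b)} you apply the two-sided free-set proposition to the outgoing map, whose inverse fibres are exactly the incoming sets. Your extra step of splitting $\Gamma=\Gamma_{\mathrm{out}}\sqcup\Gamma_{\mathrm{in}}$ and passing to a piece of cardinality $\kappa$ is sound, and it goes slightly beyond the paper, whose proof only treats the cases where all outgoing sets, or all incoming sets, are countable; your homogenisation also covers the pointwise reading of \textup{(a)}.
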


\begin{proof}
Assume first~\ref{it:coord-hajnal-large}.  If the outgoing sets are countable,
put
\[
    F(\gamma)=
    \{\eta\in\Gamma\setminus\{\gamma\}:\ R_\eta T R_\gamma\neq0\}.
\]
Then $F(\gamma)\in[\Gamma]^{<\omega_1}$ for every $\gamma$, and
$\omega_1<\kappa$.  Hajnal's set mapping theorem gives a free set
$\Lambda\subseteq\Gamma$ with $\abs{\Lambda}=\kappa$.  For distinct
$\alpha,\beta\in\Lambda$ we have neither $\beta\in F(\alpha)$ nor
$\alpha\in F(\beta)$, which is precisely
\[
    R_\beta T R_\alpha=R_\alpha T R_\beta=0.
\]
The case where the incoming sets are countable is identical, using
\[
    F(\gamma)=
    \{\eta\in\Gamma\setminus\{\gamma\}:\ R_\gamma T R_\eta\neq0\}.
\]

Now assume~\ref{it:coord-two-sided}.  Define
\[
    F(\gamma)=
    \{\eta\in\Gamma\setminus\{\gamma\}:\ R_\eta T R_\gamma\neq0\}.
\]
The sets $F(\gamma)$ are countable by the outgoing hypothesis.  Moreover,
\[
    F^{-1}(\{\gamma\})
    =
    \{\eta\in\Gamma:\ R_\gamma T R_\eta\neq0\}
\]
is countable by the incoming hypothesis.  Proposition~\ref{prop:two-sided-countable-free}
therefore gives a free set $\Lambda\subseteq\Gamma$ of cardinality $\kappa$, and
the same argument as above gives the desired off-diagonal vanishing.
\end{proof}

\begin{lemma}\label{lem:block-decomp1}
Let $X$ be a WCG Banach space, let $T\colon\ell_1(\Gamma, X)\to\ell_1(\Gamma, X)$ be bounded, and put $\kappa=\abs{\Gamma}$.  Assume that $\kappa>\omega_1$.  Then there exists $\Lambda\subseteq\Gamma$ with $\abs{\Lambda}=\kappa$ such that
\[
    R_\beta T R_\alpha=0\qquad(\alpha,\beta\in\Lambda,\ \alpha\neq\beta).
\]
\end{lemma}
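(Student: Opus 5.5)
The plan is to deduce the lemma from Lemma~\ref{lem:coordinate-diagonalisation}\ref{it:coord-hajnal-large}, using the outgoing-coordinate alternative. Since $\kappa>\omega_1$ is assumed, the only thing to check is the following claim.

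\textbf{Claim.} For every $\gamma\in\Gamma$, the outgoing set
\[
    \{\eta\in\Gamma:\ R_\eta T R_\gamma\neq 0\}
\]
is countable.

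To prove the claim, apply Corollary~\ref{cor:operator-countable} in the case $E=\ell_1(\Gamma)$ and $X$ WCG, with the countable set $\Delta=\{\gamma\}$. This yields a countable $\Lambda_\gamma\subseteq\Gamma$ such that
\[
    T(\ell_1(\{\gamma\},X))\subseteq \ell_1(\Lambda_\gamma,X).
\]
Now $R_\gamma$ maps $\ell_1(\Gamma,X)$ onto $\ell_1(\{\gamma\},X)$. So for every $\eta\notin\Lambda_\gamma$ and every $x$ we get $R_\eta T R_\gamma x=0$, because $TR_\gamma x$ has support in $\Lambda_\gamma$. Hence the outgoing set is contained in $\Lambda_\gamma$ and is countable, which proves the claim.

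With the claim in hand, the hypotheses of Lemma~\ref{lem:coordinate-diagonalisation}\ref{it:coord-hajnal-large} are satisfied. That lemma, via Hajnal's theorem with $\lambda=\omega_1<\kappa$, produces $\Lambda\subseteq\Gamma$ with $\abs{\Lambda}=\kappa$ and
\[
    R_\beta T R_\alpha=0\qquad(\alpha,\beta\in\Lambda,\ \alpha\neq\beta),
\]
which is exactly the conclusion.

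There is no real obstacle. The substantive input, that WCG forces operators $X\to\ell_1(\Gamma)$ to have separable range via the Schur property, is already contained in Corollary~\ref{cor:operator-countable}. The one point to note is that the incoming sets need not be countable, as Remark~\ref{rem:c01-not-zfc-from-decomposition} shows. This is why the two-sided ZFC alternative is unavailable here and the assumption $\kappa>\omega_1$ is needed.
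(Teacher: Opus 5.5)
Your proposal is correct and follows the paper's proof exactly. You get countable outgoing sets from Corollary~\ref{cor:operator-countable} applied to $\{\gamma\}$, and then invoke Lemma~\ref{lem:coordinate-diagonalisation} under alternative~\ref{it:coord-hajnal-large}, using $\kappa>\omega_1$.
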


\begin{proof}
For each $\gamma\in\Gamma$, Corollary~\ref{cor:operator-countable}, applied to the countable set $\{\gamma\}$, gives a countable set $\Lambda_\gamma\subseteq\Gamma$ such that
\[
    T(\ell_1(\{\gamma\},X))\subseteq\ell_1(\Lambda_\gamma,X).
\]
Thus $\{\eta:\ R_\eta T R_\gamma\neq0\}$ is countable for every $\gamma$.  Lemma~\ref{lem:coordinate-diagonalisation}, under alternative~\ref{it:coord-hajnal-large}, completes the proof.
\end{proof}

\begin{lemma}\label{lem:block-decomp2}
Let $E$ be a Banach space with a symmetric basis $(e_\gamma)_{\gamma\in\Gamma}$ not equivalent to the unit vector basis of $\ell_1(\Gamma)$, let $X$ be a Banach space such that $X^*$ contains a countable total set, let $T\colon E(\Gamma, X)\to E(\Gamma, X)$ be bounded, and put $\kappa=\abs{\Gamma}$.  Assume that $\kappa>\omega_1$.  Then there exists $\Lambda\subseteq\Gamma$ with $\abs{\Lambda}=\kappa$ such that
\[
    R_\beta T R_\alpha = 0\qquad(\alpha,
    \beta\in\Lambda,\ \alpha\neq\beta).
\]
\end{lemma}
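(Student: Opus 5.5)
The plan mirrors the proof of Lemma~\ref{lem:block-decomp1}: verify the incoming-coordinate hypothesis of Lemma~\ref{lem:coordinate-diagonalisation}, and then invoke alternative~\ref{it:coord-hajnal-large}, which needs only one of the two sets to be countable for each $\gamma$ together with $\kappa>\omega_1$.

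\emph{Countability of incoming sets.} Fix $\gamma\in\Gamma$ and let $Q_\gamma\colon E(\Gamma,X)\to X$ be the $\gamma$th coordinate map. This is a norm-one operator, since the basis is normalised and the norm symmetric. Consider the bounded operator $Q_\gamma T\colon E(\Gamma,X)\to X$. Because $X^*$ contains a countable total set and $(e_\gamma)$ is not equivalent to the $\ell_1(\Gamma)$-basis, Lemma~\ref{lmm: disjoint-2} gives a countable set $\Delta_\gamma\subseteq\Gamma$ such that
\[
    Q_\gamma T R_\eta=0\qquad(\eta\in\Gamma\setminus\Delta_\gamma).
\]
Since $R_\gamma$ and $Q_\gamma$ have the same kernel, $R_\gamma T R_\eta=0$ exactly when $Q_\gamma T R_\eta=0$. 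Consequently
\[
    \{\eta\in\Gamma:\ R_\gamma T R_\eta\neq0\}\subseteq\Delta_\gamma
\]
is countable.

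\emph{Free set.} With $\kappa>\omega_1$, alternative~\ref{it:coord-hajnal-large} of Lemma~\ref{lem:coordinate-diagonalisation} applies, using the incoming set mapping
\[
    F(\gamma)=\{\eta\neq\gamma:\ R_\gamma T R_\eta\neq0\}.
\]
Hajnal's theorem then yields $\Lambda\subseteq\Gamma$ with $\abs{\Lambda}=\kappa$ such that $R_\beta T R_\alpha=0$ for all distinct $\alpha,\beta\in\Lambda$.

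There is no real obstacle here. The only point needing care is the identification of $R_\gamma$ with the coordinate map $Q_\gamma$ into $X$, so that Lemma~\ref{lmm: disjoint-2}, which is stated for operators into $X$, applies. All the substance already sits in Lemma~\ref{lmm: disjoint-1} and in Hajnal's theorem.
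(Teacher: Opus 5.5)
Your proposal is correct and follows the paper's proof: you apply Lemma~\ref{lmm: disjoint-2} to $R_\gamma T$, viewed as an operator into $X$, to get countable incoming sets. You then invoke alternative~\ref{it:coord-hajnal-large} of Lemma~\ref{lem:coordinate-diagonalisation}, which uses Hajnal's theorem and $\kappa>\omega_1$.
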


\begin{proof}
Fix $\gamma\in\Gamma$.  Applying Lemma~\ref{lmm: disjoint-2} to the operator
\[
    R_\gamma T\colon E(\Gamma,X)\to E(\{\gamma\},X)\cong X
\]
shows that the set
\[
    \{\eta\in\Gamma:\ R_\gamma T R_\eta\neq0\}
\]
is countable.  Hence Lemma~\ref{lem:coordinate-diagonalisation}, under alternative~\ref{it:coord-hajnal-large}, applies.
\end{proof}

\begin{lemma}\label{lem:block-decomp3}
Let $E$ be a Banach space with a symmetric basis $(e_\gamma)_{\gamma\in\Gamma}$ not equivalent to the unit vector basis of $\ell_1(\Gamma)$, let $X$ be separable, and let
\[
    T\colon E(\Gamma,X)\to E(\Gamma,X)
\]
be bounded.  Put $\kappa=\abs{\Gamma}$.  Then there exists $\Lambda\subseteq\Gamma$ with $\abs{\Lambda}=\kappa$ such that
\[
    R_\beta T R_\alpha = 0
    \qquad(\alpha,\beta\in\Lambda,\ \alpha\neq\beta).
\]
\end{lemma}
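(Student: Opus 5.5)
The plan is to verify alternative~\ref{it:coord-two-sided} of Lemma~\ref{lem:coordinate-diagonalisation}. Both the outgoing and the incoming interaction sets of every coordinate should turn out to be countable. The free set $\Lambda$ of cardinality $\kappa$ is then supplied by Proposition~\ref{prop:two-sided-countable-free} via that lemma, with no cardinal hypothesis beyond the uncountability of $\Gamma$.

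\emph{Outgoing sets.} Fix $\gamma\in\Gamma$. Since $X$ is separable, Corollary~\ref{cor:operator-countable} (first alternative), applied to the countable set $\Delta=\{\gamma\}$, gives a countable $\Lambda_\gamma\subseteq\Gamma$ with $T(E(\{\gamma\},X))\subseteq E(\Lambda_\gamma,X)$. Hence $R_\eta TR_\gamma=0$ for every $\eta\notin\Lambda_\gamma$, so $\{\eta:\ R_\eta TR_\gamma\neq0\}\subseteq\Lambda_\gamma$ is countable.

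\emph{Incoming sets.} Fix $\gamma\in\Gamma$. I would apply Lemma~\ref{lmm: disjoint-2} to the operator $R_\gamma T\colon E(\Gamma,X)\to E(\{\gamma\},X)\cong X$. This is legitimate because a separable Banach space $X$ admits a countable total set in $X^*$: take a dense sequence $(x_n)$ in $X$ and norming functionals $x_n^*\in B_{X^*}$ with $x_n^*(x_n)=\norm{x_n}$. If $x_n^*(x)=0$ for all $n$, choose $x_n$ with $\norm{x-x_n}<\varepsilon$; then $\norm{x_n}=|x_n^*(x_n-x)|<\varepsilon$, so $\norm{x}<2\varepsilon$ and $x=0$. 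The hypothesis that the basis of $E$ is not equivalent to the $\ell_1(\Gamma)$-basis is exactly what Lemma~\ref{lmm: disjoint-2} (through Lemma~\ref{lmm: disjoint-1}) requires. The lemma then yields a countable $\Delta_\gamma$ with $R_\gamma TR_\eta=0$ for $\eta\notin\Delta_\gamma$, i.e.\ the incoming set $\{\eta:\ R_\gamma TR_\eta\neq0\}$ is countable.

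With both countability statements in hand, Lemma~\ref{lem:coordinate-diagonalisation}\ref{it:coord-two-sided} gives $\Lambda\subseteq\Gamma$ with $\abs{\Lambda}=\kappa$ and $R_\beta TR_\alpha=0$ for distinct $\alpha,\beta\in\Lambda$, as required. No step is really an obstacle, since everything reduces to earlier lemmas. The only point needing care is the existence of the countable total set, and the observation that the two-sided mechanism, unlike Hajnal's theorem, works at $\kappa=\omega_1$. That is the reason this statement, unlike Lemmas~\ref{lem:block-decomp1} and~\ref{lem:block-decomp2}, carries no hypothesis $\kappa>\omega_1$.
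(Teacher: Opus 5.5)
Your proposal is correct and follows the paper's proof essentially step for step. You get countable outgoing sets from Corollary~\ref{cor:operator-countable} applied to $\{\gamma\}$, and countable incoming sets from Lemma~\ref{lmm: disjoint-2} via a countable total set built from norming functionals of a dense sequence. You then close with alternative~\ref{it:coord-two-sided} of Lemma~\ref{lem:coordinate-diagonalisation}, exactly as the paper does.
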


\begin{proof}
For each $\gamma\in\Gamma$, Corollary~\ref{cor:operator-countable}, applied to the countable set $\{\gamma\}$, gives a countable set $\Lambda_\gamma\subseteq\Gamma$ such that
\[
    T(E(\{\gamma\},X))\subseteq E(\Lambda_\gamma,X).
\]
Thus the outgoing set
\[
    \{\eta\in\Gamma:\ R_\eta T R_\gamma\neq0\}
\]
is countable for every $\gamma\in\Gamma$.

Since $X$ is separable, $X^*$ contains a countable total set.  This is trivial if $X=\{0\}$.  Otherwise, choose a dense sequence $(x_n)$ in the unit sphere of $X$ and, for each $n$, choose $x_n^*\in S_{X^*}$ with $x_n^*(x_n)=1$; then $(x_n^*)$ is total.  Applying Lemma~\ref{lmm: disjoint-2} to $R_\gamma T\colon E(\Gamma,X)\to E(\{\gamma\},X)\cong X$ shows that the incoming set
\[
    \{\eta\in\Gamma:\ R_\gamma T R_\eta\neq0\}
\]
is countable for every $\gamma\in\Gamma$.  Lemma~\ref{lem:coordinate-diagonalisation}, under alternative~\ref{it:coord-two-sided}, completes the proof.
\end{proof}

\begin{proof}[Proof of Theorem \ref{thm:B}]
We prove the assertion for any one of the spaces listed in the theorem.  Thus
\[
    Z=E_0(\Gamma,X),
\]
where one of the following alternatives holds:
\begin{enumerate}[label=\textup{(\alph*)}]
    \item $E_0=\ell_1$, $X$ is WCG, and $\kappa>\omega_1$;
    \item $E_0=E$, where $E$ is not equivalent to $\ell_1(\Gamma)$, and $X$ is separable;
    \item $E_0=E$, where $E$ is not equivalent to $\ell_1(\Gamma)$, $X^*$ contains a countable total set, and $\kappa>\omega_1$.
\end{enumerate}
Let $A\in\Bop(Z)$.  In the three cases above, respectively Lemma~\ref{lem:block-decomp1}, Lemma~\ref{lem:block-decomp3}, and Lemma~\ref{lem:block-decomp2} yield a set $\Lambda\subseteq\Gamma$ with $\abs{\Lambda}=\kappa$ such that
\begin{equation}\label{eq:T-offdiag-zero}
    R_\beta A R_\alpha=0
    \qquad(\alpha,\beta\in\Lambda,\ \alpha\neq\beta).
\end{equation}
For $\alpha\in\Lambda$ write
\[
    X_\alpha=E_0(\{\alpha\},X)
\]
for the $\alpha$-th coordinate copy of $X$, and set
\[
    A_\alpha=R_\alpha A|_{X_\alpha}\in\Bop(X_\alpha).
\]
Since $X_\alpha$ is isometric to $X$, the identity on $X_\alpha$ factors, for each $\alpha$, either through $A_\alpha$ or through $I_{X_\alpha}-A_\alpha$.  One of these two alternatives occurs on a subset of $\Lambda$ of cardinality $\kappa$.  Passing to such a subset, still denoted by $\Lambda$, define
\[
    \widetilde A=
    \begin{cases}
        A, & \text{if the first alternative occurs on }\Lambda,\\
        I_Z-A, & \text{if the second alternative occurs on }\Lambda.
    \end{cases}
\]
The off-diagonal relation \eqref{eq:T-offdiag-zero} remains valid with $A$ replaced by $\widetilde A$, because the identity has no off-diagonal coordinate entries.  Put
\[
    \widetilde A_\alpha=R_\alpha\widetilde A|_{X_\alpha}\qquad(\alpha\in\Lambda).
\]
Then for every $\alpha\in\Lambda$ there are $U_\alpha,V_\alpha\in\Bop(X_\alpha)$ such that
\begin{equation}\label{eq:coordinate-factorisation}
    U_\alpha \widetilde A_\alpha V_\alpha=I_{X_\alpha}.
\end{equation}

If \ref{it:UF2} holds and $X$ has the $C$-PFP, choose the factorisations in \eqref{eq:coordinate-factorisation} so that
\[
    \norm{U_\alpha}\norm{V_\alpha}\le C
    \qquad(\alpha\in\Lambda).
\]
After rescaling, assume $\norm{U_\alpha}=1$ and $\norm{V_\alpha}\le C$; put $M=C$.

If \ref{it:CF2} holds, first rescale each factorisation so that $\norm{U_\alpha}=1$ and put $m_\alpha=\norm{V_\alpha}$.  Since $\cf(\kappa)>\omega$ and
\[
    \Lambda=\bigcup_{n\in\N}\{\alpha\in\Lambda:\ m_\alpha\le n\},
\]
Lemma~\ref{lem:cofinal-pigeon} gives a subset of $\Lambda$ of cardinality $\kappa$ on which $m_\alpha\le M$ for some $M\in\N$.  Replacing $\Lambda$ by this subset, we have in all cases
\begin{equation}\label{eq:coordinate-uniform-bound}
    \norm{U_\alpha}=1,
    \qquad
    \norm{V_\alpha}\le M
    \qquad(\alpha\in\Lambda).
\end{equation}

Set $U_\alpha=V_\alpha=0$ on the coordinates outside $\Lambda$, and define block-diagonal operators $U,V\in\Bop(Z)$ by
\[
    U\Bigl(\sum_{\alpha\in\Gamma}z_\alpha\Bigr)=\sum_{\alpha\in\Gamma}U_\alpha z_\alpha,
    \qquad
    V\Bigl(\sum_{\alpha\in\Gamma}z_\alpha\Bigr)=\sum_{\alpha\in\Gamma}V_\alpha z_\alpha.
\]
The symmetry of the norm gives $\norm U\le1$ and $\norm V\le M$.  By the off-diagonal vanishing for $\widetilde A$ and by \eqref{eq:coordinate-factorisation}, the operator $U\widetilde A V$ is the identity on the coordinate subspace $E_0(\Lambda,X)$.  Indeed, this is immediate on finitely supported vectors, and the general case follows by continuity.

Since $\abs{\Lambda}=\abs{\Gamma}$, symmetry of the basis gives an isometric reindexing isomorphism
\[
    J\colon Z\to E_0(\Lambda,X).
\]
Therefore
\[
    I_Z=J^{-1}(U\widetilde A V)|_{E_0(\Lambda,X)}J=(J^{-1}U)\widetilde A(VJ).
\]
Thus the identity on $Z$ factors through $A$ or through $I_Z-A$, according to the choice of $\widetilde A$.  This proves the PFP.  In case~\ref{it:UF2}, the above construction gives
\[
    \norm{J^{-1}U}\,\norm{VJ}\le C,
\]
so the factorisation property is uniform.
\end{proof}

\begin{remark}
The separable-range variant mentioned in Remark~\ref{rem:separable-range} follows from the same proof.  Let $F$ denote the scalar coordinate space, so that $F=\ell_1(\Gamma)$ in the $\ell_1$-case and $F=E$ in the general symmetric-basis case.  If $\kappa>\omega_1$ and every operator $S\colon X\to F$ has separable range, then Lemma~\ref{lem:operator-countable-general}, applied to one coordinate at a time, gives the outgoing-coordinate hypothesis in alternative~\ref{it:coord-hajnal-large} of Lemma~\ref{lem:coordinate-diagonalisation}; the rest of the proof of Theorem~\ref{thm:B} is unchanged.
\end{remark}

\begin{remark}
As in Theorem~\ref{th: Cassaza-Kottman-Lin}, iterated versions of Theorem~\ref{thm:B} can be obtained.  Since the argument is a routine adaptation of the ideas above, we omit the details.
\end{remark}

The case of $\ell_p(\Gamma,C(\alpha))$ is already covered by Corollary~\ref{cor:intro-C02}, so we do not pursue it further here.

%% file: 4_PFP_other_spaces.tex
\section{PFP and UPFP in other Banach spaces}

In this section, we collect several further examples that do not fit neatly into the symmetric-basis framework of the previous sections. Some of them are naturally handled by existing primariness arguments, while others require a separate factorisation argument. We have grouped them according to the underlying mechanism rather than historical chronology.

\subsection{\texorpdfstring{$\ell_\infty/c_0 \cong C(\omega^*)$}{l_infty/c_0 ≅ C(ω*)}}

\begin{proposition}\label{prop: l_infty-quot-c0}
The space $\ell_\infty/c_0 \cong C(\omega^*)$ has the $2$-PFP.
\end{proposition}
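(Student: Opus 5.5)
Write $K=\omega^*=\beta\N\setminus\N$ and fix $T\in\Bop(C(K))$. The plan is to find a clopen set $A\subseteq K$, homeomorphic to $K$ (every non-empty clopen subset of $\omega^*$ is of the form $M^*$ for an infinite $M\subseteq\N$, hence homeomorphic to $\omega^*$), on which either $T$ or $\Id-T$ acts, after compression, as an isomorphism with a good lower bound, and then to correct by a multiplication operator. This follows the Drewnowski--Roberts style argument.

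\textbf{Step 1: pointwise dichotomy.} For each $t\in K$, the representing measure $\mu_t=T^*\delta_t$ satisfies
\[
\mu_t(\{t\})+(\delta_t-\mu_t)(\{t\})=1 .
\]
So one of $|\mu_t(\{t\})|\ge\tfrac12$ or $|(\delta_t-\mu_t)(\{t\})|\ge\tfrac12$ holds. The function $t\mapsto\mu_t(\{t\})$ is not continuous, so I cannot just take a clopen level set. Instead I will use the classical fact, from Drewnowski's and Roberts' analysis of operators on $C(\omega^*)$, that an operator $T$ on $C(\omega^*)$ agrees, on a clopen copy of $\omega^*$, with a \emph{weighted composition} operator modulo a part that is ``small off the diagonal''.

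\textbf{Step 2: the obstacle.} The main obstacle is producing a clopen $A\cong\omega^*$ and a continuous $g$ on $A$ with $|g|\ge\tfrac12$ such that
\[
R_A T|_{C(A)} = M_g + S, \qquad \|S\|\ \text{small}.
\]
Here $R_A$ is restriction and $M_g$ is multiplication by $g$. I would first try to get this by lifting to $\ell_\infty$: via a lifting (e.g.\ using Proposition~\ref{prop: l_inf-almost-basis-operators}-type almost-disjoint arguments on the $\omega_1$ almost disjoint family, in the quotient), find an infinite $M\subseteq\N$ and a diagonal-dominant structure modulo $c_0$. Diagonal entries $\ge\tfrac12$ in modulus on $M$ are obtained by colouring, replacing $T$ by $\Id-T$ if necessary. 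The point where this fails is that operators on $\ell_\infty/c_0$ need not lift, so the fallback is the genuinely topological route: using that $\omega^*$ is an F-space, pass to a clopen set where the continuous-singular decomposition of $t\mapsto\mu_t$ makes the atom at $t$ vary continuously.

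\textbf{Step 3: factorisation with constant $2$.} To reach constant exactly $2$ rather than $2^+$, I must aim for $S=0$, in analogy with the non-separable free-selection of Section~\ref{sec: uncountable-direct-sums}. Since $\omega^*$ has $2^{\mathfrak c}$ pairwise disjoint-ish clopen sets, I would use a free-set argument to kill all off-diagonal interaction exactly. Then $R_AT|_{C(A)}=M_g$ and
\[
\Id_{C(A)}=M_{1/g}\,R_A\,T\,\iota_A ,
\]
where $\iota_A$ is extension by zero, $\|M_{1/g}R_A\|\le2$ and $\|\iota_A\|=1$. Composing with the isometry $C(K)\cong C(A)$ gives the factorisation with $\|U\|\|V\|\le 2$, and this yields the $2$-PFP.
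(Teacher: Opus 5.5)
Your proposal does not prove the statement. Step~2 is labelled as ``the obstacle'' and is never resolved. The lifting route, the F-space route and the free-set route are each only gestured at, so the one step that carries all the content is missing.

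\textbf{What the paper uses instead.} The missing ingredient is the actual content of \cite[Theorem~2.1]{DrewnowskiRoberts1991}, which is sharper than the ``weighted composition modulo a small off-diagonal part'' you attribute to Drewnowski and Roberts. For every $T\in\Bop(C(\omega^*))$ there are a non-empty clopen $B\subseteq\omega^*$ and a \emph{scalar} $\gamma$ with $R_BTI_B=\gamma\,\Id_{C(B)}$ exactly. With this, the dichotomy is applied once, to the single number $\gamma$, using $\max\{|\gamma|,|1-\gamma|\}\ge\tfrac12$. Let $J\colon C(\omega^*)\to C(B)$ be an isometry, which exists because $B\cong\omega^*$. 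Then $V=I_BJ$ and $U=\gamma^{-1}J^{-1}R_B$ give $UTV=\Id$ with $\|U\|\|V\|\le 2$. In the other case, the same construction with $1-\gamma$ works for $\Id-T$. Your pointwise dichotomy on $\mu_t(\{t\})$ in Step~1 plays no role and leads nowhere, since nothing clopen is extracted from it.

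\textbf{Why Step~3 fails.} The free-set idea from Section~\ref{sec: uncountable-direct-sums} is misapplied. That method \emph{transfers} the PFP from a coefficient space to a large sum; it does not create it. Suppose you had pairwise disjoint clopen sets $A_\alpha$ with every off-diagonal block $R_{A_\beta}TI_{A_\alpha}$, $\alpha\neq\beta$, equal to zero. The diagonal blocks $R_{A_\alpha}TI_{A_\alpha}$ would still be arbitrary operators on $C(A_\alpha)\cong C(\omega^*)$. Concluding anything from them would require the very statement you are proving, so the argument is circular.

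Even the off-diagonal vanishing is not available on a fixed family. Some continuous $\phi\colon\omega^*\to\omega^*$ maps every $A_\alpha$ onto $\omega^*$; for instance, take $\phi$ induced by $s\mapsto|s|$ on $2^{<\omega}\cong\N$, with $A_\alpha$ the branch clopens. Then every block of $Tx=x\circ\phi$ is non-zero.

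Your cardinality claim is also false. The clopen algebra of $\omega^*$ is $\mathcal P(\N)/\mathrm{fin}$, so $\omega^*$ has only $\mathfrak c$ clopen sets, not $2^{\mathfrak c}$.

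\textbf{A remaining gap.} Even granting $R_AT|_{C(A)}=M_g$ exactly with $g$ merely continuous, you would still need to pass to a non-empty clopen set on which $|g|\ge\tfrac12$ or $|1-g|\ge\tfrac12$. This can be done, for example because every continuous function on $\omega^*$ is constant on some non-empty clopen set. The proposal does not address it.
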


\begin{proof}
Let $C=C(\omega^*)$ and let $T\in\mathscr B(C)$.
By \cite[Theorem~2.1]{DrewnowskiRoberts1991} (applied with $A=\omega^*$), there exist a non-empty clopen $B\subseteq \omega^*$ and a scalar $\gamma$ such that
\[
\chi_B\,T(x)=\gamma x\qquad(x\in C(B)),
\]
where we identify $C(B)$ with the closed subspace
$\{x\in C(\omega^*): x=\chi_B x\}$ of functions supported on $B$.
Equivalently, writing $I_B\colon C(B)\to C$ for the canonical inclusion
(extension by $0$ outside $B$) and $R_B\colon C\to C(B)$ for restriction, we have
\[
R_BT I_B=\gamma\,\Id_{C(B)}.
\]

Since $|\gamma|+|1-\gamma|\ge |\gamma+(1-\gamma)|=1$, we have
$\max\{|\gamma|,|1-\gamma|\}\ge \tfrac12$.

As $B$ is a non-empty clopen subset of $\omega^*$, it is homeomorphic to $\omega^*$
and hence there exists an isometric isomorphism $J\colon C\to C(B)$.
If $|\gamma|\ge \tfrac12$, define $V=I_BJ$ and $U=(1/\gamma)\,J^{-1}R_B$. Then
\[
UTV=(1/\gamma)\,J^{-1}(R_BT I_B)J=\Id_C,
\]
and $\|U\|\,\|V\|\le 2$.

If instead $|1-\gamma|\ge \tfrac12$, we apply the same construction to the operator $\Id_C-T$:
since $R_B(\Id_C-T)I_B=(1-\gamma)\Id_{C(B)}$, we obtain bounded $U,V$ with
$U(\Id_C-T)V=\Id_C$ and $\|U\|\,\|V\|\le 2$.
\end{proof}

\begin{remark}
    As noted in \cite{Drewnowski1989}, if $X$ has the PFP, then so does $C(\omega^*,X)$, and the same argument yields the corresponding statement for the UPFP.
\end{remark}

\subsection{\texorpdfstring{The $p$-James tree space $JT_p$}{The p-James tree space JT_p}}

We start by recalling the following well-known lemma.

\begin{lemma}\label{lem:bb-complemented-implies-factor}
Let $E$ be a Banach space and let $S\colon E\to E$ be bounded.
Assume that there exist complemented subspaces $Y, Z\subseteq E$ and an isomorphism
$V\colon E\to Y$ such that $S|_Y\colon Y\to Z$ is an isomorphism.
Then $\Id_E$ factors through $S$.
\end{lemma}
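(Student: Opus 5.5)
The plan is to write the factorisation down explicitly from the given data. Let $P_Y$ and $P_Z$ be bounded projections of $E$ onto $Y$ and $Z$, respectively. Write $W=(S|_Y)^{-1}\colon Z\to Y$ for the inverse of the isomorphism $S|_Y\colon Y\to Z$, and write $V^{-1}\colon Y\to E$ for the inverse of $V$, viewing $V$ as a bounded operator $E\to E$ with range $Y$. The candidates for the factorisation $\Id_E=A\,S\,B$ are then
\[
    B=V\colon E\to E,
    \qquad
    A=V^{-1}\,W\,P_Z\colon E\to E,
\]
where $V^{-1}W P_Z$ is understood as the composite $E\xrightarrow{P_Z}Z\xrightarrow{W}Y\xrightarrow{V^{-1}}E$.

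The verification is a direct computation. Let $x\in E$. Since $Vx\in Y$, we have $SVx=S|_Y(Vx)\in Z$. Because $P_Z$ is a projection onto $Z$, it fixes this vector, so $P_ZSVx=SVx$. Applying $W$ gives $W S|_Y(Vx)=Vx$, and then $V^{-1}Vx=x$. Hence $ASB=\Id_E$. Boundedness is immediate, since every factor is bounded, and we obtain the quantitative bound
\[
    \norm{A}\,\norm{B}\le \norm{V^{-1}}\,\norm{(S|_Y)^{-1}}\,\norm{P_Z}\,\norm{V}.
\]

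This bound is worth recording for later uniform (UPFP) applications. Note also that the projection onto $Y$ is not actually needed: only the complementation of $Z$ is used, to extend $(S|_Y)^{-1}$ from $Z$ to all of $E$.

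There is no real obstacle in this argument. The only point needing care is to remember that $S|_Y$ maps onto $Z$, which is a possibly proper subspace of $E$. Complementation of $Z$ is exactly what allows the left inverse of $S|_Y$ to be extended to the whole of $E$.
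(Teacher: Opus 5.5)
Your proof is correct and is the same as the paper's: it takes $U=V^{-1}(S|_Y)^{-1}P_Z$ and the second factor $\iota_Y\circ V$, and checks $USW=\Id_E$ by exactly your computation. Your remark that only the complementation of $Z$ is used is accurate; the paper's argument also never uses a projection onto $Y$.
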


\begin{proof}
Let $P_Z\colon E\to Z$ be a bounded projection.
Define
\[
U = V^{-1}\,(S|_Y)^{-1}\,P_Z \in \mathscr B(E)
\qquad\text{and}\qquad
W = \iota_Y\circ V \in \mathscr B(E),
\]
where $\iota_Y\colon Y\hookrightarrow E$ is the inclusion map.
Then, since $S(Wx)=S(Vx)\in Z$ and $P_Z$ acts as the identity on $Z$,
\[
USW = V^{-1}(S|_Y)^{-1}P_Z\,S\,V
= V^{-1}(S|_Y)^{-1}S|_Y\,V
= \Id_E.
\]
\end{proof}

Fix $1 < p < \infty$ and write $q$ for the conjugate exponent, $1/p+1/q=1$. Let $\mathcal T$ denote the dyadic tree
\[
\mathcal T=\{(n,i): n\in\mathbb N_0,\ 0\le i<2^n\},
\]
partially ordered by the predecessor relation: $(n,i)\prec (m,j)$ if $(m,j)$
is a strict descendant of $(n,i)$ in the usual dyadic splitting.
A \emph{segment} is a finite chain $S=\{t_1\prec t_2\prec\cdots\prec t_k\}$ in $\mathcal T$,
and a \emph{branch} is an infinite chain (starting at some level).

For each $t\in\mathcal T$ let $x_t$ denote the canonical unit vector in $c_{00}(\mathcal T)$.
The \emph{$p$-James tree space} $JT_p$ is the completion of $c_{00}(\mathcal T)$ for the norm
\begin{equation}\label{eq:JT_p_norm}
\Bigl\|\sum_{t\in\mathcal T} a_t x_t\Bigr\|_{JT_p}
=
\sup\Biggl\{
\Bigl(
\sum_{j=1}^m
\Bigl|\sum_{t\in S_j} a_t\Bigr|^p
\Bigr)^{1/p}
:
S_1,\dots,S_m \ \text{pairwise disjoint segments of }\mathcal T
\Biggr\}.
\end{equation}
For $p=2$ this is James' original tree space $JT$; see Andrew~\cite{Andrew1983} for this formulation
and the associated segment functionals and projections.

For any segment $S\subseteq\mathcal T$ we write $f_S\in (JT_p)^*$ for the \emph{segment functional}
\[
f_S\Bigl(\sum_{t}a_t x_t\Bigr)=\sum_{t\in S}a_t,
\]
so $\|f_S\|=1$ (take the single segment $S$ in \eqref{eq:JT_p_norm}).
If $B$ is a branch, we similarly write $f_B(\sum_t a_t x_t)=\sum_{t\in B}a_t$; this is bounded because
the branch subspace is (isometric to) the corresponding $p$-James space (compare Andrew, Proposition~1(b)).

We shall use the following standard structural facts (proved in Andrew for $p=2$ and valid, with the same proofs,
for $1<p<\infty$ after replacing $\ell_2$-estimates by $\ell_p$-estimates):
\begin{itemize}
\item[(i)] For every subtree $\mathcal S\subseteq\mathcal T$, the subspace $[x_t:t\in\mathcal S]$ is \emph{isometric} to $JT_p$
and is \emph{$1$-complemented} in $JT_p$ by a norm-one projection built from a tree-like family of disjoint segments
(as in Andrew, Proposition~1(a)).
\item[(ii)] If $t_1,\dots,t_m$ are pairwise incomparable nodes, then
\[
\Bigl\|\sum_{k=1}^m a_k x_{t_k}\Bigr\|_{JT_p}=\Bigl(\sum_{k=1}^m |a_k|^p\Bigr)^{1/p},
\]
since every segment intersects $\{t_1,\dots,t_m\}$ in at most one point.
Thus, antichains span isometric copies of $\ell_p$.
\end{itemize}

\begin{theorem}\label{thm:JT_p_UPFP}
For every $1<p<\infty$, the space $JT_p$ has the uniform primary factorisation property.
That is, there exists a constant $C_p\ge 1$ such that for every bounded operator
$T\colon JT_p\to JT_p$ there are $U,V\in\mathscr B(JT_p)$ with either
$UTV=\Id_{JT_p}$ or $U(\Id_{JT_p}-T)V=\Id_{JT_p}$ and $\|U\|\,\|V\|\le C_p$.
\end{theorem}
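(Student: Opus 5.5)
We follow the diagonalisation scheme of Theorem~\ref{th: Cassaza-Lin}, adapted to the tree, and conclude with Lemma~\ref{lem:bb-complemented-implies-factor} and fact~(i). Given $T$, colour each node $t\in\mathcal T$ according to whether $|x_t^*Tx_t|\ge\tfrac12$ or $|x_t^*(\Id-T)x_t|\ge\tfrac12$; here $x_t^*$ is the coordinate functional, which equals $f_{\{t\}}$ and has norm one. A Ramsey argument for dyadic trees (Milliken/Halpern--Läuchli type, or a direct backward-induction argument in the spirit of Lemma~\ref{lmm: infinitely-support}) gives a subtree $\mathcal S$, order-isomorphic to $\mathcal T$, that is monochromatic. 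Replacing $T$ by $\Id-T$ if necessary, we assume $|x_t^*Tx_t|\ge\tfrac12$ on $\mathcal S$.

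Next we thin $\mathcal S$ further so that off-diagonal entries are small. We enumerate the nodes of a further subtree level by level and choose them recursively, as in Lemma~\ref{lem:CKL-tree-selection}, so that
\[
|x_s^*Tx_t|<\delta\,2^{-(\iota(s)+\iota(t))}\quad(s\neq t),
\]
where $\iota$ is the enumeration index. Two decay facts are needed. First, $x_s^*Tx_t\to0$ as $s$ ranges over new nodes with $t$ fixed; this holds because $(x_t)$ in the natural order is a basis, hence coordinates of $Tx_t$ tend to zero. Second, $x_t^*Tx_s\to0$ for fixed $t$ as $s$ varies among nodes still available in the tree. For the second fact we use that $JT_p$ contains no $\ell_1$: any sequence of distinct nodes has a subsequence that is either an antichain, spanning $\ell_p$ by fact~(ii), or a chain, spanning the $p$-James space. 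Both are weakly null, so by the argument of Lemma~\ref{lmm: sym-basis-affected} the functional $x_t^*T$ tends to zero along them. At each stage we must keep infinitely many admissible successors beneath every open node; this is done as in the reservoir bookkeeping of Lemma~\ref{lmm: ell1-iterated-true-diagonal}.

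On $Y=[x_t:t\in\mathcal S']$, which is isometric to $JT_p$ and $1$-complemented by a projection $P$ (fact~(i)), we set $D$ to be the diagonal operator $x_t\mapsto x_t/x_t^*Tx_t$. We then show $\|\Id_Y-DPT|_Y\|<\delta$, so that $DPT|_Y$ is invertible, and conclude as in Theorem~\ref{th: Cassaza-Lin} with constant of order $\|D\|\,\|P\|/(1-\delta)$.

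\textbf{Main obstacle.} The basis of $JT_p$ is not unconditional, so neither diagonal multipliers nor the estimate of Lemma~\ref{lmm: small-norm} is automatically bounded. Diagonal operators on $JT_p$ need not even be bounded. We propose two repairs. The first is to additionally pass to a subtree on which the diagonal values $x_t^*Tx_t$ are almost constant, equal to some $\gamma$ up to $\delta$. Such a subtree can be found by compactness of the disc $\{1/2\le|z|\le\|T\|\}$ together with the tree Ramsey property. Then $D$ is replaced by $\gamma^{-1}\Id$, at cost $2$. The second is to bound the off-diagonal operator via $\|x_t\|=1$ and $\|x_t^*\|\le2$ (a difference of two segment functionals), giving $\|\sum_{s\ne t}a_t(x_s^*Tx_t)x_s\|\le\sum|a_t|\cdot\text{small}$, with $|a_t|\le2\|x\|$. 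After these repairs the argument goes through with a uniform constant $C_p$; the nearly-constant diagonal subtree is the step we expect to need the most care.
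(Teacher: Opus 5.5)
Your proposal has genuine gaps. The symmetric-basis scheme does not transfer to $JT_p$ as you describe, for two reasons: $(x_t)$ is far from unconditional, and the projection in fact~(i) is not a coordinate projection.

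\textbf{The second decay fact is false.} Chains are not weakly null: for a branch $B$, the branch functional $f_B$ has norm one and $f_B(x_t)=1$ for every $t\in B$. The argument of Lemma~\ref{lmm: sym-basis-affected} turns ``not weakly null'' into an $\ell_1$-subsequence only by using unconditionality, which you do not have. Concretely, for $Tx=f_B(x)\,x_{t_0}$ one has $x_{t_0}^*Tx_s=1$ for every $s\in B$. In general, a fixed row converges along each branch to a branch-dependent limit, not to $0$.

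Only the antichain estimate~(ii) is usable. Given a functional $\phi$, a cone and $\varepsilon>0$, some sub-cone satisfies $|\phi(x_u)|<\varepsilon$ throughout. Otherwise one finds an antichain $(u_k)$ with $|\phi(x_{u_k})|\ge\varepsilon$, and $N\varepsilon\le\|\phi\|N^{1/p}$ fails for large $N$. So the selection would have to proceed by shrinking cones, not by pointwise decay.

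\textbf{The key estimate does not follow from your thinning.} This is the more serious gap: $\|\Id_Y-DPT|_Y\|<\delta$ is not implied by small coordinate entries. The projection of fact~(i) is $y\mapsto\sum_s f_{S_s}(y)x_s$ for disjoint segments $S_s\ni s$; coordinate projections onto thinned subtrees are in general unbounded. Hence the matrix of $PW|_Y$ has entries $f_{S_s}(Wx_t)$. These involve the coordinates of $Wx_t$ at the nodes of $S_s\setminus\{s\}$ lying outside $\mathcal S'$, which you never control.

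The inference fails already in the one-branch model. The span of a branch $\{b_1\prec b_2\prec\cdots\}$ is isometric to $J_p$ and $1$-complemented by coordinate restriction. Write $x_n=x_{b_n}$. The map $T(\sum_n a_nx_n)=\sum_k(a_{2k-1}+a_{2k})x_{2k}$ is a norm-one projection. The odd nodes satisfy $x_{2k-1}^*(\Id-T)x_{2k-1}=1$ and $x_{2j-1}^*(\Id-T)x_{2k-1}=\delta_{jk}$. So they form a monochromatic set that passes your colouring and thinning with zero error, and your argument would factor $\Id_{J_p}$ through $\Id-T$.

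But $(\Id-T)(J_p)=[x_{2k-1}-x_{2k}]\simeq\ell_p$ is reflexive, so no such factorisation exists. The culprit is that $(\Id-T)x_{2k-1}=x_{2k-1}-x_{2k}$. Any bounded block projection onto $[x_{2k-1}]$ must attach (all but finitely many of) the $x_{2k}$ to a neighbouring odd node. This either creates off-diagonal entries of modulus one or cancels the diagonal entry.

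\textbf{Near-constancy of the diagonal is not enough.} Closeness in the supremum norm does not make $M_{d-\gamma}$ small. Along a branch, the multiplier $(\delta(-1)^n)_n$ has norm at least $\delta N^{1-1/p}$ on $\sum_{n\le N}(-1)^nx_n$. What is needed is small variation along branches, which your proposal does not supply.

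\textbf{What the paper does instead.} These are exactly the points the paper handles by importing Andrew's machinery. The dichotomy is taken with respect to branch functionals, $1=f_{B_t}(Tx_t)+f_{B_t}((\Id-T)x_t)$. A Ramsey subtree selection, a bounded-variation selection on branches and antichain gliding-hump arguments yield a subtree $X$ on which $W|_X$ is an isomorphism with complemented range, via a Neumann series for branch multipliers. The factorisation then comes from Lemma~\ref{lem:bb-complemented-implies-factor}, not from inverting a near-diagonal operator on $Y$.
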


\begin{proof}
Let $T\in\mathscr B(JT_p)$ and set $U=T$ and $V=\Id-T$.
Fix once and for all a scalar $y\in(0,1/4)$.

Andrew proved for $JT=JT_2$ that there exists a subtree $\mathcal S\subseteq\mathcal T$ such that, writing
$X=[x_t:t\in\mathcal S]$, one has:
\begin{enumerate}
\item[(a)] $X$ is isometric to $JT$ and $1$-complemented in $JT$;
\item[(b)] either $U|_X$ or $V|_X$ is an isomorphism;
\item[(c)] the corresponding range $U(X)$ or $V(X)$ is complemented in $JT$.
\end{enumerate}
Moreover, the proof is quantitative once $y$ is fixed: it produces a uniform lower bound (depending on $y$)
for the diagonal coefficients on the chosen subtree, and a uniform invertibility estimate obtained by a Neumann-series
argument for the branch multipliers (see the construction around Theorem~6 in Andrew~\cite{Andrew1983}).

We \emph{claim} that the same conclusion holds for $JT_p$, $1<p<\infty$, with constants depending only on $p$ (and the fixed $y$).
Indeed, Andrew's proof uses:
\begin{itemize}
\item a dichotomy on each basis vector using a branch functional:
\[
1=f_{B_t}(x_t)=f_{B_t}(Ux_t)+f_{B_t}(Vx_t),
\]
hence for each $t$ either $|f_{B_t}(Ux_t)|\ge 1/2$ or $|f_{B_t}(Vx_t)|\ge 1/2$;
\item a Ramsey-type subtree selection \cite[Proposition~4]{Andrew1983} and a bounded-variation selection on branches \cite[Proposition~5]{Andrew1983};
\item two ``gliding hump'' propositions \cite[Propositions~2-3]{Andrew1983} whose contradiction step exploits that large antichains behave like
$\ell_2$ inside $JT$.
\end{itemize}
In $JT_p$ the antichain estimate in the last bullet is exactly \emph{(ii)} above with $\ell_2$ replaced by $\ell_p$,
so the same contradiction goes through for $p>1$ (since the lower bound grows like $L$ while the upper bound grows like $L^{1/p}$).
All other parts of the argument are combinatorial or use only the segment/branch norming mechanism, hence are unchanged.
Therefore we obtain a subtree $\mathcal S$ and a choice $W\in\{T,\Id-T\}$ such that
$W|_X$ is an isomorphism and $W(X)$ is complemented, where $X=[x_t:t\in\mathcal S]\simeq JT_p$ is $1$-complemented.

Finally, apply Lemma~\ref{lem:bb-complemented-implies-factor} to $E=JT_p$ and $S=W$. We have complemented subspaces $Y=X$ and $Z=W(X)$, and $W|_X\colon X\to W(X)$ is an isomorphism.
The factorisation produced by Lemma~\ref{lem:bb-complemented-implies-factor} yields
$\Id_{JT_p}$ factoring through $W$ with a constant controlled by the projection constants of $X$ and $W(X)$ and by
$\|(W|_X)^{-1}\|$. All these bounds depend only on $p$ (and the fixed $y$), hence one obtains a uniform constant $C_p$
independent of $T$. Thus $\Id_{JT_p}$ factors through either $T$ or $\Id_{JT_p}-T$, with uniform control, \emph{i.e.}\ $JT_p$ has the UPFP.
\end{proof}

\subsection{\texorpdfstring{$\mathcal{B}(\ell_p)$}{B(lp)}}

\begin{remark}
    For context, Blower's theorem that $\Bop(\ell_2)$ is primary and Wark's primariness results for certain $\ell_\infty$-direct sums are formulated in terms of projections and decompositions. They therefore do not, at least not directly, yield the PFP or the UPFP. The argument below for $\Bop(\ell_p)$ is instead genuinely operator-theoretic.
\end{remark}

Fix $p\in(1,\infty)$ and write $q=\frac{p}{p-1}$. For $n\in\N$ let $X_n=\Bop(\ell_p^n)$ with the operator norm.
Set
\[
    \mathcal{M}_p \;=\;\Big(\bigoplus_{n=1}^\infty X_n\Big)_{\ell_\infty},
    \qquad
    \norm{(x_n)}_{\mathcal{M}_p}=\sup_{n\in\N}\norm{x_n}.
\]
We shall use the fact (proved using Pe{\l}czy\'nski's decomposition method in \cite[Section~2]{AriasFarmer})
that $\Bop(\ell_p)$ is (linearly) isomorphic to $\mathcal{M}_p$. Hence, it suffices to prove UPFP for $\mathcal{M}_p$. Christensen and Sinclair \cite{ChristensenSincliar} acknowledge that Lindenstrauss and Haagerup were aware of the existence of such an isomorphism at least for $p=2$.

\subsubsection{Block maps in $\Bop(\ell_p^N)$}
For $N\in\N$ let $(e_i)_{i=1}^N$ be the canonical basis of $\ell_p^N$ and write
$e_{ij}\in X_N$ for the rank-one operator $e_{ij}(e_j)=e_i$ and $e_{ij}(e_k)=0$ for $k\neq j$.
Then $\norm{e_{ij}}=1$.

Given ordered subsets $\alpha=\{\alpha_1<\cdots<\alpha_n\}$ and $\tau=\{\tau_1<\cdots<\tau_n\}$ of $\{1,\dots,N\}$, define
the coordinate embedding $I_\alpha\colon \ell_p^n\to\ell_p^N$ by $I_\alpha(e_k)=e_{\alpha_k}$ and the coordinate projection
$P_\alpha\colon \ell_p^N\to\ell_p^n$ by $P_\alpha(e_{\alpha_k})=e_k$ and $P_\alpha(e_i)=0$ if $i\notin\alpha$.
Then $\norm{I_\alpha}=\norm{P_\alpha}=1$.

Define operators
\[
J_{\alpha,\tau}\colon X_n\to X_N,\qquad J_{\alpha,\tau}(A)=I_\alpha\,A\,P_\tau,
\]
\[
K_{\alpha,\tau}\colon X_N\to X_n,\qquad K_{\alpha,\tau}(B)=P_\alpha\,B\,I_\tau.
\]
They satisfy $\norm{J_{\alpha,\tau}}=\norm{K_{\alpha,\tau}}=1$ and
\[
K_{\alpha,\tau}J_{\alpha,\tau}=\Id_{X_n}.
\]
Hence
\[
Q_{\alpha,\tau}:=J_{\alpha,\tau}K_{\alpha,\tau}\in\Bop(X_N)
\quad\text{is a norm-one projection on the Banach space }X_N,
\]
because $Q_{\alpha,\tau}^2=J_{\alpha,\tau}(K_{\alpha,\tau}J_{\alpha,\tau})K_{\alpha,\tau}=Q_{\alpha,\tau}$.

We also use entry functionals $\varphi_{kl}\in X_N^*$ given by $\varphi_{kl}(A)=e_k^*(A e_l)$; clearly $\norm{\varphi_{kl}}=1$.

\begin{lemma}\label{lem:sums-of-matrixunits}
    Let $N\in\N$ and $I,J\subseteq\{1,\dots,N\}$ be finite non-empty sets.
    Then for unimodular scalars $(\varepsilon_i)_{i\in I}$ and $(\delta_j)_{j\in J}$:
    \[
    \norm{\sum_{i\in I}\varepsilon_i e_{ij_0}} = |I|^{1/p} \quad (\text{fixed } j_0),
    \qquad
    \norm{\sum_{j\in J}\delta_j e_{i_0 j}} = |J|^{1/q} \quad (\text{fixed } i_0).
    \]
\end{lemma}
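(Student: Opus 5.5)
The plan is to compute both norms directly from the definition of the operator norm on $\Bop(\ell_p^N)$, observing that each operator is rank one.

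For a fixed column $j_0$, the operator $A=\sum_{i\in I}\varepsilon_i e_{ij_0}$ factors as $A x = x_{j_0}\, u$, where $u=\sum_{i\in I}\varepsilon_i e_i\in\ell_p^N$ and $x_{j_0}=e_{j_0}^*(x)$. The norm of a rank-one operator $x\mapsto f(x)u$ is $\norm{f}\,\norm{u}$. Here $f=e_{j_0}^*$ has norm $1$ in $\ell_q^N$, and $\norm{u}_p=\bigl(\sum_{i\in I}|\varepsilon_i|^p\bigr)^{1/p}=|I|^{1/p}$ because the scalars are unimodular. This gives the first identity. To check the value is attained, one can also evaluate $A$ at $e_{j_0}$, whose image is exactly $u$.

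For a fixed row $i_0$, the operator $B=\sum_{j\in J}\delta_j e_{i_0j}$ is $Bx=g(x)\,e_{i_0}$ with $g=\sum_{j\in J}\delta_j e_j^*$. Its norm is therefore $\norm{g}_{(\ell_p^N)^*}\cdot\norm{e_{i_0}}_p=\norm{g}_{\ell_q^N}=|J|^{1/q}$. The value is attained at $x=|J|^{-1/p}\sum_{j\in J}\overline{\delta_j}e_j$, which is a unit vector with $g(x)=|J|^{1-1/p}=|J|^{1/q}$.

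There is no real obstacle here. The only points to state carefully are the rank-one norm identity $\norm{f\otimes u}=\norm{f}\norm{u}$, which follows from Hölder's inequality together with its equality case, and the identification $(\ell_p^N)^*=\ell_q^N$.
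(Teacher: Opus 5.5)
Your proof is correct and follows the same route as the paper: both operators are rank one, so their norms reduce to $\|\sum_{i\in I}\varepsilon_i e_i\|_{\ell_p^N}=|I|^{1/p}$ and $\|(\delta_j)_{j\in J}\|_{\ell_q}=|J|^{1/q}$, respectively. Your explicit norming vector for the row case is a harmless extra check that the paper leaves implicit.
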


\begin{proof}
The operator $u=\sum_{i\in I}\varepsilon_i e_{i j_0}$ maps $e_{j_0}$ to $\sum_{i\in I}\varepsilon_i e_i$ and annihilates the other basis vectors,
so $\norm{u}=\norm{\sum_{i\in I}\varepsilon_i e_i}_{\ell_p^N}=|I|^{1/p}$.
Similarly, $v=\sum_{j\in J}\delta_j e_{i_0 j}$ has rank one and is $v(x) = \big(\sum_{j\in J}\delta_j x_j\big)e_{i_0}$,
so $\norm{v}=\norm{(\delta_j)_{j\in J}}_{\ell_q}=|J|^{1/q}$.
\end{proof}

\subsubsection{Finite-dimensional scalarisation (Ramsey step)}

\begin{proposition}[Finite-dimensional scalarisation]\label{prop:finite}
Fix $n\in\N$, $K\ge 1$ and $0<\varepsilon<\tfrac14$.
There exists $N_0=N_0(n,K,\varepsilon,p)\in\N$ such that whenever $N\ge N_0$ and $T\in\Bop(X_N)$ with $\norm{T}\le K$,
there exist disjoint subsets $\alpha,\tau\subseteq\{1,\dots,N\}$ with $|\alpha|=|\tau|=n$ and a scalar $c\in\C$ such that
\[
\norm{K_{\alpha,\tau}\,T\,J_{\alpha,\tau}-c\,\Id_{X_n}}<\varepsilon.
\]
Consequently, one of $K_{\alpha,\tau}TJ_{\alpha,\tau}$ and $K_{\alpha,\tau}(\Id_{X_N}-T)J_{\alpha,\tau}$
is invertible on $X_n$ and its inverse has norm at most $4$.
\end{proposition}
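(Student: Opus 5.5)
The plan is a Ramsey argument combined with norm-growth estimates from Lemma~\ref{lem:sums-of-matrixunits}. Write the action of $T$ on matrix units as
\[
T(e_{jl})=\sum_{i,k} t_{(i,k),(j,l)}\,e_{ik}, \qquad t_{(i,k),(j,l)}=\varphi_{ik}(Te_{jl}),
\]
so that $|t_{(i,k),(j,l)}|\le K$. For ordered sets $\alpha,\tau$, the compression $K_{\alpha,\tau}TJ_{\alpha,\tau}$ is the operator on $X_n$ with matrix $\bigl(t_{(\alpha_a,\tau_b),(\alpha_c,\tau_d)}\bigr)$ with respect to the basis $(e_{ab})$ of $X_n$. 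The goal is therefore to find $\alpha,\tau$ on which this four-index array equals $c\,\delta_{ac}\delta_{bd}$ up to an error $\varepsilon'$. We take $\varepsilon'\le \varepsilon/(2n^4)$, which suffices because each $e_{ab}$ and $\varphi_{ab}$ has norm one, so the error operator has norm at most $n^4\varepsilon'$.

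\emph{Ramsey step.} Partition the disc $\{|z|\le K\}$ into finitely many cells of diameter $<\varepsilon'$. Colour each $4$-element subset $\{s_1<s_2<s_3<s_4\}$ of $\{1,\dots,N\}$, and more generally each subset of size at most $4$, by the vector of cells containing $t_{(i,k),(j,l)}$. Here $(i,k,j,l)$ ranges over all assignments of the four indices to the positions: for the rows $i,j$ and columns $k,l$ we allow equalities $i=j$ or $k=l$ and every relative order. There are only finitely many such order types, so there are finitely many colours. Finite Ramsey then gives a homogeneous set $H$ of size $m$ whenever $N\ge N_0(m)$. On $H$, each coefficient depends, up to $\varepsilon'$, only on the order type of $(i,k,j,l)$. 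We choose $\alpha$ and $\tau$ as the odd and even elements of $H$ respectively, restricted to the middle portion so that many elements of $H$ lie between and around them. In particular $\alpha\cap\tau=\emptyset$, and the diagonal entries $t_{(i,k),(i,k)}$ with $i\in\alpha$, $k\in\tau$ all lie within $\varepsilon'$ of a single value $c$.

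\emph{Killing off-diagonal types.} This is the main step, and I expect it to need the most care. Suppose an order type with $(i,k)\ne(j,l)$ has approximate value $d$. There are three cases.
\begin{itemize}
\item If the source row $j$ moves while the other indices are fixed, apply $\varphi_{ik}T$ to $\sum_{j}\epsilon_j e_{jl}$ over $m'\sim m$ admissible values of $j$ in $H$, with signs aligned. This gives $m'(|d|-\varepsilon')\le K m'^{1/p}$ by Lemma~\ref{lem:sums-of-matrixunits}.
\item If the source column moves, the same argument gives the bound with the exponent $1/q$. If both source indices move, use the matrix $\sum_{j,l}\epsilon_{jl}e_{jl}$; choosing rank-one signs $\epsilon_{jl}=\epsilon_j\delta_l$ gives norm $m'$ against a sum of size $m'^2$.
\item If only the target index $(i,k)$ moves, bound instead the norm of $Te_{jl}$ from below by its restriction to a single row or column. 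This gives $m'^{1/p}(|d|-\varepsilon')\le K$, or the same with $1/q$.
\end{itemize}
In every case $|d|\le \varepsilon'+Km'^{-\theta}$ with $\theta=\min(1/p,1/q)>0$. Choosing $m$ large, hence $N_0$ large, makes every off-diagonal entry at most $2\varepsilon'$.

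The delicate point is bookkeeping. Each order type must be realisable with many free indices inside $H$ while the remaining indices stay in the required positions, and this is why $\alpha,\tau$ are taken from the middle of $H$ with gaps. One must also check that varying a single index keeps the order type fixed, which the gaps guarantee. Once this is done, $\norm{K_{\alpha,\tau}TJ_{\alpha,\tau}-c\Id_{X_n}}<\varepsilon$, after adjusting constants.

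\emph{Consequence.} Since $K_{\alpha,\tau}J_{\alpha,\tau}=\Id_{X_n}$, we have $K_{\alpha,\tau}(\Id_{X_N}-T)J_{\alpha,\tau}$ within $\varepsilon$ of $(1-c)\Id_{X_n}$. Either $|c|\ge\tfrac12$ or $|1-c|\ge\tfrac12$. In the corresponding case a Neumann series gives invertibility, with inverse of norm at most $(\tfrac12-\varepsilon)^{-1}\le 4$ because $\varepsilon<\tfrac14$.
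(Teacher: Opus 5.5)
Your scheme is the paper's: Ramsey homogeneity for the coefficients $\varphi_{ik}(Te_{jl})$, killing off-diagonal order types with the row/column estimates of Lemma~\ref{lem:sums-of-matrixunits}, an entrywise-to-norm bound, and a Neumann series. Your off-diagonal estimates and the final consequence are fine. The step that fails is taking $\alpha$ and $\tau$ to be the odd and even elements of $H$.

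Homogeneity only says that a coefficient is determined, up to $\varepsilon'$, by the order type of its indices. For the diagonal coefficients $t_{(\alpha_a,\tau_b),(\alpha_a,\tau_b)}$ the interleaved choice produces \emph{two} order types: $\alpha_a<\tau_b$ when $a\le b$, and $\alpha_a>\tau_b$ when $a>b$. You therefore get two constants $c_<$ and $c_>$. The compression $K_{\alpha,\tau}TJ_{\alpha,\tau}$ is then close to the Schur multiplier on $X_n$ with symbol $c_<$ on $\{a\le b\}$ and $c_>$ on $\{a>b\}$. This is close to a scalar only if $c_<\approx c_>$, and nothing in your argument links the two, since it controls only off-diagonal coefficients.

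The two constants genuinely can differ. Consider the Schur multiplier on $X_N$ with symbol $\operatorname{sgn}(k-i)$ for $|k-i|\le L$ and $0$ otherwise.
\begin{itemize}
\item It is an average of conjugations $A\mapsto DAD^{-1}$ by unimodular diagonal matrices against the conjugate Dirichlet kernel, so its norm is $O(\log L)$ uniformly in $N$ and $p$.
\item Its off-diagonal coefficients vanish, and every interval of length at most $L$ is homogeneous for your colouring, with $c_<=1$ and $c_>=-1$.
\item Take $L$ equal to the size of $H$ your argument needs. This is compatible with $\norm{T}\le K$ once $K$ is large in terms of $n,\varepsilon,p$.
\item Such an interval is a legitimate output of your Ramsey step. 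For $n\ge 2$ the resulting compression sends $e_{11}\mapsto e_{11}$ and $e_{21}\mapsto -e_{21}$, so it is at distance at least $1$ from every $c\,\Id_{X_n}$.
\end{itemize}

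The repair is simple and is exactly what the paper does. Take $\alpha$ to be the first $n$ elements of $H$ and $\tau$ the last $n$, or any choice with $\max\alpha<\min\tau$. Then every pair in $\alpha\times\tau$ has the single order type row${}<{}$column, and all diagonal coefficients lie near one value $c$. With this change your proof goes through.

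A minor simplification is also available. The constants attached to order types hold on all of $H$, so you may realise each off-diagonal type by a configuration in $H$ in which a single source index varies: a row $j\ne i$ or a column $l\ne k$. Hence your first two cases already suffice, and the gap bookkeeping around $\alpha,\tau$ is unnecessary.
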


\begin{proof}
The proof is a finite Ramsey selection, following Blower's argument for $p=2$ \cite[\S2, Proposition~1]{Blower1990}.  The only analytic change is that the norms of row and column sums of matrix units are now those computed in Lemma~\ref{lem:sums-of-matrixunits}.

Set $\eta=\delta=\varepsilon/(8n^3)$.  Partition the disc $\{z\in\C: |z|\le K\}$ into finitely many sets of diameter at most $\eta$.

\smallskip\noindent
\emph{Diagonal coefficients.}
Colour each two-element set $\{i,j\}$, $i<j$, by the part containing $\varphi_{ij}(Te_{ij})$.  Ramsey's theorem gives, after choosing the initial Ramsey number sufficiently large, a set $\Gamma_1$ of the required large size on which all these coefficients lie in one part.  Choosing $c$ in that part, we have
\[
    |\varphi_{ij}(Te_{ij})-c|\le \eta
    \qquad (i<j,\\ i,j\in\Gamma_1).
\]

\smallskip\noindent
\emph{Off-diagonal coefficients.}
We now perform finitely many further Ramsey thinnings.  For each possible order pattern of four symbols corresponding to
\[
    \varphi_{kl}(Te_{ij}), \qquad (i,j)\ne(k,l),
\]
with the row indices ultimately chosen from the first half of the final set and the column indices from the second half, colour the relevant finite subsets as bad when
$|\varphi_{kl}(Te_{ij})|\ge\delta$.
A homogeneous bad set of arbitrarily large size is impossible.  The reason is always one of the following two estimates.

If the source column is fixed and the source row varies, then for suitable unimodular scalars $\varepsilon_r$,
\[
K\Big\|\sum_{r=1}^m \varepsilon_r e_{rj}\Big\|
\ge
\Big|\varphi_{kl}\Big(T\sum_{r=1}^m \varepsilon_r e_{rj}\Big)\Big|
\ge m\delta.
\]
By Lemma~\ref{lem:sums-of-matrixunits}, the norm on the left is $m^{1/p}$, so $K\ge\delta m^{1/q}$, impossible for large $m$.

If the source row is fixed and the source column varies, the same argument gives
\[
K\Big\|\sum_{s=1}^m \varepsilon_s e_{is}\Big\|
\ge m\delta,
\]
and now the norm is $m^{1/q}$, so $K\ge\delta m^{1/p}$, again impossible for large $m$.

These two estimates cover the same-column and same-row off-diagonal cases directly.  They also cover the four-distinct-index cases after fixing all but one of the varying source indices in the relevant order pattern.  Since there are only finitely many order patterns, a finite iteration of Ramsey's theorem yields a set $\Gamma$ with $|\Gamma|\ge 2n$ such that
\[
    |\varphi_{kl}(Te_{ij})|<\delta
\]
whenever $(i,j)\ne(k,l)$, with $i,k$ among the first $n$ points of $\Gamma$ and $j,l$ among the last $n$ points of $\Gamma$.

Let $\alpha$ be the first $n$ elements of $\Gamma$ and $\tau$ the last $n$ elements.  For $A=[a_{rs}]\in X_n$ and
$B=K_{\alpha,\tau}TJ_{\alpha,\tau}(A)$, we have
\[
    b_{kl}=\sum_{r,s=1}^n a_{rs}\,
    \varphi_{\alpha_k,\tau_l}\bigl(T e_{\alpha_r,\tau_s}\bigr).
\]
Splitting off the term $(r,s)=(k,l)$ gives
\[
    |b_{kl}-c a_{kl}|
    \le |a_{kl}|\eta+\sum_{(r,s)\ne(k,l)} |a_{rs}|\delta
    \le (\eta+n^2\delta)\|A\|,
\]
because $|a_{rs}|\le\|A\|$ for every entry.  If $U=B-cA$, then each entry of $U$ is bounded by $(\eta+n^2\delta)\|A\|$, and hence
\[
    \|U\|\le n\max_{k,l}|u_{kl}|
    \le n(\eta+n^2\delta)\|A\|<\varepsilon\|A\|.
\]
Thus
\[
    \|K_{\alpha,\tau}TJ_{\alpha,\tau}-c\Id_{X_n}\|<\varepsilon.
\]
Finally, either $|c|\ge1/2$ or $|1-c|\ge1/2$.  Since $\varepsilon<1/4$, a Neumann-series argument shows that one of
$K_{\alpha,\tau}TJ_{\alpha,\tau}$ and $K_{\alpha,\tau}(\Id_{X_N}-T)J_{\alpha,\tau}$ is invertible, with inverse norm at most $4$.
\end{proof}

\subsubsection{A block projection lemma}

For a matrix $A=[a_{ij}]\in X_N$ define its Hilbert--Schmidt norm
\[
\norm{A}_{HS}=\Big(\sum_{i,j=1}^N |a_{ij}|^2\Big)^{1/2}.
\]
(We use this auxiliary norm only for averaging; the operator norm remains the main norm.)

\begin{lemma}\label{lem:hs-vs-op}
Let $N\in\N$ and $A\in X_N$.
Put $a=\big| \frac1p-\frac12\big|<\frac12$. Then
\[
\norm{A}_{HS}\le N^{\frac12+a}\,\norm{A}.
\]
Moreover, if $B\in X_k$ then $\norm{B}\le k^{a}\norm{B}_{HS}$.
\end{lemma}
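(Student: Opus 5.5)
The plan is to compare both norms with the Schatten-type norms via the identity maps between $\ell_p^N$ and $\ell_2^N$. Write $\norm{\cdot}_{p\to p}$ for the operator norm on $\ell_p^N$ and $\norm{\cdot}_{2\to2}$ for the operator norm on $\ell_2^N$. We use two standard facts:
\[
\norm{A}_{2\to 2}\le\norm{A}_{HS}\le\sqrt N\,\norm{A}_{2\to2},
\]
and the comparison between $\ell_p^N$ and $\ell_2^N$ norms. For $x\in\mathbb K^N$, Hölder's inequality gives $\norm{x}_{r}\le N^{1/r-1/s}\norm{x}_s$ when $r\le s$, while $\norm{x}_s\le\norm{x}_r$. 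Consequently, for every $x$,
\[
\norm{x}_2\le N^{a_-}\norm{x}_p
\quad\text{and}\quad
\norm{x}_p\le N^{a_+}\norm{x}_2 ,
\]
where exactly one of $a_-,a_+$ equals $a=|1/p-1/2|$ and the other is $0$. In either case $a_-+a_+=a$.

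For the first inequality, factor $A=\iota_{p\to2}\circ A\circ\iota_{2\to p}$, with $A$ acting on $\ell_p^N$ in the middle. Then
\[
\norm{A}_{2\to2}\le\norm{\iota_{p\to 2}}\,\norm{A}\,\norm{\iota_{2\to p}}\le N^{a_-}N^{a_+}\norm{A}=N^{a}\norm{A}.
\]
Combining this with $\norm{A}_{HS}\le\sqrt N\norm{A}_{2\to2}$ gives $\norm{A}_{HS}\le N^{1/2+a}\norm{A}$.

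For the second inequality, apply the same factorisation in the opposite direction, now with $B$ acting on $\ell_2^k$ in the middle:
\[
\norm{B}=\norm{B}_{p\to p}\le k^{a_+}\,\norm{B}_{2\to2}\,k^{a_-}=k^{a}\norm{B}_{2\to 2}\le k^a\norm{B}_{HS}.
\]

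Finally, $a<1/2$ holds because $1<p<\infty$ forces $0<1/p<1$. There is no real obstacle here. The only care needed is to check the exponent bookkeeping in the two cases $p\le2$ and $p\ge2$. When $p\le 2$: $\norm{x}_2\le\norm{x}_p$ and $\norm{x}_p\le N^{1/p-1/2}\norm{x}_2$. When $p\ge2$: $\norm{x}_2\le N^{1/2-1/p}\norm{x}_p$ and $\norm{x}_p\le\norm{x}_2$. In both cases the product of the two constants is $N^a$.
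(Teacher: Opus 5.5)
Your proof is correct and follows the paper's argument. Both pass through the $\ell_2^N$ operator norm, using $\norm{A}_{HS}\le\sqrt N\,\norm{A}_{2\to2}$ and the factorisation through the formal identities between $\ell_p^N$ and $\ell_2^N$, whose norms multiply to $N^a$; your case split $p\le2$ versus $p\ge2$ simply spells out the paper's appeal to the ``standard $\ell_p^N$--$\ell_2^N$ inequalities''.
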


\begin{proof}
Viewing $A$ as an operator on $\ell_2^N$, we have $\norm{A}_{HS}\le \sqrt{N}\,\norm{A}_{2\to2}$.
Also
\[
\norm{A}_{2\to2}
\le \norm{\Id\colon \ell_2^N\to \ell_p^N}\,\norm{A}\,\norm{\Id\colon \ell_p^N\to \ell_2^N}
= N^{a}\,\norm{A},
\]
by the standard $\ell_p^N$--$\ell_2^N$ inequalities. Multiplying yields the first estimate.

For the second, similarly,
\[
\norm{B}\le \norm{\Id:\ell_2^k\to\ell_p^k}\,\norm{B}_{2\to2}\,\norm{\Id:\ell_p^k\to\ell_2^k}\le k^{a}\,\norm{B}_{2\to2}
\le k^{a}\norm{B}_{HS}.
\]
\end{proof}

\begin{lemma}[Block projection lemma]\label{lem:blockkill}
Fix $k,d\in\N$ and $\varepsilon\in(0,1)$.
There exists $N_0=N_0(k,d,\varepsilon,p)\in\N$ such that for all $N\ge N_0$ and every $d$-dimensional subspace $E\subseteq X_N$
there exist subsets $\alpha,\tau\subseteq\{1,\dots,N\}$ with $|\alpha|=|\tau|=k$
such that the block projection $Q_{\alpha,\tau}=J_{\alpha,\tau}K_{\alpha,\tau}$ satisfies
\[
\norm{Q_{\alpha,\tau} x}\le \varepsilon \norm{x}\qquad(x\in E).
\]
\end{lemma}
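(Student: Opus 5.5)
We will prove the Block projection lemma by an averaging argument over many disjoint pairs of index blocks, using the Hilbert--Schmidt norm from Lemma~\ref{lem:hs-vs-op} as an auxiliary Euclidean structure.

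\emph{Step 1: choose a good basis of $E$.} Since $E$ is $d$-dimensional, we use Auerbach's lemma to pick a basis $x_1,\dots,x_d$ of $E$ with $\norm{x_r}=1$ and biorthogonal functionals of norm one. Then every $x=\sum_r c_r x_r\in E$ satisfies $|c_r|\le\norm{x}$, so
\[
\norm{Q_{\alpha,\tau}x}\le \sum_{r=1}^d |c_r|\,\norm{Q_{\alpha,\tau}x_r}\le d\,\max_r\norm{Q_{\alpha,\tau}x_r}\,\norm{x}.
\]
It therefore suffices to find $\alpha,\tau$ with $\norm{Q_{\alpha,\tau}x_r}\le \varepsilon/d$ for all $r$. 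Note that $\norm{Q_{\alpha,\tau}x_r}=\norm{K_{\alpha,\tau}x_r}=\norm{P_\alpha x_rI_\tau}$, the norm of the $k\times k$ submatrix of $x_r$ in rows $\alpha$ and columns $\tau$.

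\emph{Step 2: averaging over a partition.} Let $m=\lfloor N/k\rfloor$ and partition $\{1,\dots,Nm\}$-worth of indices into $m$ disjoint blocks $\beta_1,\dots,\beta_m$ of size $k$. The $m^2$ submatrices $P_{\beta_u}x_rI_{\beta_v}$ are supported on disjoint entries, so
\[
\sum_{u,v=1}^m\sum_{r=1}^d \norm{P_{\beta_u}x_rI_{\beta_v}}_{HS}^2\le \sum_{r=1}^d\norm{x_r}_{HS}^2\le d\,N^{1+2a},
\]
by the first estimate of Lemma~\ref{lem:hs-vs-op}. Hence some pair $(u,v)$ gives $\sum_r\norm{P_{\beta_u}x_rI_{\beta_v}}_{HS}^2\le dN^{1+2a}/m^2\approx dk^2N^{2a-1}$. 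Taking $\alpha=\beta_u$, $\tau=\beta_v$ and applying the second estimate of Lemma~\ref{lem:hs-vs-op},
\[
\norm{Q_{\alpha,\tau}x_r}\le k^a\norm{P_\alpha x_rI_\tau}_{HS}\le k^{a+1}\sqrt{d}\,N^{a-\frac12}\cdot C
\]
for an absolute constant $C$ (coming from $m\ge N/(2k)$). The key point is that $a<\tfrac12$, so $N^{a-1/2}\to0$; choosing $N_0$ with $Ck^{a+1}d^{3/2}N_0^{a-1/2}\le\varepsilon$ finishes the argument. If one wants $\alpha,\tau$ disjoint (to match Proposition~\ref{prop:finite}), we restrict to pairs $u\ne v$; the diagonal pairs contribute at most $m$ of the $m^2$ terms, so averaging over the $m(m-1)$ off-diagonal pairs only changes the constant.

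The main obstacle is Step 2's exponent bookkeeping. The conversion between operator norm and Hilbert--Schmidt norm costs $N^{1/2+a}$ on the big matrix and $k^a$ on the small one, and we must ensure that the gain $1/m^2\sim k^2/N^2$ from averaging beats this loss. This works precisely because $a=|1/p-1/2|<1/2$, so the relevant net exponent is $a-\tfrac12<0$. We would double-check the Euclidean comparison constants $\norm{\Id\colon\ell_p^N\to\ell_2^N}\,\norm{\Id\colon\ell_2^N\to\ell_p^N}=N^a$ used in Lemma~\ref{lem:hs-vs-op}; everything else is routine.
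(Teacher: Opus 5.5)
Your proof is correct and follows the paper's argument: the same partition into $m=\lfloor N/k\rfloor$ consecutive blocks, the same disjoint-support Hilbert--Schmidt averaging over the $m^2$ block pairs, and the same use of both estimates of Lemma~\ref{lem:hs-vs-op}, with the gain coming from $a<\tfrac12$. The only difference is that you pass from finitely many vectors to all of $E$ via an Auerbach basis (losing a factor $d$), whereas the paper uses an $(\varepsilon/2)$-net of the unit sphere of $E$ with $L\le(1+4/\varepsilon)^{2d}$ points; both reductions work equally well.
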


\begin{proof}
Let $a=\big|\frac1p-\frac12\big|$.
Let $m=\lfloor N/k\rfloor$ so $m\to\infty$ with $N$.
Partition the first $mk$ indices into consecutive blocks of size $k$:
\[
R_s=\{(s-1)k+1,\dots,sk\}\qquad (1\le s\le m).
\]
For each $(s,t)\in\{1,\dots,m\}^2$ define $\alpha_s=R_s$, $\tau_t=R_t$ and
\[
Q_{s,t}:=Q_{\alpha_s,\tau_t}=J_{\alpha_s,\tau_t}K_{\alpha_s,\tau_t}\in\Bop(X_N).
\]
Each $Q_{s,t}$ is a norm-one projection on $X_N$.

Moreover, the operators $Q_{s,t}(A)$ have disjoint matrix-entry supports inside the $(mk)\times(mk)$ northwest corner.
Hence, for every $A\in X_N$,
\begin{equation}\label{eq:hsdecomp}
\sum_{s,t=1}^m \norm{Q_{s,t}A}_{HS}^2 \le \norm{A}_{HS}^2.
\end{equation}

Let $S_E=\{x\in E:\norm{x}=1\}$. Choose an $(\varepsilon/2)$-net $\{x_1,\dots,x_L\}\subseteq S_E$ in operator norm.
A standard volume estimate, after viewing $E$ as a real space if necessary, yields
$L\le (1+4/\varepsilon)^{2d}$ in the complex case and $L\le (1+4/\varepsilon)^d$ in the real case.
By Lemma~\ref{lem:hs-vs-op}, $\norm{x_r}_{HS}^2\le N^{1+2a}$ for each $r$.
Summing and using \eqref{eq:hsdecomp},
\[
\sum_{s,t=1}^m \sum_{r=1}^L \norm{Q_{s,t}x_r}_{HS}^2
\le \sum_{r=1}^L \norm{x_r}_{HS}^2
\le L\,N^{1+2a}.
\]
Hence for some $(s_0,t_0)$,
\[
\sum_{r=1}^L \norm{Q_{s_0,t_0}x_r}_{HS}^2 \le \frac{L\,N^{1+2a}}{m^2}.
\]

Write $Q=Q_{s_0,t_0}=J_{\alpha,\tau}K_{\alpha,\tau}$ with $|\alpha|=|\tau|=k$.
Then for each $r$,
\[
\norm{Qx_r}=\norm{J_{\alpha,\tau}K_{\alpha,\tau}x_r}\le \norm{K_{\alpha,\tau}x_r}.
\]
Now $K_{\alpha,\tau}x_r\in X_k$, and $K_{\alpha,\tau}$ simply extracts the $k\times k$ block of entries,
so $\norm{K_{\alpha,\tau}x_r}_{HS}=\norm{Qx_r}_{HS}$. Lemma~\ref{lem:hs-vs-op} gives
\[
\norm{Qx_r}\le \norm{K_{\alpha,\tau}x_r}\le k^{a}\norm{K_{\alpha,\tau}x_r}_{HS}=k^{a}\norm{Qx_r}_{HS}.
\]
Therefore,
\[
\max_{1\le r\le L}\norm{Qx_r}^2
\le \sum_{r=1}^L \norm{Qx_r}^2
\le k^{2a}\sum_{r=1}^L \norm{Qx_r}_{HS}^2
\le k^{2a}\frac{L\,N^{1+2a}}{m^2}.
\]
Since $m\asymp N/k$, the right-hand side is bounded by $C(p)\,L\,k^{1+4a}\,m^{-(1-2a)}$, which tends to $0$ as $m\to\infty$
because $1-2a>0$. Choose $N_0$ so large that this bound is at most $(\varepsilon/2)^2$.
Then $\norm{Qx_r}\le \varepsilon/2$ for all $r$.

Finally, given $x\in S_E$ choose $r$ with $\norm{x-x_r}\le \varepsilon/2$.
Since $\norm{Q}=1$,
\[
\norm{Qx}\le \norm{Qx_r}+\norm{Q(x-x_r)}\le \varepsilon/2+\varepsilon/2=\varepsilon.
\]
Thus $Q_{\alpha,\tau}=Q$ works.
\end{proof}

Write $\pi_n\colon \mathcal{M}_p\to X_n$ for the $n$th coordinate projection, and $\iota_n\colon X_n\to\mathcal{M}_p$ for the coordinate embedding.
For $I\subseteq\N$ let $P_I\colon \mathcal{M}_p\to \mathcal{M}_p$ be the coordinate projection
\[
P_I(x_1,x_2,\dots)=(y_1,y_2,\dots),\qquad y_n=\begin{cases}x_n,&n\in I,\\ 0,&n\notin I.\end{cases}
\]
Then $\norm{P_I}=1$.

\begin{lemma}\label{lem:tail-small}
Let $F$ be finite-dimensional, let $S\colon\mathcal M_p\to F$ be bounded, and let $J\subseteq\N$ be infinite.  Then for every $\eta>0$ there exists an infinite set $I\subseteq J$ such that $\|SP_I\|<\eta$.
\end{lemma}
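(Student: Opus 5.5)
Since $F$ is finite-dimensional, I plan to reduce to finitely many scalar functionals and then use a disjointness/pigeonhole argument in the $\ell_\infty$-sum. Fix a basis $f_1,\dots,f_d$ of $F$ with biorthogonal functionals $f_1^*,\dots,f_d^*\in F^*$. Then
\[
\|SP_I\|\le \sum_{r=1}^d \|f_r\|\,\|f_r^*SP_I\|.
\]
So it suffices to prove the lemma for a single bounded functional $\phi\in\mathcal M_p^*$. Indeed, once the scalar case is known, I will apply it successively to $\phi_r=f_r^*S$. Each application passes to an infinite subset of the previous infinite set, with threshold $\eta/(d\max_r\|f_r\|)$. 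This works because $\|\phi P_{I'}\|\le\|\phi P_I\|$ whenever $I'\subseteq I$: indeed $P_{I'}=P_IP_{I'}$ and $\|P_{I'}\|=1$.

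For the scalar case, I will partition $J$ into infinitely many pairwise disjoint infinite sets $J_1,J_2,\dots$. The key claim is that $\|\phi P_{J_k}\|<\eta$ for some $k$, so that $I=J_k$ works. Suppose instead that $\|\phi P_{J_k}\|\ge\eta$ for every $k$. For $k=1,\dots,L$ choose $x_k\in B_{\mathcal M_p}$ with $\phi(P_{J_k}x_k)$ real and at least $\eta/2$, after multiplying by a unimodular scalar. Put $y_k=P_{J_k}x_k$. The $y_k$ have pairwise disjoint coordinate supports, and the norm of $\mathcal M_p$ is a supremum over coordinates. Hence $\|\sum_{k=1}^L y_k\|\le1$, and therefore
\[
\|\phi\|\ge \phi\Bigl(\sum_{k=1}^L y_k\Bigr)\ge L\eta/2.
\]
This is impossible once $L>2\|\phi\|/\eta$.

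The argument is short, and I do not expect a real obstacle. The only points needing care are that we use the $\ell_\infty$-structure (disjointly supported vectors of norm at most one sum to norm at most one) and that the subset-monotonicity of $\|\phi P_I\|$ justifies the iteration over $r$.
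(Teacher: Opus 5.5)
Your proof is correct and takes essentially the same route as the paper. In the scalar case you split $J$ into disjoint infinite pieces and use that disjointly supported vectors in the $\ell_\infty$-sum add without increasing the norm; the paper phrases this as the identity $\|SP_J\|=\sum_{r=1}^m\|SP_{J_r}\|$ for a finite partition. You then refine successively over finitely many functionals on $F$, and your explicit monotonicity $\|\phi P_{I'}\|\le\|\phi P_I\|$ for $I'\subseteq I$ is what the paper's iteration uses tacitly.
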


\begin{proof}
    First suppose that $F=\mathbb K$.  Partition $J$ into $m$ disjoint infinite sets $J_1,\ldots,J_m$.  On the band $P_J\mathcal M_p=P_{J_1}\mathcal M_p\oplus_\infty\cdots\oplus_\infty P_{J_m}\mathcal M_p$, the dual norm decomposes as an $\ell_1$-sum, hence
    \[
        \|SP_J\|=\sum_{r=1}^m\|SP_{J_r}\|.
    \]
    Thus some $J_r$ satisfies $\|SP_{J_r}\|\le \|S\|/m$.  Choosing $m$ large proves the scalar case.
    
    For finite-dimensional $F$, choose $f_1,\ldots,f_M\in B_{F^*}$ such that
    \[
        \|y\|\le 2\max_{1\le r\le M}|f_r(y)|\qquad(y\in F).
    \]
    Starting with $I^{(0)}=J$, apply the scalar case successively to the functionals
    $(f_r\circ S)P_{I^{(r-1)}}$.  This gives decreasing infinite sets
    $I^{(0)}\supset I^{(1)}\supset\cdots\supset I^{(M)}$ such that
    \[
        \|(f_r\circ S)P_{I^{(M)}}\|<\eta/2
        \qquad(r=1,\ldots,M).
    \]
    For $I=I^{(M)}$ we obtain $\|SP_I\|<\eta$.
\end{proof}

\subsubsection{Almost diagonalisation on $\mathcal{M}_p$}

\begin{lemma}[Almost diagonalisation]\label{lem:almost-diagonal}
Let $T\in\Bop(\mathcal{M}_p)$ and $\varepsilon>0$.
There exist contractions $A,B\in\Bop(\mathcal{M}_p)$ with $AB=\Id_{\mathcal{M}_p}$ and an operator
\[
T' = A T B \in\Bop(\mathcal{M}_p)
\]
such that $\norm{T'-D}<\varepsilon$ for a diagonal operator $D=\bigoplus_{k=1}^\infty D_k$.
Moreover, $T'$ factors through $T$ and $\Id-T'$ factors through $\Id-T$ (via the same $A,B$).
\end{lemma}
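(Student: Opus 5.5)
We will build $A$ and $B$ as a ``block-selection'' pair. Choose a strictly increasing sequence $n_1<n_2<\cdots$ together with embeddings $J_k:=J_{\alpha_k,\tau_k}\colon X_k\to X_{n_k}$ and left inverses $K_k:=K_{\alpha_k,\tau_k}$, where $|\alpha_k|=|\tau_k|=k$. Then set
\[
B(x_1,x_2,\dots)=\sum_k \iota_{n_k}J_k x_k,\qquad A(y)=(K_k\pi_{n_k}y)_{k\in\N}.
\]
Both maps are contractions, since $\norm{J_k}=\norm{K_k}=1$ and the norm is the $\ell_\infty$-norm of the blocks. The identity $K_kJ_k=\Id_{X_k}$ gives $AB=\Id_{\mathcal M_p}$. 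Consequently $T'=ATB$ factors through $T$, and $\Id-T'=A(\Id-T)B$ factors through $\Id-T$, so the last sentence of the lemma is automatic. Let $D_k:=K_k\pi_{n_k}T\iota_{n_k}J_k\in\Bop(X_k)$ be the diagonal blocks. The task is then to make the off-diagonal part $T'-D$ small. Its $(j,k)$ entry is $K_j\pi_{n_j}T\iota_{n_k}J_k$, and we also need to control contributions coming from infinitely many source blocks at once.

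We will choose $n_k$, $\alpha_k$, $\tau_k$ recursively, with errors $\varepsilon_k=\varepsilon 2^{-k-2}$. We keep a decreasing chain of infinite reservoirs $I_0\supseteq I_1\supseteq\cdots$ of admissible future indices. At stage $k$ two things have to be ensured.

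\emph{(1) Future sources do not affect past targets.} For each $j\le k$ the map $S_j:=K_j\pi_{n_j}T\colon\mathcal M_p\to X_j$ has finite-dimensional range. Lemma~\ref{lem:tail-small} then gives an infinite $I_k\subseteq I_{k-1}$ with $\norm{S_jP_{I_k}}<\varepsilon_k$ for all $j\le k$. Because we apply it to the whole band $P_{I_k}$, it controls every future source simultaneously, including infinite sums, and not only individual blocks. This is the key reason for using Lemma~\ref{lem:tail-small} rather than a coordinatewise estimate.

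\emph{(2) The new source does not affect past targets, and the new target is not affected by past sources.} Pick $n_{k+1}\in I_k$ large. For the past sources, consider the finite-dimensional subspace $E=\pi_{n_{k+1}}T\bigl(\iota_{n_1}J_1X_1+\cdots+\iota_{n_k}J_kX_k\bigr)\subseteq X_{n_{k+1}}$, whose dimension is at most $\sum_{j\le k}j^2$. Lemma~\ref{lem:blockkill} with this $d$ provides $\alpha_{k+1},\tau_{k+1}$ of size $k+1$ such that $Q_{\alpha_{k+1},\tau_{k+1}}$, and hence $K_{k+1}$, is $\varepsilon_k$-small on $E$, provided $n_{k+1}\ge N_0(k+1,d,\varepsilon_k,p)$. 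The new source's effect on the past targets $j\le k$ is already small by (1), since $n_{k+1}\in I_k$.

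Now we estimate $\norm{(T'-D)x}$ for $x$ in the unit ball. Fix a target block $j$. The sources $k<j$ contribute at most $\sum_{k<j}\varepsilon_{j-1}$ times the norm of the relevant operator. To make this clean we apply Lemma~\ref{lem:blockkill} to $E$ spanned by images of unit balls, obtaining an operator-norm bound $\norm{K_j\pi_{n_j}T\iota_{n_k}J_k}\le\varepsilon_{j-1}\norm{T}$. The sources $k>j$ lie in the band $P_{I_j}$, so they contribute at most $\varepsilon_j$ by (1). Summing and rescaling the $\varepsilon_k$ by $\norm{T}$ gives $\sup_j\norm{\pi_j(T'-D)x}<\varepsilon$, and hence $\norm{T'-D}<\varepsilon$.

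We expect step (2) to be the main obstacle: Lemma~\ref{lem:blockkill} must be applied to the finite-dimensional space generated by the past sources, in a way that gives an operator-norm bound on $K_{k+1}\pi_{n_{k+1}}T\iota_{n_j}J_j$ uniformly over the unit ball. This needs the $\varepsilon$-net and dimension bookkeeping to be done inside $N_0$. Our first attempt will take $E$ to be the full image space above, which already has finite dimension independent of $n_{k+1}$. The choice of $N_0$ then has to be made after $k$ and $d$ are fixed, which is compatible with $I_k$ being infinite.
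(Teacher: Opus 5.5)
Your proposal is correct and is essentially the paper's argument: the same block-selection contractions $A,B$ with $AB=\Id$, Lemma~\ref{lem:tail-small} applied on the band $P_{I_k}$ to control all future sources at once, and Lemma~\ref{lem:blockkill} applied to the finite-dimensional image of the past sources, whose dimension is at most $\sum_{j\le k}j^2$ and is therefore fixed before $n_{k+1}$ is chosen. The step you flag as the main obstacle is already covered by Lemma~\ref{lem:blockkill}: it gives $\norm{Q_j y}\le\varepsilon_{j-1}\norm{y}$ for every $y\in E$, and since $K_j=K_jQ_j$ the uniform bound follows directly; the remaining differences (using $K_j\pi_{n_j}T$ instead of $\pi_{n_j}T$ in Lemma~\ref{lem:tail-small}, and summing past blocks individually, which costs a factor $j-1$ absorbed by the weights $2^{-j}$) are cosmetic.
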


\begin{proof}
Set $K=\norm{T}$ and choose a summable control sequence
\[
b_k=\frac{\varepsilon}{2^{k+3}(1+K)}\qquad(k\in\N).
\]
For later use set
\[
    d_k=\max\Bigl\{1,\sum_{j<k}\dim X_j\Bigr\}
       =\max\Bigl\{1,\sum_{j<k}j^2\Bigr\}.
\]
We build inductively:
\begin{itemize}
    \item decreasing infinite sets $I_1\supset I_2\supset \cdots$,
    \item a strictly increasing sequence $m_1<m_2<\cdots$ with $m_k\in I_k$ and $m_k\ge k$,
    \item maps $r_k\colon X_k\to X_{m_k}$ and $s_k\colon X_{m_k}\to X_k$ with $\norm{r_k}=\norm{s_k}=1$ and $s_k r_k=\Id_{X_k}$,
    \item block projections $q_k=r_k s_k$ on $X_{m_k}$ of order $k$,
\end{itemize}
such that
\begin{enumerate}
\item[(i)] if
\[
E_k := \pi_{m_k}T\Big(\sum_{j<k} \iota_{m_j} r_j(X_j)\Big)\subseteq X_{m_k},
\]
then $\norm{q_k x}\le b_k \norm{x}$ for all $x\in E_k$;
\item[(ii)] $\norm{\pi_{m_k} T P_{I_{k+1}}}\le b_k$.
\end{enumerate}

Start with $I_1=\N$. Assume $I_k$ and the data up to $k-1$ have been chosen.
Pick $m_k\in I_k$ so large that $m_k>\max\{m_{k-1},k\}$ and Lemma~\ref{lem:blockkill} applies inside $X_{m_k}$
to the $d_k$-dimensional subspace $E_k$ with tolerance $b_k$ and order $k$.
Thus there exist $\alpha_k,\tau_k\subseteq\{1,\dots,m_k\}$ with $|\alpha_k|=|\tau_k|=k$ such that
$q_k=Q_{\alpha_k,\tau_k}$ satisfies (i). Set $r_k=J_{\alpha_k,\tau_k}$ and $s_k=K_{\alpha_k,\tau_k}$.
Finally apply Lemma~\ref{lem:tail-small} to $S=\pi_{m_k}T$, the infinite set $I_k\setminus\{m_k\}$, and the tolerance $b_k$.  We obtain an infinite set
$I_{k+1}\subseteq I_k\setminus\{m_k\}$ such that
\[
    \|\pi_{m_k}T P_{I_{k+1}}\|\le b_k.
\]
This completes the induction.

Define $B\in\Bop(\mathcal{M}_p)$ by
\[
(Bx)_{m_k}=r_k(x_k)\quad (k\in\N),\qquad (Bx)_n=0\ \text{if }n\notin\{m_k:k\in\N\}.
\]
Then $\norm{B}=1$. Define $A\in\Bop(\mathcal{M}_p)$ by
\[
(Ay)_k = s_k q_k(\pi_{m_k}y)\quad (k\in\N).
\]
Then $\norm{A}\le 1$ and $AB=\Id_{\mathcal{M}_p}$ since $s_k q_k r_k=s_k r_k=\Id_{X_k}$.

Set $T'=ATB$ and define the diagonal operator $D=\bigoplus_{k=1}^\infty D_k$ by $D_k=\pi_k T' \iota_k$.
Fix $x\in\mathcal{M}_p$ with $\norm{x}\le 1$ and consider
\[
\pi_k(T'-D)x=\pi_kT'(x-\iota_k\pi_k x)=s_k q_k\,\pi_{m_k}T\,B(x-\iota_k\pi_k x).
\]
Write $u=B(x-\iota_k\pi_k x)=u^{<k}+u^{>k}$ where $u^{<k}$ is supported on $\{m_j:j<k\}$ and $u^{>k}$ on $\{m_j:j>k\}$.
Then $\norm{u^{<k}}\le 1$ and $\norm{u^{>k}}\le 1$ since $\norm{B}=1$.

\smallskip\noindent
\emph{Past part.} $y^{<k}:=\pi_{m_k}T(u^{<k})$ lies in $E_k$ by definition, hence
\[
\norm{q_k y^{<k}}\le b_k\norm{y^{<k}}\le b_k \norm{\pi_{m_k}T}\,\norm{u^{<k}}\le b_k K.
\]

\smallskip\noindent
\emph{Future part.} Since $\{m_j:j>k\}\subseteq I_{k+1}$, we have $u^{>k}=P_{I_{k+1}}u^{>k}$, hence by (ii)
\[
\norm{\pi_{m_k}T(u^{>k})}\le b_k\norm{u^{>k}}\le b_k.
\]

Since $s_k q_k$ is contractive, combining the two estimates gives
\[
\norm{\pi_k(T'-D)x}\le b_kK+b_k=b_k(1+K)=\frac{\varepsilon}{2^{k+3}}.
\]
Taking the supremum over $k$ yields $\norm{T'-D}\le \varepsilon/16<\varepsilon$.

Finally, $\Id-T' = AB-ATB = A(\Id-T)B$, so $\Id-T'$ factors through $\Id-T$.
\end{proof}

\begin{lemma}\label{lem:subsequence-copy}
Let $(n_j)_{j\ge 1}$ be strictly increasing with $n_j\ge j$.
Then the coordinate subspace of $\mathcal{M}_p$ supported on $\{n_j:j\ge 1\}$ contains a $1$-complemented copy of $\mathcal{M}_p$.
\end{lemma}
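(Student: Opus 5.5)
The plan is to exhibit explicit contractions $B\colon\mathcal M_p\to\mathcal M_p$ and $A\colon\mathcal M_p\to\mathcal M_p$ with $AB=\Id_{\mathcal M_p}$ and the range of $B$ contained in the coordinate subspace $P_I\mathcal M_p$, where $I=\{n_j:j\ge1\}$. Then $B(\mathcal M_p)$ is an isometric copy of $\mathcal M_p$ inside $P_I\mathcal M_p$, and $BAP_I$ restricted to $P_I\mathcal M_p$ is a norm-one projection onto it. This yields the required $1$-complemented copy.

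The construction is coordinatewise, using the block maps $J_{\alpha,\tau}$ and $K_{\alpha,\tau}$ introduced above. Since $n_j\ge j$, we may take $\alpha=\tau=\{1,\dots,j\}\subseteq\{1,\dots,n_j\}$ and set
\[
r_j=J_{\alpha,\tau}\colon X_j\to X_{n_j},\qquad s_j=K_{\alpha,\tau}\colon X_{n_j}\to X_j .
\]
These satisfy $\norm{r_j}=\norm{s_j}=1$ and $s_jr_j=\Id_{X_j}$. Define $(Bx)_{n_j}=r_j(x_j)$ and $(Bx)_n=0$ for $n\notin I$, and define $(Ay)_j=s_j(\pi_{n_j}y)$. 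The norm of $\mathcal M_p$ is the supremum over coordinates, so both maps are contractions. Moreover, $B$ is an isometry, because each $r_j$ is an isometry: $\norm{I_\alpha AP_\tau}=\norm{A}$, since $I_\alpha$ is an isometric embedding and $P_\tau$ is a norm-one quotient map onto $\ell_p^j$. Injectivity of $j\mapsto n_j$ (strict monotonicity) guarantees that $B$ is well defined and that distinct coordinates do not collide. Then $AB=\Id$ follows from $s_jr_j=\Id_{X_j}$.

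There is essentially no obstacle. The only points to check are that $B$ maps into $\ell_\infty$-bounded sequences, which follows from the sup estimate, and that the copy lies in the coordinate subspace, which holds because $\supp(Bx)\subseteq I$. It remains to verify that the projection is $1$-bounded: $\norm{BA}\le\norm{B}\norm{A}\le1$, and $BA$ is idempotent because $BABA=B(AB)A=BA$.
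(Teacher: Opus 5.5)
Your proposal is correct and is essentially the paper's proof: the same corner block maps $J_{\{1,\dots,j\},\{1,\dots,j\}}\colon X_j\to X_{n_j}$ and $K_{\{1,\dots,j\},\{1,\dots,j\}}$ are placed coordinatewise to produce contractions $B$ and $A$ with $AB=\Id_{\mathcal M_p}$. You also spell out the remaining verifications (that $B$ is an isometry and that $BA$ is a norm-one projection onto $B(\mathcal M_p)\subseteq P_I\mathcal M_p$), which the paper leaves implicit.
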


\begin{proof}
For each $j$ define $J_{j,n_j}=J_{\{1,\dots,j\},\{1,\dots,j\}}\colon X_j\to X_{n_j}$ and
$K_{j,n_j}=K_{\{1,\dots,j\},\{1,\dots,j\}}\colon X_{n_j}\to X_j$.
Then $K_{j,n_j}J_{j,n_j}=\Id_{X_j}$ and $\norm{J_{j,n_j}}=\norm{K_{j,n_j}}=1$.

Define $E\colon \mathcal{M}_p\to\mathcal{M}_p$ by $(Ex)_{n_j}=J_{j,n_j}(x_j)$ and $(Ex)_n=0$ if $n\notin\{n_j\}$.
Define $P\colon \mathcal{M}_p\to \mathcal{M}_p$ by $(Py)_j=K_{j,n_j}(y_{n_j})$.
Then $\norm{E}=\norm{P}=1$ and $PE=\Id_{\mathcal{M}_p}$.
\end{proof}

\begin{proposition}[Diagonal case]\label{prop:diagonalPFP}
    Let $D=\bigoplus_{n=1}^\infty D_n\in\Bop(\mathcal{M}_p)$ be diagonal.
    Then there exist $U,V\in\Bop(\mathcal{M}_p)$ such that either $UDV=\Id$ or $U(\Id-D)V=\Id$ and
    $\norm{U}\norm{V}\le 4$.
\end{proposition}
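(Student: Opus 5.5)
We will treat each diagonal block separately using the finite-dimensional scalarisation of Proposition~\ref{prop:finite}. Then we assemble the blocks, pass to a subsequence, and use Lemma~\ref{lem:subsequence-copy} to recover a full copy of $\mathcal M_p$.

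\emph{Block-by-block scalarisation.} Fix $\varepsilon=1/8$ and $K=\norm{D}\ge\sup_n\norm{D_n}$. For each $k\in\N$ let $N_0(k)=N_0(k,K,\varepsilon,p)$ be the constant from Proposition~\ref{prop:finite}, and choose a strictly increasing sequence $n_1<n_2<\cdots$ with $n_k\ge\max\{k,N_0(k)\}$. Applying Proposition~\ref{prop:finite} to $D_{n_k}\in\Bop(X_{n_k})$ gives:
\begin{itemize}
\item disjoint sets $\alpha_k,\tau_k\subseteq\{1,\dots,n_k\}$ of size $k$;
\item a scalar $c_k$ with $\norm{K_{\alpha_k,\tau_k}D_{n_k}J_{\alpha_k,\tau_k}-c_k\Id_{X_k}}<1/8$.
\end{itemize}
Since $\Id-D$ is also diagonal, with blocks $\Id-D_{n_k}$, the same data give
\[
\norm{K_{\alpha_k,\tau_k}(\Id-D_{n_k})J_{\alpha_k,\tau_k}-(1-c_k)\Id_{X_k}}<1/8 .
\]

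\emph{Choosing the operator.} For each $k$, either $|c_k|\ge 1/2$ or $|1-c_k|\ge1/2$. One of these alternatives holds for infinitely many $k$. Say it is $|c_k|\ge1/2$ for $k$ in an infinite set $L$; the other case is symmetric, with $\Id-D$ in place of $D$. For $k\in L$, put $W_k=K_{\alpha_k,\tau_k}D_{n_k}J_{\alpha_k,\tau_k}$. Then
\[
\norm{c_k^{-1}W_k-\Id_{X_k}}<2\cdot\tfrac18=\tfrac14,
\]
so $c_k^{-1}W_k$ is invertible by a Neumann series, with $\norm{(c_k^{-1}W_k)^{-1}}\le 4/3$. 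Setting $G_k=(c_k^{-1}W_k)^{-1}c_k^{-1}$, we get $G_kW_k=\Id_{X_k}$ and $\norm{G_k}\le 8/3$.

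\emph{Assembly on the subsequence.} We need a copy of $\mathcal M_p$ rather than $(\bigoplus_{k\in L}X_k)_{\ell_\infty}$. Enumerate $L=\{l_1<l_2<\cdots\}$ and note $l_j\ge j$. Define $V\colon\mathcal M_p\to\mathcal M_p$ by
\[
(Vx)_{n_{l_j}}=J_{\alpha_{l_j},\tau_{l_j}}\,J_{j,l_j}(x_j),
\]
and zero elsewhere, where $J_{j,l_j},K_{j,l_j}$ are the corner embeddings from Lemma~\ref{lem:subsequence-copy}. Define $U$ by
\[
(Uy)_j=K_{j,l_j}\,G_{l_j}\,K_{\alpha_{l_j},\tau_{l_j}}(y_{n_{l_j}}).
\]

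Since $D$ is diagonal, $(DVx)_{n_{l_j}}=D_{n_{l_j}}J_{\alpha_{l_j},\tau_{l_j}}J_{j,l_j}x_j$. Hence
\[
(UDVx)_j=K_{j,l_j}G_{l_j}W_{l_j}J_{j,l_j}x_j=x_j ,
\]
so $UDV=\Id$. The block maps are contractions and the norm is a supremum over blocks, so $\norm{V}\le1$ and $\norm U\le 8/3\le4$. This gives $\norm U\norm V\le4$.

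\emph{Main obstacle.} The expected obstacle is the diagonal setting itself. The blocks $D_n$ are arbitrary, so no single scalar works for all of them. We must pass to a subsequence, and then regain the full space $\mathcal M_p$ from sizes $k$ that are merely $\ge j$; the double embedding (Proposition~\ref{prop:finite} followed by the corner embedding) handles this. The only point to double-check is that Proposition~\ref{prop:finite} is applied with the uniform bound $K=\norm D$, so that $N_0$ depends only on $k$; that is why $n_k$ must be chosen large relative to $k$.
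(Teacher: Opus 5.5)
Your proposal is correct and follows essentially the same route as the paper: scalarise the blocks $D_{n_k}$ via Proposition~\ref{prop:finite}, pigeonhole on which of $c_k$ and $1-c_k$ has modulus at least $1/2$, and recover a full copy of $\mathcal M_p$ through the corner embeddings of Lemma~\ref{lem:subsequence-copy}. The only differences are cosmetic. You compose the corner maps directly into $U$ and $V$ instead of routing through $\mathcal N=P_L\mathcal M_p$ and the contractions $E_0,P_0$, and you track the sharper bound $8/3$ for the block inverses where the paper uses $4$.
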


\begin{proof}
Let $K=\norm{D}$ and fix $\varepsilon=\tfrac18$.
Choose a strictly increasing sequence $m_k$ with $m_k\ge k$ such that Proposition~\ref{prop:finite} applies to $T=D_{m_k}\in\Bop(X_{m_k})$
with parameters $(n,K,\varepsilon)=(k,K,\tfrac18)$.
Thus there exist block maps $J_k\colon X_k\to X_{m_k}$, $K_k\colon X_{m_k}\to X_k$ and scalars $c_k\in\C$ with
\[
\norm{K_k D_{m_k} J_k - c_k \Id_{X_k}}<\tfrac18,
\]
so either $S_k:=K_kD_{m_k}J_k$ is invertible with $\norm{S_k^{-1}}\le 4$ or
$S_k':=K_k(\Id-D_{m_k})J_k$ is invertible with $\norm{(S_k')^{-1}}\le 4$.
Let $\sigma_k\in\{0,1\}$ record which case holds.

By infinite pigeonhole there exist an infinite set $L=\{k_1<k_2<\cdots\}$ and a fixed $\sigma\in\{0,1\}$ such that $\sigma_{k_j}=\sigma$ for all $j$.
Let
\[
    \mathcal N=P_L\mathcal M_p
\]
be the full coordinate subspace supported on $L$. By Lemma~\ref{lem:subsequence-copy}, there are contractions
$E_0\colon\mathcal M_p\to\mathcal N$ and $P_0\colon\mathcal N\to\mathcal M_p$ such that $P_0E_0=\Id_{\mathcal M_p}$.

Define $B\colon\mathcal N\to\mathcal M_p$ by
\[
    (By)_{m_{k_j}}=J_{k_j}(y_{k_j}),
    \qquad
    (By)_n=0\quad(n\notin\{m_{k_j}:j\in\N\}).
\]
Define $A\colon\mathcal M_p\to\mathcal N$ by
\[
(Az)_{k_j}=
\begin{cases}
S_{k_j}^{-1}K_{k_j}(z_{m_{k_j}}),&\sigma=0,\\
(S_{k_j}')^{-1}K_{k_j}(z_{m_{k_j}}),&\sigma=1,
\end{cases}
\qquad
(Az)_n=0\quad(n\notin L).
\]
Then $\|B\|\le1$ and $\|A\|\le4$.

If $\sigma=0$, then for $y\in\mathcal N$,
\[
    (ADB y)_{k_j}=S_{k_j}^{-1}K_{k_j}\bigl(D_{m_{k_j}}J_{k_j}(y_{k_j})\bigr)
    =y_{k_j},
\]
so $ADB=\Id_{\mathcal N}$. If $\sigma=1$, the same computation gives
$A(\Id-D)B=\Id_{\mathcal N}$.

Finally set $V=BE_0$ and $U=P_0A$. Then either $UDV=\Id_{\mathcal M_p}$ or
$U(\Id-D)V=\Id_{\mathcal M_p}$, and
\(
    \|U\|\|V\|\le \|P_0\|\|A\|\|B\|\|E_0\|\le4.
\)
\end{proof}

\begin{theorem}\label{thm:MpUPFP}
    For every $p\in(1,\infty)$ the space $\mathcal{M}_p$ has the $8$-primary factorisation property.
\end{theorem}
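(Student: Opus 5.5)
The plan is to combine the almost diagonalisation of Lemma~\ref{lem:almost-diagonal} with the diagonal case, Proposition~\ref{prop:diagonalPFP}, and to absorb the perturbation by a Neumann series.

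Let $T\in\Bop(\mathcal M_p)$ and fix a small $\varepsilon>0$; the value $\varepsilon=\tfrac1{16}$ is the one I expect to work. Lemma~\ref{lem:almost-diagonal} gives contractions $A,B$ with $AB=\Id$ and a block-diagonal $D$ such that $T'=ATB$ satisfies $\norm{T'-D}<\varepsilon$. Note also that $\Id-T'=A(\Id-T)B$ and $\norm{(\Id-T')-(\Id-D)}<\varepsilon$.

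Next apply Proposition~\ref{prop:diagonalPFP} to $D$. This yields $U,V$ with $\norm U\norm V\le 4$ and either $UDV=\Id$ or $U(\Id-D)V=\Id$. Inspecting that proof, we may normalise so that $\norm V\le1$ and $\norm U\le4$. Treat the first case; the second is symmetric, using $\Id-T'$ and $\Id-D$. We have
\[
UT'V=\Id+U(T'-D)V,
\]
and $\norm{U(T'-D)V}\le 4\varepsilon\le\tfrac14$. So $W:=UT'V$ is invertible with $\norm{W^{-1}}\le \tfrac43$.

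Then $W^{-1}UATBV=\Id$, with factors $W^{-1}UA$ of norm at most $\tfrac43\cdot4=\tfrac{16}{3}$ and $BV$ of norm at most $1$. The product of norms is therefore at most $\tfrac{16}{3}<8$. In the second case the same bound holds for $\Id-T$, which gives the $8$-PFP.

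The main obstacle is the constant bookkeeping. It has to be checked that Proposition~\ref{prop:diagonalPFP} really gives $\norm V\le1$ and $\norm U\le4$ separately, since there $V=BE_0$ is a contraction. It also has to be checked that the $\varepsilon$ in Lemma~\ref{lem:almost-diagonal} can be chosen independently of $\norm T$. It can, because the lemma's $b_k$ already absorb $K=\norm T$, so the final constant is uniform in $T$. If one only had $\norm U\norm V\le4$ without the split, one would rescale $U$ and $V$ instead; since the perturbation term $U(T'-D)V$ is controlled by the product of the norms, the same estimate still goes through.
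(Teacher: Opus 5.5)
Your proposal is correct and is essentially the paper's proof: almost-diagonalise via Lemma~\ref{lem:almost-diagonal} with $\varepsilon=\tfrac1{16}$, apply Proposition~\ref{prop:diagonalPFP} to $D$, absorb the perturbation $U(T'-D)V$ (norm at most $\tfrac14$) by a Neumann series, and treat $\Id-T$ symmetrically. Your bookkeeping gives the constant $\tfrac{16}{3}$, which is slightly sharper than the paper's $8$ (the paper bounds $\tfrac43$ by $2$); as you note, only the product $\norm{U}\norm{V}\le 4$ enters the perturbation estimate, so the split normalisation is not needed.
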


\begin{proof}
Let $T\in\Bop(\mathcal{M}_p)$.
Apply Lemma~\ref{lem:almost-diagonal} with $\varepsilon=\tfrac{1}{16}$ to obtain
contractions $A_0,B_0$ with $A_0B_0=\Id$ and $T'=A_0TB_0$ satisfying $\norm{T'-D}<\tfrac{1}{16}$
for a diagonal operator $D$.

By Proposition~\ref{prop:diagonalPFP}, there exist $U_0,V_0$ with $\|U_0\|\|V_0\|\le4$ such that either
\[
    U_0DV_0=\Id
    \qquad\text{or}\qquad
    U_0(\Id-D)V_0=\Id.
\]

In the first case set $E=U_0(T'-D)V_0$. Then $\|E\|\le1/4$, so $\Id+E$ is invertible with inverse norm at most $4/3<2$. With
$U_1=(\Id+E)^{-1}U_0$ and $V_1=V_0$, we obtain
\[
    U_1T'V_1=\Id,
    \qquad
    \|U_1\|\|V_1\|\le8.
\]
Since $T'=A_0TB_0$, we have
\[
    (U_1A_0)\,T\,(B_0V_1)=\Id,
\]
so this is a factorisation through $T$ with the same bound.

In the second case set
\[
    E=U_0\bigl((\Id-T')-(\Id-D)\bigr)V_0=-U_0(T'-D)V_0.
\]
Again $\|E\|\le1/4$, and the same Neumann-series argument gives operators $U_1,V_1$ with
\[
    U_1(\Id-T')V_1=\Id,
    \qquad
    \|U_1\|\|V_1\|\le8.
\]
Since $\Id-T'=A_0(\Id-T)B_0$, we have
\[
    (U_1A_0)\,(\Id-T)\,(B_0V_1)=\Id,
\]
so this is a factorisation through $\Id-T$ with the same bound.
\end{proof}

\begin{theorem}\label{thm:BlpUPFP}
    For every $p\in(1,\infty)$ the Banach space $\Bop(\ell_p)$ has the uniform primary factorisation property.
\end{theorem}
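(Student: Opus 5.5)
The plan is to transfer Theorem~\ref{thm:MpUPFP} from $\mathcal M_p$ to $\Bop(\ell_p)$ through a linear isomorphism, using the invariance statement Proposition~\ref{prop:iso-invariance}. By the result of Arias and Farmer recalled at the start of this subsection (\cite[Section~2]{AriasFarmer}), there is a linear isomorphism $S\colon \Bop(\ell_p)\to\mathcal M_p$. This isomorphism is only a Banach-space isomorphism, not an algebra isomorphism. That is harmless, because the PFP and the UPFP refer only to operators on the underlying Banach space $\Bop(\ell_p)$, that is, to elements of $\Bop(\Bop(\ell_p))$. The multiplicative structure of $\Bop(\ell_p)$ plays no role.

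Given $T\in\Bop(\Bop(\ell_p))$, I conjugate it to $\widetilde T=STS^{-1}\in\Bop(\mathcal M_p)$. Theorem~\ref{thm:MpUPFP} provides $\widetilde U,\widetilde V$ with $\norm{\widetilde U}\norm{\widetilde V}\le 8$ such that either $\widetilde U\widetilde T\widetilde V=\Id$ or $\widetilde U(\Id-\widetilde T)\widetilde V=\Id$. Pulling back via $U=S^{-1}\widetilde U S$ and $V=S^{-1}\widetilde V S$ gives the same dichotomy for $T$. This is exactly the content of Proposition~\ref{prop:iso-invariance}, and it yields the $8\norm{S}^2\norm{S^{-1}}^2$-PFP.

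The only point needing care is that the constant $8\norm{S}^2\norm{S^{-1}}^2$ is independent of $T$. This holds because $S$ is fixed once and for all and depends only on $p$. One should also check that \cite{AriasFarmer} gives a genuine bounded linear isomorphism; since it comes from Pe{\l}czy\'nski's decomposition method, it does.

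I expect essentially no obstacle beyond this, since all the analytic work sits in Theorem~\ref{thm:MpUPFP}. The one thing I would double-check is the scalar field. Proposition~\ref{prop:finite} chose $c\in\C$, so in the real case the Ramsey partition should be taken in $[-K,K]$ instead. This does not affect the argument.
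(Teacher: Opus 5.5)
Your proposal is correct and is the paper's proof: it fixes an Arias--Farmer isomorphism $S\colon\Bop(\ell_p)\to\mathcal M_p$ and applies Proposition~\ref{prop:iso-invariance} to the $8$-PFP of $\mathcal M_p$ from Theorem~\ref{thm:MpUPFP}. This gives the $8\norm{S}^2\norm{S^{-1}}^2$-PFP, with a constant depending only on $p$.
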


\begin{proof}
    By \cite[Section~2]{AriasFarmer}, $\Bop(\ell_p)$ is linearly isomorphic to $\mathcal{M}_p$.
    Let $S\colon \Bop(\ell_p)\to \mathcal{M}_p$ be an isomorphism.
    By Theorem~\ref{thm:MpUPFP}, $\mathcal{M}_p$ has the $8$-PFP.
    Proposition~\ref{prop:iso-invariance} transfers this to $\Bop(\ell_p)$ with constant
    $8\,\norm{S}^2\norm{S^{-1}}^2<\infty$.
    Hence $\Bop(\ell_p)$ has UPFP.
\end{proof}

%% file: 5_Primary_filters.tex
\section{Filter-convergence spaces}\label{sec:primary-filters}

In this section, we give a filter-theoretic criterion for the primary factorisation property.  The abstract criterion is deliberately separated from a more concrete, set-theoretic sufficient condition. The latter is often the condition one checks in examples; it implies the required operator diagonalisation by Rosenthal's lemma.

Let \(\mathcal F\) be a proper filter on an infinite set \(\Gamma\), and put
\[
    X_{\mathcal F}=c_{\mathcal F}(\Gamma)
    =\{x\in \ell_\infty(\Gamma): x\to 0\text{ along }\mathcal F\}.
\]
Thus
\(
    x\in X_{\mathcal F}
    \quad\Longleftrightarrow\quad
    \{\gamma\in\Gamma: |x(\gamma)|<\varepsilon\}\in\mathcal F
    \quad(\varepsilon>0).
\)
For \(A\subseteq\Gamma\), let \(P_A\) be the coordinate projection and write
\(
    X_{\mathcal F}(A)=P_A X_{\mathcal F}
    =\{x\in X_{\mathcal F}: \operatorname{supp}x\subseteq A\}.
\)
If \(a\in\ell_\infty(\Gamma)\), we denote by \(M_a\) the corresponding multiplication operator on \(X_{\mathcal F}\).

For \(A\subseteq\Gamma\), define the trace filter
\[
    \mathcal F_A
    =\{B\subseteq A: B\cup(\Gamma\setminus A)\in\mathcal F\}.
\]
With the convention that \(c_{\mathcal F_A}(A)=\ell_\infty(A)\) if \(\mathcal F_A\) is improper, restriction to \(A\) identifies \(X_{\mathcal F}(A)\) isometrically with \(c_{\mathcal F_A}(A)\).
Consequently, whenever \(\Gamma=A\sqcup B\),
\(
    X_{\mathcal F}=X_{\mathcal F}(A)\oplus_\infty X_{\mathcal F}(B).
\)

\subsection{The abstract criterion}

\begin{definition}\label{def:copying-set}
Let \(K\geq 1\).  A subset \(A\subseteq\Gamma\) is called a
\emph{\(K\)-copying set} for \(\mathcal F\) if
\[
    X_{\mathcal F}(A)\simeq_K X_{\mathcal F}.
\]
\end{definition}

\begin{definition}\label{def:sum-primary-filter}
Let \(K\geq 1\).  We say that \(\mathcal F\) is
\emph{\(K\)-primary under sums} if, for every partition
\(\Gamma=\Gamma_0\sqcup\Gamma_1\), at least one of \(\Gamma_0\) and \(\Gamma_1\) contains a \(K\)-copying set for \(\mathcal F\).
\end{definition}

This is the filter analogue of a projection-level primariness condition.  It only sees coordinate decompositions of \(X_{\mathcal F}\).  To obtain a factorisation dichotomy for arbitrary operators, we use the following additional operator-level hypothesis.

\begin{definition}\label{def:diagonalisable-filter}
Let \(D\geq 1\).  We say that \(\mathcal F\) is
\emph{\(D\)-diagonalisable} if, for every
\(T\in\mathcal B(X_{\mathcal F})\) and every \(\eta>0\), there are
\(A\subseteq\Gamma\), an isomorphism
\[
    J\colon X_{\mathcal F}\longrightarrow X_{\mathcal F}(A),
    \qquad
    R=J^{-1}\colon X_{\mathcal F}(A)\longrightarrow X_{\mathcal F},
\]
with
\(
    \|J\|\,\|R\|\leq D,
\)
and a function \(a\in\ell_\infty(\Gamma)\) such that
\[
    \|R P_A T J-M_a\|<\eta .
\]
\end{definition}

\begin{definition}\label{def:factorisation-primary-filter}
Let \(K,D\geq 1\).  We say that \(\mathcal F\) is
\emph{\((K,D)\)-primary for factorisation} if it is \(K\)-primary under sums and \(D\)-diagonalisable.
\end{definition}

\begin{theorem}\label{thm:primary-filter-pfp}
Let \(\mathcal F\) be a \((K,D)\)-primary filter for factorisation on \(\Gamma\). Then
\[
    X_{\mathcal F}=c_{\mathcal F}(\Gamma)
\]
has the \((2KD)^+\)-primary factorisation property.  In particular, \(X_{\mathcal F}\) has the UPFP.
\end{theorem}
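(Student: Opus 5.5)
Fix \(T\in\mathcal B(X_{\mathcal F})\) and \(\varepsilon>0\), and choose a small \(\eta>0\) in terms of \(\varepsilon,K,D\). The plan is to reduce to a multiplication operator, split the coordinates of \(A\) according to the size of \(a\), and then factor the identity through the relevant half, exactly as in the proofs of Theorem~\ref{th: Cassaza-Lin} and Theorem~\ref{thm:MpUPFP}.

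First, \(D\)-diagonalisability gives \(A\subseteq\Gamma\), an isomorphism \(J\colon X_{\mathcal F}\to X_{\mathcal F}(A)\) with inverse \(R\) and \(\|J\|\|R\|\le D\), and \(a\in\ell_\infty(\Gamma)\) with \(\|RP_ATJ-M_a\|<\eta\). Since \(RP_A(\Id-T)J=\Id-RP_ATJ\), we also get \(\|RP_A(\Id-T)J-M_{1-a}\|<\eta\). Thus \(T\) and \(\Id-T\) are simultaneously \(\eta\)-close, after the same compression, to the complementary multipliers \(M_a\) and \(M_{1-a}\).

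Next, partition \(\Gamma=\Gamma_0\sqcup\Gamma_1\) with \(\Gamma_0=\{\gamma:|a(\gamma)|\ge\frac12\}\), so that \(|1-a(\gamma)|\ge\frac12\) on \(\Gamma_1\). Because \(\mathcal F\) is \(K\)-primary under sums, some \(\Gamma_i\) contains a \(K\)-copying set \(B\): there is an isomorphism \(S\colon X_{\mathcal F}\to X_{\mathcal F}(B)\) with \(\|S\|\|S^{-1}\|\le K\). Say \(i=0\), and set \(b=a\) and \(W=T\); the case \(i=1\) is symmetric with \(b=1-a\) and \(W=\Id-T\).

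On \(X_{\mathcal F}(B)\), the multiplier \(M_{1/b}P_B\), that is, \(1/b\) on \(B\) and \(0\) elsewhere, has norm at most \(2\) and satisfies \(M_{1/b}P_BM_b|_{X_{\mathcal F}(B)}=\Id_{X_{\mathcal F}(B)}\). Put
\[
\Phi=S^{-1}M_{1/b}P_B\,RP_AWJ\,S,
\]
viewing \(S\) as mapping into \(X_{\mathcal F}\). Then
\[
\Phi-\Id=S^{-1}M_{1/b}P_B\bigl(RP_AWJ-M_b\bigr)S,
\]
so \(\|\Phi-\Id\|\le 2K\eta\). For \(\eta\) small, \(\Phi\) is invertible with \(\|\Phi^{-1}\|\le(1-2K\eta)^{-1}\), and
\[
\Id=\bigl(\Phi^{-1}S^{-1}M_{1/b}P_BRP_A\bigr)\,W\,(JS).
\]
The factorisation constant is at most
\[
\frac{2\,\|S^{-1}\|\,\|R\|\,\|J\|\,\|S\|}{1-2K\eta}\le\frac{2KD}{1-2K\eta},
\]
which is below \(2KD+\varepsilon\) once \(\eta\) is chosen small.

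I expect no serious obstacle; the argument is essentially bookkeeping. The one point to check is that the copying set is taken inside the partition of the whole \(\Gamma\) determined by \(a\), and that \(M_b\) commutes with \(P_B\), so the inverse multiplier is contractive up to the factor \(2\). The "in particular" then follows directly from the definition of the UPFP.
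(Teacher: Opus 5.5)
Your proposal is correct and follows the paper's proof essentially step for step: the same diagonalisation, the same partition by $|a|\ge\tfrac12$, a $K$-copying set inside one piece, and a Neumann-series argument giving the constant $2KD/(1-2K\eta)$. The only cosmetic difference is where the Neumann lemma is applied: the paper applies it to the isomorphism $P_BM_bS\colon X_{\mathcal F}\to X_{\mathcal F}(B)$ (in your notation), whereas you first compose with $S^{-1}M_{1/b}P_B$ and perturb the identity on $X_{\mathcal F}$, which yields the same bound.
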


\begin{proof}
Let \(X=X_{\mathcal F}\), and let \(T\in\mathcal B(X)\).  Fix \(\varepsilon>0\).  Choose \(\eta>0\) so small that
\[
    2K\eta<1
    \qquad\text{and}\qquad
    \frac{2KD}{1-2K\eta}<2KD+\varepsilon .
\]
By \(D\)-diagonalisability, there exist \(A\subseteq\Gamma\), an isomorphism
\[
    J\colon X\longrightarrow X_{\mathcal F}(A),
    \qquad
    R=J^{-1}\colon X_{\mathcal F}(A)\longrightarrow X,
\]
with \(\|J\|\|R\|\leq D\), and \(a\in\ell_\infty(\Gamma)\), such that
\[
    S:=R P_A T J
    \quad\text{satisfies}\quad
    \|S-M_a\|<\eta .
\]
Partition \(\Gamma\) as
\[
    \Gamma_0=\{\gamma\in\Gamma: |a(\gamma)|\geq 1/2\},
    \qquad
    \Gamma_1=\Gamma\setminus\Gamma_0.
\]
Then \(|1-a(\gamma)|\geq 1/2\) for every \(\gamma\in\Gamma_1\).  Since
\(\mathcal F\) is \(K\)-primary under sums, one of \(\Gamma_0\) and
\(\Gamma_1\) contains a \(K\)-copying set.  Choose such a set \(C\), and choose an isomorphism
\[
    L\colon X\longrightarrow X_{\mathcal F}(C),
    \qquad
    Q=L^{-1}\colon X_{\mathcal F}(C)\longrightarrow X,
\]
with \(\|L\|\|Q\|\leq K\).

Assume first that \(C\subseteq\Gamma_0\). Consider
\[
    B=P_C S L=P_C R P_A T J L
    \colon X\longrightarrow X_{\mathcal F}(C).
\]
The operator
\(
    B_0=P_C M_a L\colon X\longrightarrow X_{\mathcal F}(C)
\)
is an isomorphism. Indeed, multiplication by \(a\) is invertible on
\(X_{\mathcal F}(C)\), with inverse multiplication by \(1/a\) on \(C\), because \(|a|\geq 1/2\) on \(C\). Hence
\(
    \|B_0^{-1}\|\leq 2\|Q\|.
\)
Moreover,
\(
    \|B-B_0\|
    \leq \|S-M_a\|\,\|L\|
    <\eta\|L\|.
\)
Therefore
\[
    \|B_0^{-1}\|\,\|B-B_0\|
    <2\eta\|Q\|\|L\|
    \leq 2K\eta<1.
\]
By the Neumann lemma, \(B\) is an isomorphism from \(X\) onto \(X_{\mathcal F}(C)\), and
\[
    \|B^{-1}\|
    \leq \frac{2\|Q\|}{1-2K\eta}.
\]
Putting
\[
    U=B^{-1}P_C R P_A,
    \qquad
    V=J L,
\]
we get
\(
    U T V=B^{-1}P_C R P_A T J L=B^{-1}B=\mathrm{Id}_X,
\)
and
\[
    \|U\|\|V\|
    \leq
    \frac{2\|Q\|\|R\|\|J\|\|L\|}{1-2K\eta}
    \leq
    \frac{2KD}{1-2K\eta}
    <2KD+\varepsilon .
\]
Thus \(\mathrm{Id}_X\) factors through \(T\) with constant less than
\(2KD+\varepsilon\).

If instead \(C\subseteq\Gamma_1\), then
\[
    R P_A(\mathrm{Id}_X-T)J=\mathrm{Id}_X-S
\]
and
\[
    \| (\mathrm{Id}_X-S)-M_{1-a}\|<\eta .
\]
Since \(|1-a|\geq 1/2\) on \(C\), the preceding argument applied to \(\mathrm{Id}_X-T\) and to the multiplier \(M_{1-a}\) shows that \(\mathrm{Id}_X\) factors through \(\mathrm{Id}_X-T\) with constant less than \(2KD+\varepsilon\).

Since \(T\) and \(\varepsilon\) were arbitrary, \(X\) has the \((2KD)^+\)-PFP.
\end{proof}

\subsection{A set-theoretic sufficient condition}

The preceding definition of diagonalisability is useful as an abstract criterion, but it is not a condition one can normally check directly. We now give a more concrete sufficient condition depending only on the existence of many coordinate copies of the filter.

\begin{definition}\label{def:filter-copy}
Let \(B\subseteq\Gamma\). We say that \(B\) contains a
\emph{coordinate copy} of \(\mathcal F\) if there are \(C\subseteq B\) and a bijection \(\theta\colon\Gamma\to C\) such that, for every \(A\subseteq\Gamma\),
\[
    A\in\mathcal F
    \quad\Longleftrightarrow\quad
    \theta(A)\cup(\Gamma\setminus C)\in\mathcal F .
\]
Equivalently, the trace filter \(\mathcal F_C\) is isomorphic to \(\mathcal F\).
\end{definition}

If \(C\) and \(\theta\) are as above, the formula
\[
    (U_\theta x)(\theta(\gamma))=x(\gamma)
    \quad(\gamma\in\Gamma),
    \qquad
    (U_\theta x)(\alpha)=0
    \quad(\alpha\notin C),
\]
defines a surjective linear isometry
\(
    U_\theta\colon X_{\mathcal F}\longrightarrow X_{\mathcal F}(C).
\)

\begin{definition}\label{def:large-set-self-similar-filter}
Let \(\kappa=|\Gamma|\).  We say that \(\mathcal F\) is
\emph{large-set self-similar} if every subset \(B\subseteq\Gamma\) with \(|B|=\kappa\) contains a coordinate copy of \(\mathcal F\).
\end{definition}

Thus, large-set self-similarity is a purely set-theoretic strengthening of primariness under sums: in a partition of \(\Gamma\) into two pieces, one piece must have cardinality \(\kappa\), and therefore contains a copy of the original filter.

We shall use the following standard form of Rosenthal's lemma from \cite{Rosenthal1970}.

\begin{lemma}[Rosenthal's lemma]\label{lem:rosenthal}
Let \(\Gamma\) be an infinite set, and let
\((\mu_\gamma)_{\gamma\in\Gamma}\) be a uniformly bounded family of finitely additive scalar measures on \(\Gamma\).  Suppose that
\(\mu_\gamma(\{\gamma\})=0\) for every \(\gamma\in\Gamma\).  Then, for every \(\eta>0\), there is \(A\subseteq\Gamma\) with \(|A|=|\Gamma|\) such that
\[
    |\mu_\gamma|(A)<\eta
    \qquad(\gamma\in A).
\]
\end{lemma}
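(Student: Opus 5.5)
The plan is to argue by contradiction, accumulating mass of a single measure on many pairwise disjoint sets. Put $M=\sup_\gamma\|\mu_\gamma\|$, which is finite by uniform boundedness, and fix $n\in\N$ with $n\eta>M$. Replacing each $\mu_\gamma$ by its variation $|\mu_\gamma|$, we may assume all measures are non-negative and finitely additive. The basic observation is that for any finite family of pairwise disjoint sets $C_1,\dots,C_n$, finite additivity gives $\sum_{j}\mu_\gamma(C_j)\le M$. Hence no $\gamma$ can satisfy $\mu_\gamma(C_j)\ge\eta$ for all $j\le n$. The target is to construct, if the lemma fails, such a $\gamma$ together with such disjoint $C_j$.

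Step one is a splitting step. Suppose that for a set $S\subseteq\Gamma$ with $|S|=\kappa=|\Gamma|$, no subset $A\subseteq S$ of size $\kappa$ works. Then the set of ``bad'' points $\{\gamma\in S:\mu_\gamma(S)\ge\eta\}$ must itself have size $\kappa$; otherwise we delete it and take $A$ to be the remainder, using monotonicity $\mu_\gamma(A)\le\mu_\gamma(S)$. Next split the bad set into $m$ pieces of size $\kappa$ (and, where necessary, into $\kappa$ many pairwise disjoint pieces of size $\kappa$). We then show that for each bad $\gamma$ the mass $\mu_\gamma(S)\ge\eta$ must, up to a controlled loss, sit on a piece not containing $\gamma$. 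Here the hypothesis $\mu_\gamma(\{\gamma\})=0$ enters: removing the single point $\gamma$ does not change $\mu_\gamma$. Pigeonholing over the finitely many pieces then yields a piece $C_1$ and a subset $S_1$ of size $\kappa$ disjoint from $C_1$, such that every $\gamma\in S_1$ has $\mu_\gamma(C_1)\ge\eta'$.

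Step two is to iterate this $n$ times, always working inside the surviving set $S_j$. This gives pairwise disjoint $C_1,\dots,C_n$ and a non-empty $S_n$, disjoint from all of them, on which every $\gamma$ has large measure on each $C_j$. Any $\gamma\in S_n$ then contradicts the basic observation once the thresholds $\eta'$ are chosen so that $n$ of them still exceed $M$; in practice one starts with $\eta$ replaced by a smaller constant, fixes $n$ first, and propagates the losses.

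The main obstacle is the splitting step. Because the measures are only finitely additive, the mass of $\mu_\gamma$ on $S$ can be spread thinly over infinitely many pieces, so a finite pigeonhole loses a factor each time. I would first try the classical trick of taking $\kappa$ many pairwise disjoint pieces of size $\kappa$. For each fixed $\gamma$, only finitely many pieces (at most $M/\delta$ of them) carry mass at least $\delta$. Choosing pieces that avoid these finitely many ``heavy'' indices for $\kappa$ many $\gamma$ should let us pass to a large set on which measure concentrates on the complement. Keeping all surviving sets of full cardinality $\kappa$ requires a pigeonhole over finitely many alternatives at each stage. A set of size $\kappa$ split into finitely many parts always has a part of size $\kappa$, so this causes no cofinality problems; only countable unions would, and I plan to avoid them.
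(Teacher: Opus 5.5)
The paper does not prove this lemma; it quotes it from Rosenthal's 1970 paper. So your argument has to stand on its own, and it has a genuine gap exactly at the step you flag. The splitting step is supposed to produce a set $C_1$, and a set $S_1$ of size $\kappa$ disjoint from it, with $\mu_\gamma(C_1)\ge\eta'$ for every $\gamma\in S_1$. It is never established, and both justifications you offer fail.

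First, you claim that the mass of a bad point must, up to a controlled loss, sit on a piece not containing it. This runs against the failure hypothesis itself. Apply your own first observation to any piece $P\subseteq S$ with $|P|=\kappa$: it yields $\kappa$ many $\gamma\in P$ with $\mu_\gamma(P)\ge\eta$, i.e.\ with their mass on their own piece, and the hypothesis says nothing about $\mu_\gamma(S\setminus P)$ for these points. A transversal $T$ of $\kappa$ pieces does give $\kappa$ points $t$ with $\mu_t(T)\ge\eta$, this mass lying off their own pieces. But the common heavy set is then $T$ itself, which contains those points, and splitting $T$ to separate them returns you to the own-side situation at threshold $\eta/2$.

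Second, counting heavy pieces controls $\mu_\gamma$ only on individual pieces. A finitely additive measure can vanish on every piece and still carry all its mass on a union of infinitely many of them; take the measure given by a free ultrafilter containing none of the pieces. So avoiding the finitely many heavy indices gives no lower bound at all on $\mu_\gamma(C)$ for a fixed $C$. This mass ``at infinity'' is exactly the difficulty in Rosenthal's lemma, and the proposal does not deal with it. By contrast, your worry that the thresholds degrade from round to round is unfounded: failure passes to every subset of size $\kappa$, so each round can restart at $\eta$.

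Your endgame is the right one: a single measure carrying mass $\ge\eta$ on $n$ disjoint sets with $n\eta>M$. The missing idea is to build those disjoint sets as annuli between nested blocks \emph{containing} the point, rather than as common external sets. To make this iterate, prove the formally stronger statement with a pairwise disjoint family $(E_i)_{i\in I}$, $|I|=\kappa$, in place of singletons: there is $A\subseteq I$ with $|A|=\kappa$ and $\mu_i\bigl(\bigcup_{j\in A\setminus\{i\}}E_j\bigr)<\eta$ for all $i\in A$.

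If this fails, partition $I$ into blocks $(I_\alpha)_{\alpha\in I}$, each of size $\kappa$. Failure applied to $A=I_\alpha$ gives $i_\alpha\in I_\alpha$ with $\mu_{i_\alpha}\bigl(\bigcup_{j\in I_\alpha\setminus\{i_\alpha\}}E_j\bigr)\ge\eta$. The new system $\mu'_\alpha=\mu_{i_\alpha}$, $E'_\alpha=\bigcup_{j\in I_\alpha}E_j$ fails at the same $\eta$. Indeed, a good $A'$ for it would make $\{i_\alpha:\alpha\in A'\}$ good for the old system, by monotonicity and $E_{i_\beta}\subseteq E'_\beta$.

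After $n$ rounds, some original index $i$ comes with a chain $E_i=F_0\subseteq F_1\subseteq\cdots\subseteq F_n$ satisfying $\mu_i(F_m\setminus F_{m-1})\ge\eta$ for each $m$. Hence $\mu_i(F_n)>M$, a contradiction. Now take $E_\gamma=\{\gamma\}$ and the measures $|\mu_\gamma|$, and use $|\mu_\gamma|(\{\gamma\})=|\mu_\gamma(\{\gamma\})|=0$; this gives the lemma. Only $\kappa\cdot\kappa=\kappa$ is used, so no cofinality issues arise.
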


\begin{proposition}\label{prop:large-set-self-similar-implies-diagonal-primary}
If \(\mathcal F\) is large-set self-similar, then \(\mathcal F\) is
\((1,1)\)-primary for factorisation. Consequently,
\(X_{\mathcal F}\) has the \(2^+\)-primary factorisation property.
\end{proposition}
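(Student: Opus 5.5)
The plan is to verify the two halves of Definition~\ref{def:factorisation-primary-filter} separately, both with constant $1$, and then to apply Theorem~\ref{thm:primary-filter-pfp} with $K=D=1$. That theorem gives the $(2\cdot1\cdot1)^+=2^+$-PFP directly.

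\emph{$1$-primary under sums.} Let $\Gamma=\Gamma_0\sqcup\Gamma_1$ and $\kappa=|\Gamma|$. Since $\kappa$ is infinite, one of the two pieces, say $\Gamma_i$, satisfies $|\Gamma_i|=\kappa$. By large-set self-similarity, $\Gamma_i$ contains a coordinate copy $(C,\theta)$ of $\mathcal F$. The map $U_\theta\colon X_{\mathcal F}\to X_{\mathcal F}(C)$ recorded after Definition~\ref{def:filter-copy} is a surjective isometry. Hence $C\subseteq\Gamma_i$ is a $1$-copying set.

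\emph{$1$-diagonalisability.} Fix $T\in\mathcal B(X_{\mathcal F})$ and $\eta>0$.
\begin{enumerate}
\item For each $\gamma\in\Gamma$, the functional $e_\gamma^*T\in X_{\mathcal F}^*$ has norm at most $\|T\|$. By Hahn--Banach we extend it to a functional on $\ell_\infty(\Gamma)$ of the same norm. We identify this extension with a bounded finitely additive scalar measure $\nu_\gamma$ on $\mathcal P(\Gamma)$, so that $e_\gamma^*Tx=\int x\,d\nu_\gamma$ for $x\in X_{\mathcal F}$. We have $\|\nu_\gamma\|\le\|T\|$, and $\nu_\gamma(\{\gamma\})=e_\gamma^*Te_\gamma$, because $e_\gamma\in c_0(\Gamma)\subseteq X_{\mathcal F}$.
\item We remove the atom by setting $\mu_\gamma(B)=\nu_\gamma(B\setminus\{\gamma\})$. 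This family is uniformly bounded and satisfies $\mu_\gamma(\{\gamma\})=0$.
\item Rosenthal's lemma (Lemma~\ref{lem:rosenthal}) then yields $A\subseteq\Gamma$ with $|A|=\kappa$ and $|\mu_\gamma|(A)<\eta$ for all $\gamma\in A$.
\item Large-set self-similarity gives a coordinate copy $C\subseteq A$ with bijection $\theta$. We take $J=U_\theta$ and $R=U_\theta^{-1}$, so $\|J\|\|R\|=1$, and we use $C$ in the role of the set in Definition~\ref{def:diagonalisable-filter}.
\item We define $a(\gamma)=e_{\theta(\gamma)}^*Te_{\theta(\gamma)}$; this is bounded by $\|T\|$.
\item For $x\in X_{\mathcal F}(C)$ and $\gamma\in C$ we have
\[
\bigl|e_\gamma^*Tx-\nu_\gamma(\{\gamma\})x(\gamma)\bigr|=\Bigl|\int_{C\setminus\{\gamma\}}x\,d\nu_\gamma\Bigr|\le|\mu_\gamma|(C)\,\|x\|<\eta\|x\|.
\]
Here we use that $x$ vanishes off $C$ and that $C\subseteq A$.
\item Taking the supremum over $\gamma\in C$ gives $\|P_CT|_{X_{\mathcal F}(C)}-M_{a\circ\theta^{-1}}\|\le\eta$. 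Conjugating by the isometry $U_\theta$ then gives $\|RP_CTJ-M_a\|\le\eta$. Running the argument with $\eta/2$ in place of $\eta$ makes this inequality strict.
\end{enumerate}

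\emph{Main obstacle.} The delicate point is the measure representation in the first step. The unit vectors do not form a basis of $X_{\mathcal F}$, so $e_\gamma^*Tx$ cannot be read off from a matrix. The Hahn--Banach extension to $\ell_\infty(\Gamma)=\mathrm{ba}(\Gamma)^{\,\prime}$-dual pairing supplies exactly the finitely additive measures that Rosenthal's lemma needs.

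Two things must be checked here. First, the integral over $C\setminus\{\gamma\}$ must be controlled by the total variation, $\bigl|\int_B x\,d\nu\bigr|\le|\nu|(B)\,\|x\|_\infty$. Second, the removed atom must contribute precisely $a(\gamma)x(\gamma)$. Both are standard for bounded functions integrated against finitely additive measures. Once these are in place, the rest of the argument is formal.
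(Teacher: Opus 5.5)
Your proposal is correct and follows the paper's proof essentially step for step: the same cardinality argument gives $1$-primariness under sums, and the same Hahn--Banach measures, removal of the atom at $\gamma$, Rosenthal's lemma and coordinate copy $C\subseteq A$ give $1$-diagonalisability, after which Theorem~\ref{thm:primary-filter-pfp} with $K=D=1$ finishes the argument (your $\eta/2$ adjustment also tidies up the paper's passage from pointwise strict bounds to a strict norm bound). The only point to tighten is the identification $\nu_\gamma(\{\gamma\})=e_\gamma^*Te_\gamma$, which presupposes $e_\gamma\in X_{\mathcal F}$, i.e.\ $\Gamma\setminus\{\gamma\}\in\mathcal F$; this is not automatic for a proper filter and you do not justify it, so define $a(\gamma)=\nu_{\theta(\gamma)}(\{\theta(\gamma)\})$ directly, as the paper does, and the issue disappears.
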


\begin{proof}
First let \(\Gamma=\Gamma_0\sqcup\Gamma_1\).  Since \(\Gamma\) is infinite, one of \(\Gamma_0\) and \(\Gamma_1\) has cardinality \(|\Gamma|\).  By
large-set self-similarity, that piece contains a coordinate copy of \(\mathcal F\). Hence \(\mathcal F\) is \(1\)-primary under sums.

It remains to prove \(1\)-diagonalisability. Let
\(T\in\mathcal B(X_{\mathcal F})\) and let \(\eta>0\). For each
\(\alpha\in\Gamma\), the functional
\[
    x\mapsto (Tx)(\alpha)
    \qquad(x\in X_{\mathcal F})
\]
has norm at most \(\|T\|\).  By Hahn--Banach, choose a finitely additive measure \(\lambda_\alpha\in \operatorname{ba}(\Gamma)\) with \(\|\lambda_\alpha\|\leq \|T\|\) such that
\[
    (Tx)(\alpha)=\int_\Gamma x\,d\lambda_\alpha
    \qquad(x\in X_{\mathcal F}).
\]
Put
\[
    \mu_\alpha
    =\lambda_\alpha-
      \lambda_\alpha(\{\alpha\})\delta_\alpha
    \qquad(\alpha\in\Gamma).
\]
Then \((\mu_\alpha)_{\alpha\in\Gamma}\) is uniformly bounded and
\(\mu_\alpha(\{\alpha\})=0\) for every \(\alpha\).  By
Lemma~\ref{lem:rosenthal}, there is \(A\subseteq\Gamma\) with
\(|A|=|\Gamma|\) such that
\[
    |\mu_\alpha|(A)<\eta
    \qquad(\alpha\in A).
\]
By large-set self-similarity, choose a coordinate copy
\(C\subseteq A\) of \(\mathcal F\), and let \(\theta\colon\Gamma\to C\) witness this.  Put
\[
    J=U_\theta\colon X_{\mathcal F}\to X_{\mathcal F}(C),
    \qquad
    R=U_\theta^{-1}\colon X_{\mathcal F}(C)\to X_{\mathcal F}.
\]
Then \(\|J\|\|R\|=1\).  Define \(a\in\ell_\infty(\Gamma)\) by
\[
    a(\gamma)=\lambda_{\theta(\gamma)}(\{\theta(\gamma)\})
    \qquad(\gamma\in\Gamma).
\]
For \(x\in X_{\mathcal F}\) with \(\|x\|\leq 1\) and
\(\gamma\in\Gamma\), we have
\[
\begin{aligned}
    \bigl((R P_C T J-M_a)x\bigr)(\gamma)
    &= (T Jx)(\theta(\gamma))-a(\gamma)x(\gamma) \\
    &= \int_\Gamma Jx\,d\lambda_{\theta(\gamma)}
       -\lambda_{\theta(\gamma)}(\{\theta(\gamma)\})Jx(\theta(\gamma)) \\
    &= \int_C Jx\,d\mu_{\theta(\gamma)} .
\end{aligned}
\]
Since \(C\subseteq A\), it follows that
\(
    \left|\bigl((R P_C T J-M_a)x\bigr)(\gamma)\right|
    \leq |\mu_{\theta(\gamma)}|(C)
    \leq |\mu_{\theta(\gamma)}|(A)<\eta .
\)
Taking the supremum over \(\gamma\in\Gamma\) gives
\(
    \|R P_C T J-M_a\|<\eta .
\)
Thus \(\mathcal F\) is \(1\)-diagonalisable.  The final assertion follows from Theorem~\ref{thm:primary-filter-pfp} with \(K=D=1\).
\end{proof}

\begin{example}\label{ex:co-small-filters}
Let \(\kappa=|\Gamma|\), and let \(\mu\leq\kappa\) be an infinite cardinal. Define
\[
    \mathcal F_\mu
    =\{A\subseteq\Gamma: |\Gamma\setminus A|<\mu\}.
\]
Then \(\mathcal F_\mu\) is large-set self-similar.  Indeed, if \(B\subseteq\Gamma\) has cardinality \(\kappa\), then the trace filter on \(B\) is
\[
    (\mathcal F_\mu)_B
    =\{C\subseteq B: |B\setminus C|<\mu\},
\]
which is isomorphic to \(\mathcal F_\mu\) through any bijection
\(\Gamma\to B\).  Hence
\(
    c_{\mathcal F_\mu}(\Gamma)
\)
has the \(2^+\)-PFP.

For \(\mu=\omega\), this recovers the usual cofinite-filter case
\(c_{\mathcal F_\omega}(\Gamma)=c_0(\Gamma)\).  For
\(\mu=\omega_1\), it gives the co-countable-filter case
\(c_{\mathcal F_{\omega_1}}(\Gamma)=\ell_\infty^c(\Gamma)\), whenever
\(|\Gamma|\geq\omega_1\), which recovers \cite[Theorem 1.4]{JKS2016}.
\end{example}

\begin{remark}\label{rem:why-def-diag}
Definition~\ref{def:diagonalisable-filter} should therefore be read as an abstract operator criterion, not as the primary source of examples. The set-theoretic condition in Definition~\ref{def:large-set-self-similar-filter} is stronger but much easier to verify: Rosenthal's lemma turns large-set self-similarity into diagonalisation, and large set self-similarity also gives the required primariness under sums. Thus one obtains the concrete implication
\[
    \text{large-set self-similar filter}
    \quad\Longrightarrow\quad
    c_{\mathcal F}(\Gamma)\text{ has the }2^+\text{-PFP}.
\]
\end{remark}

%% file: w_Open_Questions.tex
\section{Open Problems}\label{sec:open-problems}

The results of this paper suggest a number of natural questions.
Some concern the relation between primariness and factorisation, while others arise from the limitations of the present methods in mixed and non-separable settings.

\runinhead{Primariness versus factorisation} A natural first question is whether the primary factorisation property is genuinely stronger than primariness. The following example shows that, in some sense, this is indeed the case. The authors are grateful to Niels Jakob Laustsen (Lancaster) for bringing this example to their attention.

\begin{example}[Primariness does not imply PFP]
    Let $X_{\mathrm{GM}}$ denote the prime Gowers--Maurey space, see \cite{GowersMaureySmallOperators}. Then $X_{\mathrm{GM}}$ is prime, hence primary. On the other hand, for this space the quotient of $\Bop(X_{\mathrm{GM}})$ by the ideal of strictly singular operators is isomorphic, as a Banach algebra, to $\ell_1(\mathbb Z)$ equipped with the convolution product. Let
    \begin{equation*}
        q\colon \Bop(X_{\mathrm{GM}})\to \Bop(X_{\mathrm{GM}})/\mathcal{SS}(X_{\mathrm{GM}})\cong \ell_1(\mathbb Z)
    \end{equation*}
    denote the quotient map. For every maximal ideal $M$ of $\ell_1(\mathbb Z)$, the preimage $q^{-1}(M)$ is a maximal ideal of $\Bop(X_{\mathrm{GM}})$; in fact, these are all the maximal ideals of $\Bop(X_{\mathrm{GM}})$, since every maximal ideal of $\Bop(X_{\mathrm{GM}})$ contains the ideal of strictly singular operators. Since the maximal ideal space of $\ell_1(\mathbb Z)$ is naturally identified with the unit circle $\mathbb T$, the algebra $\ell_1(\mathbb Z)$ has continuum many maximal ideals. Hence $\Bop(X_{\mathrm{GM}})$ has continuum many maximal ideals. By Proposition~\ref{prop:maximal-ideal-pfp}, it follows that $X_{\mathrm{GM}}$ cannot have the PFP. Thus, primariness does not imply the PFP.
\end{example}

By Pe{\l}czy\'nski's decomposition method, the PFP implies primariness on spaces that are stable under the relevant direct-sum operation, but outside that framework, it is unclear if PFP implies primarity.

\begin{question}
Does every Banach space with the PFP have to be primary?
\end{question}

A classical special case is still open.

\begin{question}
Is every Banach space with a symmetric basis primary?
\end{question}

The results of Casazza, Kottman and Lin show that a large class of such spaces satisfy a strong range dichotomy, but this still falls short of a complete answer. We note, however, that Lin \cite{Lin1992} resolved the problem in the reflexive case.

\runinhead{PFP versus UPFP} In all examples presently known to us, the PFP is obtained together with quantitative control,
so that one actually gets the UPFP.
It is therefore natural to ask whether the uniform version is strictly stronger.

\begin{question}
Does there exist a Banach space with the PFP but without the UPFP?
\end{question}

A positive answer would show that the uniform quantitative content of the classical arguments is a genuine extra feature rather than an artefact of the proofs.

\runinhead{Duality} Many classical primary spaces arise in dual pairs, but the known arguments are often highly asymmetric.
The dual problem, therefore, remains largely open.

\begin{question}
Let $X$ be a primary Banach space. Must $X^*$ be primary?
What if $X$ has the PFP or the UPFP?
Do either of these properties pass to the dual space?
\end{question}

Even formulating a satisfactory converse is delicate, since non-isomorphic spaces may have isomorphic duals.

\runinhead{Countable versus uncountable sums} One of the main phenomena of this paper is that uncountability makes some problems easier rather than harder:
free-selection arguments and support-reduction lemmas allow one to force exact coordinate separation.
This leads to a natural meta-question.

\begin{question}
To what extent can the arguments for uncountable sums be pushed back to the countable setting?
For example, can the uncountable methods be adapted to recover the PFP or UPFP for difficult countable mixed sums, spaces such as $C(\alpha,\ell_p)$ or related bi-parameter spaces?
\end{question}

\runinhead{Quantitative decomposition} Several of the results proved here are genuinely quantitative, and Appendix~\ref{app:quant-pelc} records a quantitative form of Pe{\l}czy\'nski's decomposition method tailored to the present paper. We do not claim originality for that appendix: it is included only to make the constant bookkeeping explicit in one convenient place. Closely related explicit bounds in the normalised square case were obtained by Korpalski and Plebanek \cite[Theorem~5.1]{KorpalskiPlebanek2025} and improved by the Lewicki and the second-named author \cite[Theorem~B; Theorem~5.1]{KaniaLewicki2026}; see also Galego's qualitative extensions of the decomposition method \cite{Galego2007,Galego2008}.

\begin{question}
Can the constants in Appendix~\ref{app:quant-pelc} be improved substantially, especially in the square case or in the $c_0$/$\ell_p$-stable case?
\end{question}

%% file: Appendix_Quantitative_Pelczynski.tex
\section{A quantitative form of Pe{\l}czy\'nski's decomposition method}\label{app:quant-pelc}

We record here, for convenience, a quantitative version of Pe{\l}czy\'nski's decomposition method in a form suited to the present paper.
We do \emph{not} claim originality for the result below: it should be viewed as a constant-tracking variant of the classical argument of Pe{\l}czy\'nski \cite{Pelczynski1960}.
The bookkeeping involved seems to be part of the folklore, but it is not especially easy to locate in exactly the form needed here.

The closest explicit quantitative results we are aware of are due to Korpalski and Plebanek \cite[Theorem~5.1]{KorpalskiPlebanek2025} and, with an improved constant, the second-named author and Lewicki \cite[Theorem~B; Theorem~5.1]{KaniaLewicki2026}.
In the normalised setting where each space is isometric to a $1$-complemented subspace of the other and both spaces are isometrically square, Korpalski and Plebanek obtain the estimate $d_{BM}(X,Y)\le (3+\sqrt2)^2$, while the second-named author and Lewicki improve this to $d_{BM}(X,Y)\le 9+6\sqrt3$.
For qualitative extensions of Pe{\l}czy\'nski's method in the direction of supplemented subspaces and finite-sum arithmetic, see Galego \cite{Galego2007,Galego2008}.

The formulation below is slightly more flexible than the normalised square case treated in those references: we allow arbitrary complementability and isomorphism constants, and we also include the familiar $c_0$- and $\ell_p$-stable alternative.

Before turning to the statement and proof, we record a few elementary facts that will be used repeatedly.
If $A \simeq_C B$ and $B \simeq_K D$, then $A \simeq_{CK} D$.
Moreover, for every $1 \leq p \leq \infty$ and every Banach space $Z$, one has
\(
    A \oplus_p Z \simeq_C B \oplus_p Z.
\)
Also, for every $1 \leq p \leq \infty$,
\(
    \ell_p(A) \simeq_C \ell_p(B),
\)
where the isomorphism is obtained by applying a fixed isomorphism $A \to B$ coordinatewise.
Likewise,
\(
    c_0(A) \simeq_C c_0(B).
\)

\begin{theorem}[A quantitative form of Pe{\l}czy\'nski's decomposition method]\label{thm:quant-pelc}
Let $X$ and $Y$ be Banach spaces such that
\begin{enumerate}[label=\textup{(\roman*)}]
    \item $X$ has a $C_1$-complemented subspace $E$ which is $C_2$-isomorphic to $Y$;
    \item $Y$ has a $C_3$-complemented subspace $G$ which is $C_4$-isomorphic to $X$.
\end{enumerate}
Suppose further that either
\begin{enumerate}[label=(\alph*), ref=(\alph*)]
    \item \label{item:quant-pelc-a}
    there exists $1 \le r \le \infty$ such that
    \[
        X \simeq_{C_5} X \oplus_r X
        \qquad\text{and}\qquad
        Y \simeq_{C_6} Y \oplus_r Y,
    \]
    or
    \item \label{item:quant-pelc-b}
    \[
        X \simeq_{C_5} c_0(X)
        \qquad\text{or}\qquad
        X \simeq_{C_5} \ell_p(X)
    \]
    for some $1 \le p \le \infty$.
\end{enumerate}
Then there exists a constant $C \ge 1$, depending only on $C_1,\dots,C_6$ in case \ref{item:quant-pelc-a}, and only on $C_1,\dots,C_5$ in case \ref{item:quant-pelc-b}, such that $X \simeq_C Y$.
Moreover, the proof below gives an explicit admissible choice of $C$.
\end{theorem}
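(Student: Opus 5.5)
We follow Pe{\l}czy\'nski's classical argument, keeping track of the Banach--Mazur constants at each step. We use two elementary facts. First, if $E$ is $C_1$-complemented in $X$ and $P$ is the projection onto $E$, then $X=E\oplus\ker P$, and the map $x\mapsto(Px,(\Id-P)x)$ into $E\oplus_r\ker P$ has distortion at most a constant depending only on $C_1$ (for instance $2C_1$, using $\|\Id-P\|\le 1+C_1$). Second, the constant-tracking facts recorded before the statement let us replace $E$ by $Y$ at cost $C_2$, and $G$ by $X$ at cost $C_4$. This gives
\[
X\simeq_{a} Y\oplus_r A,\qquad Y\simeq_{b} X\oplus_r B,
\]
where $a$ depends only on $(C_1,C_2)$, $b$ depends only on $(C_3,C_4)$, and $A=\ker P$, $B$ are the complementary summands. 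We fix the exponent $r$ (the one from \ref{item:quant-pelc-a}, or $p$, or the $c_0$-norm in case \ref{item:quant-pelc-b}), so that all direct sums are taken with the same norm. Associativity and commutativity of $\oplus_r$ are then isometric.

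\emph{Case \ref{item:quant-pelc-a}.} Write
\[
X\oplus_r Y\simeq_{b} X\oplus_r X\oplus_r B\simeq_{C_5} X\oplus_r B\simeq_{b} Y,
\]
where the middle step applies $X\oplus_r X\simeq_{C_5} X$ coordinatewise with $B$ fixed. Symmetrically,
\[
X\oplus_r Y\simeq_{a} Y\oplus_r Y\oplus_r A\simeq_{C_6} Y\oplus_r A\simeq_{a} X.
\]
Composing these gives $X\simeq_C Y$ with $C=a^2b^2C_5C_6$, which depends only on $C_1,\dots,C_6$.

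\emph{Case \ref{item:quant-pelc-b}.} Let $\mathcal S$ denote $\ell_p$ or $c_0$, with $X\simeq_{C_5}\mathcal S(X)$. Apply the countable-sum fact coordinatewise to $X\simeq_a Y\oplus A$ to obtain
\[
\mathcal S(X)\simeq_a \mathcal S(Y\oplus A).
\]
Then $\mathcal S(Y\oplus_r A)$ is isometric to $\mathcal S(Y)\oplus_r\mathcal S(A)$ when $r=p$. Next, $Y\simeq_b X\oplus B$ yields
\[
\mathcal S(Y)\simeq_b \mathcal S(X)\oplus\mathcal S(B).
\]
The standard chain then runs
\[
Y\oplus X\simeq Y\oplus \mathcal S(X)\simeq Y\oplus\mathcal S(Y)\oplus\mathcal S(A)\simeq \mathcal S(Y)\oplus\mathcal S(A)\simeq\mathcal S(X)\simeq X,
\]
and on the other side
\[
Y\oplus X\simeq X\oplus B\oplus X\simeq B\oplus\mathcal S(X)\simeq X\oplus B\simeq Y,
\]
using $X\oplus\mathcal S(X)\simeq_{\mathrm{isom}}\mathcal S(X)$ (shift of coordinates) twice. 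Each arrow costs a factor among $a$, $b$, $C_5$, or is isometric, so the final constant is a product of at most boundedly many such factors, for example $C=a^{2}b^{2}C_5^{4}$.

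The main obstacle is bookkeeping rather than ideas. The sums must all use the same norm so that the regrouping isomorphisms are isometric. Where mixed norms arise, for example the finite sum $Y\oplus X$ taken in the $\oplus_r$-norm against a countable $\mathcal S$-sum, the reidentification costs at most a factor of $2$. We would fix $\oplus_p$ throughout, or the max-norm when $\mathcal S=c_0$, to avoid this. For $p=\infty$ the shift isometry $X\oplus_\infty\ell_\infty(X)\cong\ell_\infty(X)$ is still exact, so no extra care is needed there.
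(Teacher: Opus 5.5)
Your proof is correct. Case~(a) is the paper's argument. Both of you use $Y\simeq_{C_6}Y\oplus_rY$ (resp.\ $X\simeq_{C_5}X\oplus_rX$) inside the decompositions $X\simeq_aY\oplus_rA$ and $Y\simeq_bX\oplus_rB$, showing that $X\oplus_rY$ is $a^2C_6$-isomorphic to $X$ and $b^2C_5$-isomorphic to $Y$, and both obtain $a^2b^2C_5C_6$.

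In case~(b) your route is genuinely shorter. The paper first derives $X\simeq_{C_5^2}X\oplus_pX$, then proves that $Y$ is itself square, $Y\simeq_{(abC_5)^2}Y\oplus_pY$. Only then does it rerun the case~(a) chains, ending with $a^4b^4C_5^4$. You run the classical Pe{\l}czy\'nski chain directly. First, $X\simeq_{a^2C_5^2}Y\oplus_pX$, via $\mathcal S(Y\oplus_pA)\cong\mathcal S(Y)\oplus_p\mathcal S(A)$ and the shift $Y\oplus_p\mathcal S(Y)\cong\mathcal S(Y)$. Second, $Y\oplus_pX\simeq_{b^2C_5^2}Y$, using only $X\oplus_pX\simeq_{C_5}\mathcal S(X)$. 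This never needs squareness of $Y$, and with the same first-step constants it improves the bound to $a^2b^2C_5^4$. The paper's detour buys only uniformity (everything is funnelled through case~(a)) and the explicit squareness of $Y$ as a by-product.

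Two cosmetic remarks. The displayed $\mathcal S(Y)\simeq_b\mathcal S(X)\oplus\mathcal S(B)$ is never used. The parenthetical bound $2C_1$ for the distortion of $x\mapsto(Px,(\Id-P)x)$ is false in general: in $\ell_\infty^2$ with $P(x,y)=(y,y)$ one has $\|P\|=1$, but the distortion is $3$ for $r=1$ and $4$ for $r=\infty$. The estimate $\|\Id-P\|\le 1+C_1$ gives, for instance, $2(1+2C_1)$, which suffices since only the dependence on $C_1$ matters.
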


\begin{proof}
Choose a projection $P \colon X \to E$ with $\norm{P} \le C_1$, a projection $Q \colon Y \to G$ with $\norm{Q} \le C_3$, an isomorphism $S \colon Y \to E$ with $\norm{S}\norm{S^{-1}} \le C_2$, and an isomorphism $T \colon X \to G$ with $\norm{T}\norm{T^{-1}} \le C_4$.
After rescaling $S$ and $T$, we may assume that
\[
    \norm{S}=\norm{S^{-1}} \le \sqrt{C_2},
    \qquad
    \norm{T}=\norm{T^{-1}} \le \sqrt{C_4}.
\]
Set
\[
    F = \ker P,
    \qquad
    H = \ker Q.
\]

We first show that, for each fixed $1 \le r \le \infty$, there exist constants $a=a(C_1,C_2)$ and $b=b(C_3,C_4)$ such that
\(
    X \simeq_a Y \oplus_r F,
\)
and
\(
    Y \simeq_b X \oplus_r H.
\)
To this end, define
\[
    \Phi_X \colon X \to Y \oplus_r F,
    \qquad
    \Phi_X x = \bigl(S^{-1}Px,(I_X-P)x\bigr),
\]
and
\[
    \Psi_X \colon Y \oplus_r F \to X,
    \qquad
    \Psi_X(y,f)=Sy+f.
\]
Since $f \in F=\ker P$, we have $Pf=0$, and hence
\[
    \Psi_X\Phi_X x = S(S^{-1}Px)+(I_X-P)x = Px+(I_X-P)x = x.
\]
Also, if $y \in Y$ and $f \in F$, then
\[
    \Phi_X\Psi_X(y,f)=\bigl(S^{-1}P(Sy+f),(I_X-P)(Sy+f)\bigr)=(y,f),
\]
because $P(Sy)=Sy$ and $Pf=0$.
Thus $\Phi_X$ and $\Psi_X$ are inverse isomorphisms. Moreover,
\[
    \norm{\Phi_X x}_r \le \norm{S^{-1}Px}+\norm{(I_X-P)x}
    \le \bigl(\sqrt{C_2}\,C_1+1+C_1\bigr)\norm{x}.
\]
Likewise,
\[
    \norm{\Psi_X(y,f)} \le \norm{Sy}+\norm{f}
    \le (\sqrt{C_2}+1)(\norm{y}+\norm{f})
    \le 2(\sqrt{C_2}+1)\norm{(y,f)}_r.
\]
Hence
\(
    X \simeq_a Y \oplus_r F,
\)
where
\(
    a := 2(\sqrt{C_2}+1)\bigl(\sqrt{C_2}\,C_1+1+C_1\bigr).
\)

Similarly, defining the operators 
\(
    \Phi_Y \colon Y \to X \oplus_r H, 
    \Phi_Y y = \bigl(T^{-1}Qy,(I_Y-Q)y\bigr),
\)
and
\(
    \Psi_Y \colon X \oplus_r H \to Y, 
    \Psi_Y(x,h)=Tx+h,
\)
one obtains
\(
    Y \simeq_b X \oplus_r H,
\)
where
\(
    b := 2(\sqrt{C_4}+1)\bigl(\sqrt{C_4}\,C_3+1+C_3\bigr).
\)

We now treat the two cases separately.

\smallskip
\noindent
\textit{Case \ref{item:quant-pelc-a}.}
Fix $r$ as in the hypothesis.
Since $X \simeq_a Y \oplus_r F$, we obtain
\[
    X \simeq_a Y \oplus_r F
    \simeq_{C_6} (Y \oplus_r Y)\oplus_r F
    \simeq_1 Y \oplus_r (Y \oplus_r F)
    \simeq_a Y \oplus_r X,
\]
and therefore
\(
    X \simeq_{a^2C_6} Y \oplus_r X.
\)
Likewise,
\[
    Y \simeq_b X \oplus_r H
    \simeq_{C_5} (X \oplus_r X)\oplus_r H
    \simeq_1 X \oplus_r (X \oplus_r H)
    \simeq_b X \oplus_r Y,
\]
so
\(
    Y \simeq_{b^2C_5} X \oplus_r Y.
\)
Combining the last two displays, and using the canonical isometry $Y \oplus_r X \simeq_1 X \oplus_r Y$, we get
\[
    X \simeq_{a^2C_6} Y \oplus_r X
    \simeq_1 X \oplus_r Y
    \simeq_{b^2C_5} Y.
\]
Thus
\(
    X \simeq_{a^2b^2C_5C_6} Y.
\)

\smallskip
\noindent
\textit{Case \ref{item:quant-pelc-b}.}
We treat the case
\[
    X \simeq_{C_5} \ell_p(X),
    \qquad 1 \le p \le \infty;
\]
the case $X \simeq_{C_5} c_0(X)$ is identical, replacing $\ell_p$ by $c_0$ and $\oplus_p$ by $\oplus_\infty$ throughout.

Since $\ell_p(X)$ is canonically isometric to $X \oplus_p \ell_p(X)$, we have
\[
    X \simeq_{C_5} \ell_p(X)
    \simeq_1 X \oplus_p \ell_p(X)
    \simeq_{C_5} X \oplus_p X,
\]
and so
\(
    X \simeq_{C_5^2} X \oplus_p X.
\)

Now take $r=p$ in the first part of the proof.
Then
\[
    X \simeq_a Y \oplus_p F,
    \qquad
    Y \simeq_b X \oplus_p H.
\]
We next show that
\(
    Y \simeq_{(abC_5)^2} Y \oplus_p Y.
\)
Starting from $Y \simeq_b X \oplus_p H$, we obtain
\[
    Y \simeq_b X \oplus_p H
    \simeq_{C_5} \ell_p(X)\oplus_p H
    \simeq_a \ell_p(Y \oplus_p F)\oplus_p H
    \simeq_1 \ell_p(Y)\oplus_p \ell_p(F)\oplus_p H.
\]
Since $\ell_p(Y)$ is canonically isometric to $Y \oplus_p \ell_p(Y)$, it follows that
\[
    Y \simeq_{abC_5} Y \oplus_p \bigl(\ell_p(Y)\oplus_p \ell_p(F)\oplus_p H\bigr).
\]
On the other hand,
\[
    \ell_p(Y)\oplus_p \ell_p(F)\oplus_p H
    \simeq_1 \ell_p(Y \oplus_p F)\oplus_p H
    \simeq_a \ell_p(X)\oplus_p H
    \simeq_{C_5} X \oplus_p H
    \simeq_b Y.
\]
Therefore
\(
    Y \simeq_{(abC_5)^2} Y \oplus_p Y.
\)

We now repeat the same decomposition argument with $\oplus_p$ in place of $\oplus_r$.
Using $X \simeq_a Y \oplus_p F$, we get
\[
    X \simeq_a Y \oplus_p F
    \simeq_{(abC_5)^2} (Y \oplus_p Y)\oplus_p F
    \simeq_1 Y \oplus_p (Y \oplus_p F)
    \simeq_a Y \oplus_p X,
\]
whence
\(
    X \simeq_{a^2(abC_5)^2} Y \oplus_p X.
\)
Similarly, since $X \simeq_{C_5^2} X \oplus_p X$, we have
\[
    Y \simeq_b X \oplus_p H
    \simeq_{C_5^2} (X \oplus_p X)\oplus_p H
    \simeq_1 X \oplus_p (X \oplus_p H)
    \simeq_b X \oplus_p Y,
\]
and therefore
\(
    Y \simeq_{b^2C_5^2} X \oplus_p Y.
\)
Combining the last two displays, and using the canonical isometry $Y \oplus_p X \simeq_1 X \oplus_p Y$, we obtain
\(
    X \simeq_{a^2(abC_5)^2} Y \oplus_p X
    \simeq_1 X \oplus_p Y
    \simeq_{b^2C_5^2} Y.
\)
Thus $X \simeq_{a^4b^4C_5^4} Y$.
This completes the proof.
\end{proof}

\begin{remark}\label{rem:quant-pelc-norm}
In case \ref{item:quant-pelc-a}, the choice of the norm on the two-fold direct sums is only a normalisation.
One may equally work with the max-norm, as in \cite{KorpalskiPlebanek2025,KaniaLewicki2026}, or with any other standard norm on a finite direct sum; the resulting formulation is equivalent, with constants changing by a factor depending only on that choice.
\end{remark}

%% file: Appendix_Jp_Dual_Primary.tex
\section{Duals of the $p$th James spaces}\label{app:Jp-dual-primary}

We record a short permanence argument for the duals of the $p$th James spaces.
Throughout this appendix $1<p<\infty$, and $q$ denotes the conjugate exponent, so that $1/p+1/q=1$.
We use the standard facts that $J_p$ is quasi-reflexive of order one and that every closed infinite-dimensional subspace of $J_p$ contains a complemented subspace isomorphic to $\ell_p$; see, for example, \cite[Remark~2.2, Proposition~2.3, and Section~2.6]{BirdLaustsen2010}.
For $p=2$, the primarity input below is Casazza's theorem \cite{Casazza1977}; for general $p$ we state the result conditionally on primarity of $J_p$.

\begin{lemma}\label{lem:complemented-copy-implies-hyperplanes}
Let $X$ be an infinite-dimensional Banach space. Suppose that $X$ contains a complemented subspace $E$ such that
\[
    E \simeq E\oplus \mathbb K .
\]
Then $X\simeq X\oplus \mathbb K$. In particular, $X$ is isomorphic to each of its hyperplanes.
\end{lemma}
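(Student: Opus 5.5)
Write $X=E\oplus W$ with $E$ complemented, and fix an isomorphism $\phi\colon E\to E\oplus\mathbb K$. The plan is to absorb the extra one-dimensional summand into $E$:
\[
X\oplus\mathbb K=(E\oplus W)\oplus\mathbb K\simeq (E\oplus\mathbb K)\oplus W\simeq E\oplus W = X .
\]
Here the middle step only rearranges a finite direct sum, so it is a canonical isomorphism with constant depending only on the chosen norms on finite sums. The last step uses $\phi^{-1}\oplus\Id_W$. Concretely, I would write down the operator
\[
X\oplus\mathbb K\to X,\qquad (e+w,\lambda)\mapsto \phi^{-1}(e,\lambda)+w,
\]
where $e=Px$ and $w=(\Id_X-P)x$ for a bounded projection $P$ of $X$ onto $E$. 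Its inverse is $x\mapsto \bigl(\pi_E\phi(Px)+(\Id_X-P)x,\ \pi_{\mathbb K}\phi(Px)\bigr)$. Boundedness of both maps is immediate from $\norm{P}$, $\norm{\phi}$ and $\norm{\phi^{-1}}$, and checking that they are mutually inverse is a routine verification.

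For the ``in particular'' clause, let $H=\ker f$ be a closed hyperplane, where $f\in X^*\setminus\{0\}$. Choose $x_0$ with $f(x_0)=1$. Then $X=H\oplus\spn\{x_0\}$ topologically, so $X\simeq H\oplus\mathbb K$.

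The remaining step is to pass from $X\simeq H\oplus\mathbb K$ together with $X\simeq X\oplus\mathbb K$ to $H\simeq X$. This does not follow from cancellation in general, so the argument has to run through $E$ again rather than invoke a cancellation law. The plan is to show that $E$ sits complementably inside $H$ with a one-dimensional defect that $E$ can absorb.

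First, note that $E\cap H$ has codimension at most one in $E$ (it equals $E$ if $f|_E=0$). It is complemented in $H$ via a suitable correction of $P$ on $H$. Next, since $E\simeq E\oplus\mathbb K$, iterating $\phi$ gives $E\simeq E\oplus\mathbb K^2$. Then every hyperplane of $E$ is isomorphic to $E$, by applying the same decomposition inside $E$: a hyperplane $E_0$ of $E$ satisfies $E\simeq E_0\oplus\mathbb K$, and I expect the identification $E_0\simeq E$ to come from $\phi$ and a finite-rank change of coordinates. With that in hand,
\[
H\simeq (E\cap H)\oplus W'\simeq E\oplus W'
\]
for a complement $W'$. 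Comparing with $X\simeq E\oplus W$, and using that $W$ and $W'$ differ by a finite-dimensional piece that $E$ absorbs, gives $H\simeq X$.

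The main obstacle is this last finite-dimensional bookkeeping, namely matching $W'$ with $W$ up to a one-dimensional summand. The cleanest route I would try is to choose $x_0\in E$ whenever $f|_E\neq0$. Then the complement can be taken as $W'=W\cap H$ adjusted, so $W$ itself is unchanged, and $H\simeq E_0\oplus W\simeq E\oplus W=X$. If instead $f|_E=0$, then $E\subseteq H$. In that case $H=E\oplus(W\cap H)$ after modifying the projection by a rank-one operator, and $X=E\oplus W\simeq E\oplus (W\cap H)\oplus\mathbb K\simeq (E\oplus\mathbb K)\oplus(W\cap H)\simeq H$.
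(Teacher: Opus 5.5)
Your first part, absorbing the $\mathbb K$ into $E$ via $\phi^{-1}\oplus\Id_W$, is correct and is exactly the paper's argument. The ``in particular'' clause is where the proposal goes wrong.

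Your premise that $H\simeq X$ ``does not follow from cancellation'' is mistaken. One-dimensional (indeed finite-dimensional) summands always cancel in Banach spaces, because any two closed hyperplanes $\ker f$, $\ker g$ of a Banach space are isomorphic. If $f,g$ are linearly independent, pick $u$ with $f(u)=g(u)=1$. Then $h\mapsto h-g(h)u$ maps $\ker f$ onto $\ker g$, with inverse $k\mapsto k-f(k)u$. This is the step the paper uses. An isomorphism $X\oplus\mathbb K\to X$ carries $X\oplus\{0\}$ onto a hyperplane of $X$ isomorphic to $X$. Every hyperplane of $X$ is isomorphic to that one, hence to $X$.

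Your detour through $E$ does not avoid this fact; it only hides it. Several parts of it are fine:
\begin{itemize}
\item The case $f|_E=0$ is complete: $H=E\oplus(W\cap H)$ and $W\simeq(W\cap H)\oplus\mathbb K$, so $X\simeq(E\oplus\mathbb K)\oplus(W\cap H)\simeq H$.
\item In the case $f|_E\neq0$, your decomposition $H\simeq E_0\oplus W$, with $E_0=E\cap H$ and $x_0\in E$, is also correct.
\end{itemize}

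The gap is the last step of the case $f|_E\neq0$. To conclude, you need $E_0\simeq E$, i.e.\ that every hyperplane of $E$ is isomorphic to $E$ given $E\simeq E\oplus\mathbb K$. That is precisely the statement you are proving, with $X$ replaced by $E$. Equivalently, it is the cancellation $E_0\oplus\mathbb K\simeq E\oplus\mathbb K\Rightarrow E_0\simeq E$ that you declared unavailable for $X$. You only say you ``expect'' it to follow from $\phi$ and a finite-rank change of coordinates.

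Running ``the same decomposition inside $E$'' is circular. There $W=\{0\}$ and you are always in the case $f|_E\neq0$, which again requires a hyperplane of $E$ to be isomorphic to $E$. The finite-rank change of coordinates you have in mind is exactly the two-hyperplanes map above. Once you prove it, apply it directly to $X$, and the whole case analysis becomes unnecessary.
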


\begin{proof}
Write $X=E\oplus F$ for some closed subspace $F$ of $X$. Then
\[
    X\oplus \mathbb K
        \simeq E\oplus F\oplus \mathbb K
        \simeq (E\oplus \mathbb K)\oplus F
        \simeq E\oplus F
        =X .
\]
Hence $X$ is isomorphic to one of its hyperplanes.

It remains only to recall the standard fact that any two hyperplanes of an infinite-dimensional Banach space are isomorphic. Indeed, let
\[
    H=\ker f,
    \qquad
    K=\ker g,
\]
where $f,g\in X^*$ are non-zero. If $H=K$, there is nothing to prove. Otherwise $f$ and $g$ are linearly independent, and hence there exists $u\in X$ such that
\[
    f(u)=g(u)=1.
\]
Define
\[
    T\colon H\to K,
    \qquad
    T h=h-g(h)u .
\]
Then $g(Th)=0$, so $T(H)\subseteq K$. Similarly,
\[
    S\colon K\to H,
    \qquad
    S k=k-f(k)u
\]
is well-defined. A direct calculation gives $ST=\Id_H$ and $TS=\Id_K$. Thus $T$ is an isomorphism from $H$ onto $K$.
Since $X$ is isomorphic to one of its hyperplanes, every hyperplane of $X$ is therefore isomorphic to $X$.
\end{proof}
Let us record the following well-known fact.
\begin{lemma}\label{lem:qref-one-dual-and-summands}
Let $X$ be quasi-reflexive of order one.
Then $X^*$ is quasi-reflexive of order one.
Moreover, if $X=Y\oplus Z$, then
\[
    X^{**}/\kappa_X(X)
        \simeq
    \bigl(Y^{**}/\kappa_Y(Y)\bigr)
        \oplus
    \bigl(Z^{**}/\kappa_Z(Z)\bigr),
\]
where $\kappa_X$, $\kappa_Y$, and $\kappa_Z$ denote the canonical embeddings.
\end{lemma}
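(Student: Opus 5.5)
Both assertions are standard facts about biduals and canonical embeddings, and I will prove them through the identity $\kappa_{X^*}(X^*)\oplus \kappa_X(X)^{\perp}=X^{***}$. For the first assertion I will use the classical decomposition: for any Banach space $X$,
\[
    X^{***}=\kappa_{X^*}(X^*)\oplus \kappa_X(X)^{\perp},
\]
where $\kappa_X(X)^{\perp}\subseteq X^{***}$ is the annihilator of $\kappa_X(X)$. The projection is $\Phi\mapsto \kappa_{X^*}(\Phi\circ\kappa_X)$. It is idempotent because $\langle\kappa_{X^*}(x^*),\kappa_X(x)\rangle=x^*(x)$, and its kernel is exactly $\kappa_X(X)^{\perp}$. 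Hence
\[
    X^{***}/\kappa_{X^*}(X^*)\simeq \kappa_X(X)^{\perp}\simeq \bigl(X^{**}/\kappa_X(X)\bigr)^*,
\]
using the usual duality between annihilators and duals of quotients. If $X$ is quasi-reflexive of order one, then $X^{**}/\kappa_X(X)$ is one-dimensional, so its dual is one-dimensional, and $X^*$ is quasi-reflexive of order one.

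For the second assertion, let $P\colon X\to X$ be the projection onto $Y$ along $Z$, and let $i_Y,i_Z$ be the inclusions. Then
\[
    X\simeq Y\oplus Z \quad\text{via}\quad x\mapsto (Px,(\Id_X-P)x),
\]
and the biadjoint of this isomorphism is an isomorphism
\[
    X^{**}\to Y^{**}\oplus Z^{**},\qquad \xi\mapsto \bigl((P|^{Y})^{**}\xi,\ ((\Id_X-P)|^{Z})^{**}\xi\bigr),
\]
where $P|^{Y}\colon X\to Y$ denotes $P$ with its codomain restricted. By naturality of the canonical embedding, $R^{**}\kappa_X=\kappa_Y R$ for any $R\colon X\to Y$, so this biadjoint maps $\kappa_X(X)$ onto $\kappa_Y(Y)\oplus\kappa_Z(Z)$. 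Passing to quotients gives the claimed isomorphism
\[
    X^{**}/\kappa_X(X)\simeq \bigl(Y^{**}/\kappa_Y(Y)\bigr)\oplus\bigl(Z^{**}/\kappa_Z(Z)\bigr).
\]

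The only delicate step is the identification $\kappa_X(X)^{\perp}\simeq (X^{**}/\kappa_X(X))^*$. This is the general fact that for any closed subspace $M$ of a Banach space $W$ one has $M^{\perp}\cong (W/M)^*$ isometrically, here applied with $W=X^{**}$ and $M=\kappa_X(X)$. It requires $\kappa_X(X)$ to be closed in $X^{**}$, which holds because $\kappa_X$ is an isometry. Everything else is routine functoriality, so I expect no real obstacle.
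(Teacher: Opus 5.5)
Your proof is correct and follows essentially the same route as the paper. You use the decomposition $X^{***}=\kappa_{X^*}(X^*)\oplus\kappa_X(X)^{\perp}$, given by $\Phi\mapsto\kappa_{X^*}(\Phi\circ\kappa_X)$, for the first part, and the canonical identification $X^{**}\simeq Y^{**}\oplus Z^{**}$ carrying $\kappa_X(X)$ onto $\kappa_Y(Y)\oplus\kappa_Z(Z)$ for the second. You also make explicit two steps the paper leaves implicit: the duality $\kappa_X(X)^{\perp}\cong\bigl(X^{**}/\kappa_X(X)\bigr)^*$ and the naturality relation $R^{**}\kappa_X=\kappa_Y R$.
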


\begin{proof}
Since $X$ is quasi-reflexive of order one,
\(
    \dim\bigl(X^{**}/\kappa_X(X)\bigr)=1.
\)
The annihilator $\kappa_X(X)^\perp$ in $X^{***}$ is therefore one-dimensional.
For each $\Phi\in X^{***}$, set $x^*=\Phi\circ\kappa_X\in X^*$. Then
\(
    \Phi-\kappa_{X^*}(x^*)
\)
vanishes on $\kappa_X(X)$. Hence
\[
    X^{***}=\kappa_{X^*}(X^*)\oplus \kappa_X(X)^\perp .
\]
Thus $\kappa_{X^*}(X^*)$ has codimension one in $X^{***}$, so $X^*$ is quasi-reflexive of order one.

Now suppose that $X=Y\oplus Z$. The canonical identification
\[
    X^{**}\simeq Y^{**}\oplus Z^{**}
\]
sends $\kappa_X(y+z)$ to $\kappa_Y(y)\oplus\kappa_Z(z)$. Hence
\[
    \kappa_X(X)
        \simeq
    \kappa_Y(Y)\oplus\kappa_Z(Z),
\]
and the displayed quotient decomposition follows.
\end{proof}

\begin{proposition}\label{prop:qref-primary-dual}
Let $X$ be a quasi-reflexive Banach space of order one. Suppose that
\begin{enumerate}[label=\textup{(\roman*)}]
    \item $X$ is primary;
    \item $X$ is isomorphic to each of its hyperplanes;
    \item $X^*$ is isomorphic to each of its hyperplanes.
\end{enumerate}
Then $X^*$ is primary.
\end{proposition}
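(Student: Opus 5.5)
We will show directly that for every bounded projection $P$ on $X^*$, one of $PX^*$ and $(\Id-P)X^*$ is isomorphic to $X^*$. The idea is to pass to the adjoint $P^*$, which acts on $X^{**}$. Quasi-reflexivity of order one together with the hyperplane hypotheses lets us identify $X^{**}$ with $X$ and $X^{***}$ with $X^*$, so primariness of $X$ can then be transported back to $X^*$.

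\emph{Step 1: bidual identifications.} Since $\kappa_X(X)$ has codimension one in $X^{**}$, we have $X^{**}\simeq X\oplus\mathbb K$. By (ii), $X$ is isomorphic to a hyperplane $H$ of itself, and $X\simeq H\oplus\mathbb K$. Hence $X\oplus\mathbb K\simeq H\oplus\mathbb K\oplus\mathbb K\simeq X$, and therefore $X^{**}\simeq X$. By Lemma~\ref{lem:qref-one-dual-and-summands}, $X^*$ is quasi-reflexive of order one. The same argument, now using (iii), gives $X^{***}\simeq X^*\oplus\mathbb K\simeq X^*$.

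\emph{Step 2: dualise the projection.} Write $A=PX^*$ and $B=(\Id-P)X^*$, so that $X^*=A\oplus B$. Then $P^*$ is a projection on $X^{**}$, and $X^{**}=B^\perp\oplus A^\perp$ with $B^\perp\simeq A^*$ and $A^\perp\simeq B^*$. Since $X^{**}\simeq X$ and $X$ is primary, one of the summands is isomorphic to $X^{**}$. Without loss of generality, $A^*\simeq X^{**}$. Dualising once more gives $A^{**}\simeq X^{***}\simeq X^*$.

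\emph{Step 3: descend from $A^{**}$ to $A$.} Apply the second part of Lemma~\ref{lem:qref-one-dual-and-summands} to $X^*=A\oplus B$. This gives
\[
A^{**}/\kappa_A(A)\oplus B^{**}/\kappa_B(B)\simeq X^{***}/\kappa_{X^*}(X^*),
\]
and the right-hand side is one-dimensional. So exactly one of $A$ and $B$ is reflexive, and the other is quasi-reflexive of order one. Now $A^{**}\simeq X^*$ is not reflexive, since $X^*$ is quasi-reflexive of order one and hence non-reflexive. Therefore $A$ is not reflexive, so $A$ is quasi-reflexive of order one and $A^{**}\simeq A\oplus\mathbb K$. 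It follows that $A\oplus\mathbb K\simeq X^*$. Under such an isomorphism, the image of $A\oplus\{0\}$ is a closed hyperplane of $X^*$, so $A$ is isomorphic to a hyperplane of $X^*$. By (iii), $A\simeq X^*$. This gives primariness of $X^*$.

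The main point needing care is Step 3. Knowing only that $A^*\simeq X^{**}$, or even that $A^{**}\simeq X^*$, does not in general recover $A$ itself. The order-one quotient bookkeeping from Lemma~\ref{lem:qref-one-dual-and-summands} is exactly what excludes the reflexive alternative for $A$ and reduces the gap between $A$ and $A^{**}$ to a single dimension. That one dimension is then absorbed by hypothesis (iii). I would also double-check the routine facts that $B^\perp\simeq A^*$ and $A^\perp\simeq B^*$ under the decomposition $X^{**}=B^\perp\oplus A^\perp$ induced by $P^*$.
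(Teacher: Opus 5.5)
Your proof is correct and follows the paper's argument. You dualise the decomposition, use primariness of $X^{**}\simeq X$ to get a summand $A$ with $A^*\simeq X^{**}\simeq X$, use the order-one quotient decomposition to see that $A$ is a hyperplane of $A^{**}\simeq X^*$, and conclude by (iii). The only difference is that you pass through $X^{***}\simeq X^*$, whereas the paper dualises $A^*\simeq X$ directly to obtain $A^{**}\simeq X^*$; the two are equivalent.
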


\begin{proof}
Since $X$ is quasi-reflexive of order one, $X^{**}$ is isomorphic to $X\oplus\mathbb K$. By hypothesis, $X\oplus\mathbb K\simeq X$, and therefore
\(
    X^{**}\simeq X .
\)

Let
\(
    X^*=Y\oplus Z
\)
be a direct-sum decomposition. Passing to duals gives
\[
    X^{**}\simeq Y^*\oplus Z^* .
\]
Since $X^{**}\simeq X$ and $X$ is primary, either $Y^*\simeq X$ or $Z^*\simeq X$. We may assume, without loss of generality, that
\(
    Y^*\simeq X .
\)
Then $Y$ is not reflexive, because $X$ is not reflexive.

By Lemma~\ref{lem:qref-one-dual-and-summands}, $X^*$ is quasi-reflexive of order one and
\[
    (X^*)^{**}/\kappa_{X^*}(X^*)
        \simeq
    \bigl(Y^{**}/\kappa_Y(Y)\bigr)
        \oplus
    \bigl(Z^{**}/\kappa_Z(Z)\bigr).
\]
The left-hand side is one-dimensional. Since $Y$ is not reflexive, the quotient $Y^{**}/\kappa_Y(Y)$ is non-zero. Consequently
\(
    \dim\bigl(Y^{**}/\kappa_Y(Y)\bigr)=1.
\)
Thus $Y$ is a hyperplane in $Y^{**}$.

Dualising $Y^*\simeq X$ gives
\(
    Y^{**}\simeq X^* .
\)
Therefore $Y$ is isomorphic to a hyperplane of $X^*$. By hypothesis, every hyperplane of $X^*$ is isomorphic to $X^*$. Hence $Y\simeq X^*$.
Thus, in every decomposition of $X^*$ into two complemented summands, at least one summand is isomorphic to $X^*$, and so $X^*$ is primary.
\end{proof}

\begin{theorem}\label{thm:Jp-dual-primary}
Let $1<p<\infty$. If $J_p$ is primary, then $J_p^*$ is primary.
In particular, the dual of the classical James space $J_2$ is primary.
\end{theorem}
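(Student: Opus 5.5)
The plan is to apply Proposition~\ref{prop:qref-primary-dual} with $X=J_p$. We then have to check its three hypotheses: $J_p$ is quasi-reflexive of order one, $J_p$ is primary, and both $J_p$ and $J_p^*$ are isomorphic to their hyperplanes. Quasi-reflexivity of order one is the standard fact quoted at the start of the appendix. Primarity of $J_p$ is the assumption of the theorem, and for $p=2$ it is Casazza's theorem \cite{Casazza1977}. So the real work is in (ii) and (iii), and for both we will use Lemma~\ref{lem:complemented-copy-implies-hyperplanes}.

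For (ii), we use the standard fact that every closed infinite-dimensional subspace of $J_p$ contains a complemented copy of $\ell_p$. Applied to $J_p$ itself, this gives a complemented subspace $E\simeq\ell_p$. Since $\ell_p\simeq\ell_p\oplus\mathbb K$, Lemma~\ref{lem:complemented-copy-implies-hyperplanes} gives $J_p\simeq J_p\oplus\mathbb K$, and every hyperplane of $J_p$ is isomorphic to $J_p$.

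For (iii), we need a complemented subspace $F$ of $J_p^*$ with $F\simeq F\oplus\mathbb K$. The natural choice is to dualise the decomposition $J_p=E\oplus G$ with $E\simeq\ell_p$. This gives $J_p^*\simeq E^*\oplus G^*$, so $J_p^*$ contains a complemented copy of $E^*\simeq\ell_q$. Since $\ell_q\simeq\ell_q\oplus\mathbb K$, Lemma~\ref{lem:complemented-copy-implies-hyperplanes} again applies.

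An alternative route to (iii) is to dualise $J_p\simeq J_p\oplus\mathbb K$, which gives $J_p^*\simeq J_p^*\oplus\mathbb K$ directly. This makes (iii) immediate even without the $\ell_q$ copy, since Lemma~\ref{lem:complemented-copy-implies-hyperplanes} applies with $E=J_p^*$ itself. I would present this shorter argument and mention the $\ell_q$ route only as a remark.

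With all hypotheses in place, Proposition~\ref{prop:qref-primary-dual} yields that $J_p^*$ is primary. The case $p=2$ then follows from Casazza's theorem. The main point needing care is that the cited facts about $J_p$ (quasi-reflexivity of order one and complemented $\ell_p$ copies) hold for all $1<p<\infty$; here we rely on the references \cite{BirdLaustsen2010}. There is no real analytic obstacle.
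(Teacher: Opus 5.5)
Your proposal is correct and follows the paper's proof. You verify the hypotheses of Proposition~\ref{prop:qref-primary-dual} for $X=J_p$, using quasi-reflexivity of order one, the assumed primarity (Casazza's theorem when $p=2$), and the complemented copy of $\ell_p$ together with Lemma~\ref{lem:complemented-copy-implies-hyperplanes} for (ii). The only deviation is in (iii): the paper dualises the projection onto $E\simeq\ell_p$ to obtain a complemented $\ell_q$ in $J_p^*$, while your shortcut of dualising $J_p\simeq J_p\oplus\mathbb K$ to get $J_p^*\simeq J_p^*\oplus\mathbb K$ is equally valid and shows that hypothesis (iii) of the proposition already follows from (ii).
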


\begin{proof}
By the standard complemented-subspace theorem for $J_p$, the space $J_p$ contains a complemented subspace $E$ isomorphic to $\ell_p$. Since
\(
    \ell_p\simeq \ell_p\oplus\mathbb K,
\)
Lemma~\ref{lem:complemented-copy-implies-hyperplanes} shows that $J_p$ is isomorphic to each of its hyperplanes.

Let $P\colon J_p\to E$ be a bounded projection. Then $P^*\colon J_p^*\to J_p^*$ is a bounded projection, and its range is naturally isomorphic to $E^*$. Since $E\simeq\ell_p$, we have
\(
    E^*\simeq\ell_q,
\)
where $q$ is conjugate to $p$. Thus $J_p^*$ contains a complemented copy of $\ell_q$. Since
\(
    \ell_q\simeq\ell_q\oplus\mathbb K,
\)
Lemma~\ref{lem:complemented-copy-implies-hyperplanes} shows that $J_p^*$ is also isomorphic to each of its hyperplanes.

The space $J_p$ is quasi-reflexive of order one. If, in addition, $J_p$ is primary, then Proposition~\ref{prop:qref-primary-dual}, applied with $X=J_p$, gives that $J_p^*$ is primary.
For $p=2$, the required primarity of $J_2$ is precisely Casazza's theorem \cite{Casazza1977}.
\end{proof}

\begin{remark}\label{rem:Jp-dual-primary-literature}
The proof above uses primarity of $J_p$ only as an input. For $p=2$, this input is classical by \cite{Casazza1977}. The remaining structural ingredients used here are the quasi-reflexivity of $J_p$ of order one and the complemented $\ell_p$-subspace theorem for $J_p$; both are recorded in \cite[Remark~2.2, Proposition~2.3, and Section~2.6]{BirdLaustsen2010}. Notice that the dual hyperplane reduction uses the complemented copy of $\ell_q$ obtained by dualising a projection onto a complemented copy of $\ell_p$ in $J_p$; the exact exponent is immaterial for the argument, since every $\ell_r$ with $1<r<\infty$ is isomorphic to its hyperplanes.
\end{remark}